\numberwithin{equation}{section}
\definecolor{mygray}{gray}{0.6} %
\def\fullpath{\begingroup\everyeof{\noexpand}\@sanitize
  \edef\x{\@@input|"find `pwd` -name \jobname.tex" }%
  \edef\x{\endgroup\noexpand\zap@space\x\noexpand\@empty}\x}
\newcounter{mnotecount}[section]
\newcommand{\R}{{\mathbb{R}}}
\newcommand{\Z}{{\mathbb{Z}}}
\newcommand{\dbar}{{\mathchar'26\mkern-11mud}}
\def\be{\begin{equation}}
\def\ee{\end{equation}}
\def\ba{\begin{array}}
\def\ea{\end{array}}
\def\vp{\varphi}
\def\tV{{\tilde V}}
\newcommand{\curl}{\mathop{\rm curl}}
\newtheorem{theorem}{Theorem}[section]
\newtheorem{proposition}[theorem]{Proposition}
\newtheorem{corollary}[theorem]{Corollary}
\newtheorem{lemma}[theorem]{Lemma}
\newtheorem{remark}[theorem]{Remark}
\begin{document}

\title{Cauchy Problem for Incompressible Neo-Hookean materials}

\author[1]{Lars Andersson \thanks{laan@aei.mpg.de}}
\affil[1]{Max Planck Institute for Gravitational Physics (Albert Einstein Institute), Am M\"uhlenberg 1, D-14476 Potsdam, Germany }

\author[2]{Lev Kapitanski \thanks{l.kapitanski@math.miami.edu}}%
\affil[2]{Department of Mathematics, University of Miami, Coral Gables, FL 33124, USA} 

\date{November 6, 2021}

\maketitle

\begin{abstract}
In this paper we consider the Cauchy problem for neo-Hookean incompressible elasticity in spatial dimension $d \geq 2$. The Cauchy problem  can be formulated in terms of maps $x(t,\cdot): \R^d_\xi \to \R^d_x$ with domain a reference space $\R^d_\xi$, and with values in space $\R^d_x$. Initial data consists of initial deformation $\phi(\xi) = x(0,\xi)$ and velocity $\psi(\xi) = \partial x(t,\xi)/\partial t |_{t=0}$, which we assume are in Sobolev spaces $(\phi, \psi) \in H^{s+1}(\R^d)\times H^{s}(\R^d)$. If $s>s_{crit}=  d/2+1$, well-posedness is well-known.  We are here interested primarily in the low regularity case, $s \le s_{crit}$.   
For $d = 2, 3$, we prove existence and uniqueness  for $s_0 < s\le s_{crit}$, and we can prove  well-posedness, but  for a smaller range, $s_1 < s \le  s_{crit}$, 
\begin{align*} 
\text{if $d=2$}{}&, \quad s_0 = \frac74, \quad s_1= \tfrac74 + \tfrac{\sqrt{65}-7}{8} \\  
\text{if $d=3$}{}&, \quad s_0 = 2, \quad s_1 = 1 + \sqrt{\tfrac32} 
\end{align*} 
We consider the initial deformations of the form $x(0, \xi) = A \xi + \varphi(\xi)$, where $A$ is a constant $SL(d, \R)$ matrix. For the full range (in $s$) results, as indicated above, we need  additional H\"older 
regularity assumptions on certain combinations of second order derivatives of $\varphi$. 
 A key observation in the proof is that the equations of evolution for the vorticities decomposes into a first-order hyperbolic system, for which a Strichartz estimate holds, and a coupled transport system. This allows one to set up a bootstrap argument to prove local existence and uniqueness. Continuous dependence on initial data is proved using an argument inspired by Bona and Smith, and Kato and Lai, with a modification based on new estimates for Riesz potentials. The results of this paper should be compared to  what is known for the ideal fluid equations, where, as shown by Bourgain and Li, the requirement $s > s_{crit}$ is necessary. 
\end{abstract}

\tableofcontents

\section{Introduction}\label{intro}

We are concerned here with the motion of the incompressible neo-Hookean medium (material). We assume  that the medium occupies the whole space $\R^d$ with coordinates $\xi$. 
A generic point $\xi\in\R^d$ will find itself at the location $x(t, \xi)$ at time $t$. Incompressibility means that 
\be\label{det=1}
\det \frac{\partial x(t, \xi)}{\partial \xi} = 1\,.
\ee
The action functional 
\[
\int \int \frac12\,\left|\frac{\partial x(t, \xi)}{\partial t}\right|^2  + p(t, \xi)\,\left(\det \frac{\partial x(t, \xi)}{\partial \xi} - 1\right)\;d\xi\,dt\,,
\]
with the Lagrange multiplier $p$, gives rise to the Euler-Lagrange equations 
\be\label{idealfluid}
\frac{\partial^2 x(t, \xi)}{\partial t^2}  + \nabla_x p(t,\xi) = 0
\ee
describing the ideal (incompressible and inviscid)  fluid, cf. \cite{MR1784335}.
An ideal fluid has only kinetic energy. 
If there are internal forces controlling the deformation $x(t, \xi)$, one has to add the potential energy. 
If this potential energy depends on the deformation gradient $\partial x/\partial\xi$ only, the material is hyperelastic 
and the action takes form  
\[
\int \int \frac12\,\left|\frac{\partial x(t, \xi)}{\partial t}\right|^2 - W\left(\frac{\partial x(t, \xi)}{\partial \xi}\right) + p(t, \xi)\,\left(\det \frac{\partial x(t, \xi)}{\partial \xi} - 1\right)\;d\xi\,dt\,.
\]
(Of course, there are conditions on the meaningful stored energy functions $W$, cf. \cite{MR1262126}.) The corresponding Euler-Lagrange equations are 
\be\label{gen}
\frac{\partial^2 x^i}{\partial t^2} -  D^j_bD^i_aW\left(\frac{\partial x}{\partial\xi}\right)\;\frac{\partial^2 x^j}{\partial\xi^a \partial \xi^b} + \frac{\partial p}{\partial x^i} = 0\,,\quad i = 1,\dots, d,
\ee
where $D^i_a$ stands for the derivative $\partial/\partial(\partial x^i/\partial\xi^a)$. The summation over the repeated 
indices is assumed\footnote{We use lower case latin indices  $a,b,c$ for Lagrangian coordinates, and  $i,j,k,\dots$ for Eulerian coordinates. In the summation the repeated indices run from $1$ to $d$.}. The simplest  choice  
\[
W\left(\frac{\partial x}{\partial \xi}\right) = \frac12\,\left|\frac{\partial x}{\partial \xi}\right|^2 = \frac12\,\sum_{i, a = 1}^d \left|\frac{\partial x^i}{\partial \xi^a}\right|^2\,
\] 
corresponds to the neo-Hookean material, or solid, as it is sometimes  called. The field equations \eqref{gen} then simplify to
\be\label{NH-L}
\frac{\partial^2 x^i}{\partial t^2} -  \frac{\partial^2 x^i}{\partial\xi^a\partial\xi^a} + \frac{\partial p}{\partial x^i} = 0\,,\quad i = 1,\dots, d.
\ee
Equations \eqref{NH-L} complemented by the constraint \eqref{det=1}
describe the motion of the incompressible neo-Hookean  material {\it in the Lagrange coordinates} $ (t, \xi)$. 

\begin{remark}[Initial conditions]\label{rem-1} 
We shall consider the Cauchy problem for \eqref{idealfluid} and \eqref{NH-L}. Therefore, we prescribe the initial deformation $\phi(\xi)$ and the initial velocity $\psi(\xi)$  by
\begin{align*} 
\phi(\xi) ={}& x(0, \xi) \,,\\
\psi(\xi) ={}& \frac{\partial x(t, \xi)}{\partial t} \bigg|_{t=0}\,. 
\end{align*}
Of course, $\phi$ 
should define a volume preserving (and orientation preserving) diffeomorphism   of $\R^d$. In addition, there is a compatibility condition for $\phi$ and $\psi$ originating in \eqref{det=1}. Indeed, if \eqref{det=1} holds,
\begin{align*} 
0 =\frac{\partial\hfil}{\partial t} \left(\det \frac{\partial x(t, \xi)}{\partial \xi}\right) ={}& 
D^i_a \left(\det \frac{\partial x(t, \xi)}{\partial \xi}\right)\;\frac{\partial\hfil}{\partial t}\frac{\partial x^i(t, \xi)}{\partial \xi^a} \\
={}& \hbox{trace}\left\{\left[\left(\frac{\partial x(t, \xi)}{\partial \xi}\right)^{-1}\right]^T\,\frac{\partial\hfil}{\partial t}\frac{\partial x(t, \xi)}{\partial \xi}\right\}
\end{align*}
Thus, if $y = \phi(\xi)$ and $\xi = \phi^{-1}(y)$, we must have
\be\label{compat=1}
\frac{\partial \psi^i(\xi)}{\partial \xi^a}\;\frac{\partial \xi^a}{\partial y^i} = 0
\ee
for all $\xi\in\R^d$. In the case of the ideal fluid, one can change variables 
from $\xi$ to $\eta = \phi(\xi)$, set $x(0, \eta) = \eta$, and recalculate $\psi(\xi(\eta)) = \partial x(t, \eta)/\partial t \big|_{t=0}$. Such a change does not affect equation \eqref{idealfluid} but simplifies the initial conditions. In the case of equation \eqref{NH-L}, such a change of variables, in general, changes the form of the equation and will not be done. 
\end{remark}
\medskip

In {\it the Euler picture}, the independent variables are $(t, x)$, and equation \eqref{idealfluid} for the ideal incompressible fluid takes the form
\be\label{Euler}
\partial_t v^i + v^j\,\partial_j v^i + \partial_i p = 0\,,\quad i = 1,\dots, d,
\ee
where
\be
v(t, x) = \frac{\partial x}{\partial t}
\ee
is the vector of velocity in the Eulerian coordinates. The incompressibility condition \eqref{det=1} translates into 
\be\label{incomp}
\hbox{div}\,v \equiv \partial_i v^i = 0\,.
\ee
To describe the Eulerian form of equation \eqref{NH-L} for the incompressible neo-Hookean material, in addition to the true velocity $v(t,x)$ we introduce $d$ ``fake"  velocities $(v_1^i), \dots, (v_d^i)$  which represent the deformation gradient, 
\be
v_a^i (t, x) = \frac{\partial x^i}{\partial \xi^a}\,,\quad a = 1, \dots, d\,.
\ee
To distinguish between the derivatives in the two  pictures, we denote by $\partial_t$ and $\partial_k$ the partial derivatives in the Euler picture and by $\partial/\partial t$, $\partial/\partial \xi^a$ the derivatives in the Lagrange picture.  The dynamic equations  are 
\begin{align}
 \partial_t  v^i + v^j\, \partial_j v^i -  v^k_b\, \partial_k  v^i_b + 
\partial_i p ={}& 0\,,\label{NH-E-1}\\ 
\partial_t v^i_a + v^k\,\partial_k v^i_a - v^k_a\,\partial_k v^i ={}& 0\,, \;\;a=1,\dots, d\,, \label{NH-E-2} 
\end{align}
for each $i = 1, \dots, d$.
The first equation, \eqref{NH-E-1}, is the Eulerian form of \eqref{NH-L}, while \eqref{NH-E-2} expresses the fact that the partial derivatives $\partial/\partial t$ and $\partial/\partial \xi^a$ commute. 
In addition, we must have
\be\label{NH-E-3}
\hbox{div} \, v = 0, \quad \hbox{div} \, v_a \equiv \partial_i v^i_a = 0,\;\;a=1,\dots, d\,,
\ee
where the equation for $v_a$'s is the Piola identity (in the incompressible case), cf. \cite[\S 1.7]{MR1262126}. There are additional compatibility conditions 
\be\label{compat_ab}
v^k_a\;\partial_k  v^i_b = v^k_b\;\partial_k v^i_a\,,\quad a, b=1,\dots, d, 
\ee
which represent the fact that the Lagrangian derivatives $\partial/\partial \xi^a$ and $\partial/\partial \xi^b$ 
commute. 

If $x(t, \xi)$ and $p(t, \xi)$ is a solution of \eqref{NH-L}, \eqref{det=1}, then 
\be
v^i = \frac{\partial x^i(t, \xi)}{\partial t} \,,\quad v_a^i = \frac{\partial x^i(t, \xi)}{\partial \xi^a}\,,
\ee
and the pressure $p$, expressed as functions of $t$ and $x$, solve equations \eqref{NH-E-1}, \eqref{NH-E-2}, \eqref{NH-E-3}, and \eqref{compat_ab}.  Conversely, if ${ v}$, ${  v}_a$, and $p$ solve 
\eqref{NH-E-1}, \eqref{NH-E-2}, \eqref{NH-E-3}, and \eqref{compat_ab}, then  
the solution $x(t, \xi)$ of the ODE
system
\be
\frac{dx}{dt} = v(t,x)\,,\,\quad x(0, \xi) = \phi(\xi)\,,
\ee
(and $p$ with the corresponding coordinate change $(t, x) \to (t, \xi)$) solves the Lagrange equations \eqref{NH-L}; see also section \ref{sec:vort-lag}.

In the mechanics/engineering literature, the array $(\partial x^i(t, \xi)/\partial \xi^a)$ (in Lagrangian coordinates $t, \xi$) is denoted $F = (F^i_{a})$ and is called the deformation gradient. 
We use the notation $v^i_a$ in the Euler coordinates instead to emphasize the similarity of the subsequent treatment of the vectors $v_a$ and the velocity $v$. 

\bigskip

In this paper we study the Cauchy problem for the equations of motion of the neo-Hookean material, in  Eulerian form \eqref{NH-E-1}, \eqref{NH-E-2}, \eqref{NH-E-3}, and \eqref{compat_ab}. At the same time, we obtain corresponding results for the equations in the Lagrangean  picture. 
Energy estimates 
for equations \eqref{NH-E-1} and \eqref{NH-E-2} are essential for our study. Thus we use the Sobolev spaces $H^s(\R^d)$ to measure regularity and integrability of $v$ and $v_a$. In particular, we will have the gradients $\nabla_x v_a(t, x)$, changing continuously with $t$, in $L^2(\R^d, dx)$.  In dimensions $d\ge 3$, this alone imposes 
a restriction on the behavior at infinity of the initial diffeomorphism $\phi(\xi)$: as $|\xi|\to \infty$,  $\phi(\xi) \to A\,\xi$, where $A$ is a (any) matrix in $SL(d, \R)$. In the two dimensional case we impose 
this condition on $\phi(\xi)$. 
Once the matrix $A$ is fixed, we  split the deformation gradient $v^i_a(t, x)$ as 
\be\label{A-split}
v^i_a(t, x) = A^i_a + u^i_a(t, x)
\ee
and  work with the vectorfields $u_a$ instead of $v_a$. 
\begin{remark}It appears that in the existing literature, cf. e.g.
\cite{Schochet,ES,MR2358646}, only the case $A^i_a = \delta^i_a$ has been considered. Writing the deformation gradient tensor as the identity matrix plus the displacement gradient tensor is 
a tradition in elasticity theory.
\end{remark}
Denote by $V$ the collection of all the components of $v$ and $u_1, \dots, u_d$. 
We work in the scale of Sobolev spaces $H^s(\R^d)$ and compare with the existing results for the fluid equations 
\eqref{Euler}, \eqref{incomp}.
The regularity $H^s$ now refers to the $H^s$ norm of $V$ in the neo-Hookean case and the norm of $v$ in the fluid case. 
We are interested in local well-posedness in $H^s$, by which we mean local in time existence, uniqueness, and continuous dependence on the initial conditions (in $H^s$).  
In the subcritical case $s > d/2 + 1$, the well-posedness for the fluid equations is known, cf. \cite{Kato-Lai,KP}. 
According to the recent results of Bourgain and Li \cite{BL-0,BL-2}, well-posedness fails to hold for $s\le d/2+1$. 

There are results on local well-posedness in the case $s > d/2 + 1$ for a class of hyperelastic systems which includes  the neo-Hookean case, cf. \cite{ES}.  
Here we consider specifically the neo-Hookean case and 
 show that  then the system has smoothing properties that allow us to lower the 
regularity requirements for well-posedness compared to the fluid case. In particular, we can prove well-posedness 
below the critical regularity $s_{crit} = d/2 + 1$.  We believe that an analogous result holds for more general hyperelastic systems. 

\bigskip

The outline of the paper is as follows. We work with equations \eqref{NH-E-1}, \eqref{NH-E-2}, and \eqref{NH-E-3}, 
with initial data satisfying 
$v(0,\cdot)\in H^s$ and $u_a^i(0, x) = v_a^i(0, x) - A_a^i$. The compatibility condition 
\eqref{compat_ab} is satisfied for the initial data and will be propagated by the flow. 
 The basic {\it a priori} energy estimates are 
\be\label{ener} 
\|V(t)\|_{H^s} \lesssim \|V(0)\|_{H^s}\,\exp{\displaystyle{\left(c\,\int_0^t \|\nabla V(t^\prime)\|_{L^\infty}\,\,dt^\prime\right)}}
\ee
which are valid for any $s \ge 0$. Here and below, we write $\nabla V$ for $\nabla_x V$, when there is no room for confusion. If $s=0$, the energy is conserved:
\be\label{ener-0}
\|V(t)\|_{L^2} = \|V(0)\|_{L^2}\,.
\ee
As in the fluid case, it is clear that the solution exists as long as  the integral
\be\label{integral}
\int_0^t \|\nabla V(t^\prime)\|_{L^\infty}\,\,dt^\prime
\ee
is finite. 
If $s > d/2+1$, the Sobolev embedding bounds $\|\nabla V(t^\prime)\|_{L^\infty}$ by $\|V(t^\prime)\|_{H^s}$, and 
then the differential inequality resulting from \eqref{ener}  provides a bound on  the norm $\|V(t)\|_{H^s}$ for a short time interval 
depending on $\|V(0)\|_{H^s}$. To prove well-posedness, we use a modified Kato-Lai approach, cf. \cite{Kato-Lai}.

Recall that in the fluid case it has proved 
profitable to move from velocity to vorticity.
With the help of the Beale-Kato-Majda (BKM) estimate \cite{BKM}, 
it is possible to replace $\int_0^t \|\nabla v(t^\prime)\|_\infty\,dt^\prime$ by the integral 
$\int_0^t \|\omega(t^\prime)\|_\infty\,dt^\prime$. Note that the application of the  BKM estimate uses the fact that 
$s > d/2 + 1$. In the two-dimensional case, since the vorticity is transported by the flow, the solutions exist for all time. In the neo-Hookean case, the vorticity equations are not so nice, but they have some redeeming features. 
We do not obtain the global existence in the two-dimensional case,\footnote{There is a huge literature on global existence for small initial data, but here we concentrate on low regularity.} but we can lower the regularity requirements 
to a certain range of $s\le d/2 + 1$. 
Instead of the BKM estimate we use a different approach.  
 
 In our case we have two types of vorticities: the true vorticity, and $d$ ``fake" vorticities  
\be\label{vortic}
\omega^{mn} = \partial_m v^n - \partial_n v^m, \quad \omega_a^{mn} = \partial_m v_a^n - \partial_n v_a^m, \;\;a = 1, \dots, d\,.  
\ee
We combine them into an aggregate $\Omega$. In dealing with the integral \eqref{integral}, the following estimate 
will play an important role:
\be\label{nablavinfty}
\|\nabla V\|_\infty \lesssim \|V\|_{L^2}^{\gamma_1}\;\|\Omega\|_{{\dot B}^r_{p, p}}^{1 - \gamma_1}\,,
\ee
where ${\dot B}^r_{p, p}$ is the homogeneous Besov space, and $\gamma_1$  is function of $d$, $r$, and $p$, for a certain range of those parameters, see Lemma~\ref{v-r}.   Thanks to \eqref{ener-0}, we may replace \eqref{integral} with 
\be\label{integral2}
\int_0^t \|\Omega(t^\prime)\|_{{\dot B}^r_{p, p}}^{1 - \gamma_1}\,dt^\prime\,.
\ee
It turns out that the equations for $\omega^{mn}$ and $\omega_a^{mn}$ in the Lagrangean coordinates have the form 
(the superscript $L$ refers to the Lagrangean coordinates)
\begin{subequations}\label{Lom}
\begin{align}
\frac{\partial { \omega}^{L}}{\partial t} - \frac{\partial { \omega}_a^{L}}{\partial \xi^a} =& { F}^{L}[\nabla V]\,
\label{Lom-1}\\ 
\frac{\partial {\omega}_b^{L}}{\partial t} - \frac{\partial { \omega}^{L}}{\partial \xi^b} =& { F}_b^{L}[\nabla V]\,\label{Lom-2}
\end{align}
\end{subequations}
where the right hand sides are quadratic in $\nabla V$.  Via the Fourier transform (in Lagrangean coordinates) this system splits into a pair of wave equations 
with the principal linear parts $\frac{\partial\hfil}{\partial t} \pm i \sqrt{-\Delta_\xi}$ and a simple transport equation 
driven by the wave system. We use the Strichartz estimates involving the ${\dot B}^r_{p,p}$-norms  for the solutions of the wave equations (see \cite{Kap-2}), and the transport equations allow us to estimate the remaining components. 
In addition, we need to estimate the corresponding Sobolev norms of $\Omega^L$. 
The interplay between the various norms in the Lagrangean and the Eulerian coordinates is quite subtle, bringing in further powers of various norms. However, we are able to  close the argument by using nonlinear Gronwall-type inequalities. As a result, we show a stronger estimate of the form 
\be
\int_0^{T} \|\nabla V(t)\|_\infty^{1+\delta}\,dt \le C\,,
\ee
where $C>0$ and $T > 0$ are determined by the $H^s$ norms of the initial conditions, and $\delta$ is a dimension dependent positive constant. The estimates close for the range of $s$ larger than some $s_0 < d/2 + 1$, thus  lowering 
the classical regularity. 
Continuous dependence brings additional restrictions $s > s_1$, where $s_1 > s_0$, but $s_1 < d/2 + 1$.  We follow the general plan suggested by Bona and Smith \cite{BS}  and, in the case of ideal fluid, by Kato and Lai \cite{Kato-Lai}. 
The key step is to show that the family of solutions 
$V^\epsilon(t)$ corresponding to the mollified initial condition $V^\epsilon(0)$ is Cauchy, as $\epsilon\to 0$,  in $C([0, T]\to H^s)$ uniformly for $V(0)$ in a compact subset of $H^s$.  
The main difference from \cite{Kato-Lai} is that we use estimate \eqref{nablavinfty} and a similar estimate 
\be\label{eq:vinfty}
\| V\|_\infty \lesssim \|V\|_{L^2}^{\gamma_2}\;\|\Omega\|_{{\dot B}^r_{p, p}}^{1 - \gamma_2}\,, 
\ee
to control the difference $V^\epsilon(t) - V^\delta(t)$, where $\epsilon > \delta \searrow 0$. The restriction on regularity comes, in the end, through the parameter $r$, whose range is dictated by the Strichartz estimates. 

We state and prove the main theorem in the physical cases $d=2$ and $d=3$.  The statement is easiest to present when the curl of the displacement gradient at $t=0$,  $
\curl u_a(0, x)$, is H\"older continuous with an appropriate H\"older index $\varkappa > 0$.
Denote
\be\label{s0}
s_0 = \begin{cases} \frac74 & \text{if}\; d = 2 \\ 
2 & \text{if}\; d = 3
\end{cases}\,,\quad 
s_1 = \begin{cases} \frac74 + \frac{\sqrt{65}-7}{8} & \text{if}\; d = 2 \\ 
1 + \sqrt{\frac32} & \text{if}\; d = 3
\end{cases},\quad 
\varkappa = \begin{cases} \frac{\sqrt{65} - 7}{8} & \text{if}\;d = 2\\ 
\sqrt{\frac{3}{2}} - 1 & \text{if}\; d = 3
\end{cases} .
\ee
\begin{theorem}\label{main}  Assume the initial conditions  $V(0) = (v(0), u(0))$ are such that $v(0)\in H^s(\R^d)$ and $\curl u_a(0) \in{C^\varkappa}(\R^d)$. 
 If $s > s_0$, then there exists a unique solution $V\in C([0, T]\to H^s(\R^d))$ of the system \eqref{NH-E-1}, \eqref{NH-E-2}, and 
 \eqref{NH-E-3}, for some $T > 0$ depending continuously on $\|v(0)\|_{H^s}$. 
 
If $s > s_1$ (if $d = 2$ then $s = s_1$ is allowed), then the solution depends continuously in $C([0, T^\prime]\to H^s(\R^d))$ on the initial data: if $v_n(0)\to v(0)$ in $H^s$ then $V_n\to V$ in  $C([0, T^\prime]\to H^s(\R^d))$, where 
$[0, T^\prime]$ is the common interval of existence, $T^\prime > 0$. 
\end{theorem}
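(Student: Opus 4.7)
\textbf{Proof plan for Theorem \ref{main}.}
The plan is to proceed in two stages: first establish existence and uniqueness for $s > s_0$ by a bootstrap argument in Lagrangean coordinates, then upgrade to continuous dependence for $s > s_1$ via a Bona--Smith/Kato--Lai mollification scheme. For existence I would start with smooth mollified initial data $V^\epsilon(0)$, for which classical solutions $V^\epsilon$ exist on some interval by the known subcritical ($s>s_{crit}$) theory of \cite{ES}. The energy estimate \eqref{ener} shows that $V^\epsilon$ can be continued as long as $\int_0^t \|\nabla V^\epsilon\|_\infty\, dt'$ stays finite, so the task reduces to producing a bound for this integral that is uniform in $\epsilon$ and depends only on $\|v(0)\|_{H^s}$ and $\|\curl u_a(0)\|_{C^\varkappa}$.

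To control $\|\nabla V\|_\infty$ I would pass to Lagrangean coordinates, where the vorticity system \eqref{Lom-1}--\eqref{Lom-2} decouples at the level of the principal part (via Fourier transform in $\xi$) into a first-order wave system with symbols $\partial_t \pm i\sqrt{-\Delta_\xi}$ plus an auxiliary transport block driven by the wave system. The Strichartz estimates in homogeneous Besov $\dot B^{r}_{p,p}$ norms of \cite{Kap-2} control the wave part, the transport block is handled by Gronwall, and Lemma~\ref{v-r} (estimate \eqref{nablavinfty}) then lets me replace the critical integral \eqref{integral} by the subcritical \eqref{integral2}. The main obstacle at this stage is closing the bootstrap: transferring Lagrangean bounds for $\Omega^L$ in $\dot B^{r}_{p,p}$ back into Eulerian bounds on $\|\nabla V\|_\infty$ costs powers of the deformation gradient, and the forcings $F^L, F^L_b$ in \eqref{Lom} are quadratic in $\nabla V$, so the Besov estimates feed back into the $H^s$ estimates. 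I would organize this as a nonlinear Gronwall inequality whose exponents are calibrated so that the H\"older regularity $\varkappa$ of $\curl u_a(0)$ supplies the slack missing from the Strichartz estimate; the specific $\varkappa$ and $s_0$ in \eqref{s0} are exactly the values for which these exponents balance. Once the bootstrap closes I obtain $\int_0^T \|\nabla V^\epsilon\|_\infty^{1+\delta}\, dt \le C$ uniformly in $\epsilon$, and standard weak-compactness arguments deliver a limit $V \in C([0,T]\to H^s)$ solving the system. Uniqueness follows from \eqref{ener-0} applied to the difference of two solutions, combined with the $L^\infty$-in-gradient control from \eqref{nablavinfty}.

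For continuous dependence I would follow the scheme of Bona--Smith \cite{BS} and Kato--Lai \cite{Kato-Lai}: mollify the initial data to a family $V^\epsilon(0)$ with smooth solutions $V^\epsilon$ on a common interval $[0,T']$, and prove that $V^\epsilon$ is Cauchy in $C([0,T']\to H^s)$ uniformly for $V(0)$ in a compact subset of $H^s$. The key quantity to estimate is $V^\epsilon(t) - V^\delta(t)$ for $\epsilon > \delta > 0$; direct energy estimates lose derivatives, so I would instead use the pair of interpolation estimates \eqref{nablavinfty} and \eqref{eq:vinfty} to control this difference in a lower Sobolev norm without sacrificing the full $H^s$ bound on each $V^\epsilon$. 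The harder threshold $s_1$ arises from balancing the exponents $\gamma_1, \gamma_2$ against the permissible Strichartz parameter $r$; the quadratic relation encoding this balance is what produces the algebraic numbers $(\sqrt{65}-7)/8$ and $\sqrt{3/2}-1$ appearing in \eqref{s0}. The endpoint $s = s_1$ is admissible in dimension two because the two-dimensional Strichartz estimate has slack absent in three dimensions; this is the most delicate part of the argument and where I expect the analysis to be tightest.
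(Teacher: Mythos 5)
Your proposal is correct and follows essentially the same route as the paper: mollify, obtain smooth solutions from the subcritical theory, bound $\int_0^T\|\nabla V\|_\infty\,dt$ via the Lagrangean vorticity system (Fourier split into wave plus transport, Strichartz in $\dot B^r_{p,p}$, then the interpolation inequalities of Lemma~\ref{v-r} to pass back to $\|\nabla V\|_\infty$), close the nonlinear Gronwall bootstrap, and prove continuous dependence via Bona--Smith/Kato--Lai with the two Riesz-potential estimates \eqref{nablavinfty} and \eqref{eq:vinfty}. One small imprecision: uniqueness does not follow from \eqref{ener-0} (energy is not conserved for the difference $V - \underline{V}$); what is actually used is the Gronwall-type $L^2$ inequality $\tfrac{d}{dt}\|\tilde V\|^2 \lesssim \|\nabla \underline V\|_\infty\,\|\tilde V\|^2$, which your mention of the $L^\infty$-gradient control shows you have in mind anyway.
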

When the initial value of the displacement gradient is less smooth, the regularity of the ``vorticities" $\curl u_a(0) = \omega_a(0)$ will control the values of $s_0$ and $s_1$ (they will increase). A careful analysis of this situation is done in Sections~\ref{sec:low2} and \ref{sec:low3}. 
The proofs in the cases $d=2$ and $d=3$ are somewhat different. 
We believe that the proof in the case $d = 3$ can be generalized to $d > 3$.

\section{%
Above the critical regularity: $s > \frac{d}{2}+1$} \label{sec:Part1}

\subsection{Cauchy problem in Euler coordinates}\label{sec:high}

\subsubsection{Preliminary notes}\label{subsec:prelim} 

A few words on function spaces (for more see Appendix \ref{sec:spaces}).  The notation $\|f\|_p$ is used for the $L^p(\R^d)$ norm, 
$1\le p\le\infty$. Also, $\|f\|$ is the $L^2$ norm. 
We work in the scale of standard Sobolev spaces $H^s = H^s(\R^d)$, $s\in \R$. 
These are Hilbert spaces 
with the norms $\|f\|_{H^s} = \|J^s f\|$, where $J^s = (1 - \Delta)^{s/2} = {\cal F}^{-1}(1 + |k|^2)^{s/2}\,{\cal F}$ for real $s$ are the Bessel potentials.
We use the same notation $H^s$ for the spaces of $\R$-valued, $\R^d$-valued, or matrix-valued functions.  In the case of $\R^d$-valued functions, $H^s$ splits into an orthogonal sum (the Hodge decomposition) 
$H^s = H^s_\sigma\oplus H^s_\nabla$, where $H^s_\sigma$ is the space of divergence-free vector fields  
and $H^s_\nabla$ is the space of gradients of $H^{s+1}$ scalar functions.

We are going to work with the dynamic equations \eqref{NH-E-1} and \eqref{NH-E-2} (and \eqref{NH-E-3}) ignoring for the most part the compatibility condition \eqref{compat_ab}. However, at some point we will have to justify \eqref{compat_ab}. Also, we will have to explain why $v^i_a$ is the deformation gradient. The proof of these facts 
makes use of the estimate
\be\label{int-v-infty}
\int_0^T \|\nabla v(t)\|_\infty\,dt < \infty\,
\ee 
for the solutions on the interval $[0, T]$. 
This will be shown to hold for the solutions that we consider. Assuming \eqref{int-v-infty}, 
consider first the quantities 
$q^i_{ab} = v^k_a\;\partial_k v^i_b - v^k_b\;\partial_k v^i_a$. According to equations \eqref{NH-E-2}, these quantities satisfy the following equations:
\be
(\partial_t + v^\ell\partial_\ell) q^i_{ab} = q^r_{ab}\,\partial_r v^i\;.
\ee 
Multiply each equation by $q^i_{ab}$, sum over $i$, $a$, and $b$, and integrate in $x$ over $\R^d$. This yields the inequality 
\[
\frac12 \frac{d\hfill}{dt}\,\int \sum |q^i_{ab}|^2\,dx \le \|\nabla v(t)\|_\infty\,\int \sum |q^i_{ab}|^2\,dx\,.
\]
Therefore, if all $q^i_{ab} = 0$ at $t = 0$, then $q^i_{ab}$ will vanish for all $t \in [0, T]$. 

Similarly, if $v(t), v_a(t)$ is a solution of  \eqref{NH-E-1}, \eqref{NH-E-2}, and \eqref{NH-E-3}, for which \eqref{int-v-infty} is true, 
consider the solution of the ODE system $\dot x = v(t, x)$ with the initial condition 
$x(0, \xi) = A\xi + \varphi(\xi)$. Consider the quantity $q^i_a = v^i_a - \partial x^i/\partial \xi^a$ and compute 
its full time derivative:
\[
(\partial_t + v^\ell\partial_\ell) q^i_a = (\partial_t + v^\ell\partial_\ell) v^i_a - \frac{\partial^2 x^i}{\partial t\partial \xi^a}\,. 
\]
The first term on the right equals $v^k_a\partial_k v^i$ by equation \eqref{NH-E-2}, while 
\[
\frac{\partial^2 x^i}{\partial t\partial \xi^a} = \frac{\partial v^i}{\partial \xi^a} = 
\frac{\partial x^k}{\partial \xi^a}\,\partial_k v^i\,.
\]
Thus, 
\[
(\partial_t + v^\ell\partial_\ell) q^i_a = q^k_a \partial_k v^i\,. 
\]
As with the quantity $q^i_{ab}$, we obtain the inequality
\[
\frac12\,\frac{d\hfill}{dt}\,\int \sum |q^i_{a}|^2\,dx \le \|\nabla v(t)\|_\infty\,\int \sum |q^i_{a}|^2\,dx\,.
\]
Thus, if all $q^i_a$ are $0$ at $t = 0$, they remain $0$ for $t \in [0, T]$. 
\bigskip

We emphasize that the results in this subsection do not require $s > d/2 + 1$.

\subsection{The Cauchy problem}

In the Euler picture, we assume that the deformation gradient $(v^i_a)$ is split as in \eqref{A-split}. Thus, we fix an $SL(d, \R)$ matrix $A = (A^i_a)$ and define $d$ vectorfields $u_1, \dots, u_d$ with the components $u^i_a(t, x) = v^i_a(t,x) - A^i_a$, $i=1,\dots, d$.  Equations  \eqref{NH-E-1}, \eqref{NH-E-2}, and \eqref{NH-E-3} now look as follows: 
\begin{subequations}\label{eq:A-u-system}
\begin{align}
& \partial_t v^i + v^j\, \partial_j v^i -  u^k_b\, \partial_k u^i_b  - A^k_b\,\partial_k u^i_b + 
\nabla^i p = 0\,,\label{A-u-1}\\ 
& \partial_t u^i_a + v^k\,\partial_k u^i_a - u^k_a\,\partial_k v^i -  A^k_a\,\partial_k  v^i = 0\,, \label{A-u-2}\\ 
& \hbox{div}\,v = 0,\quad  \hbox{div}\, u_a = 0\,. \label{A-u-3}
\end{align}
\end{subequations}
What we do next in this paper does not depend on the particular choice of matrix $A$. The important thing is that the terms containing  the elements of $A$  disappear in the derivation of energy estimates.  
Here is a typical calculation showing this: 
\[
\int - A^k_b\,\partial_k u^i_b\,v^i -  A^k_a\,\partial_k  v^i\,u^i_a\;dx = 0\,,
\]
Because of that, we choose the simplest $A = I$, the identity matrix, and work with the resulting equations: 
\begin{subequations}\label{eq:u-system}
\begin{align}
& \partial_t v^i + v^j\, \partial_j v^i -  u^k_b\, \partial_k u^i_b  - \partial_b u^i_b + 
\nabla^i p = 0\,,\label{u-1}\\ 
& \partial_t u^i_a + v^k\,\partial_k u^i_a - u^k_a\,\partial_k v^i -  \partial_a  v^i = 0\,, \label{u-2}\\ 
& \hbox{div}\,v = 0,\quad  \hbox{div}\, u_a = 0\,. \label{u-3}
\end{align}
\end{subequations} 
We shall use $u$ to denote the whole collection $u_1,\dots, u_d$, and any norm of $u$ will be understood as the maximum over $a$ of the norm of $u_a$. 
 We abbreviate $v(t)$ for $v(t, \cdot)$ etc. Recall that $V(t) = (v(t), u(t))$.

\begin{theorem}\label{Thm-1}
Assume $d \geq 2$. 
Let $s$ be a real number greater than $1 + d/2$. Assume that the initial conditions $v(0), u(0)$ for equations \eqref{u-1}, 
\eqref{u-2}, and \eqref{u-3} all belong to $H^s_\sigma$. Then there exists a local in time solution of those equations such that 
\be
v, u\in C([0, T]\to H^s_\sigma)\,.
\ee
The solution is unique and depends continuously on the initial conditions.  
The lifespan, $T$, of the solution is determined by the $H^s_\sigma$-norms of the initial conditions and is characterized by the condition 
\be
\int_0^t \|\nabla V(\tau)\|_\infty \,d\tau < \infty\,,\quad\forall t < T\,.
\ee
\end{theorem}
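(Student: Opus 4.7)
The overall strategy follows the standard Kato--Lai / Friedrichs scheme adapted to this hyperbolic-transport system. I would regularize the nonlinearity in \eqref{u-1}--\eqref{u-3} by a Fourier truncation $J_\epsilon$, obtaining an ODE on $H^s_\sigma$ globally solvable at each scale and preserving $\mathrm{div}\,v=0$ and $\mathrm{div}\,u_a=0$: the former is enforced by the Leray projector $\mathbb{P}$ applied to eliminate $\nabla p$, and the latter is automatically propagated since a direct computation (using $\mathrm{div}\,v=0$) shows that $\mathrm{div}\,u_a$ satisfies a pure transport equation $(\partial_t+v\cdot\nabla)(\mathrm{div}\,u_a)=0$. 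Uniform $\epsilon$-bounds are then extracted from the a priori estimate below, and the limit is obtained by Aubin--Lions: strong convergence in $C([0,T];H^{s-1})$ combined with weak-$\ast$ convergence in $L^\infty([0,T];H^s)$.

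The essential a priori estimate is \eqref{ener}. Apply $J^s = (1-\Delta)^{s/2}$ to \eqref{u-1} and \eqref{u-2}, pair with $J^s v$ and $J^s u_a$ in $L^2$, and sum over $a$. The crucial point is that the two linear first-order terms cancel through the integration by parts
\[
\int \bigl(-\partial_b u^i_b\bigr)\, v^i + \bigl(-\partial_a v^i\bigr)\, u^i_a\,dx = \int \partial_a\bigl(u^i_a v^i\bigr)\,dx = 0,
\]
and likewise after commuting through $J^s$, modulo Kato--Ponce commutators. The transport nonlinearities $v\cdot\nabla v$ and $v\cdot\nabla u_a$ are controlled, using $\mathrm{div}\, v=0$, by the commutator estimate $\|[J^s,v\cdot\nabla]w\|_{L^2}\lesssim \|\nabla v\|_\infty\|w\|_{H^s}+\|v\|_{H^s}\|\nabla w\|_\infty$, while the semilinear terms $u^k_b\partial_k u^i_b$ and $u^k_a\partial_k v^i$ are handled by the Moser product estimate. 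The pressure, recovered from $-\Delta p=\partial_i\partial_j(v^iv^j-u^i_bu^j_b)$ (the $\partial_b u^i_b$ contribution to $\Delta p$ vanishes by $\mathrm{div}\,u_b=0$), is controlled via Calder\'on--Zygmund bounds. Combining all contributions yields $\tfrac{d}{dt}\|V\|_{H^s}^2\lesssim \|\nabla V\|_\infty\|V\|_{H^s}^2$; the Sobolev embedding $H^s\hookrightarrow W^{1,\infty}$ valid for $s>d/2+1$ closes the bootstrap on an interval $T=T(\|V(0)\|_{H^s})$.

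Uniqueness and the blow-up criterion come from the same computation at the $L^2$ level: the difference $\delta V = V_1 - V_2$ of two solutions obeys $\|\delta V(t)\|_{L^2}\le\|\delta V(0)\|_{L^2}\exp\bigl(c\int_0^t(\|\nabla V_1\|_\infty+\|\nabla V_2\|_\infty)\,d\tau\bigr)$, the linear first-order terms again cancelling by the same integration by parts. The continuation criterion follows because \eqref{ener} provides an $H^s$ bound whenever $\int_0^{T^*}\|\nabla V\|_\infty\,dt$ is finite, allowing $V$ to be extended past $T^*$ by local existence.

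The main obstacle is \emph{continuous dependence in the strong $H^s$ topology}, since \eqref{ener} is an inequality (not an identity) and the hyperbolic structure prevents a direct weak-to-strong upgrade. I would resolve this by a Bona--Smith argument: for $V_n(0)\to V(0)$ in $H^s$, mollify each initial datum at scale $\epsilon_n\downarrow 0$ to obtain smooth approximants $\widetilde V_n$ whose flows exist up to a common $T'>0$ by uniformity of the bound above. The $L^2$ stability estimate gives $\|V_n-\widetilde V_n\|_{L^2}\to 0$; interpolating against the uniform $H^{s+\alpha}$ bound available for $\widetilde V_n$ upgrades this to convergence in $C([0,T'];H^s)$. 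The delicate point is the precise choice of mollification scale to balance the gain in $H^{s+\alpha}$ against the loss in $L^2$; this is the main technical step but is by now standard for quasilinear symmetric-hyperbolic systems of this type.
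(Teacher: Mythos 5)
The existence and uniqueness parts of your proposal follow essentially the same route as the paper: Friedrichs/Fourier truncation to an ODE in $\rho_\epsilon[H^s_\sigma]$, the $L^2$ cancellation of the linear cross terms $-\partial_b u^i_b\cdot v^i - \partial_a v^i\cdot u^i_a$, the Kato--Ponce commutator estimate to close \eqref{ener}, and passage to the limit by compactness. Uniqueness and the blow-up criterion via the $L^2$-level difference estimate also match.

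There is, however, a genuine gap in your continuous-dependence step, and it is precisely the delicate part you flagged. You propose: mollify $V_n(0)$ at scale $\epsilon_n$ to get $\widetilde V_n$, note $\|V_n-\widetilde V_n\|_{L^2}\to 0$, and then ``interpolate against the uniform $H^{s+\alpha}$ bound available for $\widetilde V_n$'' to upgrade to $H^s$-convergence. This does not go through, for two reasons. First, the $H^{s+\alpha}$ bound on $\widetilde V_n$ is \emph{not} uniform: it grows like $\epsilon_n^{-\alpha}$, and $\epsilon_n\downarrow 0$. Second, and more fundamentally, interpolation of the difference $V_n-\widetilde V_n$ between $L^2$ and $H^{s+\alpha}$ requires $V_n-\widetilde V_n\in H^{s+\alpha}$, but $V_n$ has data only in $H^s$, so its solution is only $H^s$ in time. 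You cannot put the rough solution into the higher space that you want to interpolate against.

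The Bona--Smith/Kato--Lai argument resolves this by never interpolating the rough solution at all: one estimates $\sup_t\|V^\delta(t)-V^\epsilon(t)\|_{H^s}$ for two \emph{mollified} solutions (both smooth), derives a Gronwall inequality for $\widetilde V=V^\delta-V^\epsilon$ from the equation it satisfies, and isolates the single dangerous term $\|\widetilde V(t)\|_\infty\,\|V^\epsilon(t)\|_{H^{s+1}}$. Here $\|V^\epsilon\|_{H^{s+1}}\lesssim\epsilon^{-1}$ (growing), but $\|\widetilde V\|_\infty$ is controlled by Gagliardo--Nirenberg as $\|\widetilde V\|^{1-d/(2s)}\|\widetilde V\|_{H^s}^{d/(2s)}$, and the $L^2$ factor decays like $\epsilon^s\,o(\epsilon)$, so the product scales like $\epsilon^{s-1-d/2}\,o(\epsilon)\to 0$ precisely because $s>d/2+1$. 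The family $\{V^\epsilon\}$ is then Cauchy in $C([0,T];H^s)$, uniformly over data in a compact set, and a final triangle inequality (your $\|V_n-V\|\le\|V_n-V_n^\epsilon\|+\|V-V^\epsilon\|+\|V_n^\epsilon-V^\epsilon\|$) closes continuous dependence. This Gronwall-based treatment of the $\epsilon^{-1}$ term, not a naive interpolation, is the missing content in your sketch.
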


The general plan of the proof will be the same as in  the proof of the corresponding result for the Euler equations 
(describing an ideal fluid) by Kato and Lai, \cite{Kato-Lai},  complemented by some nice observations from \cite{FMRR-1}. We will sketch it anyway to emphasize certain points  that will be used later.
Denote the orthogonal 
projection $H^s\to H^s_\sigma$ by 
${\cal P}$. In the Fourier space,   
\[
\widehat{\left({\cal P} w\right)^i}(k) = \left(\delta^{ij} - \frac{k^i k^j}{|k|^2}\right){\hat w}^j(k)\,.
\]
Since Leray's work \cite{MR1555394}, projection 
of the equations onto the space of divergence-free vectors is standard in hydrodynamics, and we use it as well to get rid of the pressure term. Thus, we deal with the following equations:
\begin{align}
& \partial_t v^i + {\cal P}\left\{v^j\, \partial_j v^i -  u^k_b\, \partial_k u^i_b  - \partial_b u^i_b\right\} = 0\,,\label{u-11}\\ 
& \partial_t u^i_a + v^k\,\partial_k u^i_a - u^k_a\,\partial_k v^i -  \partial_a v^i = 0\,. \label{u-21}
\end{align}
We shall not need equations \eqref{u-3} since $\partial_t \hbox{div}\, v = 0$ follows from equation \eqref{u-11} and, from \eqref{u-21},  
\begin{align*} 
\partial_t \hbox{div}\,u_a + v^k\,\partial_k \hbox{div}\,u_a ={}&  u^k_a \partial_k \hbox{div}\, v + \partial_a  \hbox{div}\,v \,.
\end{align*} 
This shows that if equations \eqref{u-3} are satisfied at $t=0$, they will be satisfied for all times $t > 0$. 

Let $v(0)\in H^s_\sigma$ and $u_a(0)\in H^s_\sigma$  be given. The solution of \eqref{u-11}, \eqref{u-21} with these initial conditions will be obtained as a limit of approximate solutions 
$v^\epsilon, u_a^\epsilon$ whose Fourier transforms are supported in the ball $\{|k|\le \epsilon^{-1}\}$, cf. \cite{FMRR-1}. 

Denote by $\rho_\epsilon$ the  Friedrichs' mollifiers $\rho_\epsilon(x) = \epsilon^{-d} \rho(x/\epsilon)$, where $\rho(x)$ is the inverse Fourier transform 
of the characteristic function of the unit ball $\{|k|\le 1\}$. In other words, 
\[
\widehat{\rho_\epsilon * f}(k) = \chi_{\{|k|\le 1/\epsilon\}}(k)\,{\hat f}(k)\,.
\]
To save space, we write $\rho_\epsilon[f]$ instead of $\rho_\epsilon * f$,
\[
\rho_\epsilon[f](x) = \int \rho_\epsilon(x - y) f(y)\,dy\,.
\]
The mollifiers $\rho_\epsilon$ have the usual properties:
\begin{lemma}\label{mollifiers}
For any $\phi\in H^s$,   
\begin{enumerate}
\item $\rho_\epsilon[\phi]\to \phi$ in $H^s$ as $\epsilon\to 0$;
\item for any $m \ge 0$, $\|\rho_\epsilon[\phi]\|_{H^{s+m}} \le \left(1+\frac{1}{\epsilon^2}\right)^{m/2}\,\|\phi\|_{H^s}$. In particular, if $\epsilon\in (0, 1)$,
\be\label{epsilon-}
\|\rho_\epsilon[\phi]\|_{H^{s+1}} \le \frac{\sqrt{2}}{\epsilon}\,\|\phi\|_{H^s}\,,
\ee
\item for $m \ge 0$ and $\epsilon\in (0, 1)$, 
\be\label{epsilon+}
\|\rho_\epsilon[\phi] - \phi\|_{H^{s-m}}\le 2^{-m/2}\,\epsilon^m \,\|\phi\|_{H^s}\,.
\ee
\item If $\mathfrak C$ is a compact subset of $H^s$, then, for all $m\ge 0$, 
\be\label{epsilon+2}
\|\rho_\epsilon[\phi] - \phi\|_{H^{s-m}} = \epsilon^m\,o(\epsilon)
\ee 
uniformly in $\phi\in {\mathfrak C}$.
\end{enumerate}
\end{lemma}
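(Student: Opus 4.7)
The plan is to reduce all four statements to Plancherel, using that $\rho_\epsilon$ is a sharp Fourier projector. Writing $\chi_\epsilon$ for the indicator of $\{|k|\le 1/\epsilon\}$ and $\chi_\epsilon^c=1-\chi_\epsilon$, the definition gives
\[
\|\rho_\epsilon[\phi]\|_{H^{s+m}}^2 = \int \chi_\epsilon(k)\,(1+|k|^2)^{s+m}\,|\hat\phi(k)|^2\,dk,
\]
\[
\|\rho_\epsilon[\phi]-\phi\|_{H^{s-m}}^2 = \int \chi_\epsilon^c(k)\,(1+|k|^2)^{s-m}\,|\hat\phi(k)|^2\,dk.
\]
Claim (2) follows by pulling out the factor $(1+|k|^2)^m\le (1+1/\epsilon^2)^m$ on the support $\{|k|\le 1/\epsilon\}$; the special case \eqref{epsilon-} is the specialization $m=1$ together with the elementary inequality $1+1/\epsilon^2\le 2/\epsilon^2$ valid for $\epsilon\in(0,1)$. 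Claim (3) is the dual estimate: on $\{|k|>1/\epsilon\}$ one has $(1+|k|^2)^{-m}\le (1+1/\epsilon^2)^{-m}$, and pulling this constant out of the integral gives the $\epsilon^{m}\|\phi\|_{H^s}$ bound. Claim (1) is then immediate from dominated convergence applied to the second display with $m=0$: the integrand is pointwise dominated by the fixed $L^1$ function $(1+|k|^2)^s|\hat\phi|^2$, while $\chi_\epsilon^c(k)\to 0$ pointwise as $\epsilon\to 0$.

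The real content of the lemma lies in (4), the uniform-on-compacta refinement of (3). I plan a standard two-step argument. Step one: for each fixed $\phi\in H^s$, the same dominated-convergence device, applied now to
\[
\epsilon^{-2m}\|\rho_\epsilon[\phi]-\phi\|_{H^{s-m}}^2 = \int \epsilon^{-2m}\chi_\epsilon^c(k)(1+|k|^2)^{-m}\,(1+|k|^2)^s|\hat\phi(k)|^2\,dk,
\]
shows that $\epsilon^{-m}\|\rho_\epsilon[\phi]-\phi\|_{H^{s-m}}\to 0$, since the integrand is still dominated by the fixed $L^1$ majorant $(1+|k|^2)^s|\hat\phi|^2$ (using the bound $\epsilon^{-2m}(1+|k|^2)^{-m}\le 1$ on $\{|k|>1/\epsilon\}$) and goes to zero pointwise. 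Step two: by linearity of $\rho_\epsilon-I$, the estimate (3) is an operator bound,
\[
\epsilon^{-m}\bigl\|(\rho_\epsilon[\phi_1]-\phi_1) - (\rho_\epsilon[\phi_2]-\phi_2)\bigr\|_{H^{s-m}} \le 2^{-m/2}\|\phi_1-\phi_2\|_{H^s},
\]
valid uniformly in $\epsilon\in(0,1)$. With pointwise convergence from step one and the uniform Lipschitz bound from step two in hand, a finite $\delta$-net argument on the compact set $\mathfrak C$ converts pointwise convergence into uniform convergence: given $\eta>0$, cover $\mathfrak C$ by finitely many $H^s$-balls $B(\phi_j,\delta)$, choose $\epsilon$ small enough that $\epsilon^{-m}\|\rho_\epsilon[\phi_j]-\phi_j\|_{H^{s-m}}<\eta$ for every $j$, and invoke the Lipschitz bound on the remaining term.

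I expect no serious obstacle: parts (1)--(3) are bare Plancherel, and the only mild subtlety is in (4), where one must recognize that the compactness argument needs both the quantitative operator bound extracted from (3) and the $\epsilon^{-m}$-rescaled sharpening of (1); once these are in place, the finite-net step is standard.
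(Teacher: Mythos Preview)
Your proof is correct and, for parts (1)--(3), identical to the paper's: both reduce everything to Plancherel and pull out the obvious weight factor on the support of $\chi_\epsilon$ or $\chi_\epsilon^c$.

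For part (4) there is a small but genuine difference in route. The paper, after writing
\[
\|\rho_\epsilon[\phi]-\phi\|_{H^{s-m}}^2 \le 2^{-m}\epsilon^{2m}\int_{|k|>1/\epsilon}(1+|k|^2)^s|\hat\phi(k)|^2\,\dbar k,
\]
simply invokes the Kolmogorov--Riesz compactness criterion to conclude that the tail integral $\int_{|k|>1/\epsilon}(1+|k|^2)^s|\hat\phi|^2\,\dbar k$ tends to zero uniformly for $\phi$ in a compact subset of $H^s$; this is one of the equivalent characterizations of precompactness in $H^s$. You instead prove the pointwise-in-$\phi$ statement by dominated convergence and then upgrade to uniformity via the equicontinuity bound from (3) and a finite $\delta$-net on $\mathfrak C$. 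Your argument is slightly longer but more self-contained, since it does not assume the reader knows Kolmogorov--Riesz; the paper's is a one-line citation of a standard criterion. The two arguments are essentially dual formulations of the same fact: uniform high-frequency tail decay is both a consequence and a characterization of compactness in $H^s$.
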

\begin{proof} 
The first claim is well-known. The second one follows from  
\begin{align*} 
\|\rho_\epsilon[\phi]\|_{H^{s+m}}^2 ={}& \int_{k \le 1/\epsilon} (1 + |k|^2)^{s + m}\,|\hat \phi(k)|^2\,\dbar k  \\
 \le{}& \left(\frac{1 + \epsilon^2}{\epsilon^2}\right)^m\,\int_{k \le 1/\epsilon} (1 + |k|^2)^{s}\,|\hat \phi(k)|^2\,\dbar k\,,
\end{align*} 
where $\dbar k = (2\pi)^{-d} dk$. 
To prove the third and the fourth claims, observe that
\begin{align*} 
\|\rho_\epsilon[\phi] - \phi\|_{H^{s-m}}^2 ={}& \int_{k > 1/\epsilon} (1 + |k|^2)^{s - m}\,|\hat \phi(k)|^2\,\dbar k \\
={}& \int_{k > 1/\epsilon} \frac{1}{(1 + |k|^2)^m}(1 + |k|^2)^{s }\,|\hat \phi(k)|^2\,\dbar k \\ 
\le{}& 2^{-m}\,\epsilon^{2m}\,\int_{k > 1/\epsilon} (1 + |k|^2)^{s }\,|\hat \phi(k)|^2\,\dbar k\,.
\end{align*} 
By  the Kolmogorov-Riesz compactness criterion, cf. \cite{MR2734454},  
$\int_{k > 1/\epsilon} (1 + |k|^2)^{s }\,|\hat \phi(k)|^2\,\dbar k\to 0$ as $\epsilon\to 0$, uniformly in $\phi\in {\mathfrak C}$, a compact subset of $H^s$. 

\end{proof} 

\bigskip

To solve \eqref{u-11},\eqref{u-21} we construct approximate solutions $v^\epsilon, u_a^\epsilon$ by considering 
the following truncated system (cf. \cite{FMRR-1})  
\begin{align}
\partial_t v^i + {\cal P}\,\rho_\epsilon\left[v^j\, \partial_j v^i -  u^k_b\, \partial_k u^i_b  - \partial_b u^i_b\right] ={}& 0,\label{u-12}\\ 
\partial_t u^i_a + \rho_\epsilon \left[v^k\,\partial_k u^i_a - u^k_a\,\partial_k v^i -  \partial_a  v^i \right] ={}& 0 . \label{u-22}
\end{align}
We let $v^\epsilon$ and $u_a^\epsilon$ solve this system with the initial conditions
\be\label{ic-approx}
v^\epsilon(0) = \rho_\epsilon\left[v(0)\right],\quad u_a^\epsilon(0) = \rho_\epsilon\left[u_a(0)\right].
\ee
The solutions of \eqref{u-12} and \eqref{u-22} will automatically have their Fourier transforms supported in the ball $\{|k|\le 1/\epsilon\}$. That  local in time  and unique solutions exist follows from the fact that 
equations \eqref{u-12}, \eqref{u-22} can be viewed 
 as an ODE in the Hilbert space $\rho_\epsilon[H^s_\sigma]$, 
\be\label{Lip}
\frac{dV}{dt} = F(V)\,,
\ee
with locally Lipschitz right hand side. The Lipschitz property of $F(V)$ in our case is not hard to verify. To avoid clutter, we shall drop the index on $u_a$ when convenient. 
For example, the $H^s$ norm of $u\;\partial u - {\underline{u}}\;\partial {\underline{u}}$, where $u, \underline{u}\in \rho_\epsilon[H^s_\sigma]$, is estimated as follows:
\[
\begin{aligned}
& \|u\;\partial u - {\underline{u}}\;\partial {\underline{u}}\|_{H^s} \le 
\|u - \underline{u}\|_{H^s}\|\partial u\|_{H^s} + 
\|\underline{u}\|_{H^s}\|\partial (u - \underline{u})\|_{H^s} \le 
& \frac{1}{\epsilon}\,\left(\|u\|_{H^s} + \|{\underline{u}}\|_{H^s}\right)\;\|u - \underline{u}\|_{H^s}\,.
\end{aligned}
\]
We have used  that $H^s$ is an algebra when $s > d/2$ and that 
\[
\|\nabla w\|_{H^s} \le \frac{1}{\epsilon} \|w\|_{H^s}\,
\]
for any $w\in \rho_\epsilon[H^s]$. 

By the Cauchy-Picard theorem, cf. e.g., \cite[Theorem 3.1]{MB}, for every initial condition 
$(v^\epsilon(0), u^\epsilon(0))\in \rho_\epsilon[H^s_\sigma]$, there exists a $T_*(\|(v^\epsilon(0), u^\epsilon(0))\|_{\rho_\epsilon[H^s_\sigma]}, \epsilon) > 0$ and a unique solution $(v^\epsilon(t), u^\epsilon(t))$ of
the problem \eqref{u-12}, \eqref{u-22}, \eqref{ic-approx} on the time interval $(- T_*, T_*)$, such that  $(v^\epsilon, u^\epsilon)\in C([- T, T]\to \rho_\epsilon[H^s_\sigma])$ for every $0 < T < T_*$. The energy estimates discussed in the next section will show that 
$T_*$ can be chosen the same for all  $\epsilon > 0$. 

\subsection{Energy estimates}\label{subsec:energyestimates} 

For the basic $L^2$ estimate, let $(v^\epsilon, u^\epsilon)$ be a solution of \eqref{u-12}, \eqref{u-22}.  Multiply \eqref{u-12} by ${v^\epsilon}$, multiply \eqref{u-22} by $u^\epsilon_a$, sum over $a$ and integrate over $\R^d$ to obtain (after integration by parts and cancellations due to the divergence-free nature of $v^\epsilon$ and $u^\epsilon_a$):
\[
\frac{d\hfill}{dt}\,\int |v^\epsilon(t)|^2 + |u^\epsilon(t)|^2\,dx = 0\,
\]
(unless otherwise specified, the expressions such as $|u|^2$ are understood as $\sum_a u_a\cdot u_a$).  
Thus,
\be\label{L2}
\int |v^\epsilon(t)|^2 + |u^\epsilon(t)|^2\,dx = \int |v^\epsilon(0)|^2 + |u^\epsilon(0)|^2\,dx  \le \int |v(0)|^2 + |u(0)|^2\,dx \,.
\ee
The higher energy estimates are standard. We present them in a schematic form. Schematically, the system 
\eqref{u-12}, \eqref{u-22} is 
\be\label{sch-e}
\partial_t V + {\cal P}\rho_\epsilon [ V\cdot \nabla V ]= 0\,.
\ee
Let $r$ be any positive real number and note that $J^r = (1 - \Delta)^{r/2}$ commutes with 
${\cal P}$, with $\rho_\epsilon*$, and with all partial derivatives. Act with 
$J^r$ on \eqref{sch-e} and write the result in the form:
\be\label{u-r-eps}
\begin{aligned}
& \partial_t {J^r V} + {\cal P}\,\rho_\epsilon\left[V\cdot\nabla {J^r V}\right] = - {\cal P}\,\rho_\epsilon\left[J^r(V\cdot\nabla V) - V\cdot\nabla {J^r V}\right]\,.
\end{aligned}
\ee
It follows that 
\be\label{r-energy-eq}
\frac12\,\frac{d\hfill}{dt}\,\|J^r V(t)\|^2 \le  
\|J^r(V\cdot\nabla V) - V\cdot\nabla {J^r V}\|\;\|J^r V(t)\|\,.
\ee
By
the Kato-Ponce commutator 
estimate \cite{KP}, 
\be\label{kp}
\|J^r(V\cdot\nabla V) - V\cdot\nabla {J^r V}\| \lesssim \|\nabla V(t)\|_\infty\;\|J^r V(t)\|\,.
\ee
Then it follows from \eqref{r-energy-eq} that 
\be\label{pre-energy-eps}
\frac{d\hfill}{dt}\,\|V^\epsilon(t)\|_{H^r}^2 \lesssim \|\nabla V^\epsilon(t)\|_\infty\;\|V^\epsilon(t)\|_{H^r}^2\,.
\ee
This implies (with $t > 0$)
\be\label{r-energy-est}
\| V^\epsilon(t)\|^2_{H^r} \le \|V(0)\|^2_{H^r}\;
\exp\left( c(r,d)\int_0^t \|\nabla V^\epsilon(\tau)\|_\infty\,d\tau\right),
\ee
with the constant $c(r,d)$ independent of $\epsilon$.

\medskip

If we take $r = s > d/2 + 1$, then, by Sobolev embedding, $\|\nabla V^\epsilon(\tau)\|_\infty \lesssim \|V^\epsilon(\tau)\|_{H^s}$, and inequality \eqref{pre-energy-eps} implies 
\[
\frac{dy^\epsilon(t)}{dt}\le C(s, d)\, y^\epsilon(t)^{3/2}\,,
\]
for $y^\epsilon(t) = \|V^\epsilon(\tau)\|_{H^s}^2$. The $\epsilon$-independent estimate 
\[
y^\epsilon(t) \le y(0)\,\left(1 - \frac12\,C(s,d)\,{y(0)^{1/2}}\,t\right)^{-2}
\]
follows. Here we made use of the fact that with our choice of the mollifier,  we have $\|\rho_\epsilon[f]\;\big|\;H^s\|\le \|f\;\big|\;H^s\|$, and hence 
$y^\epsilon(0)\le y(0)$. 
If we take the initial conditions from the ball 
$B_R = \{\|V(0)\;\big|\;H^s\|^2 \le R^2\}$, then we can choose the common life-span 
\be
T_* = \frac{2}{C(s, d)\,R}\,.
\ee

\subsection{Passage to the limit}\label{subsec:highlimit} 

When  $\epsilon\to 0$, the approximate solutions $v^\epsilon$, $u^\epsilon$ converge to the true solutions $v$, $u$ of \eqref{u-11}, \eqref{u-21}. 
Pick a $T < T_*$. All approximate solutions $V^\epsilon(t)$ exist on the interval $[-T, T]$ and by \eqref{r-energy-est}  have uniformly bounded $H^s$ norms. Then there is a sequence $\epsilon_n\searrow 0$ such that 
$V^{\epsilon_n}$ converge weak-$*$ in $L^\infty([-T, T]\to H^s_\sigma)$ to some $V = (v, u)$ in that space. From the equations \eqref{u-12} and \eqref{u-22} we see that $(\partial_t v^\epsilon(t), \partial_t u^\epsilon(t))$ are uniformly bounded in $H^{s-1}_\sigma$. Therefore, we can arrange that 
$\partial_tV^{\epsilon_n}$ converge weak-$*$ in $L^\infty([-T, T]\to H^{s-1}_\sigma)$ to $\partial_tV = (\partial_tv, \partial_tu)$. In addition, taking into consideration Rellich's compact embedding theorem and the fact that $s$ is sufficiently large, 
we can assume that $V^{\epsilon_n}$ converges strongly to $V$ 
and each first partial derivative $\partial V^{\epsilon_n}$ converges strongly to $\partial V$ in every $L^2([-T, T]\times B_N(0))$ 
(where $B_N(0) = \{x\in\R^d: |x|\le N\}$) and $V^{\epsilon_n}$ and $\partial V^{\epsilon_n}$ converge almost everywhere in the slab $[-T, T]\times \R^d$. We also note that $V^{\epsilon}(t, x)$ and 
$\partial V^{\epsilon}(t, x)$ are uniformly bounded on $[-T, T]\times \R^d$.

The just mentioned facts are sufficient to conclude that $(v, u)$ solves  equations \eqref{u-11}, \eqref{u-21}
in the sense of distributions and assume the prescribed values at $t = 0$. To see this, let $\eta(t, x)$ be any smooth divergence free vector with compact support 
in $[-T, T]\times\R^d$. Multiply equations \eqref{u-12} and \eqref{u-22} by $\eta$ and integrate over $[0, T]\times \R^d$. After a few rearrangements, we get 
\be\label{uno-5}
\begin{aligned}
& \int_0^T\int - v^\epsilon\,\partial_t\eta\, + \left( v^{\epsilon j} \partial_j v^{\epsilon i} -  u^{\epsilon k}_a\; \partial_k u^{\epsilon i}_b  
- \partial_a u^{\epsilon i}_b \right)\,\rho_\epsilon\left[\eta^i\right]\,dt\,dx  = \int v^\epsilon(0)\,\eta(0)\,dx
\end{aligned}
\ee
and 
\be\label{dos-5}
\begin{aligned}
& \int_0^T\int - u^{\epsilon }_a\,\partial_t \eta + \left[v^{\epsilon k}\partial_k u^{\epsilon i}_a - u^{\epsilon k}_a\partial_k v^{\epsilon i} - \partial_a v^{\epsilon i}\right]\,\rho_\epsilon[\eta^i]\,dt\,dx  = \int u^\epsilon_a(0)\,\eta(0)\,dx
\end{aligned}
\ee
Since the terms such as $v^{\epsilon j} \partial_j v^{\epsilon i}$ and $u_a^{\epsilon k}\;\partial_k u^{\epsilon i}_a$ are uniformly in $\epsilon$ bounded in $L^2([-T, T]\times \R^d)$, expressions such as
\[
\int_0^T\int  u^{\epsilon k}_a\,\partial_k u^{\epsilon i}_b\;\left(\rho_\epsilon[\eta^i] - \eta^i\right)\,dt\,dx
\]
go to zero as $\epsilon = \epsilon_n\to 0$. At the same time, 
\[
\int_0^T\int  u^{\epsilon k}_a\,\partial_k u^{\epsilon i}_b\;\eta^i \,dt\,dx \to 
\int_0^T\int  u^{k}_a\,\partial_k u^{ i}_b\;\eta^i \,dt\,dx .
\]
Thus, the limit functions $v$ and $u$ satisfy 
the integral identities
\be\label{uno-6}
\begin{aligned}
& \int_0^T\int - v\,\partial_t\eta\, + \left( v^{ j} \partial_j v^{ i} -  u^{ k}_a\; \partial_k u^{ i}_b  
- \partial_a u^{ i}_b \right)\,\eta^i\,dt\,dx  = \int v(0)\,\eta(0)\,dx
\end{aligned}
\ee
and, for $a = 1, \dots, d$, 
\be\label{dos-6}
\begin{aligned}
& \int_0^T\int - u_a\,\partial_t \eta + \left[v^{ k}\partial_k u^{ i}_a - u^{ k}_a\partial_k v^{ i} - \partial_a v^{ i}\right]\,\eta^i\,dt\,dx  = \int u_a(0)\,\eta(0)\,dx
\end{aligned}
\ee
with any divergence free smooth $\eta$ with compact support in $(-T, T)\times \R^d$. This shows, in particular, that 
$(v, u)$ solve equations \eqref{u-11}, \eqref{u-21} in the sense of distributions.

So far we have a solution $V$ with $V\in L^\infty([-T, T]\to H^s_\sigma)$ and $\partial_tV\in L^\infty([-T, T]\to H^{s-1}_\sigma)$. This already implies that $V: [-T, T]\to H^s_\sigma$ is weakly continuous. Let us now prove the uniqueness of the obtained solution. Assume there is another solution, ${\underline V} = (\underline{v}, \underline{u})\in L^\infty([-T, T]\to H^s_\sigma)$ with the time derivative in 
$L^\infty([-T, T]\to H^{s-1}_\sigma)$, as seen from \eqref{u-11},\eqref{u-21} . The equation satisfied by the difference, ${\tilde V} = V - \underline{V}$, is  schematically 
\[
\partial_t {\tilde V} + {\cal P} V\cdot \nabla {\tilde V} = - {\cal P} {\tilde V}\cdot\nabla {\underline V}
\]
The same argument as for the energy estimate gives
\[
\frac12 \frac{d\hfill}{dt}\|{\tilde V}(t)\|^2  \lesssim \|\nabla {\underline V}(t)\|_\infty\,\|{\tilde V}(t)\|^2\,.
\]
As long as  $\int_0^t\|\nabla {\underline V}(\tau)\|_\infty\,d\tau$ is finite on $[0, T]$, and since ${\tilde V}(0) = 0$, 
it follows that ${\tilde V}(t) = 0$ on $[0, T]$.
\medskip

Now we shall prove that the solution $(v(t), u(t))$ is strongly continuous in $t$. 
Recall the inequality \eqref{r-energy-est}. We need it with $r=s$. When $s > d/2 + 1$, we have 
\[
\int_0^t \|\nabla V^\epsilon(t^\prime)\|_\infty \,dt^\prime \lesssim M\cdot t\,,
\]
where $M$ is a uniform in $\epsilon$ bound on the  $L^\infty([-T_*, T_*]\to H^s)$ norms of $V$. Thus, 
it follows from  \eqref{r-energy-est} that 
 for all $\epsilon = \epsilon_n$, and $t > 0$ 
\[
\| v^\epsilon(t)\|^2_{H^s} + \|u^\epsilon(t)\|^2_{H^s}\le \left(\|v(0)\|^2_{H^s} +  \|u(0)\|^2_{H^s}\right)\;
\exp\left( C\,M\cdot t\right)\,.
\]
Use the lower semi-continuity of the norms on the left to obtain
\[
\| v(t)\|^2_{H^s} + \|u(t)\|^2_{H^s}\le \left(\|v(0)\|^2_{H^s} +  \|u(0)\|^2_{H^s}\right)\;
\exp\left( C\,M\cdot t\right)\,.
\]
Since the solution $(v(t), u(t))$ is weakly continuous in $H^s$, it follows that it is strongly continuous from the right at $t = 0$. For any $t_0\in [-T, T]$, if we solve \eqref{u-11}, \eqref{u-21} 
with the initial condition $(v(t_0), u(t_0))$ at $t=t_0$, we obtain the same solution $(v(t), u(t))$ due to uniqueness. Consequently, $(v(t), u(t))$ is strongly continuous in $H^s$ from the right at every $t\in [-T, T]$. The equations are invariant under the time reversal transformation $(t, v(t), u(t)) \mapsto (-t, -v(-t), -u(-t)) $. Again, due to uniqueness, 
this implies strong continuity of $t\mapsto (v(t), u(t))\in H^s$ from the left.  This proves the strong continuity in time of $V(t)\in H^s_\sigma$, i.e.,  
$V\in C([0, T]\to H^s)$.  

\subsection{Revisiting energy estimates}

The limit case of the inequality \eqref{r-energy-est} is true, viz. for any solution satisfying the conditions of Theorem \ref{Thm-1}, 
\be\label{r-energy}
\|v(t)\|_{H^r}^2 + \|u(t)\|_{H^r}^2 \le \left( \|v(t_0)\|_{H^r}^2 + \|u(t_0)\|_{H^r}^2\right)\,
\exp \left(c(r, d)\,\int_{t_0}^t \|\nabla V(\tau)\|_\infty\,d\tau\right)
\ee 
for all $-T\le t_0 < t\le T$ and for any $0 < r \le s$. This is not proved by passing to the limit $\epsilon\to 0$ in \eqref{r-energy-est} but rather by appealing to the fact (cf.  \cite[p. 64]{BKM}) that 
$(J^r v, J^r u)$ is the unique $C([-T, T]\to L^2)$ solution of the linear non-homogeneous hyperbolic system
\be\label{u-rr}
\begin{aligned}
& \partial_t {{\mathfrak v}} + {\cal P}\,\left[v^j\, \partial_j {\mathfrak v} -  u^k_b\, \partial_k {\mathfrak u}_b  - \partial_b {\mathfrak u}_b\right] = F^r\,,\\ 
& \partial_t {\mathfrak u}_a + \left[v^k\,\partial_k {\mathfrak u}_a - u^k_a\,\partial_k {\mathfrak v} -  \partial_a {\mathfrak v}\right] = F^r_a\,, 
\end{aligned}
\ee
where
\be\label{F-r}
\begin{aligned}
F^r & = {\cal P}\,\left[\left(v^j\, \partial_j {J^r v} - J^r(v^j\,\partial_j v)\right) - 
\left(u^k_b\, \partial_k {J^r u}_b - J^r(u^k_b\;\partial_k u_b)\right)\right] \\ 
F^r_a & = {\cal P}\,\left[\left(v^j\, \partial_j {J^r u_a} - J^r(v^j\,\partial_j u_a)\right) - 
\left(u^k_a\, \partial_k {J^r v} - J^r(u^k_a\;\partial_k v)\right)\right]\,,
\end{aligned}
\ee
and
\[
{\mathfrak v}(0) = J^r v(0),\quad {\mathfrak u}_b(0) = J^r u_b(0)\,.
\]
The energy estimate for the solution can be written in the form (we take $t_0 = 0$ for simplicity)
\[
\left(\|{\mathfrak v}(t)\|^2 + \|{\mathfrak u}_a(t)\|^2\right)^{1/2} \le \left(\|{\mathfrak v}(0)\|^2 + \|{\mathfrak u}_a(0)\|^2\right)^{1/2} + \int_0^t \left(\|F^r(\tau)\|^2 + \|F^r_a(\tau)\|^2\right)^{1/2}\,d\tau\,.
\] 
Substitute the true values ${\mathfrak v} = J^r v$ and ${\mathfrak u}_a = J^r u_a$,  apply estimates \eqref{kp} to the norms in the integrand, and use Gronwall's inequality to arrive at \eqref{r-energy}.

\subsection{Continuous dependence}\label{sec:contdep}

The proof of continuous dependence of solutions on the initial conditions will follow the strategy 
of Bona and Smith \cite{BS} as has been done by Kato and Lai \cite{Kato-Lai} for the Euler equations.  
Take a sequence of initial conditions $V_n(0)\in H^s_\sigma$ converging (in $H^s$) to $V(0)$. 
Let $V_n(t)$ and $V(t)$ be the corresponding solutions
of system \eqref{u-11}, \eqref{u-21}. 
We can choose the same interval $[0, T]$ for all $V_n(0)$. The goal is to show that the solutions $V_n(t)$ converge to $V(t)$ in $C([0, T]\to H^{s}_\sigma$. 

Mollify the initial conditions, $V_n^\epsilon(0) = \rho_\epsilon[V_n(0)]$ and $V^\epsilon(0) = \rho_\epsilon[V(0)]$, using the same mollifier as in Lemma~\ref{mollifiers}. Let $V_n^\epsilon$ and $V^\epsilon$ be the corresponding solutions of  \eqref{u-11}, \eqref{u-21} lying in the space 
$C([0, T]\to H^{s+1}_\sigma)$. As the energy estimates show, we can adjust $T$ so that the solutions exist on  the same time interval, $[0, T]$ for all $\epsilon > 0$. We have 
\be\label{cd-scheme}
\begin{aligned}
\sup_{[0, T]}\|V_n(t) - V(t)\|_{H^s} \le{}& 
\sup_{[0, T]}\|V_n(t) - {V}^\epsilon_n(t)\|_{H^s} 
+ \sup_{[0, T]}\|V(t) - {V}^\epsilon(t)\|_{H^s} \\ 
&\quad + \sup_{[0, T]}\|{V}^\epsilon_n(t) - {V}^\epsilon(t)\|_{H^s}\,.
\end{aligned}
\ee
As will be shown, the first two norms on the right will be small by the choice of $\epsilon$, while the last norm, for a fixed $\epsilon$, will be small for large $n$.

We start with the first two norms on the right side of \eqref{cd-scheme} and prove a slightly more general result. Consider the system \eqref{u-11}, \eqref{u-21} with initial conditions 
$V(0) = (v(0), u(0))$ from a compact subset ${\mathfrak C}$ of $H^s_\sigma$, where $s > d/2+1$. 
Mollify the initial conditions, $V^\epsilon(0) = (\rho_\epsilon[v(0)], \rho_\epsilon[u(0)])$, and let $V^\epsilon(t)$ be 
the corresponding solutions on $[0, T]$. The interval can be chosen uniformly for $V(0)\in {\mathfrak C}$ and $\epsilon\in(0, 1)$ as follows from the existence part of Theorem  \ref{Thm-1}. 
\begin{proposition}\label{prop:A}   
For $V(0)\in {\mathfrak C}$ and $\epsilon\to 0$, 
the family $V^\epsilon$ is uniformly Cauchy in $C([0,T]\to H^s_\sigma)$.
\end{proposition}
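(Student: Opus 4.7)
The plan is to adapt the Bona-Smith and Kato-Lai strategy: for $0<\delta<\epsilon$, set $W=V^\epsilon-V^\delta$, and bound $\sup_{t\in[0,T]}\|W(t)\|_{H^s}$ by coupling a low-regularity smallness estimate on $W$ (coming from the $O(\epsilon^s)$ smoothing property of $\rho_\epsilon$ on the compact set $\mathfrak C$) with an $\epsilon^{-1}$ a priori higher-regularity bound for $V^\epsilon$ (coming from applying the energy estimate \eqref{r-energy} at level $r=s+1$). Interpolation will convert the $\epsilon^{-1}$ blow-up into an $o(1)$ quantity once combined with the low-regularity decay.

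First I would collect the needed a priori bounds. Lemma \ref{mollifiers}(1),(4) shows $\rho_\epsilon V(0)\to V(0)$ in $H^s$ uniformly for $V(0)\in \mathfrak C$, so the existence statement in Theorem \ref{Thm-1} furnishes a common interval $[0,T]$ and a constant $M$ with $\|V^\epsilon(t)\|_{H^s}\le M$ for all $\epsilon\in(0,1)$, all $V(0)\in\mathfrak C$, and all $t\in[0,T]$; in particular $\int_0^T\|\nabla V^\epsilon(\tau)\|_\infty\,d\tau$ is uniformly bounded by Sobolev embedding. Since $\|V^\epsilon(0)\|_{H^{s+1}}\le\sqrt 2\,\epsilon^{-1}\|V(0)\|_{H^s}$ by Lemma \ref{mollifiers}(2), applying \eqref{r-energy} with $r=s+1$ gives $\|V^\epsilon(t)\|_{H^{s+1}}\lesssim\epsilon^{-1}$ on $[0,T]$, uniformly on $\mathfrak C$.

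Subtracting equations \eqref{u-11}, \eqref{u-21} for $V^\epsilon$ and $V^\delta$ yields a linear system for $W$ of the schematic form $\partial_t W + {\cal P}[V^\epsilon\cdot\nabla W + W\cdot\nabla V^\delta]=0$ plus harmless constant-coefficient linear terms coming from the matrix $A=I$. An $L^2$ energy estimate, using $\hbox{div}\,V^\epsilon=0$ to cancel the transport piece after integration by parts, together with the bound $\|W(0)\|_{L^2}\le\|\rho_\epsilon V(0)-V(0)\|_{L^2}+\|\rho_\delta V(0)-V(0)\|_{L^2}=o(\epsilon^s)$ from Lemma \ref{mollifiers}(4) and Gronwall, gives $\sup_{t\in[0,T]}\|W(t)\|_{L^2}=o(\epsilon^s)$ uniformly on $\mathfrak C$. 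A Kato-Ponce commutator estimate at level $s$ then produces
\[ \frac{d}{dt}\|W\|_{H^s}^2 \lesssim \bigl(\|\nabla V^\epsilon\|_\infty+\|\nabla V^\delta\|_\infty\bigr)\|W\|_{H^s}^2 + \|W\|_\infty\,\|V^\epsilon\|_{H^{s+1}}\,\|W\|_{H^s}. \]

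The main obstacle is the last term, in which $\|V^\epsilon\|_{H^{s+1}}$ diverges like $\epsilon^{-1}$. Because $s>d/2+1$, the intermediate index $s-1$ lies strictly above $d/2$, so the Sobolev embedding $H^{s-1}\hookrightarrow L^\infty$ combined with interpolation between $L^2$ and $H^s$ yields the Gagliardo-Nirenberg inequality $\|W\|_\infty\lesssim\|W\|_{L^2}^{1/s}\,\|W\|_{H^s}^{(s-1)/s}$. Substituting the $L^2$ decay and the $H^{s+1}$ bound established above produces
\[ \|W\|_\infty\,\|V^\epsilon\|_{H^{s+1}} \lesssim \bigl(o(\epsilon^s)\bigr)^{1/s}\cdot\epsilon^{-1}\cdot\|W\|_{H^s}^{(s-1)/s} = o(1)\cdot\|W\|_{H^s}^{(s-1)/s}, \]
whose right-hand side is bounded by $o(1)\cdot M^{(s-1)/s}$ thanks to the uniform $H^s$ bound on $W$. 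Gronwall's inequality applied to the resulting differential inequality gives $\sup_{t\in[0,T]}\|W(t)\|_{H^s}^2 \lesssim \|W(0)\|_{H^s}^2 e^{CT} + o(1)$; since Lemma \ref{mollifiers}(4) with $m=0$ gives $\|W(0)\|_{H^s}=o(1)$ uniformly on $\mathfrak C$, we conclude that $\{V^\epsilon\}$ is uniformly Cauchy in $C([0,T]\to H^s_\sigma)$.
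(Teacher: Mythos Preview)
Your strategy is exactly the Bona--Smith/Kato--Lai argument the paper carries out, and your use of the interpolation $\|W\|_\infty\lesssim\|W\|_{H^{s-1}}\lesssim\|W\|_{L^2}^{1/s}\|W\|_{H^s}^{(s-1)/s}$ (in place of the paper's sharper $\|W\|_\infty\lesssim\|W\|_{L^2}^{1-d/(2s)}\|W\|_{H^s}^{d/(2s)}$) is a perfectly good variant: $(o(\epsilon^s))^{1/s}\cdot\epsilon^{-1}=o(1)$ just as well.

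There is, however, a genuine inconsistency in the way you split the nonlinearity. You write the difference equation as $\partial_t W+\mathcal P[V^\epsilon\!\cdot\!\nabla W+W\!\cdot\!\nabla V^\delta]=0$, but with this choice the term that cannot be handled by a commutator is $W\!\cdot\!\nabla J^s V^\delta$, so the dangerous factor is $\|V^\delta\|_{H^{s+1}}\lesssim\delta^{-1}$, not $\|V^\epsilon\|_{H^{s+1}}\lesssim\epsilon^{-1}$. Since $0<\delta<\epsilon$ is arbitrary, $o(\epsilon)\cdot\delta^{-1}$ does not go to zero and the estimate does not close. The fix is simply to use the other (equally valid) decomposition $V^\epsilon\nabla V^\epsilon-V^\delta\nabla V^\delta=V^\delta\!\cdot\!\nabla W+W\!\cdot\!\nabla V^\epsilon$, which is precisely what the paper does; then the $H^{s+1}$ norm lands on $V^\epsilon$ and your bound goes through.

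A smaller point: your stated $H^s$ differential inequality omits the Kato--Ponce terms of the form $\|V^{\epsilon,\delta}\|_{H^s}\,\|\nabla W\|_\infty$ (cf.\ the paper's \eqref{I-s-0}--\eqref{II-s-0}). In the present regime $s>d/2+1$ these are harmless since $\|\nabla W\|_\infty\lesssim\|W\|_{H^s}$ and can be absorbed into the Gronwall coefficient, but you should mention them.
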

\begin{proof} 
 Consider the difference $\tilde V = V^\delta - V^\epsilon$, where $0 < \delta < \epsilon$. At $t=0$, the $H^s$ norm of 
 $\tV(0)$ goes to $0$ as $\epsilon\searrow 0$. 
 The difference $\tV(t)$ satisfies, schematically, the equation
\be\label{toy}
\partial_t {\tilde V} + {\cal P}\,V^\delta\nabla {\tilde V} = - {\cal P}\,{\tilde V} \nabla V^\epsilon\,.
\ee
First get the $L^2$ estimate. We have
\[
\frac12\,\|\tV(t)\|^2 \le \|\nabla V^\epsilon(t)\|_\infty\,\|\tV(t)\|^2\,,
\]
and therefore,
\be\label{est:L2}
\|\tV(t)\|^2 \le \|\tV(0)\|^2\,\exp\int_0^t \|\nabla V^\epsilon(\tau)\|_\infty\,d\tau\,.
\ee
This implies
\be\label{est:L2-1}
\sup_{[0, T]} \|\tV(t)\| \lesssim \epsilon^s\,o(\epsilon)
\ee
since the integral of $\|\nabla V^\epsilon(\tau)\|_\infty$ is bounded and, since $V(0)\in H^s$, and therefore,  
\begin{align*} 
\|\tV(0)\|^2 ={}& \|V^\delta(0) - V^\epsilon(0)\|^2 \\
={}& \int_{1/\epsilon <|\kappa|<1/\delta}|\hat V(0,\kappa)|^2\,\dbar\kappa \\
={}& \int_{1/\epsilon <|\kappa|<1/\delta}\frac{|\widehat {J^sV(0)}(\kappa)|^2}{(1 + |\kappa|^2)^s}\,\dbar\kappa \\ 
\le{}& \left(\frac{\epsilon^2}{1 + \epsilon^2}\right)^s\,\int_{1/\epsilon <|\kappa|<1/\delta}|\widehat {J^sV(0)}(\kappa)|^2
\,\dbar\kappa \,.
\end{align*} 
Now, turn to the $H^s$ estimates. 
 Act with $J^s$ on equation \eqref{toy}:
\[
\begin{aligned}
\partial_t J^s{\tilde V} + {\cal P}\,V^\delta\nabla J^s{\tilde V} ={}&  {\cal P}\,\left(V^\delta\nabla J^s{\tilde V}
 - J^s(V^\delta\nabla {\tilde V})\right)  \\ 
& \quad - {\cal P}\,\left( J^s\left({\tilde V} \nabla V^\epsilon\right) - {\tilde V} \nabla J^s V^\epsilon \right) - {\cal P}\,{\tilde V} \nabla J^s V^\epsilon\,.
\end{aligned}
\]
As in the derivation of energy estimates, we obtain
\be\label{tilde-s}
\begin{aligned}
\frac{d \hfill}{dt}\; \|J^s{\tilde V}(t)\| \le{}&  \|V^\delta\nabla J^s{\tilde V} - J^s(V^\delta\nabla {\tilde V})\| 
\\ 
& \quad + \| J^s\left({\tilde V} \nabla V^\epsilon\right) - {\tilde V} \nabla J^s V^\epsilon\| + \|\tV\|_\infty\,\|\nabla J^s V^\epsilon\|
\end{aligned}
\ee
By the  Kato-Ponce inequality \eqref{kp}, 
\be\label{I-s-0}
\|V^\delta\nabla J^s{\tilde V} - J^s(V^\delta\nabla {\tilde V})\|\lesssim \|\nabla V^\delta(t)\|_\infty\,\|{\tilde V}(t)\|_{H^s} + \|\nabla {\tilde V}(t)\|_\infty\,\|V^\delta(t)\|_{H^s}\,,
\ee
and
\be\label{II-s-0}
\| J^s\left({\tilde V} \nabla V^\epsilon\right) - {\tilde V} \nabla J^s V^\epsilon\| \lesssim \|\nabla {\tilde V}(t)\|_\infty\,\|V^\epsilon(t)\|_{H^s} + \|\nabla V^\epsilon(t)\|_\infty\,\|{\tilde V}(t)\|_{H^s}\,.
\ee 
Thus, 
\be\label{tilde-s2}
\begin{split}
\frac{d\hfil}{dt} \|{\tV}(t)\|_{H^s}\lesssim{}&  (\|\nabla V^\delta(t)\|_\infty + \|\nabla V^\epsilon(t)\|_\infty)\,\|{\tV}(t)\|_{H^s} \\ 
& \quad + (\|V^\delta(t)\|_{H^s} + \|V^\epsilon(t)\|_{H^s})\,\|\nabla\tV(t)\|_\infty + 
\|\tV(t)\|_\infty\,\|V^\epsilon(t)\|_{H^{s+1}}\,.
\end{split}
\ee
The $L^\infty_tH^s$ norms of $V^\epsilon$ and $V^\delta$ are uniformly bounded. Since $s > d/2+1$, the functions 
$\|\nabla V^\epsilon(t)\|_\infty$ and $\|\nabla V^\delta(t)\|_\infty$ are (uniformly) bounded on $[0, T]$,  
and, also, $\|\nabla\tV(t)\|_\infty \lesssim \|\tV(t)\|_{H^s}$ . 
The energy estimate \eqref{r-energy} with $r = s+1$ tells us that 
\[
\|V^\epsilon(t)\|_{H^{s+1}} \le C\,\|\rho_\epsilon[V(0)]\|_{H^{s+1}} \le C\,\frac{1}{\epsilon}\,\|V(0)\|_{H^s}\,.
\]
By the Gagliardo-Nirenberg inequality,
\[
\|\tV(t)\|_\infty \lesssim \|\tV(t)\|^{1 - \frac{d}{2s}}\,\|\tV\|_{H^s}^{\frac{d}{2s}}\,.
\]
Taking into account \eqref{est:L2-1}, we obtain 
\[
\|\tV(t)\|_\infty \lesssim \epsilon^{s - \frac{d}{2}}\,{o(\epsilon)}\,\|\tV\|_{H^s}^{\frac{d}{2s}}\,.
\]
Hence, 
\[
\|\tV(t)\|_\infty\,\|V^\epsilon(t)\|_{H^{s+1}} \lesssim \epsilon^{s - 1 - \frac{d}{2}}\,{o(\epsilon)}\,\|\tV\|_{H^s}^{\frac{d}{2s}}\,\|V(0)\|_{H^s} \lesssim \epsilon^{s - 1 - \frac{d}{2}}\,.
\]
Combining all this, 
\[
\frac{d\hfil}{dt} \|{\tV}(t)\|_{H^s}\lesssim \|{\tV}(t)\|_{H^s} + \epsilon^{s - 1 - \frac{d}{2}}\,.
\]
Integrating this inequality and using the fact that $\|\tV(0)\|_{H^s}\to 0$ uniformly for $V(0)\in {\mathfrak C}$, we see that (again, uniformly for $V(0)\in {\mathfrak C}$) the supremum of $ \|{\tV}(t)\|_{H^s}$ goes to $0$ as $\epsilon\searrow 0$. 
\end{proof}
\begin{remark} \label{rem:KL} In the proof above, the term $\|\tV(t)\|_\infty\,\|V^\epsilon(t)\|_{H^{s+1}}$ occurring in equation \eqref{tilde-s2} is treated differently from \cite{Kato-Lai}. In the lower regularity case, where the argument of \cite{Kato-Lai} does not apply, this term will be treated by a generalization of the Gagliardo-Nirenberg inequality. 
\end{remark} 

Proposition \ref{prop:A} takes care of the first two terms of the right hand side of \eqref{cd-scheme}. 
To handle the last term, $\sup_t \|V_n^\epsilon(t) - V^\epsilon(t)\|_{H^s}$, for a fixed small $\epsilon$, we treat the difference 
$\tV = V_n^\epsilon - V^\epsilon$ as in the proof of the proposition and arrive at the inequality 
\be\label{tilde-s3}
\begin{split}
 \frac{d\hfil}{dt} \|{\tV}(t)\|_{H^s}\lesssim{}& (\|\nabla V_n^\epsilon(t)\|_\infty + \|\nabla V^\epsilon(t)\|_\infty)\,\|{\tV}(t)\|_{H^s} \\ 
& \quad + (\|V_n^\epsilon(t)\|_{H^s} + \|V^\epsilon(t)\|_{H^s})\,\|\nabla\tV(t)\|_\infty + 
\|\tV(t)\|_\infty\,\|V^\epsilon(t)\|_{H^{s+1}}\,.
\end{split}
\ee
Now $\|\nabla\tV(t)\|_\infty,\,\|\tV(t)\|_\infty \lesssim \|\tV(t)\|_{H^s}$ and the remaining factors are uniformly bounded in $t$. Hence, $\sup_t \|\tV(t)\|_{H^s} \lesssim \|\tV(0)\|_{H^s}\to 0$ as $n\to\infty$. 
\bigskip

This completes the proof of Theorem \ref{Thm-1}. 
\bigskip

\begin{remark}\label{pressure-E} It is standard that once the solution $(v, u)$ of the ``projected" system \eqref{u-11}, \eqref{u-21} is obtained, with 
$(v, u)\in C([0, T]\to H^s_\sigma)$ and $(\partial_t v, \partial_t u)\in C([0, T]\to H^{s-1}_\sigma)$, 
we can recover the pressure $p(t, x)$. From equations \eqref{u-1}, it follows that $\nabla p\in C([0, T]\to H^{s-1})$. Also, $- \Delta p = \partial_k v^i\cdot \partial_i v^k - \partial_k u_a^i\cdot \partial_i u_a^k$,  from \eqref{u-1}. Using the Riesz transforms ${\cal R}_j$, we can solve for 
$p = {\cal R}_i {\cal R}_k \left(v^i\,v^k - u^i_a\ u^k_a\right)$. 
As long as $s > d/2$,  we see that 
$p\in C([0, T]\to H^s)$. 

From the identities \eqref{uno-6}, \eqref{dos-6} we can now deduce the following identities
\be\label{uno-7}
\begin{aligned}
& \int_0^T\int - v^i\,\left(\partial_t\underline{\eta}^i + v^j \partial_j \underline{\eta}^i\right) +  v^{ i}_a\, v^{ k}_a  
\partial_k\underline{\eta}^i - p\,\partial_i \underline{\eta}^i\,dt\,dx  = \int v(0)\,\underline{\eta}(0)\,dx
\end{aligned}
\ee
and, for $a = 1, \dots, d$, 
\be\label{dos-7}
\begin{aligned}
& \int_0^T\int - v_a^i\,\left(\partial_t \underline{\eta}^i + v^{ k} \partial_k \underline{\eta}^i\right) + v^{ i}\,v^{ k}_a\partial_k \underline{\eta}^i\,dt\,dx  = \int v_a(0)\,\underline{\eta}(0)\,dx
\end{aligned}
\ee
valid for any smooth vector function $\underline{\eta}$ with compact support in $(-T, T)\times \R^d$. 
We have rearranged the terms (compared to \eqref{uno-6}, \eqref{dos-6}) for  future use.
\end{remark}

\section{Lagrangean picture} \label{sec:backtolagrange}

This section is located  between the sections on high regularity and lower regularity Cauchy problem 
in the Euler setting. We treat the Lagrange equations by switching to the Euler form, applying the results for the Euler setting, and switching back to Lagrange. Thus, in this section, our results will be 
conditioned on the available results for the Eulerian system. At the moment, we have results only in the case $s > d/2 + 1$, and we can apply them right away. After we show, in the next section, 
how to work with $s < d/2 + 1$, we shall immediately translate that into the Lagrangean setting 
by applying the results of the current section.

\subsection{A volume preserving diffeomorphism lemma} 

We start with a useful lemma. 
\begin{lemma}\label{difff} 
Let $A$ be a $d\times d$ matrix with real (constant) entries and $\det A = 1$.
Consider the map $\Phi:\,\xi\in\R^d_\xi \to x = A\xi + \varphi(\xi)\in \R^d_x$ such that 
\begin{enumerate} 
\item $\varphi \in H^{s+1}(\R^d_\xi)$ with $s > \frac{d}{2}$; 
\item $\det\,\left(A + \frac{\partial \varphi(\xi)}{\partial\xi}\right) = 1$\,.
\end{enumerate}
Then $\Phi$ is a $C^1$ diffeomorphism and its inverse, $\Phi^{-1}$, can be written as $\xi = A^{-1}x - g(x)$ where $g\in H^{s+1}(\R^d_x)$. 
\end{lemma}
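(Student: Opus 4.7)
The plan has two parts: first establish the global $C^1$ diffeomorphism property of $\Phi$, then analyze the Sobolev regularity of the inverse.

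First, I would observe that by the Sobolev embedding $H^{s+1}(\R^d) \hookrightarrow C^1_0(\R^d)$, valid because $s+1 > d/2+1$, both $\varphi$ and $D\varphi$ are continuous and vanish at infinity. Thus $\Phi(\xi)=A\xi + \varphi(\xi)$ is a $C^1$ map, and the hypothesis $\det(A + D\varphi) = 1$ ensures $D\Phi$ is invertible at every point, making $\Phi$ a local diffeomorphism. Since $\varphi$ is bounded, $|\Phi(\xi)| \geq \|A^{-1}\|^{-1}|\xi| - \|\varphi\|_\infty \to \infty$ as $|\xi|\to\infty$, so $\Phi$ is proper. Invoking Hadamard's global inverse function theorem (a proper $C^1$ local diffeomorphism between simply connected manifolds is a global diffeomorphism), I conclude that $\Phi$ is a global $C^1$ diffeomorphism. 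Its inverse is $C^1$ with $\det D\Phi^{-1}=1$.

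Next, set $g(x) := A^{-1}x - \Phi^{-1}(x)$, so $g(x) = A^{-1}\varphi(\Phi^{-1}(x))$. The $L^2$ bound is immediate: because $\Phi^{-1}$ is volume preserving, a change of variables gives $\|g\|_{L^2} \leq |A^{-1}|\|\varphi\|_{L^2}$. For the derivative, differentiate the identity $\Phi\circ\Phi^{-1}=\mathrm{id}$ to obtain $D\Phi^{-1}(x) = [A+D\varphi(\Phi^{-1}(x))]^{-1}$. Since $\det(A+D\varphi) = 1$, Cramer's rule gives
\[
D\Phi^{-1}(x) = \mathrm{cof}\bigl(A + D\varphi(\Phi^{-1}(x))\bigr)^{T}.
\]
The cofactor is a polynomial of degree $d-1$ in the entries of its argument whose constant term is $\mathrm{cof}(A)^T = A^{-1}$ (using $\det A=1$). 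Hence
\[
Dg(x) = A^{-1} - D\Phi^{-1}(x) = -\sum_{k=1}^{d-1} P_k\bigl(D\varphi(\Phi^{-1}(x))\bigr),
\]
a polynomial without constant term in the entries of $D\varphi\circ\Phi^{-1}$. Since $s>d/2$ makes $H^s$ a Banach algebra, it suffices to show $D\varphi\circ\Phi^{-1} \in H^s(\R^d_x)$, for then $Dg\in H^s$ and the $L^2$ bound on $g$ yields $g\in H^{s+1}$.

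The main obstacle is precisely this composition estimate. The plan is to prove by a bootstrap that $(D^k\varphi)\circ\Phi^{-1} \in H^{s+1-k}$ for each integer $k$ in the relevant range (and interpolate for fractional $s$). The base case $k=1$ in $L^2$ is immediate from volume preservation. For higher regularity one differentiates, using the chain rule
\[
\partial_j\bigl((D^k\varphi)\circ\Phi^{-1}\bigr)(x) = (D^{k+1}\varphi)(\Phi^{-1}(x))\cdot\partial_j\Phi^{-1}(x),
\]
and the identity $\partial_j\Phi^{-1} = (A^{-1})_{\cdot j} - \partial_j g$ proved above. The algebra structure of $H^s$, combined with the exactly volume-preserving nature of $\Phi^{-1}$ (so no Jacobian factors arise in the $L^2$ piece), allows one to close the iteration. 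A standard composition estimate for Sobolev functions with a bi-Lipschitz volume-preserving diffeomorphism whose deviation from the affine map lies in $H^{s+1}$ gives the cleanest packaging of this argument. The delicate point — and what would be the main difficulty if one tried to push $s$ below $d/2$ — is that the chain rule immediately couples the regularity of $g$ with that of $\varphi\circ\Phi^{-1}$, so the bootstrap must be set up so that each iteration gains regularity at a rate controlled by the algebra inequality.
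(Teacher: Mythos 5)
Your argument follows the paper's proof in all essential respects: Sobolev embedding plus Hadamard's global inverse function theorem for the $C^1$ diffeomorphism, the observation that $\det(A+D\varphi)=1$ makes $(A+D\varphi)^{-1}-A^{-1}$ a polynomial without constant term in the entries of $D\varphi$ (the paper's $F_A$), and a Moser-type bootstrap for the Sobolev regularity of the inverse. The one caveat is that your deferred final step --- the ``standard composition estimate'' and the interpolation for fractional $s$ --- is exactly where the paper's real work lies (Lemma~\ref{chain-2} together with the bi-Lipschitz equivalence of $H^\gamma$ norms for $0\le\gamma\le 1$): the chain-rule products place their factors at different regularity levels, so the plain algebra property of $H^s$ does not by itself close the iteration and one needs the tame Gagliardo--Nirenberg/Runst product estimates.
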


\begin{remark}
Notice that the regularity restriction in this lemma corresponds to $V\in H^s$ with $s > d/2$.
\end{remark}
\begin{proof} 
By the Sobolev embedding,
the entries of the matrix $\partial \varphi(\xi)/\partial \xi$ are bounded, and so $\Phi\in C^1$. 
Since $\det\,\left(\partial\Phi /\partial\xi\right) = 1$, $\Phi$ is a local diffeomorphism. 
It is a global volume preserving $C^1$-diffeomorphism of $\R^d$ by  
Hadamard's global inverse function theorem  (see \cite{Gordon} for references and proofs). 
In particular,
\[
\int_{\R^d} f(\xi)\,d\xi = \int_{\R^d} f(\Phi^{-1}(x))\,dx
\]
for any reasonable $f$.  
Since $\Phi$ is volume preserving, 
\[
\left(A + \frac{\partial\varphi(\xi)}{\partial \xi}\right)^{-1} = A^{-1} + F_A(\frac{\partial\varphi(\xi)}{\partial \xi})\,,
\]
where the entries of $F_A$ are polynomials in $\partial\varphi/\partial\xi$ of degree $d-1$ without constant terms.
It follows (Moser's Lemma \ref{chain-2}) that $F_A(\partial\varphi/\partial\xi)$ is in $H^s(\R^d_\xi)$. Denote by 
$\psi(\xi)$ the matrix-function 
$F_A(\partial\varphi/\partial\xi)$ viewed as a function of $\xi$. We have  
\[
\frac{\partial\hfil}{\partial x} = \left(A + \frac{\partial\varphi(\xi)}{\partial \xi}\right)^{-1}\;\frac{\partial\hfil}{\partial \xi}\,=\, \left(A^{-1} + F_A(\frac{\partial\varphi(\xi)}{\partial \xi})\right)\;\frac{\partial\hfil}{\partial \xi}\,= \,(A^{-1} + \psi(\xi))\;\frac{\partial\hfil}{\partial \xi}. 
\]
Define $g(x) = A^{-1} x - \xi$. Then 
\[
\frac{\partial g(x)}{\partial x} = A^{-1} - \frac{\partial\xi}{\partial x} = - \psi(\xi)\,.
\]
Note that $\psi(\xi)$ is a bounded function. 
Calculating the higher $x$-derivatives of $g$ using that $\partial/\partial x = (A^{-1} + \psi(\xi))\,\partial/\partial \xi$, 
we see that, for any integer $r$, the derivatives $\partial^{r+1} g(x)/\partial x^{r+1}$ are sums 
of constant multiples of the terms of the form
\be\label{prod-psi}
(\psi)^{\alpha_0}\,(\frac{\partial \psi}{\partial\xi})^{\alpha_1}\,(\frac{\partial^2 \psi}{\partial\xi^2})^{\alpha_2}\cdots (\frac{\partial^m \psi}{\partial\xi^m})^{\alpha_m}\,,
\ee
where the $\alpha$'s are non-negative integers and 
\be
\alpha_1 + 2\,\alpha_2 + \cdots + m\,\alpha_m = r\,.
\ee
Let $r$ be the largest integer less than or equal to $s$. 
The $L^2(\R^d, d\xi)$ norm of each product \eqref{prod-psi} is bounded by the $H^r(\R^d_\xi)$ norm of $\psi$ 
as follows from Moser's argument. Hence, all $x$-derivatives of $g(x)$ of order $r+1$ are in $L^2(dx)$.
Assume now $s = r + \gamma$, where  $\gamma\in(0, 1)$. The $H^s$ norm of a function is equivalent to the $H^r$ norm plus 
the sum of the $H^\gamma$ norms of all partial derivatives of order $r$. This means we have to estimate the $H^\gamma$ norm of the product \eqref{prod-psi}. Note that, in general, if $f(x)$ and ${\tilde f}(\xi)$ are related by the equation ${\tilde f}(\xi) = f(x( \xi))$ and the transformation $x\to \xi$ is bi-Lipschitz (as in our case), the $H^\gamma(\R^d_\xi)$ norm of ${\tilde f}$ and 
the $H^\gamma(\R^d_x)$ norm of $f$ are equivalent if $0\le \gamma \le 1$. 
Thus, we will estimate the $H^\theta(\R^d_\xi)$ norms of the expressions \eqref{prod-psi}. But this is done in a slightly more general case in the proof of Lemma \ref{chain-2}. 
This completes the proof of the regularity of $g(x)$ claimed in the statement of the theorem.  

\end{proof}

\subsection{Cauchy problem in the Lagrange setting}

Recall that the basic equations in Lagrangean form are
\begin{align}
& \frac{\partial^2 x^i}{\partial t^2} -  \frac{\partial^2 x^j}{\partial\xi^a \partial \xi^a} + \frac{\partial p}{\partial x^i} = 0\,,\quad i = 1,\dots, d,\label{Lag-1} \\ 
& \det \frac{\partial x}{\partial\xi}\,=\,1 \label{Lag-2}
\end{align}
Given a constant matrix $A\in SL(d, \R)$, consider the Cauchy problem for this system with the initial conditions 
\be\label{Lag-ic}
x(0, \xi) = A\xi + \varphi^L(\xi)\,,\quad \frac{\partial x(0, \xi)}{\partial t} = v_0^L( \xi)\,.
\ee
We assume that $x(0, \xi)$ satisfies \eqref{Lag-2} and that $v_0^L$ is divergence free in the $x$ variables,
\be\label{div0}
\frac{\partial (v_0^L)^i(\xi)}{\partial x^i} = \frac{\partial (v^L)^i(\xi)}{\partial \xi^a}\;\frac{\partial \xi^a}{\partial x^i} = \hbox{Tr}\,\left(\frac{\partial v_0^L(\xi)}{\partial \xi}\,(A + \frac{\partial \varphi^L}{\partial\xi})^{-1}\right) = 0\,.
\ee 
Define the quantities 
\begin{equation}\label{eq:udef}
u^i_a(0, x) = \frac{\partial (\varphi^L)^i(\xi)}{\partial\xi^a}\,\big|_{\xi = \xi(0, x)}\,. 
\end{equation}
Then by Piola's identities, 
\[
\hbox{div}\,u_a(0, x) = \frac{\partial u^i_a(0, x)}{\partial x^i} = 0\quad \text{ for each $a=1,\dots, d$}\,. 
\]
Define $v(0, x) = v_0^L(\xi)$. 
\bigskip 

We assume that $\varphi^L\in H^{s+1}_\xi$ and $v_0^L \in H^s_\xi$ for some $s > d/2$. By Lemma \ref{difff},  
$u_a(0,\cdot)$ belongs to $H^s(\R^d_x)$.  Also,  $v(0, \cdot)\in H^s(\R^d_x)$.
\bigskip

\noindent{\bf Assumption I.} Assume that the system \eqref{A-u-1}, \eqref{A-u-2}, \eqref{A-u-3}, with the initial conditions $(v(0), u(0))$,  has a unique solution $(v, u)\in C([0, T]\to H^s_\sigma(\R^d_x))$ with $(\partial_t v, \partial_t u)\in C([0, T]\to H^{s-1}_\sigma(\R^d_x))$. Also, assume that for $v$ and $v_a = A_a + u_a$, the integral identities \eqref{uno-7} and \eqref{dos-7} are valid. In addition, assume that
\be\label{noblowup-2}
\int_0^T \|\nabla v(t)\|_\infty  \,dt < \infty\,.
\ee

\begin{theorem}\label{Lag-Cauchy} 
Under Assumption I, the problem \eqref{Lag-1}, \eqref{Lag-2}, \eqref{Lag-ic} has a unique solution 
$x(t, \xi), p^L(t, \xi)$ on the same time interval $[0, T]$, such that 
\[
x - A\xi \in C([0, T]\to H^{s+1}(\R^d_\xi)),\quad  \frac{\partial x}{\partial t}\in C([0, T]\to H^{s}(\R^d_\xi))\,,
\]
and 
\[
p^L\in C([0, T]\to H^{s}(\R^d_\xi))\,.
\]
The family of maps $\xi \mapsto x(t, \xi)$ is a family of volume preserving $H^{s+1}$-diffeomorphisms. The inverse maps, $x\mapsto \xi(t, x)$, have the following regularity:
\[
\xi - A^{-1}x\in C([0, T]\to H^{s+1}(\R^d_x)),\quad  \frac{\partial \xi}{\partial t}\in C([0, T]\to H^{s}(\R^d_x))\,.
\]
If the solutions $(v, u)$ of the Eulerian system \eqref{A-u-1}, \eqref{A-u-2}, \eqref{A-u-3} depend continuously in $C([0, T]\to H^{s}(\R^d_x))$ on the initial conditions, then the solutions 
$x(t,\xi), p^L(t, \xi)$ of \eqref{Lag-1}, \eqref{Lag-2}, \eqref{Lag-ic} depend continuously 
on the initial conditions $\varphi^L\in H^{s+1}$ and $v_0^L\in H^s$. 
\end{theorem}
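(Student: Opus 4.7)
The plan is to lift the Eulerian solution provided by Assumption I to a Lagrangian solution by integrating the flow of $v$, and then to use Lemma \ref{difff} together with composition estimates on Sobolev functions to transfer regularity and continuous dependence back to the Lagrangian picture.

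First, I would define $x(t,\xi)$ as the unique solution of the ODE $dx/dt = v(t,x)$ with $x(0,\xi) = A\xi + \varphi^L(\xi)$. Since $v \in C([0,T]\to H^s)$ with $s > d/2$ and $\int_0^T \|\nabla v\|_\infty\,dt < \infty$, classical Cauchy--Lipschitz theory gives a $C^1$ flow on all of $[0,T]$. Volume preservation, i.e.\ \eqref{Lag-2}, follows from $\hbox{div}_x\,v = 0$ together with $\det(\partial x(0,\xi)/\partial\xi) = 1$. The essential identification $\partial x^i/\partial\xi^a = v^i_a(t, x(t,\xi)) = A^i_a + u^i_a(t, x(t,\xi))$ is exactly what the $q^i_a$ computation in Section~\ref{subsec:prelim} delivers, and it requires only \eqref{noblowup-2}; the compatibility condition \eqref{compat_ab} is propagated by the parallel $q^i_{ab}$ argument in the same subsection.

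Next I would read off the claimed Lagrangian regularity from composition estimates. Applying Lemma~\ref{difff} at each $t$ with displacement $x(t,\xi) - A\xi$ (the determinant condition holds by the previous step) yields a volume preserving $H^{s+1}$-diffeomorphism with inverse of the stated form $\xi(t,x) = A^{-1}x - g(t,x)$, $g(t,\cdot)\in H^{s+1}(\R^d_x)$. A Moser-type composition estimate of the kind used in Lemma~\ref{chain-2} then shows that $u_a(t,x(t,\xi))$ and $v(t,x(t,\xi))$ lie in $H^s(\R^d_\xi)$, with continuity in $t$ inherited from $(v,u)\in C([0,T]\to H^s)$ and the continuity of the flow. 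This upgrades $\partial x^i/\partial\xi^a - A^i_a$ to an $H^s(\R^d_\xi)$ object continuous in $t$, giving $x - A\xi\in C([0,T]\to H^{s+1}(\R^d_\xi))$, while $\partial x/\partial t = v(t,x(t,\xi))\in C([0,T]\to H^s(\R^d_\xi))$. The symmetric claim for $\xi(t,\cdot) - A^{-1}x$ is obtained by applying the same lemma to the inverse map.

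For the pressure, Remark~\ref{pressure-E} recovers $p(t,x)\in C([0,T]\to H^s(\R^d_x))$ from the Riesz-transform formula. Setting $p^L(t,\xi) = p(t,x(t,\xi))$ and composing with the $H^{s+1}$ diffeomorphism gives $p^L\in C([0,T]\to H^s(\R^d_\xi))$. The Lagrange equation \eqref{Lag-1} follows by the chain rule: $\partial_t^2 x(t,\xi) = (\partial_t v + v^k \partial_k v)(t,x(t,\xi))$, and by \eqref{NH-E-1} together with $\partial x^i/\partial\xi^a = v^i_a$ and \eqref{NH-E-2} this equals $\partial^2 x^i/\partial\xi^a\partial\xi^a - \partial p/\partial x^i$. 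Uniqueness at the Lagrangian level follows from uniqueness at the Eulerian level, since any other Lagrangian solution of the stated regularity generates, via $v(t,x) = \partial x/\partial t|_{\xi = \xi(t,x)}$ and $v^i_a = \partial x^i/\partial\xi^a|_{\xi = \xi(t,x)}$, an Eulerian solution with the same initial data.

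Finally, continuous dependence is obtained in two steps. Given Lagrangian data $(\varphi^L_n, v^L_{0,n})\to (\varphi^L, v^L_0)$ in $H^{s+1}\times H^s$, the associated Eulerian initial data (obtained by pulling back through $\xi\mapsto A\xi + \varphi^L_n(\xi)$ and its inverse) converge in $H^s(\R^d_x)$ by the continuity of Sobolev composition. The assumed Eulerian continuous dependence then yields $(v_n,u_n)\to(v,u)$ in $C([0,T]\to H^s)$; a Gronwall argument on the ODEs $dx_n/dt = v_n(t,x_n)$ combined once more with composition estimates propagates this convergence to $x_n - A\xi$, $\partial_t x_n$, and $p^L_n$ in the stated norms. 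The principal technical obstacle throughout is a careful treatment of composition of Sobolev functions with Sobolev diffeomorphisms --- specifically, that $f\mapsto f\circ\Phi$ is bounded $H^s\to H^s$ uniformly for $\Phi$ in a neighborhood of a fixed $H^{s+1}$ diffeomorphism and continuous in $\Phi$ in the appropriate topology --- which is delicate but standard and already implicit in Lemma~\ref{chain-2}.
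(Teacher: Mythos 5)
Your overall strategy --- integrate the flow of $v$, identify $\partial x/\partial\xi$ with $v_a$ via the $q^i_a$ argument, and pull everything back --- is the paper's strategy. But the step where you transfer Sobolev regularity to the Lagrangian picture has a circularity. You ``apply Lemma~\ref{difff} at each $t$ with displacement $x(t,\xi)-A\xi$'', yet Lemma~\ref{difff} takes $\varphi\in H^{s+1}(\R^d_\xi)$ as a \emph{hypothesis}; that $x(t,\cdot)-A\xi\in H^{s+1}(\R^d_\xi)$ for $t>0$ is precisely what must be proved. To get it your way you need that $u_a(t,\cdot)\in H^s(\R^d_x)$ composed with $\xi\mapsto x(t,\xi)$ lands in $H^s(\R^d_\xi)$, i.e.\ boundedness of $f\mapsto f\circ\Phi$ on $H^s$ for a Sobolev diffeomorphism $\Phi$. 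That is not Lemma~\ref{chain-2}, which concerns the outer composition $f(u(x))$ with $f:\R\to\R$ smooth, not a change of the independent variable; and the paper never proves such a composition theorem because it deliberately avoids needing one. The actual argument runs in the opposite direction: the cofactor formula expresses $\partial\xi^a/\partial x^i-(A^{-1})^a_i$ as a polynomial without constant term in the $u^{i}_{a}$, so it lies in $C([0,T]\to H^s(\R^d_x))$ purely by the algebra property of $H^s$ --- no composition at all --- and integrating \eqref{ode-2} gives $\xi-A^{-1}x\in C([0,T]\to H^{s+1}(\R^d_x))$. Only then is Lemma~\ref{difff} invoked, once, applied to the map $x\mapsto\xi$ whose regularity is now known, to produce the regularity of its inverse $\xi\mapsto x$.

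A second, smaller point: your derivation of \eqref{Lag-1} by the pointwise chain rule $\partial_t^2x=(\partial_tv+v^k\partial_kv)(t,x(t,\xi))$ is only formal at the regularity of Assumption I ($s>d/2$ can be close to $d/2$, so $\partial_t v\in H^{s-1}$ and $\partial^2x/\partial\xi^2\in H^{s-1}$ need not be continuous functions). This is why the integral identities \eqref{uno-7}, \eqref{dos-7} are built into Assumption I: the paper verifies the Lagrangian equations by substituting $v^i_a=\partial x^i/\partial\xi^a$, $v=\partial x/\partial t$ into those identities and changing variables $(t,x)\to(t,\xi)$, obtaining \eqref{umo-7-L}, \eqref{dos-7-L} as the precise (weak) sense in which $x$ and $p$ solve \eqref{Lag-1}--\eqref{Lag-2}. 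Your uniqueness and continuous-dependence arguments are in line with the paper's, modulo the same reliance on an unproven composition estimate when converting Lagrangian data to Eulerian data and back.
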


\begin{proof}  Create the Eulerian initial data $(v(0), u(0))$ as described above and solve 
 the problem \eqref{u-11}, \eqref{u-21}. Thanks to  Assumption I there is a solution $(v, u)\in C([0, T]\to H^s_\sigma(\R^d_x))$ with $(\partial_t v, \partial_t u)\in C([0, T]\to H^{s-1}_\sigma(\R^d_x))$. 
Note that for $s > d/2$
\be\label{uv-in-C}
v, u \in C([0, T]\times \R^d)\,;
\ee
in particular, $v$ and $u$ are bounded in the slab $[0, T]\times \R^d_x$. 
Solve the ODE 
\be\label{ode-0}
\dot x = v(t, x)\,,\quad x(0) = A\xi + \varphi^L(\xi)\,.
\ee
Thanks to \eqref{noblowup-2}, there exists a unique solution $x(t, \xi)$ on $[0, T]$ such that, as  functions of $t$ with the values in $\R^d$,  
$x\in C([0, T]\to C^1(\R^d_\xi))$ and $\dot x\in C([0, T]\to C(\R^d_\xi))$.
Clearly, $\det \partial x/\partial \xi = 1$. Thus, $\xi\to x(t, \xi)$ is a local $C^1$-diffeomorphism. 
Since 
\be\label{x-v}
x(t, \xi) =  A\xi + \varphi^L(\xi) + \int_0^t v(s, x(s, \xi))\,ds\,
\ee
and $v$ is bounded, $|x(t, \xi)| \to \infty$ as $|\xi|\to \infty$. 
By Hadamard's global inverse function theorem, $x(t, \cdot): \R^d\to \R^d$ is a global $C^1$ diffeomorphism.  
The derivatives $\partial x/\partial \xi$ are solutions of the following system of ODEs  
\be\label{ode-1}
\frac{\partial\hfill}{\partial t} \frac{\partial x^i}{\partial \xi^a} =\partial_k v^i(t, x)\,\frac{\partial x^k}{\partial \xi^a}
\ee
with 
\[
\frac{\partial x^i(0, \xi)}{\partial \xi^a} = A^i_a + \frac{\partial(\varphi^L)^i(\xi)}{\partial\xi^a}\,.
\]
It follows that 
\be\label{u-infty}
|\frac{\partial x(t, \xi)}{\partial \xi}|\le C\,\exp \int_0^t \|\nabla v(\tau)\|_\infty\,d\tau
\ee
and
\be\label{x-x}
|x(t, \xi) - x(t, \eta)| \le C\,|\xi - \eta|\,\exp \int_0^t \|\nabla v(\tau)\|_\infty\,d\tau\,.
\ee
The inverse map, $\xi(t, x)$ satisfies the equation 
\be\label{ode-2}
\frac{\partial \xi(t, x)}{\partial t} = - \frac{\partial \xi(t, x)}{\partial x^i}\,v^i(t, x)\,.
\ee
Also 
\[
\frac{\partial\hfill}{\partial t}\frac{\partial \xi^a}{\partial x^i} = - \frac{\partial \xi^a}{\partial x^j}\;\frac{\partial^2 x^j}{\partial t\partial \xi^b}\;\frac{\partial \xi^b}{\partial x^i}
\]
or
\be\label{eq-xi}
\frac{\partial\hfill}{\partial t}\frac{\partial \xi^a}{\partial x^i} = - \frac{\partial \xi^a}{\partial x^j}\;\frac{\partial v^j}{\partial x^i}
\ee
In particular,  
\[
|\frac{\partial \xi^a(t, x)}{\partial x^k}| \le C\,\exp \int_0^t \|\nabla v(\tau)\|_\infty\,d\tau\,.
\]
Then
\be\label{xi-xi}
|\xi(t, x) - \xi(t, y)| \le C\,|x - y|\,\exp \int_0^t \|\nabla v(\tau)\|_\infty\,d\tau\,.
\ee
Thanks to the assumption \eqref{noblowup-2}, the transformation $\xi\to x(t, \xi)$ is bi-lipschitz uniformly on any fixed finite time interval. 
As a consequence, the Sobolev $H^1$ norms in Eulerian and Lagrangian coordinates are equivalent: 
\[
\|f(t,\cdot)\|_{H^1(\R^d_x)} \simeq \|\tilde f(t,\cdot)\|_{H^1(\R^d_\xi)}\,,
\]
where $\tilde f(t, \xi) = f(t, x(t, \xi))$. 

As seen from \eqref{u-21} and \eqref{dos-7}, the functions $v^i_a(t, x) = A^i_a + u^i_a(t, x)$ satisfy the equation
\[
\partial_t v_a^i + v^k\,\partial_k v_a^i = v^k_a\,\partial_k v^i\,
\]
with the initial condition $v^i_a(0, x) = A^i_a + u^i_a(0, x)$. 
In the Lagrangian coordinates,
\[
\frac{\partial v^i_a}{\partial t} = \partial_k v^i\,v^k_a\,.
\]
This means the matrix $v^i_a$ is the solution of the same system \eqref{ode-1} as $\partial x^i/\partial\xi^a$ and with the same initial condition. This proves that 
\be\label{dx/dxi}
\frac{\partial x^i(t, \xi)}{\partial \xi^a} = v^i_a(t, x(t, \xi))\,.
\ee
Now go to the integral identities \eqref{uno-7} and \eqref{dos-7}. 
Substitute $v^i_a = \partial x^i/\partial\xi^a$ and $v = \partial x/ \partial t$  and change variables to 
$(t, \xi)$. This results in 
\be\label{umo-7-L}
\int_0^T \int - \frac{\partial x^i}{\partial t}\,\frac{\partial \underline{\eta}^i}{\partial t} + 
\frac{\partial x^i}{\partial \xi^a}\,\frac{\partial \underline{\eta}^i}{\partial \xi^a} - p \,\frac{\partial \xi^a}{\partial x^i}\,\frac{\partial \underline{\eta}^i}{\partial \xi^a}\;d\xi dt = 
\int v^L_0\,\underline{\eta}(0)\,d\xi
\ee
and, for all $a = 1,\dots, d$,
\be\label{dos-7-L}
\int_0^T \int - \frac{\partial x^i}{\partial \xi^a}\,\frac{\partial \underline{\eta}^i}{\partial t} + 
\frac{\partial x^i}{\partial t}\,\frac{\partial \underline{\eta}^i}{\partial \xi^a} \;d\xi dt = 
\int \left(A^i_a + \frac{\partial \varphi^i}{\partial\xi^a}\right)\,\underline{\eta}(0)\,d\xi
\ee
for any smooth $\underline{\eta}$ with compact support in $(- T, T)\times \R^d$. In this sense 
$x$ and $p$ solve equations \eqref{Lag-1} and  \eqref{Lag-2} with the appropriate initial conditions. 

\bigskip

 This solution is unique in the following sense: any family of diffeomorphisms $x(t, \xi)$ satisfying \eqref{Lag-ic} and such that the quantities  
$\partial x^i/\partial t$ and $\partial x^i/\partial \xi^a - A^i_a$, when expressed as functions of $t$ and $x$, belong to the space 
$C([0, T]\to H^s_\sigma(\R^d_x))$, must coincide with the solution obtained by means of Theorem \ref{Thm-1}  (simply by the uniqueness of solutions in the Euler setting). 

It is easy to show that the functions $\xi(t, x) - A^{-1}x$ viewed as functions of $t$ and $x$ belong to the space 
$C([0, T]\to H^{s+1}(\R^d_x))$. Indeed, 
\[
\begin{aligned}
\frac{\partial \xi^a}{\partial x^i} = \left[\left(\frac{\partial x}{\partial\xi}\right)^{-1}\right]^a_i = \frac{1}{(d-1)!}\,\epsilon_{i i_2\dots i_d}\,\epsilon^{a a_2\dots a_d} v^{i_2}_{a_2} \dots v^{i_d}_{a_d}\,. 
\end{aligned}
\]
Hence,
\[
\frac{\partial \xi^a}{\partial x^i} - (A^{-1})^a_i = \frac{1}{(d-1)!}\,\epsilon_{i i_2\dots i_d}\,\epsilon^{a a_2\dots a_d} \sum_B B^{i_2}_{a_2} \dots B^{i_d}_{a_d}\,,
\]
where the summation is over all matrices $B$ such that each $B^{i_k}_{a_k}$ is either $A^{i_k}_{a_k}$ or $u^{i_k}_{a_k}$, but not all are $A^{i_k}_{a_k}$. 
Hence $\frac{\partial \xi^a}{\partial x^i} - (A^{-1})^a_i \in C([0, T]\to H^{s}(\R^d_x))$ ($H^s$ is an algebra). 
Also, (see \eqref{ode-2})
\[
\xi^a(t, x) - (A^{-1} x)^a = - (\varphi^L)^a(\xi(0,x)) + \int_0^t - v^i(\tau, x)\,\left[\frac{\partial \xi^a(\tau, x)}{\partial x^i} - (A^{-1})^a_i\right] - \left(A^{-1}\right)^a_i\,v^i(\tau, x)\,d\tau
\]
and it is clear that this function is in $C([0, T]\to L^2)$. It follows that $\xi(t, x) = A^{-1}x + h(t, x)$ for some 
$h(t,\cdot)\in H^{s+1}(\R^d_x)$ depending continuously on $t$. By Lemma~\ref{difff}, the inverse map, 
$x = A\xi - A h^L(t, \xi)$ is such that $h^L(t)\in H^{s+1}(\R^d_\xi)$. 

\end{proof} 

\section{Intermezzo} \label{sec:intermezzo}

In section \ref{sec:below}
we shall transition to lower regularity solutions of equations \eqref{u-11}, \eqref{u-21}. 
Given the initial conditions $V(0) = (v(0), u(0))\in H^s_\sigma$ with $s \le \frac{d}{2} + 1$, we 
will mollify them to get $V^\epsilon(0) = \rho_\epsilon [V(0)]$, and use Theorem~\ref{Thm-1} to 
obtain the corresponding solution $V^\epsilon(t)$ on some interval $[0, T_\epsilon]$. We would like 
to be able to pass to the limit as $\epsilon\to 0$ to obtain $V(t)$, and to be sure that $T_\epsilon$ 
can be bounded from below by some $T > 0$. We'll have to modify the corresponding arguments in 
section \ref{sec:Part1}.
When $s \le \frac{d}{2} + 1$, the norm $\|\nabla V(t)\|_\infty$ 
is no longer controlled by $\|V(t)\|_{H^s}$. However, we still need to show that the integral 
$\int_0^t\|\nabla V^\epsilon(t^\prime)\|_\infty\,dt^\prime$ is finite over some time interval (independent of $\epsilon$). In addition, we have to deal with $\|\nabla \tV(t)\|_\infty$ in the proof of 
continuous dependence, and we cannot use there the argument we had for $\|\tV(t)\|_\infty$. 
Thus, we need new tools. The first observation is that for any sufficiently smooth divergence-free 
vectorfield $v$ with $\curl v = \omega$, we have inequalities of the form
\be\label{gn-1}
\|\nabla v\|_\infty \lesssim \|v\|_2^{\gamma_1}\;\|\omega\,\big|\;{\dot B}^r_{p,p}\|^{1 - \gamma_1}
\ee
and
\be\label{gn-2}
\| v\|_\infty \lesssim \|v\|_2^{\gamma_2}\;\|\omega\,\big|\;{\dot B}^r_{p,p}\|^{1 - \gamma_2}\,,
\ee
each with its own allowed range for parameters $r>0$ and $p$, and each with the powers $\gamma_1$ and $\gamma_2$ expressed in terms of $r$, $p$, and $d$. The space ${\dot B}^r_{p,p}(\R^d)$ 
is the  homogeneous Besov space.  In dimension $d=2$ we use  $p=\infty$ in which case ${\dot B}^r_{\infty,\infty}(\R^2)$ is the (homogeneous) H\"older space ${\dot C}^r(\R^2)$. When $d > 2$, we must have  
$1\le p<\infty$, and then ${\dot B}^r_{p,p}(\R^d)$  is the (homogeneous) Sobolev-Slobodetsky space ${\dot W}^{r,p}$. The precise form of inequalities \eqref{gn-1} and \eqref{gn-2} is given in Lemma \ref{GN-E-2d} and Lemma \ref{GN-E-3} 
below, and we prove a more general result, Lemma~\ref{HA}, in Appendix \ref{sec:riesz-est}.  

Since the $L^2$ norm of velocities is bounded from the basic energy conservation, we can use \eqref{gn-1}  to obtain 
\[
\int_0^T\|\nabla v(t)\|_\infty\,dt\lesssim \int_0^T \|\omega(t)\,\big|\;{\dot B}^r_{p,p}\|^{1 - \gamma_1}\,dt\,
\] 
and show that the integral on the right is \textit{a priori} bounded. In fact, we will do better. We have room to  work with 
\be\label{v-infty-m}
\int_0^T\|\nabla v(t)\|_\infty^m\,dt\lesssim \int_0^T \|\omega(t)\,\big|\;{\dot B}^r_{p,p}\|^{m(1 - \gamma_1)}\,dt\,
\ee
for some $m > 1$. To prove that the integral on the right is bounded we should look 
 at the equations satisfied by vorticities.

\subsection{Derivation of the equations for vorticities}\label{sec:vort}

In the case of the general dimension $d\ge 2$, define the vorticities $\omega^{mn} = \partial_m v^n - \partial_n v^m$ and $\omega_a^{mn} = \partial_m v_a^n - \partial_n v_a^m = \partial_m u_a^n - \partial_n u_a^m$. Equations \eqref{u-1} and \eqref{u-2} imply the following equations for the vorticities: 
\be\label{om-1}
\partial_t \omega^{mn} + v^j\,\partial_j \omega^{mn} - v^j_a\,\partial_j \omega_a^{mn} = f^{mn}
\ee
and 
\be\label{om-2} 
\partial_t \omega^{mn}_b + v^j \partial_j \omega_b^{mn} - v_b^j\, \partial_j \omega^{mn} = f_b^{mn}\,,
\ee
where
\be\label{Fom-1}
f^{mn} = - \omega^{mj}\,\partial_j v^n + \omega^{nj}\,\partial_j v^m + \omega_a^{mj}\,\partial_j v_a^n - \omega_a^{nj}\,\partial_j v_a^m\,,
\ee
and
\be\label{Fom-2}
f_b^{mn} = - \left[\partial_m v^j \,\partial_j v^n_b - \partial_n v^j \,\partial_j v^m_b\right] + \left[\partial_m v_b^j \,\partial_j v^n - \partial_n v_b^j \,\partial_j v^m\right]\,.
\ee
We combine $f^{mn}$ and  $f^{mn}_b$ into an array, $F = (F^{mn}) = (f^{mn}, f_b^{mn})$. 

In terms of the Fourier transform, ${\hat \omega}^{mn} = i\,\left(\kappa^m {\hat v}^n - \kappa^n {\hat v}^m\right)$ and the zero divergence condition provides $\kappa^n {\hat v}^n = 0$. Thus,  
\[
{\hat v}^n = - i \;\frac{\kappa^m}{|\kappa|^2}\,{\hat \omega}^{mn}\,.
\]
In our notation,
\be\label{v-om-Riesz}
v^n = - i D^{-1}{\cal R}_m \,\omega^{mn}\quad\text{and}\quad \partial_k v^n = {\cal R}_k\,{\cal R}_m \,\omega^{mn}\,, 
\ee
where ${\cal R}_m = {\cal F}^{-1} \kappa^m/|\kappa| {\cal F}$ are the Riesz transforms. 
Thus, the right sides, $f^{mn}$ and $f_b^{mn}$, of equations \eqref{om-1} and \eqref{om-2}, are sums of products of Riesz transforms of vorticities. Here and in what follows $\Omega$ is a combined notation for $\omega$ and all $\omega_a$. We will omit the superscripts ${}^{mn}$ where possible. 

\subsection{Vorticities in the Lagrangean setting}\label{sec:vort-lag}

In Lagrangean coordinates equations \eqref{om-1} and \eqref{om-2} simplify as follows: \be\label{Lom-1-late}
\frac{\partial { \omega}^{L}}{\partial t} - \frac{\partial { \omega}_a^{L}}{\partial \xi^a} = { f}^{L}[\Omega]\,,
\ee
and 
\be\label{Lom-2-late}
\frac{\partial {\omega}_b^{L}}{\partial t} - \frac{\partial { \omega}^{L}}{\partial \xi^b} = { f}_b^{L}[\Omega]\,
\ee
If the right sides are known functions, this is a linear  system  with constant coefficients for the vector $[{ \omega}^{L}, { \omega}_1^{L},\dots, { \omega}_d^{L}]^T$. Apply the Fourier transform ${\cal F}_{\xi\to k}$ to \eqref{Lom-1-late} and \eqref{Lom-2-late}. We emphasize that here the Fourier transform is applied in Lagrangian coordinates.   The result is a system of ODEs:  
\be\label{FouLom}
\frac{d\hfill}{dt} {\hat\Omega}^{L} - i\,M\,{\hat \Omega}^{L} = {\hat F}^{L}\,,
\ee
where 
\[
{\hat\Omega}^{L} = [{\hat \omega}^{L}, {\hat \omega}_1^{L},\dots, {\hat \omega}_d^{L}]^T\;,
\]
with ${\hat \omega}^L(t, k) = {\cal F}_{\xi\to k} {\omega}^L(t, \xi)$, and $M = M(k)$ is a symmetric $(d+1)\times (d+1)$ matrix whose first row is 
\[
[0, k^1, \dots, k^d]\,,
\]
whose first column is the transpose of the first row, and all other entries are zeros. The eigenvalues of $M$ are $\pm |k|$ and the multiplicity $(d-1)$ eigenvalue $0$. The corresponding eigenvectors are 
${\bf e}_1 = [1, {\hat k}^1,\dots, {\hat k}^d]^T$, ${\bf e}_2 = [1, - {\hat k}^1,\dots, -{\hat k}^d]^T$, for $|k|$, $-|k|$, respectively, and for the zero eigenvalues, ${\bf e}_3 = [0, {\hat k}^2, - {\hat k}^1, 0,\dots, 0]^T$, ${\bf e}_4 = [0, {\hat k}^3, 0, - {\hat k}^1, 0,\dots, 0]^T$, \dots, 
${\bf e}_{d+1} = [0, 0, \dots, {\hat k}^d, -{\hat k}^{d-1}]^T$, where ${\hat k}^j = k^j/|k|$.
  Introduce the quantities 
\be\label{pis}
{\hat \pi}_+^L = {\hat \omega}^L + {\hat k}^a {\hat \omega}_a^L,\quad {\hat \pi}_-^L = {\hat \omega}^L - {\hat k}^a {\hat \omega}_a^L,\quad
{\hat \pi}_{ab}^L = {\hat k}^a {\hat \omega}_b^L - {\hat k}^b {\hat \omega}_a^L\,.
\ee
Then
\be\label{om-pi}
\begin{aligned}
\omega^L & = {\cal F}^{-1}\frac12 ({\hat \pi}_+^L + {\hat \pi}_-^L)\,,\quad  
 \omega_a^L  = {\cal F}^{-1}\left(\frac12\,({\hat\pi}_+^L - {\hat\pi}_-^L)\,{\hat k}^a - {\hat k}^b\,{\hat\pi}_{ab}^L\right)\,. 
\end{aligned}
\ee
When written in terms of ${\hat \pi}^L$, equation \eqref{FouLom} split into three groups:
\be\label{FouPi}
\begin{aligned}
& \frac{d{\hat \pi}_+^L}{dt} = i |k|\,\pi_+^L + {\hat f}^L + {\hat k}^a {\hat f}_a^L \\ 
& \frac{d{\hat \pi}_-^L}{dt} = - i |k|\,\pi_-^L + {\hat f}^L - {\hat k}^a {\hat f}_a^L \\ 
& \frac{d{\hat \pi}_{ab}^L}{dt} = {\hat k}^a {\hat f}_b^L - {\hat k}^b {\hat f}_a^L
\end{aligned}
\ee 
Denote
\be\label{Fs}
{\hat F}_{\pm}^L = {\hat f}^L \pm {\hat k}^a {\hat f}_a^L\,,\quad {\hat F}_{ab}^L = {\hat k}^a {\hat f}_b^L - {\hat k}^b {\hat f}_a^L\,.
\ee
With this notation equations \eqref{FouPi} read
\be\label{FouPi-1}
\begin{aligned}
& \frac{d{\hat \pi}_+^L}{dt} = i |k|\,{\hat \pi}_+^L + {\hat F}_+^L \\ 
& \frac{d{\hat \pi}_-^L}{dt} = - i |k|\,{\hat \pi}_-^L + {\hat F}_-^L \\ 
& \frac{d{\hat \pi}_{ab}^L}{dt} =  {\hat F}_{ab}^L 
\end{aligned}
\ee
The solution to \eqref{FouPi-1} can be written as follows
\be\label{SolPi-1}
\begin{aligned}
& {\hat \pi}_{\pm}^L(t) = e^{i t |k|} \,{\hat \pi}_{\pm}^L(0) + \int_0^t e^{i(t - t^\prime) |k|}\,{\hat F}_{\pm}^L(t^\prime)\,dt^\prime \\ 
& {\hat \pi}_{ab}^L(t) = {\hat \pi}_{ab}^L(0) + \int_0^t {\hat F}_{ab}^L(t^\prime)\,dt^\prime\,.
\end{aligned}
\ee 
\bigskip

We will use the notation $\pi_{\pm}^L$ and $\pi_{ab}^L$ for the inverse Fourier transforms of the hatted quantities. 
As seen from \eqref{FouPi-1}, the quantities $\pi_{\pm}^L$ satisfy the first order hyperbolic equations of the form  
\[
\frac{\partial \pi_{\pm}}{\partial t} = \pm\,i\,\sqrt{-\Delta}\,\pi_{\pm} + F_{\pm}\,.
\] 
There is a useful version of the Strichartz inequality from \cite[Theorem 2]{Kap-1}: if $w$ is a solution of $\frac{\partial w}{\partial t} + i \sqrt{-\Delta} w = f$, then  
\be\label{Str-0}
\left(\int_0^T \|w(t)\;\big|\;{\dot B}^r_{p, p}\|^{q}\,dt\right)^{\frac{1}{q}} \le 
C\,\left(\|w(0)\;\big|\; {\dot H}^\theta\| + \int_0^T \|f(t)\;\big|\; {\dot H}^\theta\|\,dt\right)
\ee
with the parameters $r, p, q$, and $\theta$ in certain admissible ranges depending on the dimension $d \ge 2$. The constant $C$ depends on $d, r, p$, and $q$, but not on $T$ and not on $w(0)$ and $f$ (see Lemma~\ref{Strichartz} in Appendix~\ref{sec:stricharts-app} for a precise statement).  
It is important that we use the homogeneous spaces both in the inequalities \eqref{gn-1}, \eqref{gn-2} 
and in \eqref{Str-0}: the ubiquitous Riesz transforms (see e.g. \eqref{v-om-Riesz}, \eqref{pis}) are bounded in those homogeneous spaces. 
\bigskip

\noindent Now, the estimate \eqref{Str-0}, when applied to the quantities $\pi$,  will lead to the estimates 
for the quantities  
\[
\left(\int_0^T \|\Omega^L(t)\;\big|\;{\dot B}^r_{p, p}\|^{q}\,dt\right)^{\frac{1}{q}}
\]
for vorticities $\Omega^L$ in the Lagrangean coordinates. We would like to translate those as bounds 
for the right hand side of \eqref{v-infty-m}. But the norm of vorticites in \eqref{v-infty-m} is in 
the Eulerian coordinates. Thus, we need to establish correspondences (inequalities) between the  
like norms in the Lagrange and the Euler pictures. This we do in Appendix~\ref{sec:Eul-Lag-App}.
Not surprisingly, the $L^\infty$ norms of the Jacobian matrices $\partial x/\partial \xi$ and $\partial\xi/\partial x$ 
show up as factors. They are expressed in terms of $\|u(t)\|_\infty$, which in turn is bounded via \eqref{gn-2}. The $f(t)$ in \eqref{Str-0} is $F^L_+$ or $F^L_-$ from equations \eqref{FouPi-1}. After discarding the Riesz transforms and transitioning to the Euler coordinates (more of the factors $\|u(t)\|_\infty$), we end up working with the quadratic expressions \eqref{Fom-1} and \eqref{Fom-2}.  
Their ${\dot H}^\theta$ norms are essentially $\|\nabla V\|_\infty\cdot \|\Omega^E(t)\,\big|\;{\dot H}^\theta\|$, and we use \eqref{gn-1} again with appropriate values of the parameters.

\section{%
Below the critical regularity: $s \le \frac{d}{2} + 1$}\label{sec:below}

We shall use the following notation:  
\begin{subequations}\label{eq:notationOmF}
\begin{align}
u ={}& (u^1_1,\dots, u^d_1,\dots,u^1_d,\dots, u^d_d), \\ 
V ={}& (v^1,\dots,v^d, u^1_1,\dots, u^d_1,\dots,u^1_d,\dots, u^d_d), \\ 
\Omega ={}& ( \omega^{mn}, \omega^{mn}_a ), \\
F ={}& (f^{mn}, f^{mn}_a), 
\end{align}
\end{subequations} 
and use superscripts ${}^L$ or ${}^E$ to indicate the corresponding quantities are viewed in the Lagrangean, $(t, \xi)$,  or the Eulerian, $(t, x)$,  coordinates. Note that the $L^p(\R^d, dx) $ and 
$L^p(\R^d, d\xi) $, $1\le p \le\infty$, norms are the same for ${}^L$ and ${}^E$. When the setting is clear 
we omit ${}^L$ and ${}^E$.
As always, $\|\cdot \|$ is the notation for the $L^2$ norm and $(\cdot, \cdot )$ is the corresponding inner product.  

The meaning of $\lesssim$ is standard by now. 
We write $A\simeq B$ when $A\lesssim B$ and $B \lesssim A$. We shall use the symbol 
$\lesssim_{e_0}$ to indicate that the hidden constants in the inequality depend on the initial energy $e_0$.  
When integrating differential inequalities 
of the form $dy(t)/dt \lesssim a(t)\,y(t)$, we write the solution of the latter inequality as 
$
y(t) \le y(0)\ \exp\{ c\int_0^t a(t^\prime)\,dt^\prime  \} .
$
If the constant depends on the initial energy, we write $c(e_0)$.

\subsection{ A priori $L^2$ estimates}\label{sec:omega-L2} 

The basic conservation of energy follows from equations \eqref{u-11}, \eqref{u-21}:
\be\label{Energy}
\int \frac12\,|v(t)|^2 + \frac12\,|u(t)|^2\,dx = \int \frac12\,|v(0)|^2 + \frac12\,|u(0)|^2\,dx\,.
\ee
In what follows, $e_0$ is the square root of the initial energy:
\be\label{e_0}
e_0^2 =  \int \frac12\,|v(0)|^2 + \frac12\,|u(0)|^2\,dx\,.
\ee
Note that since the velocities $V^E$ are divergence free, 
\[
\|\nabla V^E\| \simeq \|\Omega^E\| .
\]
From  
equations for vorticities, we derive an $L^2$ estimate in the Euler coordinates. 
Multiplying \eqref{om-1} by $\omega^{mn}$ and \eqref{om-2} by $\omega^{mn}_b$, summing, and integrating over $\R^d$ yields, using the notation \eqref{eq:notationOmF} 
$$
\frac{1}{2} \frac{d}{dt} \| \Omega(t)\|^2 = (F,\Omega), 
$$
and hence \\\be\label{DE-om-L2}
\frac{d}{dt} \| \Omega(t)\| \lesssim \|F(t)\|, 
\ee
As formulas \eqref{Fom-1} and \eqref{Fom-2} show, we have 
\[
\begin{aligned}
& \|F\| \lesssim \|\nabla_x V^E\|_\infty\;\|\nabla_x V^E\| \lesssim \|\nabla_x V^E\|_\infty\;\|\Omega^E\|
\end{aligned}
\]
Taking this into account, integrate 
 \eqref{DE-om-L2} to obtain 
\be\label{om-L2-10}
\|\Omega(t)^E\| \le \|\Omega(0)^E\|\;\,e^{c\int_0^t \|\nabla V^E(t^\prime)\|_\infty\,dt^\prime}\,.
\ee

\subsection{ A priori ${\dot H}^\theta$ estimates for vorticities}\label{sec:omega-H}
\bigskip

We shall need the following Kato-Ponce commutator estimate: 
\begin{lemma}\label{full-Ka-Po}
Let $v = (v^1,\dots ,v^d)$ be a sufficiently smooth vectorfield in $\R^d$, and let $g$ be a sufficiently smooth scalar function. Then, for any $s > 0$, 
\be\label{KP}
\|J^s(v^k\,\partial_k g) - v^k\;\partial_k J^s g\|_2 \lesssim \|\nabla v\|_\infty\,\|J^s g\|_2 + 
\|J^{s+1} v\|_2\,\|g\|_\infty
\ee
and, for any $\theta > 0$,  
\be\label{hom-KP}
\|D^\theta(v^k\,\partial_k g) - v^k\;\partial_k D^\theta g\|_2 \lesssim \|\nabla v\|_\infty\,\|D^\theta g\|_2 + 
\|D^{\theta+1} v\|_2\,\|g\|_\infty
\ee
\end{lemma}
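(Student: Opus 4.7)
The plan is to derive both \eqref{KP} and \eqref{hom-KP} by Bony's paraproduct decomposition. Note that \eqref{KP} differs only cosmetically from the classical Kato--Ponce commutator estimate
\[
\|[J^s, f]h\|_2 \lesssim \|\nabla f\|_\infty\,\|J^{s-1}h\|_2 + \|J^s f\|_2\,\|h\|_\infty
\]
of \cite{KP}: applied naively with $h = \partial_k g$, the classical bound yields $\|\nabla v\|_\infty\,\|J^s g\|_2 + \|J^s v\|_2\,\|\partial_k g\|_\infty$, which differs from \eqref{KP} only by having one derivative on $g$ rather than on $v$ in the second term. The goal is to shift that derivative from $g$ to $v^k$, and this is exactly what a paraproduct analysis accomplishes cleanly.

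Decompose $v^k\partial_k g = T_{v^k}(\partial_k g) + T_{\partial_k g}(v^k) + R(v^k,\partial_k g)$, where $T_f h = \sum_j S_{j-2}f\cdot\Delta_j h$ is Bony's paraproduct and $R$ is the high--high remainder. Subtracting $v^k\partial_k J^s g$ then gives
\begin{align*}
J^s(v^k\partial_k g) - v^k\partial_k J^s g ={}& \bigl[J^s,\,T_{v^k}\bigr](\partial_k g) \\
& {}+ \bigl(J^s T_{\partial_k g}(v^k) - T_{\partial_k J^s g}(v^k)\bigr) \\
& {}+ \bigl(J^s R(v^k,\partial_k g) - R(v^k,\partial_k J^s g)\bigr).
\end{align*}
The first piece is the paradifferential commutator of $J^s$ with the symbol $v^k$, which by standard symbolic calculus has $L^2$-norm bounded by $\|\nabla v\|_\infty\,\|J^{s-1}\partial_k g\|_2 \lesssim \|\nabla v\|_\infty\,\|J^s g\|_2$. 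In the second piece, each summand $S_{j-2}(\partial_k g)\,\Delta_j v^k$ has Fourier support in a $2^j$-annulus, so $J^s$ acts on it essentially as multiplication by $2^{js}$; Bernstein's inequality $\|S_{j-2}\partial_k g\|_\infty \lesssim 2^j\|g\|_\infty$, combined with almost-orthogonality of the $v^k$-blocks, then yields $\lesssim \|g\|_\infty\,\|J^{s+1} v\|_2$. The high--high remainder is controlled in the same way, the extra factor of $2^j$ coming from $\partial_k$ being absorbed into the high-frequency $v$-block.

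The homogeneous estimate \eqref{hom-KP} follows from the identical argument with $D^\theta$ in place of $J^s$, since every block in the decomposition lives at a strictly positive Littlewood--Paley frequency, where $J^s$ and $D^\theta$ act comparably and no zero-frequency issue arises. The main (and essentially only) delicate step is the paradifferential commutator estimate $\|[J^s, T_{v^k}] h\|_2 \lesssim \|\nabla v\|_\infty\,\|J^{s-1} h\|_2$, which rests on a Taylor expansion of the symbol $(1+|\xi|^2)^{s/2}$ (respectively $|\xi|^\theta$) near each dyadic frequency to produce the one-derivative gain; this is by now a classical bound in paradifferential calculus, and I would cite it rather than re-derive it.
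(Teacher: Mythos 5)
Your paraproduct route is genuinely different from what the paper does: the paper's entire proof of this lemma is a citation of \cite{KP} for \eqref{KP} plus a scaling argument for \eqref{hom-KP}. Since, as you correctly note, \eqref{KP} is not literally the commutator lemma of \cite{KP} (substituting $h=\partial_k g$ there yields $\|J^s v\|_2\|\nabla g\|_\infty$, not $\|J^{s+1}v\|_2\|g\|_\infty$, in the second term), wanting a self-contained Littlewood--Paley proof is reasonable. Your handling of the paradifferential commutator $[J^s,T_{v^k}]\partial_k g$ and of the two low-high terms $J^sT_{\partial_k g}v^k$ and $T_{\partial_k J^s g}v^k$ is correct: those blocks live in dyadic annuli, almost orthogonality gives an $\ell^2$ sum, and $\|S_{j-2}\partial_k J^s g\|_\infty\lesssim 2^{j(1+s)}\|g\|_\infty$ produces exactly $\|J^{s+1}v\|_2\,\|g\|_\infty$.

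The gap is in the sentence ``the high--high remainder is controlled in the same way.'' The term $J^sR(v^k,\partial_k g)$ is indeed fine: its $j$-th block has Fourier support in a ball of radius $\sim 2^j$ and $L^2$ norm $\lesssim 2^j\|g\|_\infty\|\Delta_j v\|_2$, and summing ball-supported pieces into $H^s$ works precisely because $s>0$. Its partner $R(v^k,\partial_k J^s g)$, however, must be summed in $L^2$, i.e.\ at regularity zero, where almost orthogonality fails: the best blockwise bound is $\|\Delta_j v^k\,\widetilde\Delta_j\partial_k J^s g\|_2\lesssim 2^{j(s+1)}\|\Delta_j v\|_2\|g\|_\infty$, which is only square-summable, whereas an $L^2$ bound on a sum of ball-supported pieces requires essentially $\ell^1$ control of the block norms. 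This is the classical borderline failure of $R:\dot H^{s+1}\times \dot B^{-s-1}_{\infty,\infty}\to L^2$, and there is no rescuing cancellation against $J^sR(v^k,\partial_k g)$, because $\langle\xi+\eta\rangle^s-\langle\eta\rangle^s$ is not small on the resonant set $|\xi+\eta|\ll|\eta|\sim|\xi|$. The standard repair is to pull the derivative out front before commuting: write $v^k\partial_k g=\partial_k(v^k g)-(\operatorname{div}v)\,g$, so the commutator equals $\partial_k\bigl([J^s,v^k]g\bigr)-[J^s,\operatorname{div}v]\,g$; the second piece is controlled by the fractional Leibniz rule (Lemma \ref{frac-leibniz}), and in the first piece the troublesome remainder becomes $R(v^k,J^s g)$ estimated in $H^1$ rather than $L^2$ --- the summation now happens at positive regularity, the blocks being $\lesssim 2^{js}\|\Delta_j v\|_2\|g\|_\infty$, and discrete Young's inequality closes the estimate with $\|J^{s+1}v\|_2\|g\|_\infty$. (In every application in the paper $v$ is divergence-free, so the correction term even vanishes.) The same repair is needed in your direct proof of \eqref{hom-KP}; alternatively, once \eqref{KP} is in hand, \eqref{hom-KP} follows by the scaling argument the paper invokes.
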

The proof of \eqref{KP} is in the original paper 
\cite{KP}. 
The homogeneous version of the Kato-Ponce inequality, \eqref{hom-KP}, can be obtained from \eqref{KP} via scaling.

\begin{lemma}\label{Dth-om}
\be\label{Hth-om}
\|\Omega^E(t)\,\big|\; {\dot H}^\theta\| \le \|\Omega^E(0)\,\big|\; {\dot H}^\theta\|\;\exp \left \{ c \int_0^t\left(\|\nabla V(t^\prime)\|_\infty + \|\Omega(t^\prime)\|_\infty\right) \,\,dt^\prime \right \} 
\ee
\end{lemma}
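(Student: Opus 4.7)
The plan is to apply the homogeneous derivative $D^\theta = (-\Delta)^{\theta/2}$ to both vorticity equations \eqref{om-1} and \eqref{om-2}, then perform an $L^2$ energy estimate, exploiting the structure of the transport-like cross couplings and controlling the commutators via Lemma~\ref{full-Ka-Po}. Concretely, I would write
\begin{align*}
\partial_t D^\theta \omega + v^j\partial_j D^\theta\omega - v^j_a\partial_j D^\theta\omega_a ={}& D^\theta f - [D^\theta, v^j\partial_j]\omega + [D^\theta, v^j_a\partial_j]\omega_a, \\
\partial_t D^\theta \omega_b + v^j\partial_j D^\theta\omega_b - v^j_b\partial_j D^\theta\omega ={}& D^\theta f_b - [D^\theta, v^j\partial_j]\omega_b + [D^\theta, v^j_b\partial_j]\omega,
\end{align*}
pair the first equation with $D^\theta\omega$ and the second with $D^\theta\omega_b$ in $L^2$, and sum over $b$ and $(m,n)$.

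The key observation is that the transport terms $v^j\partial_j$ contribute zero after integration by parts, since $\operatorname{div} v = 0$, and the same holds for $v^j_a$ applied to $D^\theta\omega_a\cdot D^\theta\omega_b$ using $\operatorname{div} v_a = 0$. Moreover, the \textit{cross} coupling terms cancel against each other: integrating by parts using $\operatorname{div} v_b = 0$,
$$
\int v^j_b\,\partial_j D^\theta \omega \cdot D^\theta\omega_b\,dx = -\int D^\theta\omega \cdot v^j_b\,\partial_j D^\theta\omega_b\,dx,
$$
so after summing the two equations the principal linear couplings $-v^j_a\partial_j D^\theta\omega_a$ and $-v^j_b\partial_j D^\theta\omega$ drop out. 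This leaves
$$
\tfrac12\,\frac{d}{dt}\|\Omega^E(t)\,|\,\dot H^\theta\|^2 \lesssim \|D^\theta F\|\cdot\|\Omega^E\,|\,\dot H^\theta\| + \mathcal{C},
$$
where $\mathcal{C}$ collects the four commutator contributions.

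For the commutators, I would apply the homogeneous Kato--Ponce estimate \eqref{hom-KP} with $v$ or $v_a$ in the role of the velocity field and $\omega$ or $\omega_a$ in the role of $g$, obtaining bounds of the shape
$$
\|[D^\theta, v^j\partial_j]\omega\| \lesssim \|\nabla v\|_\infty\|D^\theta\omega\| + \|D^{\theta+1}v\|\,\|\omega\|_\infty,
$$
and similarly with $v\leftrightarrow v_a$, $\omega\leftrightarrow\omega_a$. Since velocities are divergence-free, Riesz-transform boundedness in homogeneous Sobolev spaces gives $\|D^{\theta+1}v\| \simeq \|D^\theta\omega\|$ and $\|D^{\theta+1}v_a\|\simeq \|D^\theta\omega_a\|$, so each commutator is dominated by $(\|\nabla V\|_\infty + \|\Omega\|_\infty)\,\|\Omega^E\,|\,\dot H^\theta\|$. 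The forcing term $F$ is bilinear in $\nabla V$ and $\Omega$ (see \eqref{Fom-1}, \eqref{Fom-2}); the fractional Leibniz rule together with $\|D^\theta\nabla V\|\simeq\|\Omega^E\,|\,\dot H^\theta\|$ yields $\|D^\theta F\|\lesssim \|\nabla V\|_\infty\,\|\Omega^E\,|\,\dot H^\theta\|$.

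Combining these bounds produces the differential inequality
$$
\frac{d}{dt}\|\Omega^E(t)\,|\,\dot H^\theta\| \lesssim \bigl(\|\nabla V(t)\|_\infty + \|\Omega(t)\|_\infty\bigr)\,\|\Omega^E(t)\,|\,\dot H^\theta\|,
$$
after which Gronwall's lemma delivers \eqref{Hth-om}. The main obstacle is the cancellation of the cross terms between the two coupled equations: without it, one would be stuck with a genuinely hyperbolic system at the principal level and would need to work with a symmetrizer, whereas here the skew-symmetry after integration by parts (and divergence-free nature of $v_a$) is precisely what turns the pair of equations into a transport-type system for the purposes of the $\dot H^\theta$ estimate. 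A secondary technical point is ensuring that the homogeneous Kato--Ponce inequality is applicable at the regularity under consideration, which is why the factor $\|\Omega\|_\infty$ (controlling $\|g\|_\infty$) appears alongside $\|\nabla V\|_\infty$ in the exponent.
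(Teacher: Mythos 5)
Your proof is correct and follows essentially the same route as the paper's: apply $D^\theta$ to the vorticity system, cancel the principal transport and cross-coupling terms by integration by parts using the divergence-free conditions, bound the commutators with the homogeneous Kato--Ponce estimate together with $\|D^{\theta+1}V\|\simeq\|\Omega\|_{\dot H^\theta}$, bound $D^\theta F$ by the fractional Leibniz rule, and conclude by Gronwall. The only difference is that you spell out the cancellation of the cross terms explicitly, which the paper leaves implicit (it is the same cancellation already used in its $L^2$ estimate for $\Omega$).
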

\begin{proof}
Act with $D^\theta$, $\theta > 0$,  on equations \eqref{om-1} and \eqref{om-2} and write the result in the form
\be\label{D-th-om}
\begin{aligned}
& \partial_t D^\theta\omega^{mn} + v^j\,\partial_j D^\theta\omega^{mn} - v^j_a\,\partial_j D^\theta\omega_a^{mn} = D^\theta f^{mn}[\Omega] + \Theta_1 \\ 
& \partial_t D^\theta\omega^{mn}_b + v^j \partial_j D^\theta\omega_b^{mn} - v_b^j\, \partial_j D^\theta\omega^{mn} = D^\theta f_b^{mn}[\Omega]  + \Theta_2\,,
\end{aligned}
\ee
where $\Theta_{1,2}$ are the commutator error terms:
\be
\begin{aligned}
& \Theta_1 = - \left[D^\theta\left(v^j\,\partial_j \omega^{mn}\right)  - v^j\,\partial_j D^\theta\omega^{mn}\right] + \left[D^\theta\left(v^j_a\,\partial_j \omega_a^{mn}\right) - v^j_a\,\partial_j D^\theta\omega_a^{mn}\right] \\ 
& \Theta_2 = - \left[D^\theta\left(v^j\,\partial_j \omega_b^{mn}\right)  - v^j\,\partial_j D^\theta\omega_b^{mn}\right] + \left[D^\theta\left(v^j_b\,\partial_j \omega^{mn}\right) - v^j_b\,\partial_j D^\theta\omega^{mn}\right]
\end{aligned}
\ee
By the homogeneous Kato-Ponce commutator estimate \eqref{hom-KP},  
\[
\|\Theta_1\|, \,\|\Theta_2\| \lesssim \|\nabla V\|_\infty\,\|\Omega^E\|_{{\dot H}^\theta} + \|V^E\,\big|\;{\dot H}^{\theta + 1}\|\cdot \|\Omega^E\|_\infty
\]
Note that 
\[
\|V^E\,\big|\;{\dot H}^{\theta + 1}\| \simeq \|\Omega^E\|_{{\dot H}^\theta}\,.
\]
Then 
\be
\|\Theta_1\|, \,\|\Theta_2\| \lesssim \left(\|\nabla V\|_\infty + \|\Omega\|_\infty\right)\,\|\Omega^E\|_{{\dot H}^\theta}
\ee
Applying the fractional product rule, cf. Lemma \ref{frac-leibniz}
to the terms $D^\theta f, D^\theta f_a$ and, taking into account \eqref{v-om-Riesz}, we obtain 
\be\label{F-TH}
\|D^\theta F\| \lesssim \|\nabla V\|_\infty\,\|\Omega^E\|_{{\dot H}^\theta}\,.
\ee
Thus, equations \eqref{D-th-om} lead to this estimate
\be\label{OM-TH}
\frac{d\hfill}{dt} \|\Omega^E(t)\|_{{\dot H}^\theta}^2 \lesssim \left(\|\nabla V\|_\infty + \|\Omega\|_\infty\right)\,\,\|\Omega^E(t)\|_{{\dot H}^\theta}^2\,,
\ee
and \eqref{Hth-om} follows.
\end{proof}

\subsection{Tropical ``norms" and homogeneous norms}

It shall be convenient to use the following notation:
\[
\langle g\rangle_p = \max ( 1, \|g\|_p)\,.
\]
These are not norms, but they are ``tropical" norms in the following sense.  
For a real number $a$, denote $\langle a \rangle = \max (1, |a|)$. 
Let $a$, $b$, and $c$ be real numbers. Then 
\begin{enumerate}
\item If $0 \le a \le b$, then $\langle a\rangle \le \langle b \rangle$ .
\item If $|a|\le 1 + |b|$, then $\langle a \rangle \le 2\,\langle b \rangle$. Also, $1 + |b|\le 2\,\langle b\rangle$.
\item If $\lambda \ge 0$, then $\langle \lambda\,a \rangle \le \langle\lambda\rangle\,\langle a \rangle$. 
\item If $\lambda \ge 0$, then $\langle |a|^\lambda \rangle = \langle a \rangle^\lambda$ .
\item $\langle\langle a\rangle\rangle = \langle a\rangle$\ .
\item If $\langle a \rangle \le \,\langle b \rangle$ and $\langle b \rangle \le \,\langle c \rangle$, then 
$\langle a \rangle \le \,\langle c \rangle$.
\item The triangle inequality: $\langle a + b \rangle \le \,\langle a \rangle + \langle b \rangle$. 
\item $\int_0^t \langle f(t^\prime) \rangle\,dt^\prime \le t + \int_0^t |f(t^\prime)|\,dt^\prime$ .
\end{enumerate}
The tropical norms absorb additive constants, which simplifies formulas. 

Recall that 
\[
\frac{\partial x^i}{\partial \xi^a} = v^i_a = A^i_a + u^i_a
\]
and, therefore,  
\be\label{vbul-infty}
\|v_a\|_\infty \lesssim \langle u_a\rangle_\infty\,,\quad \|u_a\|_\infty \lesssim \langle v_a\rangle_\infty\,,\quad \langle v_a\rangle_\infty \simeq \langle u_a\rangle_\infty\,.
\ee
The hidden constants depend on the matrix $A$.
\medskip

Throughout, we shall be using the following abbreviation for the homogeneous Sobolev, Besov, and H\"older norms:
\[
\begin{aligned}
& \{g\}_r = \|g\,\big|\; {\dot B}^r_{\infty, \infty}(\R^d)\|\,, \quad\{g\}_{r, p} = \|g\,\big|\; {\dot B}^r_{p,p}(\R^d)\|\,,\quad\quad  
[g]_\theta = \|g\,\big|\; {\dot H}^\theta(\R^d)\|\,.
\end{aligned}
\]

\subsection{Technical inequalities when $d = 2$}

Throughout Section~\ref{sec:below} the regularity parameter $s$ is less than $\frac{d}{2} + 1$ and we are trying to make it as small as possible. When $d = 2$, we use auxiliary parameters $\theta $ and $r$ such that 
\be\label{theta-r-2d}
s = 1 + \theta\,,\quad \theta = r + \frac34\,,
\ee 
and
\be\label{theta-r-2d><}
0 \le \theta \le 1,\quad 0 < r < 1\,,
\ee
and the goal is to show that $r$ can be arbitrarily small.
\medskip

There are two technical lemmas, Lemma~\ref{GN-E-2d} and Lemma~\ref{Strest-2d}, that we rely upon. The first one offers the Gagliardo-Nirenberg type 
multiplicative inequalities \textit{\`a la} Y.~Meyer, cf. e.g. \cite{MR1905314}.

\begin{lemma}
\label{GN-E-2d}
Let $v$ be any vectorfield in $\R^2$ with $\hbox{\rm div}\;v = 0$, and let $\omega = \curl v$.
Assuming the parameters $r$ and $\theta$ satisfy \eqref{theta-r-2d><}, we have 
\be\label{x-1}
\|\nabla v\|_\infty\,,\;\|\omega\|_\infty \lesssim \|v\|_2^{r/(r+2)}\;\|\omega\,\big| {\dot C}^r\|^{2/(r+2)}\,,
\ee
and
\be\label{x-2}
\|v\|_\infty \lesssim \|v\|_2^{(r+1)/(r+2)}\;\|\omega\,\big| {\dot C}^r\|^{1/(r+2)}\,,
\ee
and
\be\label{x-3}
\|v\|_\infty \lesssim \|v\|_2^{\theta/(\theta+1)}\;\|\omega\,\big| {\dot H}^\theta\|^{1/(\theta+1)}\,,
\ee
when $v\in L^2(\R^2)$ and $\omega\in {\dot C}^r(\R^2)$ or $\omega\in {\dot H}^\theta(\R^2)$, respectively.
\end{lemma}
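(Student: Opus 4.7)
The plan is to prove all three inequalities via a homogeneous Littlewood--Paley decomposition combined with Bernstein's inequalities and the standard optimization of the frequency cutoff. I will write $v=\sum_j \dot\Delta_j v$ and $\omega=\sum_j \dot\Delta_j \omega$, using the fact that for a divergence--free field in $\R^2$ the Biot--Savart operator $v = \nabla^\perp \Delta^{-1}\omega$ is a composition of Riesz transforms with $|\nabla|^{-1}$, so that $\dot\Delta_j$ intertwines with these operators. In particular
\[
\|\dot\Delta_j v\|_\infty \simeq 2^{-j}\,\|\dot\Delta_j \omega\|_\infty, \qquad \|\dot\Delta_j \nabla v\|_\infty\simeq \|\dot\Delta_j \omega\|_\infty,
\]
with comparable constants on either side (this follows because Riesz transforms act as bounded operators on the frequency--localized blocks, e.g.\ via the Besov characterization $\{\omega\}_r=\sup_j 2^{jr}\|\dot\Delta_j\omega\|_\infty$).

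For inequality \eqref{x-1}, I fix an integer $N$ to be chosen later and split the sum for $\|\nabla v\|_\infty$ at level $N$. For $j\le N$, Bernstein's inequality in $\R^2$ gives $\|\dot\Delta_j \nabla v\|_\infty \lesssim 2^{2j}\|\dot\Delta_j v\|_2$, and Cauchy--Schwarz over the geometric series yields $\sum_{j\le N}2^{2j}\|\dot\Delta_j v\|_2 \lesssim 2^{2N}\|v\|_2$. For $j>N$, the identity above and the Besov characterization of $\dot C^r$ give $\|\dot\Delta_j\nabla v\|_\infty \lesssim 2^{-jr}\{\omega\}_r$, which sums to $\lesssim 2^{-Nr}\{\omega\}_r$ because $r>0$. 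Choosing $N$ so that $2^{2N}\|v\|_2 \simeq 2^{-Nr}\{\omega\}_r$, i.e.\ $2^{N(r+2)}\simeq \{\omega\}_r/\|v\|_2$, produces the bound $\|v\|_2^{r/(r+2)}\{\omega\}_r^{2/(r+2)}$. The same argument with $\omega$ in place of $\nabla v$ (using $\|\dot\Delta_j\omega\|_\infty\lesssim 2^{2j}\|v\|_2^{\,}$ at low frequency, obtained from $\|\dot\Delta_j\omega\|_\infty\lesssim 2^j\|\dot\Delta_j\omega\|_2\simeq 2^{2j}\|\dot\Delta_j v\|_2$) handles $\|\omega\|_\infty$.

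For \eqref{x-2}, I decompose $\|v\|_\infty \le \sum_j \|\dot\Delta_j v\|_\infty$ and apply the same dichotomy: low frequencies give $\sum_{j\le N}2^j\|v\|_2\lesssim 2^N\|v\|_2$ (Bernstein in $\R^2$), while high frequencies yield $\sum_{j>N}2^{-j(1+r)}\{\omega\}_r\lesssim 2^{-N(1+r)}\{\omega\}_r$. Balancing gives exponents $(r+1)/(r+2)$ and $1/(r+2)$. Inequality \eqref{x-3} is analogous: at high frequencies I combine $\|\dot\Delta_j v\|_\infty\lesssim 2^{-j}\|\dot\Delta_j\omega\|_\infty\lesssim \|\dot\Delta_j\omega\|_2$ (Bernstein $L^2\to L^\infty$ in $\R^2$) with Cauchy--Schwarz to get $\sum_{j>N}\|\dot\Delta_j\omega\|_2 \lesssim 2^{-N\theta}[\omega]_\theta$, and balance against the same $2^N\|v\|_2$ low--frequency bound.

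The one technical point to be careful about is the convergence of the \emph{homogeneous} low--frequency sums as $j\to -\infty$; here the hypothesis $v\in L^2(\R^2)$ is essential, and I would justify it by Cauchy--Schwarz applied to $\sum_{j\le N}2^{jd/2}\|\dot\Delta_j v\|_2 \le \bigl(\sum_{j\le N}2^{jd}\bigr)^{1/2}\|v\|_2$, which converges in dimension $d=2$. Everything else is an optimization argument which I expect to be routine; the only real obstacle is checking the endpoint hypothesis $r\in(0,1)$, $\theta\in[0,1]$ ensures the Besov/Sobolev characterizations and Bernstein estimates are in their stated form, but this is exactly the parametrization \eqref{theta-r-2d><} given in the statement.
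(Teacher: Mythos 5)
Your proof is correct and is essentially the paper's argument: the paper simply routes all three estimates through the general Lemma~\ref{HA} of Appendix~\ref{sec:riesz-est} (applied to $f=\omega$ with ${\mathfrak a}$ equal to the identity or a product of Riesz transforms, using $[\omega]_{-1}\simeq\|v\|_2$), and the proof of that lemma is precisely your frequency splitting at level $N$, low-frequency bound against $\|v\|_2$, high-frequency tail bound against the Besov norm, and optimization over $N$. The one caveat — shared with the paper, whose Lemma~\ref{HA} requires $r>d/p-1$ — is that your high-frequency geometric series for \eqref{x-3} needs $\theta>0$ strictly, which is harmless since $\theta=r+3/4$ in every application.
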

\begin{proof}  In the Euler coordinates, as follows from  
formulas \eqref{v-om-Riesz}, $v$ and $\omega$ are related, schematically, as 
$v = D^{-1} {\cal R}{\cal R}\,\omega$. It follows immediately that  
\[
\|v\| \simeq \|\omega\,\big|\;{\dot H}^{-1}\|\,.
\]
Keeping this in mind, we turn to Lemma~\ref{HA} and apply it to $f = \omega$. To prove inequality \eqref{x-1} for $\|\omega\|_\infty$, take the symbol $a(\kappa) \equiv 1$  in the second inequality of Lemma~\ref{HA}. For estimates \eqref{x-2} and \eqref{x-3}, use Lemma~\ref{HA} with the operator ${\mathfrak a}$ being the product of two Riesz transforms. Applying Lemma~\ref{HA} with the operator ${\mathfrak a} = {\cal R} {\cal R}$,   obtain the inequality for $\nabla v$. 
In particular,  \eqref{eq:D.3}  with $p = \infty$ implies \eqref{x-1} and \eqref{gamma1} with $p = \infty$ 
implies \eqref{x-2}, while, \eqref{gamma1} with $p=2$ and $r=\theta$ implies \eqref{x-3}. 
\end{proof}

The following corollary is a combination of Lemma~\ref{GN-E-2d} and Lemma~\ref{E-L-E}.
\begin{corollary}\label{GN-2d}
Assume $x: \R^2_\xi \to \R^2_x$ is a volume preserving diffeomorphism. Let  $u_a^i = \partial x^i/\partial\xi^a - A^i_a$, $i, a=1, 2$, be the components of the deformation gradient tensor and 
let $\omega_a$ be the corresponding vorticities. 
If $0 < r < 1$, and if $0\le\theta\le 1$, then (with the tropical norms) 
\be\label{x-4}
\langle u_a\rangle_\infty \lesssim \langle u_a\rangle_2^{(r+1)/(r+2)}\;\langle\{\omega_a^E\}_r\rangle^{1/(r+2)}
\ee
and 
\be\label{x-5}
\langle u_a\rangle_\infty \lesssim \langle u_a\rangle_2^{(r+1)/2}\;\langle\{\omega_a^L\}_r\rangle^{1/2}
\ee
and
\be\label{x-6}
\langle u_a\rangle_\infty \lesssim \langle u_a\rangle_2^{\theta/(\theta+1)}\;\langle [\omega_a^E]_\theta\rangle^{1/(\theta+1)}
\ee
and
\be\label{x-7}
\langle u_a\rangle_\infty \lesssim \langle u_a\rangle_2^{\theta}\;\langle[\omega_a^L]_\theta\rangle\,.
\ee
\end{corollary}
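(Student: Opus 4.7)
The plan is to derive the Eulerian inequalities \eqref{x-4} and \eqref{x-6} as immediate consequences of Lemma~\ref{GN-E-2d}, and then to bootstrap these using Lemma~\ref{E-L-E} to obtain the Lagrangian versions \eqref{x-5} and \eqref{x-7}. For \eqref{x-4} and \eqref{x-6} I would apply Lemma~\ref{GN-E-2d} with $v=u_a$ (divergence-free by \eqref{u-3}) and $\omega=\omega_a$: inequality \eqref{x-2} yields \eqref{x-4} and inequality \eqref{x-3} yields \eqref{x-6}, once one passes from the ordinary norms to the tropical ones using the elementary properties $\langle AB\rangle\le\langle A\rangle\langle B\rangle$ and $\langle A+B\rangle\le\langle A\rangle+\langle B\rangle$ for $A,B\ge 0$ listed just after the definition of the tropical bracket.

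For the Lagrangian estimates I would first express $\{\omega_a^E\}_r$ and $[\omega_a^E]_\theta$ in terms of the corresponding Lagrangian norms, at the cost of powers of the $L^\infty$ norm of the Jacobian of the inverse map $\xi(t,\cdot)$. In $d=2$ the cofactor formula $\partial\xi/\partial x=\mathrm{cof}(\partial x/\partial\xi)$, combined with $\det(\partial x/\partial\xi)=1$ and \eqref{vbul-infty}, gives $\|\nabla_x\xi\|_\infty\lesssim\langle u\rangle_\infty$. The H\"older comparison then follows directly from the definition of $\{\cdot\}_r$,
\[
\{\omega_a^E\}_r\;\lesssim\;\|\nabla_x\xi\|_\infty^r\,\{\omega_a^L\}_r\;\lesssim\;\langle u\rangle_\infty^r\,\{\omega_a^L\}_r,
\]
while the Sobolev comparison I would obtain by complex interpolation of the pullback $T:g\mapsto g\circ\xi$ between $L^2\to L^2$ (operator norm $1$, by volume preservation) and $\dot H^1\to\dot H^1$ (operator norm $\lesssim\langle u\rangle_\infty$), yielding
\[
[\omega_a^E]_\theta\;\lesssim\;\langle u\rangle_\infty^{\theta}\,[\omega_a^L]_\theta.
\]
These are the two instances of Lemma~\ref{E-L-E} I plan to invoke.

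Substituting into \eqref{x-4} and \eqref{x-6} and taking the maximum over $a$ (using the paper's convention that an unsubscripted $u$ denotes the whole collection with norms defined as $\max_a$), one obtains
\[
\langle u\rangle_\infty\lesssim\langle u\rangle_2^{(r+1)/(r+2)}\,\langle u\rangle_\infty^{r/(r+2)}\,\langle\{\omega^L\}_r\rangle^{1/(r+2)},
\]
\[
\langle u\rangle_\infty\lesssim\langle u\rangle_2^{\theta/(\theta+1)}\,\langle u\rangle_\infty^{\theta/(\theta+1)}\,\langle[\omega^L]_\theta\rangle^{1/(\theta+1)},
\]
which become, after absorbing the factor of $\langle u\rangle_\infty$ to the left and raising to the powers $(r+2)/2$ and $\theta+1$ respectively, exactly \eqref{x-5} and \eqref{x-7}.

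The hard part will be the sharpness of the Sobolev comparison. A naive Slobodeckij double-integral change of variables would produce $\langle u\rangle_\infty^{(d+2\theta)/2}=\langle u\rangle_\infty^{1+\theta}$ instead of $\langle u\rangle_\infty^{\theta}$, and $1+\theta$ is precisely the threshold at which the self-improving inequality ceases to close: the exponent of $\langle u\rangle_\infty$ on the right would equal $(1+\theta)/(\theta+1)=1$, leaving no room to solve. Complex interpolation (or, equivalently, the Littlewood--Paley characterization of $\dot H^\theta$) delivers the sharp exponent $\theta$ and is the essential ingredient that makes the residual power $1-\theta/(\theta+1)=1/(\theta+1)$ on the left positive.
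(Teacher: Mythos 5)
Your proposal is correct and follows exactly the paper's (one-line) argument: the corollary is stated there as ``a combination of Lemma~\ref{GN-E-2d} and Lemma~\ref{E-L-E},'' and your bootstrap --- apply \eqref{x-2}, \eqref{x-3} to $u_a$, convert $\{\omega_a^E\}_r$ and $[\omega_a^E]_\theta$ to Lagrangian norms at the cost of $\langle u_a\rangle_\infty^{r}$ and $\langle u_a\rangle_\infty^{\theta}$ (the $d=2$ case of \eqref{ineq1}, \eqref{ineq2}), absorb, and raise to the powers $(r+2)/2$ and $\theta+1$ --- reproduces the intended derivation with the correct exponents. Your closing observation that the sharp exponent $\theta$ (rather than $1+\theta$) in the Sobolev comparison is exactly what keeps the self-improvement from degenerating is a valid and worthwhile sanity check, though not required.
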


\begin{lemma}\label{Om-E-L}
When $d=2$ we have
\be\label{om1}
\langle\{\Omega^E\}_r\rangle^{2/(r+2)} \lesssim_{e_0} \langle\{\Omega^L\}_r\rangle
\ee
and 
\be\label{om2}
\langle[\Omega^E]_\theta\rangle\lesssim_{e_0} \langle[\Omega^L]_\theta\rangle^{1 + \theta}\,.
\ee
\end{lemma}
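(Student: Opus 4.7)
The crucial observation is that $\Omega^E$ and $\Omega^L$ are the \emph{same} vorticity expressed in different coordinates: $\Omega^L(t,\xi)=\Omega^E(t,x(t,\xi))$. Both inequalities therefore reduce to bounds on how the volume-preserving bi-Lipschitz change of variables $x=x(t,\xi)$ acts on the seminorms $\{\cdot\}_r$ and $[\cdot]_\theta$. In two dimensions with $\det(\partial x/\partial\xi)=1$, the inverse Jacobian is the adjugate,
\[
\partial\xi/\partial x=(A+u)^{-1}=\operatorname{adj}(A+u),
\]
whose entries are affine in the entries of $u$; hence $\|\partial\xi/\partial x\|_\infty\lesssim\langle u\rangle_\infty$, and this one estimate powers both claims.

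For \eqref{om1}, writing $\Omega^E=\Omega^L\circ\xi(t,\cdot)$ and using the Lipschitz bound on $\xi(t,\cdot)$, I get directly
\[
\{\Omega^E\}_r\le\|\partial\xi/\partial x\|_\infty^{\,r}\,\{\Omega^L\}_r\lesssim\langle u\rangle_\infty^{\,r}\,\{\Omega^L\}_r.
\]
Inequality \eqref{x-4} of Corollary \ref{GN-2d}, combined with the basic energy bound $\langle u\rangle_2\lesssim e_0$, gives $\langle u\rangle_\infty\lesssim_{e_0}\langle\{\Omega^E\}_r\rangle^{1/(r+2)}$, so $\langle u\rangle_\infty^{\,r}\lesssim_{e_0}\langle\{\Omega^E\}_r\rangle^{r/(r+2)}$. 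Substituting and passing to tropical norms on the left yields
\[
\langle\{\Omega^E\}_r\rangle\lesssim_{e_0}\langle\{\Omega^E\}_r\rangle^{r/(r+2)}\,\langle\{\Omega^L\}_r\rangle,
\]
which rearranges (by subtracting exponents) to \eqref{om1}.

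For \eqref{om2}, the plan is to first obtain the sharp change-of-variables estimate
\[
[\Omega^E]_\theta\lesssim\|\partial\xi/\partial x\|_\infty^{\,\theta}\,[\Omega^L]_\theta
\]
by operator interpolation applied to $T:f\mapsto f\circ\xi(t,\cdot)$: the map is an $L^2$-isometry (volume preservation) and has $\dot H^1$-norm bounded by $\|\partial\xi/\partial x\|_\infty$ (chain rule), so complex interpolation between these endpoints gives the displayed bound for every $\theta\in[0,1]$. Combined with $\|\partial\xi/\partial x\|_\infty\lesssim\langle u\rangle_\infty$ and the \emph{Lagrangian} form \eqref{x-7} of the embedding (which, after absorbing $\langle u\rangle_2^{\,\theta}\lesssim_{e_0}1$, reads $\langle u\rangle_\infty\lesssim_{e_0}\langle[\Omega^L]_\theta\rangle$), I obtain
\[
[\Omega^E]_\theta\lesssim_{e_0}\langle[\Omega^L]_\theta\rangle^{\,\theta}\,[\Omega^L]_\theta\le\langle[\Omega^L]_\theta\rangle^{\,1+\theta},
\]
and taking the tropical bracket on the left gives \eqref{om2}.

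The delicate point, and the main obstacle, is the sharpness of the Sobolev change-of-variables bound. The direct Gagliardo double-integral $[\Omega^E]_\theta^2\simeq\iint|\Omega^E(x)-\Omega^E(y)|^2/|x-y|^{2+2\theta}\,dx\,dy$ only yields $[\Omega^E]_\theta\lesssim\langle u\rangle_\infty^{1+\theta}[\Omega^L]_\theta$; combined with either \eqref{x-6} or \eqref{x-7} this produces $\langle[\Omega^L]_\theta\rangle^{2+\theta}$ on the right, which is too weak for the subsequent bootstrap. The interpolation exponent $\theta$ (rather than $1+\theta$) on $\|\partial\xi/\partial x\|_\infty$ is therefore essential, and must be paired specifically with the Lagrangian version \eqref{x-7} (not the Eulerian \eqref{x-6}) in order to produce the sharp exponent $1+\theta$ in \eqref{om2}.
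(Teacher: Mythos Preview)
Your argument is correct and follows essentially the same route as the paper: the change-of-variables bounds $\{\Omega^E\}_r\lesssim\langle u_a\rangle_\infty^{\,r}\{\Omega^L\}_r$ and $[\Omega^E]_\theta\lesssim\langle u_a\rangle_\infty^{\,\theta}[\Omega^L]_\theta$ that you derive are exactly the $d=2$ cases of inequalities \eqref{ineq1} and \eqref{ineq2} in Lemma~\ref{E-L-E} (and your interpolation proof of the second one is the paper's own proof of that lemma), after which one closes with Corollary~\ref{GN-2d}. One small correction to your final remark: the Eulerian bound \eqref{x-6} works just as well as \eqref{x-7} for \eqref{om2}, via the same self-improvement trick you used for \eqref{om1} --- plugging $\langle u_a\rangle_\infty\lesssim_{e_0}\langle[\Omega^E]_\theta\rangle^{1/(\theta+1)}$ into $[\Omega^E]_\theta\lesssim\langle u_a\rangle_\infty^{\,\theta}[\Omega^L]_\theta$ and rearranging gives $\langle[\Omega^E]_\theta\rangle^{1/(\theta+1)}\lesssim_{e_0}\langle[\Omega^L]_\theta\rangle$.
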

\begin{proof} This follows 
 from \eqref{vbul-infty}, Lemma \ref{GN-E-2d}, and Corollary~\ref{GN-2d}. 
 \end{proof}

\bigskip

The second technical lemma presents the Strichartz inequalities. 
\begin{lemma}\label{Strest-2d}
Let $w$ be a solution of the Cauchy problem 
\[
w_t + i\sqrt{-\Delta}\;w = f\,,\quad w(0) = w_0\,,
\]
on the time interval $[0, T]$. Let 
$w_0 \in {\dot H}^\theta(\R^2)$ and $f\in L^1([0, T]\to {\dot H}^\theta(\R^2))$.
If the parameters $r$ and $\theta$ satisfy $\theta = r + 3/4$, then   
\be\label{tr-str-2d}
\left(\int_0^T \{w(t)\}_r^4\,dt\right)^{1/4} \lesssim 
[w_0]_\theta + \int_0^T [f(t)]_\theta\,dt\,.
\ee
{\sl [The constant in $\lesssim$ does not depend on $T$.]}  Written with tropical norms,  inequalities \eqref{tr-str-2d} lead to  
\be\label{trop-str-2d}
\left(\int_0^T \langle\{w(t)\}_r\rangle^4\,dt\right)^{1/4} \lesssim T^{1/4} + 
[w_0]_\theta + \int_0^T [f(t)]_\theta\,dt\,.
\ee
\end{lemma}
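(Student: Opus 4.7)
The plan is to derive this lemma as a direct specialization of the general Strichartz inequality \eqref{Str-0} (i.e.\ Lemma~\ref{Strichartz} from Appendix~\ref{sec:stricharts-app}) to dimension $d=2$ at the endpoint pair $(q,p)=(4,\infty)$. The key calculation is to check that the parameter relation $\theta = r + 3/4$ is precisely what the scaling of the half-wave equation forces in this configuration, and that $(4,\infty)$ falls in the wave-admissible range for $d=2$.

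First I would recall that \eqref{Str-0} requires the triple $(q,p,r)$ to satisfy the wave-admissibility condition $\frac{1}{q} + \frac{d-1}{2p} \le \frac{d-1}{4}$ together with the scaling identity $\theta = r + d\bigl(\tfrac{1}{2}-\tfrac{1}{p}\bigr) - \tfrac{1}{q}$. For $d=2$ admissibility reduces to $\tfrac{1}{q} + \tfrac{1}{2p} \le \tfrac{1}{4}$, which allows the choice $p = \infty$, $q=4$. Substituting into the scaling identity gives $\theta = r + 1 - \tfrac{1}{4} = r + \tfrac{3}{4}$, matching the hypothesis of the lemma exactly. Since $\dot B^r_{\infty,\infty} = \dot C^r$ and the paper's shorthand sets $\{w\}_r := \|w\,\big|\,\dot B^r_{\infty,\infty}\|$, the conclusion \eqref{tr-str-2d} is exactly \eqref{Str-0} in this configuration; the $T$-independence of the constant is part of the cited statement.

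For the tropical-norm refinement \eqref{trop-str-2d}, I use the elementary bound $\langle a\rangle = \max(1,a) \le 1 + a$ for $a \ge 0$. Then
\[
\int_0^T \langle\{w(t)\}_r\rangle^4\,dt \;\le\; \int_0^T \bigl(1+\{w(t)\}_r\bigr)^4\,dt \;\lesssim\; T + \int_0^T \{w(t)\}_r^4\,dt,
\]
and the subadditivity of $x\mapsto x^{1/4}$ gives
\[
\Bigl(\int_0^T \langle\{w(t)\}_r\rangle^4\,dt\Bigr)^{1/4} \;\lesssim\; T^{1/4} + \Bigl(\int_0^T \{w(t)\}_r^4\,dt\Bigr)^{1/4}.
\]
Combining this with \eqref{tr-str-2d} yields \eqref{trop-str-2d}.

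The main obstacle, such as it is, is purely bookkeeping: one must verify that the Besov endpoint $(q,p)=(4,\infty)$ is indeed among the admissible triples covered by \cite[Theorem~2]{Kap-1} / Lemma~\ref{Strichartz}. The Besov refinement is essential here since $L^\infty_x$ would fail at this endpoint, but $\dot B^r_{\infty,\infty}$ (for $r>0$) is admissible; this is precisely the improvement supplied by Kapitanski's formulation, so no new work beyond parameter checking is required.
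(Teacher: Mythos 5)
Your proposal is correct and follows essentially the same route as the paper: the first inequality is exactly the $d=2$, $p=q=\infty$ specialization of the appendix Strichartz theorem (Corollary~\ref{cor:strichartz}), with the parameter check $\theta = r + \tfrac{d+1}{4}\tfrac{p-2}{p}\big|_{d=2,\,p=\infty} = r+\tfrac34$ matching your scaling computation, and the tropical-norm version follows from the same elementary bound $\langle a\rangle\le 1+a$ plus subadditivity of $x\mapsto x^{1/4}$ that the paper uses. No gaps.
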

\begin{proof}
The proof of a more general statement than \eqref{tr-str-2d} is given in Appendix \ref{sec:stricharts-app}.  To obtain the second inequality from the first one, use
\[
\int_0^T \langle f(t) \rangle\,dt = |\,[0, T]\cap \{t : |f(t)|\le 1\}\,| + \int_0^T \chi_{\{t : |f(t)| > 1\}}\;|f(t)|\, dt\le T + \int_0^T |f(t)|\,dt\,.
\]

\end{proof}

\subsection{ A priori estimates, $d = 2$}\label{sec:d2}

The conservation of energy \eqref{Energy} tells us that the $L^2$ norm of $V(t)$ is bounded, $\| V(t) \| \leq \| V(0)\| =e_0$.
It follows from \eqref{om1} that
\be\label{yy-1}
\int_0^t \{\Omega^E(t^\prime)\}_r^{8/(r+2)}\;dt^\prime\; \lesssim_{e_0} 
\;\int_0^t \langle\{\Omega^L(t^\prime)\}_r\rangle^4\;dt\,,
\ee 
and then from  \eqref{x-1} we obtain 
\be\label{yy-3-first}
\begin{aligned}
& \int_0^t \|\nabla V^E(t^\prime)\|_\infty + 
\|\Omega^E(t^\prime)\|_\infty\;dt^\prime \lesssim_{e_0}\int_0^t \{\Omega^E(t^\prime)\}_r^{2/(r+2)}\;dt^\prime \\ 
& \lesssim_{e_0} 
\,t^{3/4}\;\left(\int_0^t \{\Omega^E(t^\prime)\}_r^{8/(r+2)}\;dt^\prime\right)^{1/4} \lesssim_{e_0} 
\;t^{3/4}\;\left(\int_0^t \langle\{\Omega^L(t^\prime)\}_r\rangle^4\;dt^\prime\right)^{1/4}
\end{aligned}
\ee 
Now, 
estimate \eqref{Hth-om} proved in Lemma~\ref{Dth-om} can be extended as follows:
\be\label{yy-2}
\begin{aligned}
& [\Omega^E(t)]_\theta \leq [\Omega^E(0)]_\theta\;\exp \left\{c(e_0)\,
t^{3/4}\;\left(\int_0^t \{\Omega^E(t^\prime)\}_r^{8/(r+2)}\;dt^\prime\right)^{1/4}
\right\}
\end{aligned}
\ee

\begin{proposition}\label{apri-2d}
Let $V(t)$ be a solution of \eqref{u-11}, \eqref{u-21} in $C([0, T_1]\to H^{s_2}_\sigma(\R^2))$ in the sense of Theorem~\ref{Thm-1}, where $s_2 > \frac{d}{2} + 1 = 2$. Assume that for some $r > 0$ the following norms of the vorticities $\Omega(t)$ at $t=0$ are bounded: 
\be
[\Omega^E(0)]_\theta \le C_0\,,\quad \{\omega^E_a(0)\}_r \le C_1\,,
\ee
where $\theta = r + \frac34$. Then there exists time $T_0 > 0$, depending only on $\|V(0)\|$ (basic energy)  and the values of $C_0$ and $C_1$, such that 
\be\label{dot-Cr-2d}
\left(\int_0^t \{\Omega^E(t^\prime)\}_r^{8/(r+2)}\;dt^\prime\right)^{1/4} \le C_2 
\ee
and 
\be\label{L4-2d}
\int_0^t \|\nabla V(t^\prime)\|_\infty^4 + \|\Omega(t^\prime)\|_\infty^4\;dt^\prime \le C_3
\ee
for all $t$ in the interval $[0, T_0]$, where the bounds, $C_2$ and $C_3$, depend only on $e_0$ and the values of $C_0$ and $C_1$. 
\end{proposition}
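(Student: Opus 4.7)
The plan is a bootstrap on the single scalar quantity
\[
G(t) \;:=\; \int_0^t \langle\{\Omega^L(t')\}_r\rangle^4 \, dt',
\]
which will control all other norms of interest, while Strichartz provides a nonlinear self-bound on $G$ that closes for short times.

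\textbf{Step 1 (Consequences of a bound on $G$).} From \eqref{om1} and \eqref{yy-3-first} we have, with constants depending only on $e_0$,
\[
\int_0^t \{\Omega^E(s)\}_r^{8/(r+2)} \, ds \lesssim_{e_0} G(t),
\qquad
\int_0^t \|\nabla V^E(s)\|_\infty \, ds \lesssim_{e_0} t^{3/4} G(t)^{1/4},
\]
and from \eqref{yy-2} (the refinement of Lemma~\ref{Dth-om}),
\[
\sup_{s\le t}[\Omega^E(s)]_\theta \;\le\; C_0 \exp\bigl\{c(e_0)\, t^{3/4} G(t)^{1/4}\bigr\}.
\]

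\textbf{Step 2 (Bounding $G$ via Strichartz).} The Fourier decomposition \eqref{om-pi} writes $\Omega^L$ as Riesz-type combinations of $\pi^L_\pm, \pi^L_{ab}$; since these multipliers are bounded on $\dot{B}^r_{\infty,\infty}(\R^2)$, $\{\Omega^L\}_r \lesssim \{\pi^L_+\}_r + \{\pi^L_-\}_r + \sum_{a,b}\{\pi^L_{ab}\}_r$. For the wave modes, Lemma~\ref{Strest-2d} applied to \eqref{FouPi-1} gives
\[
\left(\int_0^t \{\pi^L_\pm(s)\}_r^4 \, ds\right)^{1/4} \;\lesssim\; t^{1/4} + [\pi^L_\pm(0)]_\theta + \int_0^t [F^L_\pm(s)]_\theta \, ds.
\]
For the transport modes, integration of $\partial_t \pi^L_{ab} = F^L_{ab}$ in time yields the uniform bound $\sup_{s\le t}\{\pi^L_{ab}(s)\}_r \le \{\pi^L_{ab}(0)\}_r + \int_0^t \{F^L_{ab}(s)\}_r\, ds$, whence $\bigl(\int_0^t\{\pi^L_{ab}\}_r^4 ds\bigr)^{1/4} \le t^{1/4}\sup_{s\le t}\{\pi^L_{ab}\}_r$.

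\textbf{Step 3 (Estimating the forcings).} By \eqref{Fs} the quantities $F^L_\pm, F^L_{ab}$ are Riesz combinations of the Lagrangean images of $f^{mn}, f^{mn}_b$ from \eqref{Fom-1}--\eqref{Fom-2}, which are quadratic in $\nabla V^E$. Transferring to Eulerian coordinates via the estimates of Appendix~\ref{sec:Eul-Lag-App} introduces polynomial factors in $\|u(s)\|_\infty$, and the fractional Leibniz rule (Lemma~\ref{frac-leibniz}) gives
\[
[F^L_\pm(s)]_\theta + \{F^L_{ab}(s)\}_r \;\lesssim_{e_0}\; P\bigl(\|u(s)\|_\infty\bigr)\,\|\nabla V^E(s)\|_\infty \,\bigl([\Omega^E(s)]_\theta + \{\Omega^E(s)\}_r\bigr),
\]
for an explicit polynomial $P$. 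The factor $\|u(s)\|_\infty$ is itself controlled, through \eqref{x-4}--\eqref{x-5}, by a positive power of $\langle\{\Omega^L(s)\}_r\rangle$, a further power of which is absorbed into $G(t)$ via H\"older in time.

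\textbf{Step 4 (Closing the bootstrap).} Plugging the bounds of Steps 1 and 3 into Step 2 and using H\"older's inequality to handle the product of $\|\nabla V^E\|_\infty$ and the suprema, one arrives at an inequality of the form
\begin{equation}\label{eq:boot-2d-prop}
G(t)^{1/4} \;\le\; A_0 + A_1 t^{1/4} + A_2\, t^{3/4}\, G(t)^{1/4} \exp\bigl\{c(e_0)\, t^{3/4} G(t)^{1/4}\bigr\},
\end{equation}
with constants $A_0, A_1, A_2$ depending only on $e_0$, $C_0$, $C_1$. Choose $T_0 > 0$ so small, in terms of these constants, that whenever $t \le T_0$ and $G(t)^{1/4} \le 4A_0$ the right-hand side of \eqref{eq:boot-2d-prop} is at most $2A_0$. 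Since $G(0) = 0$ and $t \mapsto G(t)$ is continuous on $[0, T_1]$, a standard continuity argument gives $G(t)^{1/4} \le 2A_0$ throughout $[0, T_0]$.

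\textbf{Step 5 (Deriving the stated bounds).} Estimate \eqref{dot-Cr-2d} then follows from \eqref{om1} and the bound $G(T_0)^{1/4} \le 2A_0$. For \eqref{L4-2d}, raise \eqref{x-1} to the fourth power,
\[
\|\nabla V^E(s)\|_\infty^4 + \|\Omega^E(s)\|_\infty^4 \;\lesssim_{e_0}\; \{\Omega^E(s)\}_r^{8/(r+2)},
\]
and integrate in $s$.

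\textbf{Main obstacle.} The delicate part is Step 3. Both the fractional-Leibniz bookkeeping in $\dot H^\theta$ and the Eulerian/Lagrangean transfer contribute powers of $\|u\|_\infty$ and of vorticity norms, and the Strichartz admissibility forces the tight scaling $\theta = r + 3/4$ in dimension two; one must arrange the algebra so that every such power is absorbed into either $G(t)$ or an integer power of $t$, and in particular so that the Gronwall-type exponential in \eqref{eq:boot-2d-prop} multiplies $t^{3/4}G^{1/4}$ (which can be made small) rather than $G^{1/4}$ alone. If any one of these conversions wastes a factor, the bootstrap no longer closes.
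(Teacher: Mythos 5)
Your proposal is correct and follows essentially the same route as the paper: the same decomposition of $\Omega^L$ into the Strichartz-controlled modes $\pi^L_{\pm}$ and the transported mode $\pi^L_{12}$, the same forcing estimates via the fractional Leibniz rule and the Euler--Lagrange norm transfers, and the same continuity/bootstrap closure. The only cosmetic difference is that you bootstrap on the Lagrangean integral $G(t)$ whereas the paper uses the equivalent Eulerian quantity $y(t)$ of \eqref{yy}, the two being interchangeable via \eqref{om1}.
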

\begin{proof} It is convenient to use 
\be\label{yy}
y(t) = \left(\int_0^t \langle\{\Omega^E(t^\prime)\}_r\rangle^{8/(r+2)}\;dt^\prime\right)^{1/4}
\ee
as the  quantity for which we obtain the \textit{ a priori} estimates. 
In view of \eqref{yy-1}, 
\[
y(t) \lesssim_{e_0} \left(\int_0^t \langle\{\Omega^L(t^\prime)\}_r\rangle^4\;dt^\prime\right)^{1/4}\,.
\]
Due to the representations \eqref{om-pi}, and due to the boundedness of the Riesz transforms 
in ${\dot C}^r$,  we have 
\be\label{yy-3}
\begin{aligned}
 \left(\int_0^t \langle\{\Omega^L(t^\prime)\}_r\rangle^4\;dt^\prime\right)^{1/4} \lesssim_{e_0}  & \sum_{\pm}\;\left(\int_0^t \langle\{\pi_\pm^L(t^\prime)\}_r\rangle^4\;dt^\prime\right)^{1/4} + 
 \left(\int_0^t \langle\{\pi_{12}^L(t^\prime)\}_r\rangle^4\;dt^\prime\right)^{1/4}
\end{aligned}
\ee
The integrals with $\pi_\pm^L$ are  controlled by the Strichartz  estimates of Lemma \ref{Strest-2d}:
\[
\begin{aligned}
& \left(\int_0^t \langle\{\pi_\pm^L(t^\prime)\}_r\rangle^4\;dt^\prime\right)^{1/4} \lesssim\,t^{1/4} +  
[\pi_\pm^L(0)]_\theta + \int_0^t [F_\pm^L(t^\prime)]_\theta\;dt^\prime
\end{aligned}
\]
Note that  
\be\label{L-E-inity}
[\pi_{\pm}^L(0)]_\theta \lesssim [\Omega^L(0)]_\theta \underset{\eqref{ineq2}}{\lesssim}  \langle u_a\rangle_\infty^\theta\;[\Omega^E(0)]_\theta \underset{\eqref{x-6}}{\lesssim_{e_0}} 
\,\langle[\Omega^E(0)]_\theta\rangle^{(\theta+2)/(\theta+1)}
\ee
Also, by \eqref{Fs}, 
\[
[F_\pm^L]_\theta \lesssim [f^L]_\theta + [f^L_a]_\theta\,.
\]
In the two dimensional case $f^L = 0$. 
By Lemma~\ref{E-L-E}, $[f^L_a]_\theta \lesssim \langle u_a\rangle_\infty^\theta\;[f_a^E]_\theta$.  By \eqref{Fom-2}, each $f_a^E$ is a sum of products of the form $\partial v^E\;\partial v_a^E$. Each factor, $\partial v$ and $\partial v_a$, is a linear combination of the Riesz transforms of the vorticities. Applying the fractional product rule, we obtain 
\[
[\partial v\;\partial v_a]_\theta \lesssim [\partial v]_\theta \;\|\partial v_a\|_\infty + \|\partial v\|_\infty\;[\partial v_a]_\theta\,.
\]
Since $[\partial V^E]_\theta \lesssim [\Omega^E]_\theta$, we have 
\[
[F_\pm^L]_\theta \lesssim \langle u_a\rangle_\infty^\theta\;\langle u_a\rangle_\infty\,[\Omega^E]_\theta = \langle u_a\rangle_\infty^{1+\theta}\;[\Omega^E]_\theta\,.
\]
Invoking \eqref{x-6}, we get
\be\label{yy-4}
[F_\pm^L]_\theta \lesssim_{e_0} \,\langle [\Omega^E]_\theta \rangle^2\,.
\ee
Thus, 
\be\label{yy-5}
\left(\int_0^t \langle \{\pi_\pm^L(t^\prime)\}_r \rangle^4\;dt^\prime\right)^{1/4} \lesssim_{e_0} t^{1/4} + 
\langle [\Omega^E(0)]_\theta\rangle^{(\theta+2)/(\theta+1)} + \int_0^t \langle [\Omega^E(t^\prime)]_\theta \rangle^2\;dt^\prime
\ee
Use \eqref{yy-2} for $[\Omega^E]_\theta$ followed by \eqref{yy-3}:
\be\label{yy-51}
\begin{aligned}
\left(\int_0^t  \langle \{\pi_\pm^L(t^\prime)\}_r \rangle^4\;dt^\prime\right)^{1/4} & \lesssim_{e_0} t^{1/4} + 
\langle [\Omega^E(0)]_\theta \rangle^{(\theta+2)/(\theta+1)} \\  
& + t\;\langle [\Omega^E(0)]_\theta \rangle^2\;\exp\left\{ c(e_0)
t^{3/4}\;y(t)
\right\} 
\end{aligned}
\ee
To estimate the $L^4_t {\dot C}^r_x$ norm of $\pi^L_{12}$ we go directly to the equation, see \eqref{SolPi-1}, 
\[
\pi_{12}^L(t) = \pi_{12}^L(0) + \int_0^t F^L_{12}(t^\prime)\,dt^\prime\,.
\]
Then
\[
\langle \{\pi^L_{12}(t)\}_r \rangle  \le \langle \{\pi^L_{12}(0)\}_r \rangle + \langle\,\int_0^t \{F^L_{12}(\tau)\}_r\,d\tau \rangle\,.
\]
Observe that
\[
\begin{aligned}
& \int_0^t \langle\,\int_0^{t^\prime} \{F^L_{12}(\tau)\}_r\,d\tau \rangle^4\,dt^\prime \le t + \int_0^t\left(\int_0^{t^\prime} \{F^L_{12}(\tau)\}_r\,d\tau\right)^4\,dt^\prime \le 
 t + t\,\left(\int_0^{t} \{F^L_{12}(\tau)\}_r\,d\tau\right)^4 \,.
\end{aligned}
\]
Therefore, 
\[
\begin{aligned}
& \left(\int_0^t \langle\,\int_0^{t^\prime} \{F^L_{12}(\tau)\}_r\,d\tau \rangle^4\,dt^\prime \right)^{1/4} \le t^{1/4} + t^{1/4}\,\int_0^{t} \{F^L_{12}(\tau)\}_r\,d\tau\,dt\,, 
\end{aligned}
\]
and hence,
\be\label{pi-12y}
\left(\int_0^t \langle\{\pi_{12}^L(\tau)\}_r\rangle^4\,d\tau\right)^{1/4} \lesssim t^{1/4}\;\left(\langle\{\pi^L_{12}(0)\}_r\rangle + \int_0^t \{F^L_{12}(\tau)\}_r\,d\tau\,\right).
\ee
In view of \eqref{Fs},  and the continuity of the Riesz transform on Holder spaces,
\[
\{F^L_{12}\}_r \lesssim \{f^L_1\}_r + \{f^L_2\}_r\,.
\]
By Lemma~\ref{E-L-E}, 
\be\label{yy-6}
\{f^L_a\}_r\lesssim \langle u_a\rangle_\infty^r\;\{f^E_a\}_r\,. 
\ee
Again, since each $f_a^E$ is a sum of products of the form $\partial v^E\;\partial v_a^E$, we have
\[
\{f^E_a\}_r \lesssim \|\nabla V\|_\infty\;\{\Omega^E\}_r \underset{\eqref{x-1}}{\lesssim_{e_0}} \langle\{\Omega^E\}_r \rangle^{(r+4)/(r+2)}\,.
\]
Combine this with \eqref{x-4} and derive from \eqref{yy-6}
\be\label{yy-61}
\{f^L_a\}_r\lesssim_{e_0} \langle \{\Omega^E\}_r \rangle^2\,. 
\ee
Also,   
\be\label{yy-8}
\{\pi^L_{12}(0)\}_r \lesssim \{\omega^L_a(0)\}_r\underset{\eqref{ineq1}}{\lesssim} 
\langle u_a(0)\rangle_\infty^r\,\{\omega^E_a(0)\}_r \underset{\eqref{x-4}}{\lesssim_{e_0}}
 \langle \{\omega^E_a(0)\}_r \rangle^{(r+3)/(r+2)} 
\ee
Use this and \eqref{yy-61} in \eqref{pi-12y} to obtain
\be\label{yy-7}
\left(\int_0^t \langle \{\pi_{12}^L(\tau)\}_r\rangle^4\,d\tau\right)^{1/4} \lesssim t^{1/4}\;
\left(\langle \{\omega^E_a(0)\}_r \rangle^{(r+3)/(r+2)} + t^{4/(2-r)}\,y(t)^{r+2}\right)
\ee
It follows from \eqref{yy-3}, \eqref{yy-51}, and \eqref{yy-7} that 
\be\label{yy-9}
\begin{aligned}
y(t) \lesssim_{e_0} & t^{1/4} + \langle[\Omega^E(0)]_\theta \rangle^{(\theta+2)/(\theta+1)} + t\;[\Omega^E(0)]_\theta^2\;\exp\left\{ c(e_0)
t^{3/4}\;y(t)\right\} \\ 
& + t^{1/4}\;
\left(\langle \{\omega^E_a(0)\}_r \rangle^{(r+3)/(r+2)} + t^{4/(2-r)}\,y(t)^{r+2}\right)
\end{aligned} 
\ee
The continuity argument now shows that, for a continuous nonnegative $y(t)$, if $y(0) \lesssim  [\Omega^E(0)]_\theta^{(\theta+2)/(\theta+1)}$, then there exists a $T_0 > 0$ ($T_0$ depends only on the initial data: the 
energy $\|V(0)\|^2$, and the norms $[\Omega^E(0)]_\theta$ and $\{\omega^E_a(0)\}_r$) such that 
$y(t)$ stays bounded for $t\in [0, T_0]$. 

It is easy to verify (using \eqref{x-1} and \eqref{om1}) that
\[
\int_0^t \|\nabla V(t^\prime)\|_\infty^4 + \|\Omega^E(t^\prime)\|_\infty^4\;dt^\prime \lesssim_{e_0} 
\int_0^t \langle \{\Omega^E(t^\prime)\}_r \rangle^{8/(r+2)}\,dt^\prime
\]
which stays finite for $t\in[0,T_0]$. 
\end{proof}

\begin{corollary}\label{apri-col} 
In the setting of Proposition~\ref{apri-2d}, 
\be
\int_0^{T_0} \|\Omega^E(t)\,\big|\;C^r(\R^2)\|^{8/(r+2)}\;dt < \infty\,. 
\ee
\end{corollary}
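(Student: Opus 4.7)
The corollary is essentially an immediate consequence of Proposition~\ref{apri-2d}, plus the fact that the inhomogeneous Hölder norm can be split as $\|f\|_{C^r(\R^2)} \simeq \|f\|_\infty + \{f\}_r$. My plan is to reduce the claim to the two estimates \eqref{dot-Cr-2d} and \eqref{L4-2d} already in hand.

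First I would note that for $p = 8/(r+2) \ge 1$ (valid on $0 < r < 1$), we have the elementary inequality
\[
(a+b)^p \lesssim a^p + b^p\,,\qquad a, b \ge 0,
\]
so that
\[
\|\Omega^E(t)\,\big|\;C^r(\R^2)\|^{8/(r+2)} \lesssim \|\Omega^E(t)\|_\infty^{8/(r+2)} + \{\Omega^E(t)\}_r^{8/(r+2)}\,.
\]
The integral of the second term on $[0, T_0]$ is finite by the fourth power of \eqref{dot-Cr-2d}.

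For the first term, I see two routes. The quickest is to use \eqref{L4-2d}: since $8/(r+2) < 4$ for $r > 0$, Hölder's inequality in time gives
\[
\int_0^{T_0} \|\Omega^E(t)\|_\infty^{8/(r+2)}\,dt \le T_0^{\,r/(r+2)}\,\left(\int_0^{T_0} \|\Omega^E(t)\|_\infty^{4}\,dt\right)^{2/(r+2)} \lesssim T_0^{\,r/(r+2)}\,C_3^{2/(r+2)}\,.
\]
Alternatively, one can apply the Gagliardo--Nirenberg inequality \eqref{x-1} directly to each vorticity component, yielding $\|\Omega^E(t)\|_\infty \lesssim_{e_0} \{\Omega^E(t)\}_r^{2/(r+2)}$; raising to the $8/(r+2)$ power and integrating produces $\int_0^{T_0}\{\Omega^E(t)\}_r^{16/(r+2)^2}\,dt$, which is again finite by Hölder's inequality (the exponent $16/(r+2)^2$ lies strictly below $8/(r+2)$) combined with \eqref{dot-Cr-2d}.

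Either route closes the argument in a few lines. There is no real obstacle beyond bookkeeping: the heavy lifting has already been done in Proposition~\ref{apri-2d}, where the combined Strichartz--vorticity bootstrap produced the time interval $[0, T_0]$ and the bounds $C_2, C_3$. The corollary simply repackages those bounds in the inhomogeneous Hölder scale, which is the form that will be convenient in subsequent applications.
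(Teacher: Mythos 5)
Your proposal is correct and is essentially the paper's own argument: the paper's proof is the one-line observation that $\|\cdot\|_{C^r} \simeq \|\cdot\|_\infty + \{\cdot\}_r$ and then invokes \eqref{dot-Cr-2d} and \eqref{L4-2d}, exactly as in your first route (your Hölder-in-time step, using $8/(r+2)<4$, is the detail the paper leaves implicit). The alternative route via \eqref{x-1} is also valid but unnecessary.
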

\begin{proof} This follows from \eqref{dot-Cr-2d} and \eqref{L4-2d} due to the fact that 
the norm in $C^r$ is equivalent to the sum of the $L^\infty$ and the ${\dot C}^r$ norms.
\end{proof}

\subsection{Low regularity well-posedness in the case $d=2$}\label{sec:low2}

In this section we obtain the existence and uniqueness results for $H^s$-solutions with $s\le 2$. 
To achieve this, we impose an additional regularity restriction on the initial deformation. This restriction is satisfied automatically if $u_a(0) = 0$. After existence and uniqueness are settled, we 
show the continuous dependence of solutions in $C([0, T_0]\to H^s_\sigma)$ on the initial conditions. See Theorem \ref{contdep-2d}.

\begin{theorem}\label{exist-2d}  
In $\R^2$ consider the system \eqref{u-11}, \eqref{u-21}. Let $s$ be any number greater than $7/4$. 
If the initial velocities $v(0)$ and $u_a(0)$ belong to $H^s_\sigma$, and if the vorticities 
$\omega_a(0)$ belong to the homogeneous H\"older space ${\dot C}^r$ with some $0 < r \le s - \frac74$, 
then there is $T_0 > 0$ depending only on the 
$H^s_\sigma$ norms of the initial data and on $\|\omega_a(0)\,\big|\;{\dot C}^r\|$, and a unique solution $(v, u_a)$ such that
\be
v, u_a\in C([0, T_0]\to H^s_\sigma)\,, 
\ee
\be
\int_0^{T_0} \|\nabla V(t^\prime)\|_\infty^4 + \|\Omega^E(t^\prime)\|_\infty^4\;dt^\prime < \infty\,,
\ee
and
\be
\int_0^{T_0} \|\Omega^E(t)\,\big|\;C^r(\R^2)\|^{8/(r+2)}\,dt < \infty\,.
\ee
\end{theorem}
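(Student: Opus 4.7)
The plan is to construct $V$ as a limit of smooth approximations, using Proposition~\ref{apri-2d} to obtain a common lifespan and uniform estimates. Regularize the initial data by $V^\epsilon(0) = \rho_\epsilon[V(0)]$. Since $\rho_\epsilon$ is the sharp Fourier cut-off onto $\{|k|\le 1/\epsilon\}$, we have $V^\epsilon(0) \in H^{s_2}_\sigma$ for every $s_2 > 2$, so Theorem~\ref{Thm-1} produces smooth solutions $V^\epsilon \in C([0,T_\epsilon] \to H^{s_2}_\sigma)$. The mollification does not inflate the initial-data norms entering Proposition~\ref{apri-2d}: on the Fourier side, $[\Omega^\epsilon(0)]_\theta \le [\Omega(0)]_\theta \lesssim \|V(0)\|_{H^s}$ for $\theta = r + 3/4 \le s-1$, while a Littlewood--Paley argument gives $\{\omega_a^\epsilon(0)\}_r \lesssim \{\omega_a(0)\}_r$ uniformly in $\epsilon$.

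Apply Proposition~\ref{apri-2d} to each $V^\epsilon$. This yields a common $T_0 > 0$ depending only on $e_0$, $[\Omega(0)]_\theta$, and $\{\omega_a(0)\}_r$, together with the uniform bounds
\be\label{plan-ap}
\int_0^{T_0}\|\nabla V^\epsilon(t)\|_\infty^4 + \|\Omega^\epsilon(t)\|_\infty^4\,dt \le C_3, \qquad \int_0^{T_0}\{\Omega^\epsilon(t)\}_r^{8/(r+2)}\,dt \le C_2^4.
\ee
By H\"older, $\int_0^{T_0}\|\nabla V^\epsilon(t)\|_\infty\,dt \le T_0^{3/4}\,C_3^{1/4}$, so the exponent in \eqref{r-energy-est} with $r=s$ is uniformly controlled and $\sup_{[0,T_0]}\|V^\epsilon(t)\|_{H^s}$ is bounded independently of $\epsilon$. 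Extract a subsequence with $V^{\epsilon_n}\rightharpoonup V$ weak-$*$ in $L^\infty([0,T_0]\to H^s_\sigma)$; since $s > 7/4 > d/2$, $V^\epsilon$ is uniformly bounded in $L^\infty$, hence $\partial_t V^{\epsilon_n}$ is uniformly bounded in $L^\infty_t L^2_x$, and Aubin--Lions gives strong $L^2_{\mathrm{loc}}$ convergence of $V^{\epsilon_n}$. Combined with weak convergence of $\nabla V^{\epsilon_n}$ in $L^\infty_t L^2$, this suffices to pass to the limit in the integral identities \eqref{uno-5}--\eqref{dos-5}. Weak-$*$ lower semicontinuity and Fatou in $t$ then transfer \eqref{plan-ap} and Corollary~\ref{apri-col} from the approximations to $V$, producing both integral estimates in the theorem statement. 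For uniqueness within this class, the difference $\tV = V_1 - V_2$ of two such solutions satisfies $\tfrac{d}{dt}\|\tV\|^2 \lesssim \|\nabla V_2\|_\infty\,\|\tV\|^2$ by the standard $L^2$ energy argument, and Gronwall combined with $\tV(0) = 0$ and $\int_0^{T_0}\|\nabla V_2\|_\infty\,dt < \infty$ forces $\tV \equiv 0$. Strong $H^s$-continuity in time then follows by applying the limit energy inequality \eqref{r-energy} with $r=s$ to $V$ itself and invoking the time-reversal symmetry of the system, exactly as in Section~\ref{subsec:highlimit}.

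The main obstacle is that one cannot pass to the limit directly in the nonlinear integrand $\{\Omega^\epsilon(t)\}_r^{8/(r+2)}$ entering the a priori estimate, since $\dot B^r_{\infty,\infty}$ is not reflexive and its natural weak topology is ill-suited. The workaround is to apply Proposition~\ref{apri-2d} only to the smooth approximations (where everything is classical) and then transfer its conclusions to $V$ by weak-$*$ lower semicontinuity; this is why it is important that the common $T_0$ depend only on fixed initial-data norms rather than on $V^\epsilon(0)$ itself. A secondary subtle point is uniform control of $\{\omega_a^\epsilon(0)\}_r$ when $\rho_\epsilon$ is a sharp Fourier cut-off, which requires a Littlewood--Paley argument rather than an elementary contraction bound.
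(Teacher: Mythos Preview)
Your proposal is correct and follows the paper's route exactly: mollify the data, invoke Theorem~\ref{Thm-1} for smooth approximants, apply Proposition~\ref{apri-2d} for a uniform lifespan and the integral bounds, control $\|V^\epsilon\|_{L^\infty_t H^s}$ via \eqref{r-energy-est}, extract a weak limit, and transfer the integral bounds by weak-$*$ lower semicontinuity (the paper names the preduals $L^{4/3}_t L^1_x$ and $L^{8/(6-r)}_t B^{-r}_{1,1}$ explicitly). The subtle point you flag about uniform control of $\{\omega_a^\epsilon(0)\}_r$ under the sharp Fourier cut-off is glossed over in the paper as well; the cleanest fix is to mollify the initial data with a smooth Littlewood--Paley projector in this step, which is uniformly bounded on $\dot B^r_{\infty,\infty}$ and preserves all the properties of Lemma~\ref{mollifiers} needed later.
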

\begin{proof} Recall that $s=2$ is the critical regularity in $\R^2$ and we have already settled the case $s > 2$. Assume 
now that $7/4 < s\le 2$ and $v(0), u_a(0) \in H^s_\sigma$ and that $\omega_a(0)\in {\dot C}^r$ with $0 < r \le s - \frac74$. Mollify the initial conditions, $V^\epsilon(0) = \rho_\epsilon*V(0)$, and obtain the 
solution $V^\epsilon(t)$ by Theorem~\ref{Thm-1}. Let $\Omega^\epsilon(t)$ be the corresponding vorticities. Set $\theta = r + 3/4$ (note that then $\theta + 1\le s$). Since, as $\epsilon \to 0$,  $V^\epsilon(0)\to V(0)$ in $H^s$ and $\Omega^\epsilon(0)\to \Omega(0)$ in ${\dot H}^\theta$, by Proposition~\ref{apri-2d},  there exist $T_0 > 0$  and $C > 0$ such that for  all sufficiently small $\epsilon$ 
\be\label{unif-infty-2d}
\int_0^{T_0} \|\nabla V^\epsilon(t)\|_\infty^4 + \|\Omega^\epsilon(t)\|_\infty^4\;dt \le C\,,
\ee
and, in addition, by Corollary~\ref{apri-col}, 
\be\label{unif-om-2d}
\int_0^{T_0} \|\Omega^\epsilon(t)\,\big|\; C^r(\R^2)\|^{8/(r+2)}\,dt \le C\,.
\ee
As in the high regularity case, cf. section \ref{sec:Part1}, the estimate 
\be\label{s-energy}
\|V^\epsilon(t)\|_{H^s}^2 \leq \|V^\epsilon(0)\|_{H^s}^2\cdot \exp\left\{ c\int_0^{T_0} \|\nabla V^\epsilon(t^\prime)\|_\infty\;dt^\prime\right\}
\ee 
is true for all $t\in[0, T_0]$. Taking all this into account, we can pass to the limit along some sequence $\epsilon_n\to 0$ as in Section~\ref{subsec:highlimit}, and obtain a solution $V(t)$ of equations 
\eqref{u-11}, \eqref{u-21}. Of course, $V^\epsilon \to V$ in the sense of distributions on $(-T_0, T_0)\times\R^2$.  Due to the fact  that the space $L^4([0, T_0]\to L^\infty(\R^2))$ is the dual of $L^{4/3}([0, T_0]\to L^1(\R^2))$ and the space $L^{8/(r+2)}([0, T_0]\to B^r_{\infty, \infty}(\R^2))$ is the dual of 
$L^{8/(6-r)}([0, T_0]\to B^{-r}_{1, 1}(\R^2))$,
the functionals in \eqref{unif-infty-2d} and \eqref{unif-om-2d} are weak-$*$ lower semicontinuous, and we obtain
\be\label{infty-2d}
\int_0^{T_0} \|\nabla V(t)\|_\infty^4 + \|\Omega(t)\|_\infty^4\;dt \le C\,,
\ee
and
\be\label{om-2d}
\int_0^{T_0} \|\Omega(t)\,\big|\; C^r(\R^2)\|^{8/(r+2)}\,dt \le C\,.
\ee
This allows one to prove uniqueness of such solutions, and to show that $V\in C([0, T_0]\to H^s(\R^2))$, as in Section~\ref{subsec:highlimit}. 
\end{proof}

\bigskip

We remark that in Theorem~\ref{exist-2d}, the H\"older parameter $r \in (0,s-7/4]$ can be arbitrarily small, and we still get 
a unique solution in $H^s$ as long as $s > 7/4$. We shall refer to  the solutions described in Theorem~\ref{exist-2d} as $(s, r)$-solutions. 
Once again, for the existence and uniqueness of local in time solutions in $H^s$ with $\frac74 < s \le 2$, it is sufficient that $\{\omega_a^E(0)\}_r < \infty$ for some
 $r>0$. 
\begin{theorem}\label{contdep-2d}
Consider the set of $(s, r)$-solutions corresponding to the appropriate initial conditions. 
Assume that 
\be\label{2d-s-interval}
\frac{\sqrt{65} + 7}{8} \le s \le 2\,,
\ee
and that $r > 0$ and 
\be\label{sr-2d}
\frac{2 - s}{s - 1} \le r \le s - \frac74\,. 
\ee
Then the $(s, r)$-solutions depend continuously on the initial conditions: If $V_n(0)\to V(0)$ in $H^s_\sigma$ and 
$(\omega_n)_a(0)\to \omega_a(0)$ in ${\dot C}^r$, then $V_n$ converges to $V$ in 
$C([0, T_1]\to H^s_\sigma(\R^2))$, for some $T_1\in (0, T_0]$.

If $r \ge \frac{\sqrt{65}-7}{8}$, then continuous dependence takes place for any $s$ in the interval 
\eqref{2d-s-interval}.
\end{theorem}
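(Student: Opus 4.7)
The plan is to adapt the Bona--Smith/Kato--Lai mollification scheme used in Section~\ref{sec:contdep} to the low-regularity regime, using the Gagliardo--Nirenberg inequalities of Lemma~\ref{GN-E-2d} in place of the Sobolev embedding $H^s\hookrightarrow W^{1,\infty}$ that is unavailable when $s\le d/2+1$. Given $V_n(0)\to V(0)$ in $H^s_\sigma$ and $(\omega_n)_a(0)\to\omega_a(0)$ in ${\dot C}^r$, the set $\mathfrak{C}$ consisting of these data and their limits is compact in $H^s_\sigma\times{\dot C}^r$. By Theorem~\ref{exist-2d} and Proposition~\ref{apri-2d} one obtains a common lifespan $[0,T_1]\subset[0,T_0]$ on which the solutions $V_n$, $V$ and their mollifications $V_n^\epsilon$, $V^\epsilon$ all exist, with the uniform a priori bounds \eqref{infty-2d}, \eqref{om-2d}. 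I then split as in \eqref{cd-scheme}, show that the first two summands tend to $0$ uniformly for initial data in $\mathfrak{C}$ as $\epsilon\to 0$ (the low-regularity analogue of Proposition~\ref{prop:A}), and dispose of the third summand, at fixed small $\epsilon$, by a standard continuity argument.

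The heart of the matter is the inequality \eqref{tilde-s2} applied to $\tV=V^\delta-V^\epsilon$ with $0<\delta<\epsilon$:
\[
\frac{d}{dt}\|\tV(t)\|_{H^s}\lesssim \bigl(\|\nabla V^\delta\|_\infty+\|\nabla V^\epsilon\|_\infty\bigr)\|\tV\|_{H^s}
+\bigl(\|V^\delta\|_{H^s}+\|V^\epsilon\|_{H^s}\bigr)\|\nabla\tV\|_\infty
+\|\tV\|_\infty\,\|V^\epsilon\|_{H^{s+1}}.
\]
The first summand is a Gronwall multiplier: the a priori bound \eqref{L4-2d} places $\|\nabla V^{\epsilon,\delta}\|_\infty$ in $L^4_t\subset L^1_t$ uniformly, so after integration it contributes only a bounded exponential factor. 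For the remaining two summands I apply Lemma~\ref{GN-E-2d} to $\tV$, whose vorticity $\tilde\omega$ is controlled in $L^{8/(r+2)}_t{\dot C}^r$ uniformly by triangle inequality and Corollary~\ref{apri-col}:
\[
\|\nabla\tV(t)\|_\infty\lesssim\|\tV(t)\|_2^{r/(r+2)}\{\tilde\omega(t)\}_r^{2/(r+2)},\qquad
\|\tV(t)\|_\infty\lesssim\|\tV(t)\|_2^{(r+1)/(r+2)}\{\tilde\omega(t)\}_r^{1/(r+2)}.
\]
Combined with the basic $L^2$ estimate \eqref{est:L2-1}, namely $\|\tV(t)\|_2\lesssim\epsilon^s o(\epsilon)$ uniformly for $V(0)\in\mathfrak{C}$, the second summand on the right is controlled in $L^1_t$ since $\{\tilde\omega\}_r^{2/(r+2)}\in L^4_t$ uniformly and $\|V^{\epsilon,\delta}\|_{H^s}$ is uniformly bounded.

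The critical summand is $\|\tV\|_\infty\,\|V^\epsilon\|_{H^{s+1}}$. Using the energy estimate $\|V^\epsilon(t)\|_{H^{s+1}}\lesssim\epsilon^{-1}\|V(0)\|_{H^s}\,e^{c\int\|\nabla V^\epsilon\|_\infty}\lesssim\epsilon^{-1}$ together with H\"older in time against $\{\tilde\omega\}_r^{1/(r+2)}\in L^8_t$, one obtains
\[
\int_0^t\|\tV(\tau)\|_\infty\,\|V^\epsilon(\tau)\|_{H^{s+1}}\,d\tau
\;\lesssim\;\epsilon^{\,\frac{s(r+1)}{r+2}-1}\;o\!\left(\epsilon^{(r+1)/(r+2)}\right)\,t^{7/8}.
\]
The prefactor remains bounded exactly when $s(r+1)\ge r+2$, i.e.\ when $r\ge (2-s)/(s-1)$, and the compactness factor $o(\cdot)$ supplied by Lemma~\ref{mollifiers}(4) then forces this contribution to vanish uniformly on $\mathfrak{C}$ even at equality. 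Compatibility with the existence constraint $r\le s-7/4$ from Theorem~\ref{exist-2d} requires $(2-s)/(s-1)\le s-7/4$, i.e.\ $4s^2-7s-1\ge 0$, which gives exactly the range $s\ge s_1=(7+\sqrt{65})/8$ stated in \eqref{2d-s-interval}; the alternative hypothesis $r\ge(\sqrt{65}-7)/8$ simply fixes $r$ at a value which automatically dominates $(2-s)/(s-1)$ throughout this interval.

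Gronwall applied to the integrated form of \eqref{tilde-s2} then yields $\sup_{[0,T_1]}\|\tV(t)\|_{H^s}\to 0$ uniformly for $V(0)\in\mathfrak{C}$ as $\epsilon\to 0$, handling the first two summands of \eqref{cd-scheme}. The third summand is dealt with by repeating the same argument with $\tV=V_n^\epsilon-V^\epsilon$ at fixed $\epsilon$: since the initial data are now smooth, one has $\|\nabla\tV\|_\infty,\|\tV\|_\infty\lesssim\|\tV\|_{H^s}$ and the inequality reduces to a linear Gronwall, which combined with $\|\tV(0)\|_{H^s}\to 0$ as $n\to\infty$ gives convergence. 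The main obstacle is the critical third term: balancing the $\epsilon$-decay of $\|\tV\|_2$ against the $\epsilon^{-1}$ blow-up of $\|V^\epsilon\|_{H^{s+1}}$ through the Gagliardo--Nirenberg exponent $(r+1)/(r+2)$ is precisely what forces both the coupling $r\ge (2-s)/(s-1)$ and, via the existence constraint $r\le s-7/4$, the threshold $s\ge s_1$ appearing in the statement.
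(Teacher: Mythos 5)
Your proposal is correct and follows essentially the same route as the paper: the Bona--Smith splitting \eqref{cd-scheme}, the inequality \eqref{tilde-V-low}, the estimates \eqref{x-1}--\eqref{x-2} of Lemma~\ref{GN-E-2d} applied to $\tV$ together with $\|\tV(t)\|\lesssim\epsilon^s o(\epsilon)$, and the balancing of $\epsilon^{s(r+1)/(r+2)-1}$ against $\|V^\epsilon\|_{H^{s+1}}\lesssim\epsilon^{-1}$, which yields exactly the constraints $r\ge(2-s)/(s-1)$, $r\le s-\tfrac74$ and hence $s\ge(7+\sqrt{65})/8$. The only imprecision is in the final claim: when $r\ge(\sqrt{65}-7)/8$ one may have $r>s-\tfrac74$, so one cannot run the argument with $r$ itself but must first pass to a smaller $r'$ satisfying \eqref{sr-2d} via the embedding $L^2\cap{\dot B}^r_{\infty,\infty}\subset L^2\cap{\dot B}^{r'}_{\infty,\infty}$ of Lemma~\ref{besov-nested}, as the paper does.
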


\begin{proof}
The proof of continuous dependence follows the same 
steps as the proof of Proposition~\ref{prop:A} in Section~\ref{sec:contdep} until the inequality 
\be\label{tilde-V-low}
\begin{aligned}
 \frac{d\hfill}{dt}\,\|{\tilde V}(t)\|_{H^s} \lesssim & \left(\|\nabla V^\delta(t)\|_\infty 
+ \|\nabla V^\epsilon(t)\|_\infty \right)\,\|{\tilde V}(t)\|_{H^s}  \\ 
& + \left(\|V^\delta(t)\|_{H^s} + \|V^\epsilon(t)\|_{H^s}\right)\,\|\nabla {\tilde V}(t)\|_\infty \\ 
& + \|{\tilde V}(t)\|_\infty\,\|V^\epsilon(t)\|_{H^{s+1}}\,. 
\end{aligned} 
\ee
Recall that here $\tilde V$ is composed of the differences ${v}^\delta - {v}^\epsilon$ 
and ${u}_a^\delta - {u}_a^\epsilon$ with $0<\delta <\epsilon<1$, where $V^\epsilon$ is a solution corresponding to the mollified initial condition $\rho_\epsilon*V(0)$ (and similar for $\delta$). 
The time-span $[0, T_1]$ is determined according to Proposition~\ref{apri-2d} and accommodates the solutions $V_n$ for all sufficiently large $n$, and $V$, and all $V_n^\epsilon$,  and $V^\epsilon$ for all 
sufficiently small $\epsilon$. Uniformly in (sufficiently small) $\epsilon$, we have estimates 
\eqref{unif-infty-2d} and \eqref{unif-om-2d} with $T_1$ instead of $T_0$. Also, uniformly in $\epsilon$, 
\[
\sup_{0\le t\le T_1} \|V^\epsilon(t)\|_{H^s} \le M
\]
for some constant $M > 0$. 

Let us deal with the third term on the right side of \eqref{tilde-V-low}. 
As  in Section~\ref{sec:contdep}, 
\[
\|V^\epsilon(t)\|_{H^{s+1}} \lesssim \frac{C}{\epsilon}\,.
\]
At the same time, by Lemma~\ref{GN-E-2d}, inequality \eqref{x-2},
\[
\|{\tilde V}(t)\|_\infty \lesssim \|V^\delta(t) - V^\epsilon(t)\|^{\gamma_2}\,\|\Omega^\delta(t) - \Omega^\epsilon(t)\,\big|\; {\dot B}^r_{\infty, \infty}\|^{1-\gamma_2}
\]
where
\[
\gamma_2 = \frac{r+1}{r+2}\,.
\]
For the $L^2$ norms we have  \eqref{est:L2-1}, i.e.,
\[
\sup_{[0, T_1]}\,\|{\tilde V}(t)\|\lesssim \epsilon^s\,o(\epsilon)
\]
Thus, 
\be\label{restrict-1}
\begin{aligned}
& \|{\tilde V}(t)\|_\infty\,\|V^\epsilon(t)\|_{H^{s+1}} \lesssim \epsilon^{s(r+1)/(r+2) - 1}\,o(\epsilon)\;
\|\Omega^\delta(t) - \Omega^\epsilon(t)\,\big|\; {\dot B}^r_{\infty, \infty}\|^{1/(r+2)}
\end{aligned}
\ee 
For $(s, r)$-solutions,  $s \ge \frac74 + r$. This is the first restriction. 
In order that $s(r+1)/(r+2) - 1 \ge 0$,  we must have $s \ge 1 + \frac{1}{r + 1}$. This is the second restriction. Both restrictions must be satisfied, which leads to \eqref{sr-2d}. Note that for the inequality  
\[
\frac{2 - s}{s - 1} \le  s - \frac74\,
\]
to be valid, we must have $s \ge \frac78 + \frac{\sqrt{65}}{8}$.

Now consider the second term on the right side of \eqref{tilde-V-low}. It is less than $2 M\,\|\nabla{\tilde V}(t)\|_\infty$. By inequality \eqref{x-1} of Lemma~\ref{GN-E-2d}, 
\[
\|\nabla{\tilde V}(t)\|_\infty \lesssim \|{\tilde V}(t)\|^{r/(r+2)}\,\|\Omega^\delta(t) - \Omega^\epsilon(t)\,\big|\; {\dot B}^r_{\infty, \infty}\|^{2/(r+2)}
\]
and hence,
\be\label{term-2}
\|\nabla{\tilde V}(t)\|_\infty \lesssim \epsilon^{sr/(r+2)}\,o(\epsilon)\;\|\Omega^\delta(t) - \Omega^\epsilon(t)\,\big|\; {\dot B}^r_{\infty, \infty}\|^{2/(r+2)}
\ee
Now we see that \eqref{tilde-V-low} implies the inequality 
\be\label{tilde-V-8}
\begin{aligned}
 \frac{d\hfill}{dt}\,\|{\tilde V}(t)\|_{H^s} \lesssim & \left(\|\nabla V^\delta(t)\|_\infty 
+ \|\nabla V^\epsilon(t)\|_\infty \right)\,\|{\tilde V}(t)\|_{H^s}  \\ 
& + 2 M\,\epsilon^{sr/(r+2)}\,o(\epsilon)\;\|\Omega^\delta(t) - \Omega^\epsilon(t)\,\big|\; {\dot B}^r_{\infty, \infty}\|^{2/(r+2)} \\ 
& +  \epsilon^{s(r+1)/(r+2) - 1}\,o(\epsilon)\;
\|\Omega^\delta(t) - \Omega^\epsilon(t)\,\big|\; {\dot B}^r_{\infty, \infty}\|^{1/(r+2)}\,. 
\end{aligned} 
\ee
which can be integrated:
\[
\begin{aligned}
& \|{\tilde V}(t)\|_{H^s} \lesssim \|{\tilde V}(0)\|_{H^s} \;e^{ \int_0^t\left(\|\nabla V^\delta(t^\prime)\|_\infty 
+ \|\nabla V^\epsilon(t^\prime)\|_\infty \right)\,dt^\prime} \\ 
& + 2 M\,\epsilon^{sr/(r+2)}\,o(\epsilon)\;\int_0^t e^{\int_\tau^t\left(\|\nabla V^\delta(t^\prime)\|_\infty 
+ \|\nabla V^\epsilon(t^\prime)\|_\infty \right)\,dt^\prime}\;\|\Omega^\delta(\tau) - \Omega^\epsilon(\tau)\,\big|\; {\dot B}^r_{\infty, \infty}\|^{2/(r+2)}\,d\tau \\ 
& + \epsilon^{s(r+1)/(r+2) - 1}\,o(\epsilon)\;\int_0^t e^{\int_\tau^t\left(\|\nabla V^\delta(t^\prime)\|_\infty 
+ \|\nabla V^\epsilon(t^\prime)\|_\infty \right)\,dt^\prime}\;\|\Omega^\delta(\tau) - \Omega^\epsilon(\tau)\,\big|\; {\dot B}^r_{\infty, \infty}\|^{1/(r+2)}\,d\tau
\end{aligned}
\]
Using estimates \eqref{unif-infty-2d} and \eqref{unif-om-2d},  uniformly on $[0, T_1]$ we have 
\[
\|{\tilde V}(t)\|_{H^s} \lesssim \|{\tilde V}(0)\|_{H^s} + \epsilon^{sr/(r+2)}\,o(\epsilon) + \epsilon^{s(r+1)/(r+2) - 1}\,o(\epsilon) \to 0
\]
as $\epsilon \to 0$ (as $\|{\tilde V}(0)\|_{H^s} = \|V^\delta(0) - V^\epsilon(0)\|_{H^s} \to 0$). 
This proves continuous dependence as explained in Section~\ref{sec:contdep}. 
\medskip

In the case $r \ge (\sqrt{65} - 7)/8$, the space  $L^2 \cap {\dot B}^r_{\infty,\infty}$ is a subspace of any space $L^2 \cap {\dot B}^{r^\prime}_{\infty,\infty}$ with smaller $r^\prime$ (see Lemma~\ref{besov-nested}), in particular, with $0 < r^\prime < (\sqrt{65} - 7)/8$. Such $r^\prime$ can be chosen to satisfy inequality \eqref{sr-2d} provided $s$ satisfies \eqref{2d-s-interval}. This justifies the last claim of the proposition. 
\end{proof}

\subsection{Technical inequalities when $d=3$}\label{d=3} 

In the three dimensional case, the general scheme of proving low regularity well-posedness is the same as in the two dimensional case. The differences are in details. Lemma~\ref{Strichartz} suggests that in $3D$ instead of the H\"older 
space ${\dot B}^r_{\infty, \infty}$ we should use the homogeneous Slobodetsky spaces ${\dot B}^r_{p, p}$ 
with $p < \infty$. In addition, the $\theta$ in the homogeneous Sobolev spaces ${\dot H}^\theta$ will have to 
be greater than $1$. This complicates the transition from Lagrangian to Eulerian settings.

In what follows, $2\le p < \infty$ and $\theta = r + (p-2)/p$, as needed for the Strichartz inequalities. 
Lemma~\ref{GN-E-3} will require $ r > 3/p$. Denote 
\[
h = r - \frac{3}{p}\,.
\]
Note that this is the three dimensional example of the quantity $r - \frac{d}{p}$, which is the scaling (regularity) parameter for the space ${\dot B}^r_{p, p}(\R^d)$. 
Using $h$, we can write 
\[
\theta = 1 + \frac{1}{p} + h\,.
\]
We would like to have $\theta$ as small as possible (because then we would get $s = \theta + 1$ small), and we are going to show that $h$ and $1/p$ can be chosen arbitrarily small positive. 
\bigskip

We shall use the notation 
\[
[g]_\theta = \|g\,\big|\; {\dot H}^\theta(\R^3)\|\,,\quad 
\{g\}_{r,p} = \|g\,\big|\; {\dot B}^r_{p,p}(\R^3)\|\,
\]
for the relevant homogeneous spaces.

As in the two dimensional case, we have the following inequalities that follow from Lemma~\ref{HA}.
\begin{lemma}\label{GN-E-3}
Let $v$ be a vectorfield in $\R^3$ such that $\hbox{\rm div}\,v = 0$ and let $\omega = \curl v$.
Assuming the parameters $r$, $h$, 
and $\theta$ satisfy the conditions 
\be\label{param-3d}
0 < r < 1,\quad h > 0,\quad  \theta > \frac12\,,
\ee 
we have
\be\label{z-1}
\|\nabla v\|_\infty, \|\omega\|_\infty \lesssim  \|v\|_2^{h/(h+\frac52)}\;\{\omega\}_{r,p}^{\frac52/(h + \frac52)}
\ee
and
\be\label{z-2}
\|v\|_\infty \lesssim \|v\|_2^{(h+1)/(h+\frac52)}\;\{\omega\}_{r,p}^{\frac32/(h + \frac52)}
\ee
and 
\be\label{z-3}
\|v\|_\infty \lesssim \|v\|_2^{(\theta - \frac12)/(\theta+1)}\;[\omega]_\theta^{\frac32/(\theta+1)}
\ee
\end{lemma}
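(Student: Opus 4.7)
The plan is to follow the strategy used for Lemma~\ref{GN-E-2d}, reducing everything to the general Riesz-potential interpolation inequality of Lemma~\ref{HA} applied to $f = \omega$. The formulas \eqref{v-om-Riesz} give $v^n = -i D^{-1}{\cal R}_m \omega^{mn}$ and $\partial_k v^n = {\cal R}_k {\cal R}_m \omega^{mn}$, and, in particular, $\|v\|_2 \simeq [\omega]_{-1}$. Thus each quantity on the left-hand side of \eqref{z-1}--\eqref{z-3} is an $L^\infty$ norm of a constant-order Fourier multiplier applied to $\omega$: order $0$ for $\|\omega\|_\infty$ and $\|\nabla v\|_\infty$, and order $-1$ for $\|v\|_\infty$. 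Lemma~\ref{HA} then interpolates such an $L^\infty$ norm between the low-frequency quantity $\|v\|_2$ and the high-frequency quantity $\{\omega\}_{r,p}$ or $[\omega]_\theta$.

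For \eqref{z-1} I would apply Lemma~\ref{HA} with symbol $a(\kappa) \equiv 1$ to obtain the bound on $\|\omega\|_\infty$, and with the operator being a product of two Riesz transforms to obtain the bound on $\|\nabla v\|_\infty$. The hypothesis $h > 0$, equivalently $r > 3/p$, is precisely what secures the embedding ${\dot B}^r_{p,p}(\R^3) \hookrightarrow L^\infty$ needed to control the high-frequency part of the output. For \eqref{z-2} I would apply the same lemma with the order-$(-1)$ operator corresponding to the representation $v = D^{-1}{\cal R}\omega$. In each case the exponents are forced by the scaling $v_\lambda(x) = \lambda^{3/2} v(\lambda x)$, which preserves $\|v\|_2$: under this scaling $\|\omega\|_\infty$ and $\|\nabla v\|_\infty$ scale like $\lambda^{5/2}$, $\|v\|_\infty$ scales like $\lambda^{3/2}$, and $\{\omega\}_{r,p}$ scales like $\lambda^{h + 5/2}$. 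Matching scales on both sides yields exactly the stated exponents $(5/2)/(h + 5/2)$ in \eqref{z-1} and $(3/2)/(h + 5/2)$ in \eqref{z-2}, with complementary weights on $\|v\|_2$.

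For \eqref{z-3} I would run the same scheme with ${\dot H}^\theta$ in place of ${\dot B}^r_{p,p}$. Here the constraint $\theta > 1/2$ plays the role of $h > 0$: it is the sharp condition guaranteeing that the order-$(-1)$ multiplier $D^{-1}{\cal R}$ maps ${\dot H}^\theta(\R^3)$ into $L^\infty(\R^3)$; equivalently, it is the condition for ${\dot H}^{\theta - 1}(\R^3) \hookrightarrow L^\infty(\R^3)$ in dimension three. Scaling then produces the exponent $(3/2)/(1+\theta)$ on $[\omega]_\theta$ and the complementary weight $(\theta - 1/2)/(1+\theta)$ on $\|v\|_2$.

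The only real content to verify is matching the chosen multipliers and parameter ranges to the precise hypotheses of Lemma~\ref{HA} in Appendix~\ref{sec:riesz-est}. Since the scaling forces the exponents uniquely, no new interpolation identity has to be invented, and the remainder is a standard Littlewood--Paley argument that parallels the two-dimensional proof verbatim.
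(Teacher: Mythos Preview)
Your approach is correct and matches the paper's exactly: the paper proves Lemma~\ref{GN-E-3} by the same reduction to Lemma~\ref{HA} used for Lemma~\ref{GN-E-2d}, with $[\omega]_{-1}\simeq\|v\|_2$ and the choices $(d,p,q)=(3,p,p)$ for \eqref{z-1}--\eqref{z-2} and $(d,p,q)=(3,2,2)$, $r=\theta$ for \eqref{z-3}, which reproduce precisely the exponents you computed by scaling. One small slip: the parenthetical ``equivalently, it is the condition for $\dot H^{\theta-1}(\R^3)\hookrightarrow L^\infty(\R^3)$'' is false (that embedding would need $\theta>5/2$); the correct reading is that $\theta>1/2$ is the hypothesis $r>d/p-1$ of Lemma~\ref{HA} part~3 with $p=2$, $d=3$, or equivalently $\dot H^{\theta+1}\hookrightarrow L^\infty$.
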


We need to know how these norms change under transition from the Lagrange to Euler coordinates.
\begin{lemma}\label{trop-3d} 
Assume $d=3$, $0 < r < 1$, $0 \le \theta\le 1$, and $2\le p < \infty$. Then, for sufficiently smooth $g$,  
\be\label{trans-3d<}
\begin{aligned}
& \{g^L\}_{r,p} \lesssim \langle u_a\rangle_\infty^r\;\{g^E\}_{r,p}\,,\quad \{g^E\}_{r,p} \lesssim \langle u_a\rangle_\infty^{2r}\;\{g^L\}_{r,p}\,,\quad  
 [g^L]_\theta \lesssim \langle u_a\rangle_\infty^\theta\;[g^E]_\theta\,.
\end{aligned}
\ee
If $1<\theta < 2$, then 
\be\label{trans-3d>}
[g^L]_\theta \lesssim  \langle u_a\rangle_\infty^{\theta-1}\;\left(\langle u_a\rangle_\infty + \langle u_a^E\rangle_2^{(\theta - \frac12)/(\theta + 1)}\cdot [\omega_a^E]_\theta^{\frac32/(\theta+1)}\right)\;[g^E]_\theta\,.
\ee
\end{lemma}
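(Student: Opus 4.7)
The plan is to handle the two regimes of regularity in succession, relying on integral characterizations of the homogeneous Besov and Sobolev norms together with the fact that $\xi \mapsto x(\xi)$ is a volume-preserving bi-Lipschitz diffeomorphism whose Lipschitz constants are controlled by $\langle u_a\rangle_\infty$ and $\langle u_a\rangle_\infty^{d-1} = \langle u_a\rangle_\infty^2$ (the latter via cofactor expansion of the inverse of $A + \nabla u$, since the determinant is $1$). For $0 < r < 1$ and $0 < \theta < 1$ I would invoke the Slobodetsky--Gagliardo characterizations
\begin{align*}
\{g\}_{r,p}^p &\simeq \iint \frac{|g(x) - g(y)|^p}{|x-y|^{rp + 3}}\,dx\,dy, \\
[g]_\theta^2 &\simeq \iint \frac{|g(x) - g(y)|^2}{|x-y|^{2\theta+3}}\,dx\,dy,
\end{align*}
change variables through $\xi = \xi(x)$ (Jacobian equal to $1$), and replace the distance in the denominator using the bi-Lipschitz bounds $|x(\xi)-x(\eta)| \lesssim \langle u_a\rangle_\infty |\xi-\eta|$ and $|\xi(x)-\xi(y)| \lesssim \langle u_a\rangle_\infty^{2} |x-y|$. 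The three inequalities in \eqref{trans-3d<} then follow directly, with the $r$ versus $2r$ powers tracking which of the two Lipschitz bounds is used and the $\theta$ power being the isotropic Sobolev case.

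For $1 < \theta < 2$ the plan is to reduce to the already-settled range by differentiation. First I would use the equivalence $[g^L]_\theta \simeq \sum_a [\partial_\xi^a g^L]_{\theta-1}$ together with the chain rule
$$
\partial_\xi^a g^L(\xi) = v_a^i(\xi)\,(\partial_i g^E)(x(\xi)),
$$
and apply a fractional Leibniz rule in $\dot H^{\theta-1}$ to the product, producing two terms of the form $\|v_a\|_\infty [(\nabla g^E)\circ x]^L_{\theta-1}$ and $[v_a]^L_{\theta-1}\,\|(\nabla g^E)\circ x\|_\infty$. Both Lagrangian norms have exponent in $(0,1)$ and can be transferred to the Eulerian side via the case just established, at the cost of a factor $\langle u_a\rangle_\infty^{\theta-1}$. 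Combined with $\|v_a\|_\infty \lesssim \langle u_a\rangle_\infty$ and $[\nabla g^E]_{\theta-1} \simeq [g^E]_\theta$, the first piece contributes the summand $\langle u_a\rangle_\infty^{\theta-1}\,\langle u_a\rangle_\infty\,[g^E]_\theta$ in \eqref{trans-3d>}.

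The remaining cross term $\langle u_a\rangle_\infty^{\theta-1}\,[u_a^E]_{\theta-1}\,\|\nabla g^E\|_\infty$ is the step I expect to be the main obstacle, because $\|\nabla g^E\|_\infty$ cannot be controlled by $[g^E]_\theta$ alone without additional information on $g$. The plan is to exchange a derivative between the factors through a more refined bilinear estimate (for instance via a paraproduct or Littlewood-Paley decomposition that conserves total regularity), replacing $[u_a^E]_{\theta-1} \cdot \|\nabla g^E\|_\infty$ by $\|u_a^E\|_\infty\cdot[g^E]_\theta$ up to acceptable factors; here the divergence-free condition on $u_a$, which yields $[u_a^E]_\theta \simeq [\omega_a^E]_\theta$ via the Riesz transforms, together with the Gagliardo-Nirenberg inequality \eqref{z-3} applied to $u_a$, allows one to rewrite $\|u_a^E\|_\infty \simeq \langle u_a^E\rangle_2^{(\theta-1/2)/(\theta+1)}\,[\omega_a^E]_\theta^{3/(2(\theta+1))}$, which is exactly the combination appearing in the second summand of \eqref{trans-3d>}. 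Checking that the exponents on $\langle u_a\rangle_\infty$, $\langle u_a^E\rangle_2$, and $[\omega_a^E]_\theta$ line up precisely as stated is the remaining bookkeeping.
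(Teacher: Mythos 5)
Your proposal has two genuine gaps. First, in part a) the Slobodetsky--Gagliardo characterization does not deliver the stated exponents. After the (measure-preserving) change of variables the denominator is $|\xi(x)-\xi(y)|^{rp+3}$, and bounding it by the Lipschitz constant gives $\{g^L\}_{r,p}^p\le \|v_a\|_\infty^{rp+3}\{g^E\}_{r,p}^p$, i.e.\ the exponent $r+3/p$ rather than $r$ (and $\theta+3/2$ rather than $\theta$ in the $L^2$ case); the extra $d/p$ comes from the $+d$ in the kernel, which your route cannot avoid. The paper gets the sharp exponent by interpolating the composition operator $g\mapsto g\circ x$ between endpoints where its norm is computed exactly: it is an isometry on $L^2$ and has norm $\lesssim\|v_a\|_\infty$ on $\dot H^1$, whence exponent $\theta$ on $\dot H^\theta$ by real/complex interpolation; the $\dot B^r_{p,p}$ case then follows by a second interpolation between $\dot B^r_{2,2}=\dot H^r$ and $\dot B^r_{\infty,\infty}=\dot C^r$ (where the direct H\"older computation does give exponent $r$, since there $d/p=0$). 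This is Lemma~\ref{E-L-E} in the appendix.

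Second, and more seriously, your treatment of the cross term in part b) is where the argument breaks. The split $[v_a]_{\theta-1}\,\|\nabla g^E\|_\infty$ is a dead end, and the proposed repair --- exchanging a derivative so as to replace it by $\|u_a\|_\infty\,[g^E]_\theta$ --- is not a valid bilinear estimate: one cannot bound $\|u\,\nabla g\|_{\dot H^{\theta-1}}$ by $\|u\|_\infty\|g\|_{\dot H^\theta}$ (plus the symmetric term) without retaining some positive regularity of $u$; indeed the paper's final bound genuinely contains $\|u_a\,|\,\dot F^1_{3,2}\|^{\theta-1}$. The correct route is to never produce $\|\nabla g^E\|_\infty$ at all: apply the fractional Leibniz rule \eqref{product-H} with mixed Lebesgue exponents $1/q_1=\tfrac12-\tfrac{\theta-1}{3}$, $1/q_2=\tfrac{\theta-1}{3}$, so that $\|\nabla g^E\|_{q_1}\lesssim[g^E]_\theta$ by Sobolev embedding, while the other factor is measured in $\dot F^{\theta-1}_{q_2,2}$ and estimated by Runst's inequality \eqref{hrunst} as $\|u_a\|_\infty^{2-\theta}\,\|u_a\,|\,\dot F^1_{3,2}\|^{\theta-1}$. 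One then uses $u_a=D^{-1}\mathcal{R}\mathcal{R}\,\omega_a$, the embedding $\dot H^{1/2}(\R^3)\subset L^3(\R^3)\simeq\dot F^0_{3,2}$, the interpolation of $\dot H^{1/2}$ between $\dot H^{-1}$ and $\dot H^\theta$ with $[\omega_a]_{-1}\simeq\|u_a\|_2$, and \eqref{z-3} for $\|u_a\|_\infty$; the exponents on $\|u_a\|_2$ and $[\omega_a^E]_\theta$ then collapse (because $(2-\theta)+(\theta-1)=1$) to exactly the combination in \eqref{trans-3d>}. You correctly guessed the final shape of the bound, but the mechanism that produces it is the part that is missing.
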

\begin{proof} The first group of inequalities comes from Lemma~\ref{E-L-E}. In the case $1 < \theta < 2$, we start with inequality \eqref{ineq3}, i.e.,
\be\label{ineq-3}
[g^L]_\theta \lesssim  \| v_a\|_\infty^{\theta-1}\;\left(\| v_a\|_\infty + \|u_a\|_\infty^{2 - \theta}\;\|u_a^E\,\big|\;{\dot F}^1_{d, 2}(\R^d)\|^{\theta - 1}\right)\;[g^E]_\theta\,.
\ee
Now,
\[
\|u_a\,\big|\;{\dot F}^1_{d, 2}(\R^d)\| \simeq \|D^{-1}{\cal R} {\cal R} \omega_a\,\big|\;{\dot F}^1_{d, 2}(\R^d)\| \lesssim \|\omega_a\,\big|\;{\dot F}^0_{d, 2}(\R^d)\|
\]
When $d = 3$, ${\dot F}^{1/2}_{2, 2}(\R^3)\subset {\dot F}^0_{3, 2}(\R^3)$. Hence
\[
\|\omega_a\,\big|\;{\dot F}^0_{3, 2}(\R^3)\| \lesssim [\omega_a]_{\frac12}\,.
\]
Now, $-1 < 1/2 < \theta$,  and therefore,
\[
\|\omega_a^E\;\big|\; {\dot H}^{1/2}\| \le \|\omega_a^E\;\big|\; {\dot H}^{-1}\|^{(\theta - \frac12)/(\theta + 1)}\cdot \|\omega_a^E\,\big|\;{\dot H}^\theta\|^{\frac32/(\theta+1)} \,.
\]
Recall that (regardless of the dimension) $\|\omega_a^E\;\big|\; {\dot H}^{-1}\|\lesssim \|u_a^E\|$. 
Thus, when $d = 3$,
\[
\|u_a\,\big|\;{\dot F}^1_{d, 2}(\R^d)\|\lesssim \|u_a\|^{(\theta - \frac12)/(\theta + 1)}\cdot \|\omega_a\,\big|\;{\dot H}^\theta\|^{\frac32/(\theta+1)}\,.
\]
On the other hand, $\|u_a\|_\infty \lesssim \|u_a\|_2^{(\theta - \frac12)/(\theta+1)}\;[\omega_a]_\theta^{\frac32/(\theta+1)}$ by \eqref{z-3}. Computing the exponents, we arrive at 
\eqref{trans-3d>}.

\end{proof}

\begin{corollary}\label{GN-3d}
Assume $x: \R^3_\xi \to \R^3_x$ is a volume preserving diffeomorphism. Let  $u_a^i = \partial x^i/\partial\xi^a - A^i_a$, $i, a=1, 2, 3$,  be the components of the deformation gradient tensor and 
let $\omega_a$ be the corresponding vorticities.
\be\label{z-4}
\langle u_a\rangle_\infty \lesssim \langle u_a\rangle_2^{(h+1)/(h+\frac52)}\;\langle\{\omega_a^E\}_{r,p}\rangle^{\frac32/(h+\frac52)}
\ee
and
\be\label{z-6}
\langle u_a\rangle_\infty \lesssim \langle u_a\rangle_2^{(\theta-1/2)/(\theta+1)}\;\langle[\omega_a^E]_\theta\rangle^{\frac32/(\theta+1)}
\ee
\end{corollary}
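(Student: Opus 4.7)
The plan is to obtain Corollary \ref{GN-3d} as a direct consequence of Lemma \ref{GN-E-3} applied to the divergence-free vector field $u_a$, after converting the resulting inequalities from ordinary to tropical norms. Since each $u_a$ satisfies $\operatorname{div} u_a = 0$ (equation \eqref{u-3}) and has $\curl u_a = \omega_a$, it fits the hypothesis of Lemma \ref{GN-E-3} exactly; applying the lemma to $u_a$ in place of $v$ yields the non-tropical inequalities
\begin{equation*}
\|u_a\|_\infty \lesssim \|u_a\|_2^{(h+1)/(h+\frac52)}\,\{\omega_a^E\}_{r,p}^{\frac32/(h+\frac52)}
\qquad\text{and}\qquad
\|u_a\|_\infty \lesssim \|u_a\|_2^{(\theta-\frac12)/(\theta+1)}\,[\omega_a^E]_\theta^{\frac32/(\theta+1)}.
\end{equation*}

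The remaining step is the passage from $\|\cdot\|$ to $\langle\cdot\rangle$. I would argue by a simple case split. If $\|u_a\|_\infty \le 1$, then $\langle u_a\rangle_\infty = 1$, and since the right-hand sides of both target inequalities involve tropical norms (each $\ge 1$) raised to non-negative powers, those right-hand sides are bounded below by $1$, so the inequalities hold trivially (with implicit constant $1$). If instead $\|u_a\|_\infty > 1$, then $\langle u_a\rangle_\infty = \|u_a\|_\infty$, and on the right-hand side we use the elementary facts (listed in the tropical-norm properties above) that $\|f\|_p \le \langle f\rangle_p$ and $\{g\}_{r,p} \le \langle\{g\}_{r,p}\rangle$ (and similarly for $[\cdot]_\theta$), together with the preservation of inequalities under raising to non-negative powers, to bound each factor by its tropical analogue. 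The exponents $(h+1)/(h+\tfrac52)$, $\tfrac32/(h+\tfrac52)$, $(\theta-\tfrac12)/(\theta+1)$, and $\tfrac32/(\theta+1)$ are all non-negative under the parameter assumptions \eqref{param-3d}, which makes these monotonicity steps legitimate.

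There is really no hard step here; the corollary is a formal consequence of Lemma \ref{GN-E-3} together with the tropical-norm bookkeeping already set up in Section \ref{sec:below}. The one point worth flagging is that, in contrast to Corollary \ref{GN-2d}, no application of Lemma \ref{E-L-E} is required because the statement only involves Eulerian norms $\{\omega_a^E\}_{r,p}$ and $[\omega_a^E]_\theta$; Lagrangian analogues (which would need the bi-Lipschitz transfer with the factor $\langle u_a\rangle_\infty^r$) are deliberately omitted, presumably because in the three-dimensional argument the Lagrangian-to-Eulerian transfer for these particular norms will be handled separately via Lemma \ref{trop-3d} where it is needed. So the proof is essentially a one-line invocation of Lemma \ref{GN-E-3} plus a short remark on tropical norms.
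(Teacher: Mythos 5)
Your proposal is correct and follows essentially the same route as the paper, which proves the corollary simply by citing inequalities \eqref{z-2} and \eqref{z-3} of Lemma~\ref{GN-E-3} applied to $u_a$ (divergence-free by the Piola identity for the volume-preserving map); your explicit case split for the tropical-norm conversion just spells out the bookkeeping the paper leaves implicit.
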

\begin{proof} These inequalities follow from \eqref{z-2} and \eqref{z-3}. 
\end{proof}

A three dimensional analogue of Lemma~\ref{Om-E-L} is as follows.
\begin{lemma}\label{Om-E-L-3d} 
When $d = 3$, 
\be\label{z-7}
\langle \{\Omega^E\}_{r, p} \rangle \lesssim  \langle u_a\rangle_2^{2 r(\theta-1/2)/(\theta+1)}\;\langle[\Omega^E]_\theta\rangle^{3 r/(\theta+1)}\;\langle \{\Omega^L\}_{r, p} \rangle\,.
\ee
\end{lemma}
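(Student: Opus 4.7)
The plan is to combine the Lagrangian-to-Eulerian change of variables estimate from Lemma~\ref{trop-3d} with the Gagliardo--Nirenberg type bound for $\|u_a\|_\infty$ from Corollary~\ref{GN-3d}. Indeed, the structure of \eqref{z-7} strongly suggests this: the factor $\langle\{\Omega^L\}_{r,p}\rangle$ on the right should come directly from a coordinate-change inequality, and the remaining factors $\langle u_a\rangle_2^{2r(\theta-1/2)/(\theta+1)}\,\langle[\Omega^E]_\theta\rangle^{3r/(\theta+1)}$ are exactly what one obtains by raising the bound \eqref{z-6} to the power $2r$.

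First I would apply the second inequality of \eqref{trans-3d<} in Lemma~\ref{trop-3d} componentwise to $\Omega^E$, giving
\[
\{g^E\}_{r,p}\lesssim \langle u_a\rangle_\infty^{2r}\,\{g^L\}_{r,p},
\]
so that, passing to tropical norms and summing over the components assembled in $\Omega$,
\[
\langle\{\Omega^E\}_{r,p}\rangle \lesssim \langle u_a\rangle_\infty^{2r}\,\langle\{\Omega^L\}_{r,p}\rangle.
\]
Next, I would invoke \eqref{z-6} from Corollary~\ref{GN-3d}, raised to the $2r$-th power,
\[
\langle u_a\rangle_\infty^{2r}\lesssim \langle u_a\rangle_2^{2r(\theta-1/2)/(\theta+1)}\,\langle[\omega_a^E]_\theta\rangle^{3r/(\theta+1)}.
\]
Finally, since $\omega_a^E$ is one block of the aggregate $\Omega^E$, we have $\langle[\omega_a^E]_\theta\rangle\le \langle[\Omega^E]_\theta\rangle$, and substituting gives precisely \eqref{z-7}.

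I do not anticipate any genuine obstacle here: the lemma is essentially a bookkeeping corollary of two already-proved inequalities. The only point to be a bit careful about is that Lemma~\ref{trop-3d}'s change-of-variable inequality \eqref{trans-3d<} is stated for a single scalar/tensor component $g$, while $\Omega$ is a collection of components $(\omega^{mn},\omega_a^{mn})$; but since the tropical norm of a vector-valued quantity is the maximum of the tropical norms of its entries (or an equivalent sum), this applies term by term with a harmless constant. One should also note that the hypotheses of Lemma~\ref{trop-3d} and Corollary~\ref{GN-3d} must both be met simultaneously, namely $0<r<1$ and $\theta$ in the appropriate range; these are the standing assumptions in Section~\ref{d=3}, so no extra conditions are needed in the statement.
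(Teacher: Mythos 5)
Your proposal is correct and coincides with the paper's own proof, which likewise applies the second inequality of \eqref{trans-3d<} to pass from $\{\Omega^L\}_{r,p}$ to $\{\Omega^E\}_{r,p}$ at the cost of $\langle u_a\rangle_\infty^{2r}$ and then substitutes \eqref{z-6} for $\langle u_a\rangle_\infty$. The bookkeeping remarks about componentwise application and the standing parameter assumptions are fine and do not change the argument.
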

\begin{proof} From \eqref{trans-3d<}, when $d = 3$ we have
\[
\langle\{\Omega^E\}_{r,p}\rangle \lesssim \langle u_a\rangle_\infty^{2r}\;\langle\{\Omega^L\}_{r,p} \rangle\,.
\]
Now use   \eqref{z-6} for the $L^\infty$ norm. This leads to \eqref{z-7}.
\end{proof}

\noindent In the three dimensional case, the Strichartz inequality we use is given by 
\begin{lemma}\label{Str-3d}
Let $w$ be a solution of the Cauchy problem 
\[
w_t + i\sqrt{-\Delta}\;w = f\,,\quad w(0) = w_0\,,
\]
on the time interval $[0, T]$. Let 
$w_0 \in {\dot H}^\theta(\R^3)$ and $f\in L^1([0, T]\to {\dot H}^\theta(\R^3))$.
If $2\le p < \infty$ and $\theta$ and $r$ are related as follows, 
\be\label{theta-3d}
\theta = r + \frac{p-2}{p} = 1 + h + \frac1p\,,
\ee
then    
\be\label{trop-str-3d}
\left(\int_0^T \{w(t)\}_{r,p}^{\frac{2p}{p-2}}\,dt\right)^{\frac{p-2}{2p}} \lesssim 
[w_0]_\theta + \int_0^T [f(t)]_\theta\,dt\,.
\ee
\end{lemma}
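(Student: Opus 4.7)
The plan is to reduce the lemma to the sharp homogeneous Strichartz estimate for the half-wave propagator in $\R^3$ and reconstruct the Besov norm by Littlewood--Paley decomposition. First I verify the exponent arithmetic: with $q = 2p/(p-2)$, the pair $(q,p)$ satisfies
\[
\tfrac{1}{q}+\tfrac{d-1}{2p} \,=\, \tfrac{p-2}{2p} + \tfrac{1}{p} \,=\, \tfrac{1}{2} \,=\, \tfrac{d-1}{4} \quad (d=3),
\]
so it saturates the sharp wave-admissibility endpoint. The associated $L^p_x$-Sobolev exponent is $\theta_0 := d(\tfrac{1}{2}-\tfrac{1}{p}) - \tfrac{1}{q} = \tfrac{p-2}{p}$, and the hypothesis $\theta = r + \theta_0$ is exactly the regularity offset needed to promote an $L^q_t L^p_x$ estimate to an $L^q_t \dot B^r_{p,p}$ one.

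By Duhamel's formula and Minkowski's integral inequality it suffices to prove the homogeneous estimate $\bigl(\int_0^T \{e^{-it\sqrt{-\Delta}} u_0\}_{r,p}^q\,dt\bigr)^{1/q} \lesssim [u_0]_\theta$ with constant independent of $T$; the forcing term then follows by applying the bound to $u_0 = f(s)$ and integrating in $s$. I would decompose $u_0 = \sum_j P_j u_0$ dyadically. Since $e^{-it\sqrt{-\Delta}}$ commutes with each Littlewood--Paley projector $P_j$, the Keel--Tao $TT^*$ argument applied to the three-dimensional dispersive bound $\|e^{-it\sqrt{-\Delta}} P_j g\|_{L^\infty_x}\lesssim 2^{j}|t|^{-1}\|P_j g\|_{L^1_x}$ yields the sharp frequency-localized Strichartz estimate
\[
\|e^{-it\sqrt{-\Delta}} P_j u_0\|_{L^q_t L^p_x} \lesssim 2^{j\theta_0}\|P_j u_0\|_{L^2_x}.
\]

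For the reassembly I target the slightly stronger left-hand norm $L^q_t \dot B^r_{p,2}$, which dominates $L^q_t \{\cdot\}_{r,p}$ via the embedding $\dot B^r_{p,2}\hookrightarrow \dot B^r_{p,p}$ (just $\ell^2\hookrightarrow \ell^p$ on dyadic coefficients, valid because $p\ge 2$). Since $q\ge 2$, Minkowski's integral inequality allows moving $\ell^2_j$ inside $L^q_t$:
\[
\|w\|_{L^q_t \dot B^r_{p,2}} \le \Bigl(\sum_j 2^{2jr}\|P_j w\|_{L^q_t L^p_x}^2\Bigr)^{1/2}.
\]
Combined with the frequency-localized Strichartz, with $r + \theta_0 = \theta$, and with the Plancherel identity $[u_0]_\theta^2 \simeq \sum_j 2^{2j\theta}\|P_j u_0\|_{L^2}^2$, this closes the homogeneous case. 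The forcing term is treated analogously, using Minkowski once more to pull $L^1_s$ outside $\ell^2_j$. The only non-routine point is the mixed-norm bookkeeping; both Minkowski interchanges move in the favourable direction precisely because $q\ge 2$ and $p\ge 2$, so there is no genuine obstacle, and dimension three enters only through the decay exponent $(d-1)/2 = 1$ in the dispersive bound.
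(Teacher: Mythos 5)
Your argument is correct in outline and rests on the same mechanism as the paper's Appendix D proof (frequency-localized dispersive decay for $e^{it\sqrt{-\Delta}}$, a $TT^*$/fractional-integration step, and the observation that $q=\tfrac{2p}{p-2}$, $\theta=r+\tfrac{p-2}{p}$ is exactly sharp wave-admissibility in $d=3$), but it is organized differently. The paper keeps the full Besov norms throughout: it converts the dispersive bound into an $L^{p'}$--Besov to $L^p$--Besov decay estimate, dualizes the whole solution against a test function $h(t,x)$, and closes with the Hardy--Littlewood--Sobolev inequality in time; the Littlewood--Paley structure is only used inside the dispersive estimate. You instead prove the clean frequency-localized estimate $\|e^{it\sqrt{-\Delta}}P_j u_0\|_{L^q_tL^p_x}\lesssim 2^{j\theta_0}\|P_ju_0\|_{L^2}$ and reassemble with two applications of Minkowski (valid since $q\ge2$ and $p\ge2$), which in fact yields the slightly stronger $L^q_t\dot B^r_{p,2}$ bound before embedding into $\dot B^r_{p,p}$. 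Your treatment of the inhomogeneous term by Duhamel plus Minkowski is also fine, since the right-hand side is an $L^1_t\dot H^\theta$ norm and no Christ--Kiselev step is needed.

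One slip to fix: the stated dispersive bound $\|e^{-it\sqrt{-\Delta}}P_jg\|_{L^\infty}\lesssim 2^{j}|t|^{-1}\|P_jg\|_{L^1}$ has the wrong frequency power. In $\R^3$ the kernel of $e^{it\sqrt{-\Delta}}\varphi(2^{-j}D)$ is $O\bigl(2^{3j}(2^j|t|)^{-1}\bigr)=O\bigl(2^{2j}|t|^{-1}\bigr)$ for $2^j|t|\gtrsim 1$, i.e.\ the constant is $2^{j(d+1)/2}=2^{2j}$ (this matches the factor $2^{j(d+1)\frac{p-2}{2p}}$ in the paper's estimate at $p=\infty$). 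The $TT^*$ argument converts a dispersive constant $A$ into $A^{\frac12-\frac1p}$ in the Strichartz bound; with your $A=2^{j}$ you would only obtain $2^{j\theta_0/2}$, which would leave the final regularity at $r+\theta_0/2\neq\theta$. With the correct $A=2^{2j}$ one gets $2^{2j(\frac12-\frac1p)}=2^{j\theta_0}$, which is exactly the localized estimate you wrote down, so the rest of your argument goes through unchanged once the exponent is corrected.
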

The proof of a more general statement is in Appendix \ref{sec:stricharts-app}.
\bigskip

\subsection{A priori estimates, $d=3$}

As always $\|V(t)\| = \|V(0)\| = e_0$. 
As in the two dimensional case, we would like to have
\[
\sup_{0\le t\le T_0} \int_0^t \|\nabla V^E(t^\prime)\|_\infty^q + \|\Omega^E(t^\prime)\|_\infty^q\,dt^\prime  < \infty
\]
for some $q > 1$. 
Thanks to \eqref{z-1} we have
\[
\int_0^t \|\nabla V^E(t^\prime)\|_\infty^q + \|\Omega^E(t^\prime)\|_\infty^q\,dt^\prime \lesssim_{e_0} 
\int_0^t \langle \{\Omega^E(t^\prime)\}_{r,p} \rangle^{\frac52 q/(h+\frac52)}\;dt^\prime
\]
We will use the Strichartz estimates in the Lagrange setting. So, we will transform $\Omega^E$ in the integral on the right to $\Omega^L$ using Lemma~\ref{Om-E-L-3d}:
\[
\begin{aligned}
& \langle \{\Omega^E(t^\prime)\}_{r,p} \rangle^{\frac52 q/(h+\frac52)}  \lesssim 
\langle u_a\rangle_2^{\frac{\theta - \frac12}{\theta+1}\cdot \frac{5rq}{h+\frac52}}\;
 \langle[\omega_a^E]_\theta \rangle^{\frac{\frac32}{\theta+1}\cdot \frac{5rq}{h+\frac52}}\;
 \langle \{\Omega^L(t^\prime)\}_{r,p} \rangle^{\frac52 q/(h+\frac52)}
\end{aligned} 
\]
Thus, absorbing the $L^2$ norm of $V$, we have
\be\label{O-1}
\begin{aligned}
& \int_0^t \langle \{\Omega^E(t^\prime)\}_{r,p} \rangle^{\frac52 q/(h+\frac52)}\;dt^\prime \lesssim_{e_0} 
\sup_{0\le t^\prime\le t}\,\langle [\omega_a^E]_\theta\rangle^{\frac{\frac32}{\theta+1}\cdot \frac{5rq}{h+\frac52}}\; \int_0^t \langle \{\Omega^L(t^\prime)\}_{r,p} \rangle^{\frac52 q/(h+\frac52)}\;dt^\prime
\end{aligned} 
\ee
Since 
\[
\frac12 - \frac1p < 1 + \frac{2h}{5}
\]
when $h > 0$ and $p\ge 2$, we define $q > 1$ by the equality 
\be\label{qhp}
\frac{\frac52 q}{\frac52 + h} = \frac{2p}{p-2}\,.
\ee
Thus, with  $q$ as in \eqref{qhp}, we have   
\be\label{gradV}
\begin{aligned}
& \int_0^t\|\nabla V^E(t^\prime)\|_\infty^{q}+ \|\Omega^E(t^\prime)\|_\infty^q\,dt^\prime \lesssim
\sup_{0\le\tau\le t}[\Omega^E(\tau)]_\theta^{\frac{2p}{p-2} \,\frac{3r}{\theta+1}}\; \int_0^t\langle \{\Omega^L(t^\prime)\}_{r,p} \rangle^{\frac{2p}{p-2}}\,dt^\prime \,.
\end{aligned}
\ee

\begin{proposition}\label{apri-3d}
Let $V(t)$ be a solution of \eqref{u-11}, \eqref{u-21} in $C([0, T_1]\to H^{s_2}_\sigma(\R^3))$ in the 
sense of Theorem~\ref{Thm-1}, where $s_2 > \frac{d}{2} + 1 = \frac52$. Assume that for some $r>0$ and $p\in [2, +\infty)$ such that $r - 3/p > 0$, the following norms of the vorticities $\Omega^E(t)$ at $t=0$ are bounded:
\[
[\Omega^E(0)]_\theta \le C_0,\quad \{\omega_a^E(0)\}_{r,p} \le C_1\,,
\]
where $\theta = 1 + r - 2/p$. Then there exists a time $T_0 > 0$, depending only on $\|V(0)\|$, $C_0$, and $C_1$, such that
\be\label{dot-Brp}
\left(\int_0^t \{\Omega^E(t^\prime)\}_{r,p}^{\frac{2p}{p-2}}\,dt^\prime\right)^{\frac{p-2}{2p}} \le C_2
\ee
and
\be\label{Lq-3d}
\int_0^t\|\nabla V^E(t^\prime)\|_\infty^{q}+ \|\Omega^E(t^\prime)\|_\infty^q\,dt^\prime \le C_3
\ee
where
\be\label{q}
q = \frac{2p}{p-2}\cdot \frac{\frac52 + r - \frac{3}{p}}{\frac52}\,,
\ee
and
\be\label{Hs-3d}
\|V(t)\|_{H^{\theta + 1}} \le C_4\,,
\ee
for all $t$ in the interval $[0, T_0]$, where $C_2$, $C_3$,  and $C_4$ depend only on $e_0$, $C_0$, and $C_1$.
\end{proposition}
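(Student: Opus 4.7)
The plan is to mirror the bootstrap argument of Proposition~\ref{apri-2d}, with the main role played by the scalar functional
\[
y(t) \;=\; \left(\int_0^t \langle \{\Omega^E(t^\prime)\}_{r,p}\rangle^{\frac{2p}{p-2}}\,dt^\prime\right)^{\frac{p-2}{2p}},
\]
which is precisely the quantity appearing in \eqref{dot-Brp}. Once $y(t)$ is bounded on some $[0,T_0]$, the bound \eqref{Lq-3d} follows immediately from \eqref{gradV}, and \eqref{Hs-3d} follows from the standard $H^{\theta+1}$ energy estimate (the limit form of \eqref{r-energy-est} at $r=\theta+1$), since $\int_0^t\|\nabla V^E\|_\infty\,dt^\prime \le t^{1-1/q}\,C_3^{1/q}$ by H\"older.

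First I would push $y(t)$ to the Lagrangean picture using Lemma~\ref{Om-E-L-3d}, obtaining
\[
y(t) \;\lesssim_{e_0}\; \sup_{[0,t]} \langle[\Omega^E]_\theta\rangle^{\beta_1}\;
\left(\int_0^t \langle \{\Omega^L(t^\prime)\}_{r,p}\rangle^{\frac{2p}{p-2}}\,dt^\prime\right)^{\frac{p-2}{2p}}
\]
for the appropriate exponent $\beta_1$ read off from \eqref{z-7}. Next, using the Fourier decomposition \eqref{om-pi} and the $L^p$-boundedness of Riesz transforms on homogeneous Besov spaces, the Lagrangean integral splits into contributions from $\pi_{\pm}^L$ and $\pi_{ab}^L$. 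For $\pi_{\pm}^L$ I would apply the Strichartz inequality of Lemma~\ref{Str-3d} with the admissible pair dictated by \eqref{theta-3d}, obtaining
\[
\left(\int_0^t \{\pi_\pm^L\}_{r,p}^{\frac{2p}{p-2}}\,dt^\prime\right)^{\frac{p-2}{2p}}
\lesssim [\pi_\pm^L(0)]_\theta + \int_0^t [F_\pm^L(\tau)]_\theta\,d\tau.
\]
The initial term is controlled by $\langle[\Omega^E(0)]_\theta\rangle$ raised to a power, using inequality \eqref{trans-3d>} of Lemma~\ref{trop-3d} (this is the 3D analogue of \eqref{L-E-inity}); note that in three dimensions $\theta>1$, so we must use the second half of Lemma~\ref{trop-3d}, which introduces an additional factor involving $[\omega_a^E]_\theta$. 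For the forcing, $F_\pm^L = f^L \pm \hat k^a f_a^L$ involves quadratic expressions \eqref{Fom-1}, \eqref{Fom-2} in Riesz transforms of $\Omega^E$; applying the fractional product rule and then \eqref{trans-3d>} yields a bound of the form $[F_\pm^L]_\theta \lesssim_{e_0} P(\langle [\Omega^E]_\theta\rangle)$ for a polynomial $P$. For $\pi_{ab}^L$, which satisfies the transport/ODE \eqref{SolPi-1} without any dispersive part, I would integrate directly and use \eqref{trans-3d<} together with Lemma~\ref{E-L-E} to convert back to $\{\omega_a^E\}_{r,p}$, as in the derivation of \eqref{yy-7}.

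In parallel, to control the $\sup$ of $[\Omega^E]_\theta$ that appears in $\beta_1$ and in the $[F_\pm^L]_\theta$ bound, I would invoke Lemma~\ref{Dth-om}, which gives
\[
[\Omega^E(t)]_\theta \le [\Omega^E(0)]_\theta\,\exp\left\{c\int_0^t(\|\nabla V\|_\infty+\|\Omega\|_\infty)\,dt^\prime\right\},
\]
and then use \eqref{z-1} of Lemma~\ref{GN-E-3} together with H\"older in time to bound the integral in the exponent by $t^{1-1/q}\cdot y(t)^{\frac{2p}{p-2}\cdot \frac{5/2}{h+5/2}\cdot\frac{1}{q}}$, exactly as \eqref{yy-2} was used in the 2D proof.

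Combining all of these, $y(t)$ will satisfy a self-bounding inequality of the schematic form
\[
y(t) \;\le\; A_0 \;+\; t^{\alpha_1}\,\Phi_1(y(t))\,\exp\bigl(t^{\alpha_2}\Phi_2(y(t))\bigr)
\]
with $\alpha_1,\alpha_2>0$ and $\Phi_1,\Phi_2$ continuous polynomials determined by the exponents in \eqref{z-1}--\eqref{z-6} and \eqref{z-7}, where $A_0$ depends only on $e_0,C_0,C_1$. A standard continuity argument then produces $T_0>0$ such that $y(t)\le 2A_0$ on $[0,T_0]$, which is \eqref{dot-Brp}. The bound \eqref{Lq-3d} follows from \eqref{gradV}, and \eqref{Hs-3d} from energy estimates as above.

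The main obstacle, I expect, is the coupling introduced by \eqref{trans-3d>}: because $\theta>1$ in three dimensions, the Lagrange-to-Euler transition for the $\dot H^\theta$ norms of $\Omega$ already involves $[\omega_a^E]_\theta$ itself, so the self-bounding inequality for $y(t)$ is not polynomial in $y$ alone but genuinely couples $y(t)$ with $\sup_{[0,t]}[\Omega^E]_\theta$. The bookkeeping of exponents has to be done carefully to verify that every occurrence of $[\Omega^E]_\theta$ on the right-hand side is multiplied by a strictly positive power of $t$, so that the continuity/bootstrap argument actually closes for small $T_0$. Once this accounting is carried out as in \eqref{yy-9}, the rest is routine.
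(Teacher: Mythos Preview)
Your overall architecture is exactly that of the paper: introduce $y(t)$ as in \eqref{Y-1}, couple it with $z(t)=\sup_{[0,t]}\langle[\Omega^E]_\theta\rangle$ via Lemma~\ref{Dth-om} and Lemma~\ref{Om-E-L-3d}, split $\Omega^L$ into $\pi_\pm^L$ (Strichartz) and $\pi_{ab}^L$ (direct integration), and close with a continuity argument. That part is fine.

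There is, however, a concrete error in your treatment of the forcing. You claim that the fractional product rule plus \eqref{trans-3d>} gives $[F_\pm^L]_\theta \lesssim_{e_0} P(\langle[\Omega^E]_\theta\rangle)$, a polynomial in $[\Omega^E]_\theta$ alone. This is false in the regime of interest. The product rule applied to $\partial V^E\cdot\partial V^E$ produces a factor $\|\nabla V^E\|_\infty$, and in three dimensions $\|\nabla V^E\|_\infty$ \emph{cannot} be controlled by $e_0$ and $[\Omega^E]_\theta$ unless $\theta>3/2$; a scaling check shows the required exponents would be negative. Since the whole point is to allow $\theta=1+h+1/p$ close to $1$, you must instead invoke \eqref{z-1} and pay with $\langle\{\Omega^E\}_{r,p}\rangle^{\frac{5/2}{h+5/2}}$. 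The paper does exactly this in \eqref{temp-12}--\eqref{FLs-2}, obtaining
\[
[F_\pm^L(t)]_\theta \;\lesssim_{e_0}\; \langle[\Omega^E(t)]_\theta\rangle^{(\frac52\theta+1)/(\theta+1)}\,\langle\{\Omega^E(t)\}_{r,p}\rangle^{\frac{5/2}{h+5/2}},
\]
so that after H\"older in $t$ the integrated forcing contributes $z(t)^{\alpha}\,t^{1/q'}\,y(t)^{\frac{5/2}{h+5/2}}$, as in \eqref{int-FLs-th}. A similar mixed bound, \eqref{Frp-2}, is needed for $\{F_{ab}^L\}_{r,p}$. With these corrections the final inequality is \eqref{eq:y-3d}; after substituting \eqref{Z-1} for $z(t)$ you get $y(t)\le\mathfrak F(t,y(t))$ with $\mathfrak F(0,y)$ a constant depending only on $e_0,C_0,C_1$, and the continuity argument closes. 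So your schematic self-bounding inequality is qualitatively right, but the route to it must pass through the mixed $z$--$y$ estimate on the forcing, not through a pure $P(z)$ bound.
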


\begin{proof} Keep in mind \eqref{qhp}. Denote
\be\label{Y-1}
y(t) = \left(\int_0^t \langle \{\Omega^E(\tau)\}_{r,p} \rangle^{\frac{2p}{p-2}}\,d\tau\right)^{\frac{p-2}{2p}}\,\quad\text{and}\quad z(t) = \sup_{0\le t^\prime \le t} \langle [\Omega^E(t^\prime)]_\theta \rangle\,.
\ee
It follows from \eqref{z-1} that
\[
\begin{aligned}
\int_0^t\|\nabla V^E(t^\prime)\|_\infty + \|\Omega^E(t^\prime)\|_\infty \,dt^\prime \lesssim_{e_0} \,\int_0^t \langle \{\Omega^E(t^\prime)\}_{r,p} \rangle^{\frac52/(h+\frac52)}\,dt^\prime
\end{aligned}
\]
and, therefore,
\be\label{int-Linfty-3d}
\int_0^t\|\nabla V^E(t^\prime)\|_\infty + \|\Omega^E(t^\prime)\|_\infty \,dt^\prime \lesssim_{e_0} \;t^{1/q^\prime}\,y(t)^{\frac52/(h+\frac52)}\,.
\ee 
Incorporate this into estimate \eqref{Hth-om} and obtain 
\be\label{Z-1}
z(t) \le \langle [\Omega^E(0)]_\theta \rangle\,\exp\{ c(e_0) \, t^{1/q^\prime}\,y(t)^{\frac52/(h+\frac52)} \}\,.
\ee
Now proceed to estimate $y(t)$. Applying estimate \eqref{z-7}, we obtain  
\be\label{Y-2}
y(t) \lesssim_{e_0} z(t)^{\frac{3r}{\theta+1}}\;\left(\int_0^t \langle \{\Omega^L(\tau)\}_{r,p} \rangle^{\frac{2p}{p-2}}\,d\tau\right)^{\frac{p-2}{2p}}\,.
\ee
As formulas \eqref{om-pi} show, the vorticities $\Omega^L$ are linear combinations of the Riesz transforms of the quantities $\pi^L_\pm$ and $\pi^L_{ab}$. Then,
\[
\left(\int_0^t \langle \{\Omega^L(\tau)\}_{r,p} \rangle^{\frac{2p}{p-2}}\,d\tau\right)^{\frac{p-2}{2p}} \lesssim 
\left(\int_0^t \langle \{\pi^L_\pm(\tau)\}_{r,p} \rangle^{\frac{2p}{p-2}}\,d\tau\right)^{\frac{p-2}{2p}} +  \left(\int_0^t \langle \{\pi^L_{ab}(\tau)\}_{r,p} \rangle^{\frac{2p}{p-2}}\,d\tau\right)^{\frac{p-2}{2p}}
\]
(summation over $\pm$ and $ab = 12, 23, 31$). 
Use  Strichartz inequalities \eqref{trop-str-3d} to estimate the norms $\|\pi^L_\pm\,\big|\; L^{2p/(p-2)}([0, t]\to {\dot B}^r_{p,p})\|$  
whereas $\|\pi^L_{ab}\,\big|\; L^{2p/(p-2)}([0, t]\to {\dot B}^r_{p,p})\|$ can be estimated directly 
from the representation of $\pi^L_{ab}(t)$ in \eqref{SolPi-1}. Thus we have
\be\label{pm-1}
\left(\int_0^t\langle \{\pi_\pm^L(\tau)\}_{r,p} \rangle^{2p/(p-2)}\,d\tau\right)^{\frac{p-2}{2p}}\lesssim 
t^{\frac{p-2}{2p}} + [\pi^L_\pm(0)]_\theta + \int_0^t [F^L_\pm(\tau)]_\theta\,d\tau
\ee
and
\be\label{ab-1}
\left(\int_0^t \langle \{\pi_{ab}^L(\tau)\}_{r,p} \rangle^{\frac{2p}{p-2}}\,d\tau\right)^{\frac{p-2}{2p}}\lesssim 
t^{\frac{p-2}{2p}}\;\left[ \langle \{\pi_{ab}^L(0)\}_{r,p}\rangle +  \int_0^t\{ F^L_{ab}(t^\prime)\}_{r, p}\,dt^\prime\right]\,.
\ee
We see from \eqref{pis} that 
\[
[\pi^L_\pm(0)]_\theta \lesssim [\Omega^L(0)]_\theta\,.
\]
Now use \eqref{trans-3d>} to transition to the Euler setting:
\begin{align*}
[\Omega^L(0)]_\theta \lesssim{}&  \langle u_a(0)\rangle_\infty^{\theta-1}\;\left(\langle u_a(0)\rangle_\infty + \langle u_a^E(0)\rangle_2^{(\theta - \frac12)/(\theta + 1)}\cdot [\omega_a^E(0)]_\theta^{\frac32/(\theta+1)}\right)\;[\Omega^E(0)]_\theta \\
\intertext{use \eqref{z-6}} 
\lesssim_{e_0}{}& [\omega_a^E(0)]_\theta^{\frac{\frac32 \theta}{\theta+1}}\;[\Omega^E(0)]_\theta\,.
\end{align*} 
Thus,
\be\label{omth-0}
[\pi^L_\pm(0)]_\theta \lesssim_{e_0} [\Omega^E(0)]_\theta^{\frac{(\frac52 \theta + 1)}{\theta+1}}\,.
\ee
Also,
\begin{align*} 
\{\pi_{ab}^L(0)\}_{r,p} \underset{\eqref{pis}}{\lesssim }{}& \{\omega_a^L(0)\}_{r,p}  \underset{\eqref{trans-3d<}}{\lesssim} \langle u_a(0)\rangle_\infty^r\;\{\omega_a^E(0)\}_{r,p} \\
\underset{\eqref{z-4}}{\lesssim}{}& \langle u_a^E(0)\rangle_2^{r(h+1)/(h+\frac52)}\;\{\omega_a^E(0)\}_{r,p}^{1 + \frac32 r/(h+\frac52)}
\end{align*} 
and therefore,
\be\label{omrp-0}
\{\pi_{ab}^L(0)\}_{r,p} \lesssim_{e_0} \{\omega_a^E(0)\}_{r,p}^{1 + \frac32 r/(h+\frac52)}\,.
\ee
Consider now the terms $[F^L_\pm]_\theta$. Of course,
\[
[F^L_\pm]_\theta \lesssim [f^L]_\theta + [f^L_a]_\theta\,.
\]
Then, by \eqref{trans-3d>} with \eqref{z-6},
\be\label{FLs}
\begin{aligned}
& \langle [f^L(t)]_\theta \rangle \rangle \lesssim \langle[\omega_a^E(t)]_\theta  \rangle^{\frac32 \theta/(\theta+1)}\;\langle [f^E(t)]_\theta \rangle \,, \\  
& \langle [f^L_a(t)]_\theta \rangle \lesssim [\omega_a^E(t)]_\theta  \rangle^{\frac32 \theta/(\theta+1)}\;\langle [f^E_a(t)]_\theta \rangle
\end{aligned}
\ee
Each of $f^E$ and $f_a^E$ is a linear combination of quadratic terms of the form $\partial V^E\,\partial V^E$. 
Thus, consider the ${\dot H}^\theta$  norm of $\partial V^E\,\partial V^E$. Using the fractional product rule (cf. Lemma \ref{frac-leibniz}), 
\[
[\partial V^E\,\partial V^E]_\theta \simeq \|D^\theta(\partial V^E\,\partial V^E)\|\lesssim 
 \|D^\theta \partial V^E\|\,\|\partial V^E\|_\infty 
\]
Thus, 
\be\label{temp-12}
[\partial V^E(t)\,\partial V^E(t)]_\theta \lesssim_{e_0} [\Omega^E(t)]_\theta\;\langle \{\Omega^E(t)\}_{r,p}\rangle^{\frac52/(h+\frac52)} 
\ee
Combine this with \eqref{FLs}:
\be\label{FLs-2}
[f^L(t)]_\theta\,,\;[f^L_a(t)]_\theta \lesssim_{e_0} \langle [\Omega^E(t)]_\theta \rangle^{(\frac52 \theta + 1)/(\theta+1)}\;
\langle \{\Omega^E(t)\}_{r,p} \rangle^{\frac52/(h+\frac52)} 
\ee
Then
\be\label{int-FLs-th}
\int_0^t [F^L(t^\prime)]_\theta\,dt^\prime \lesssim z(t)^{(\frac52 \theta + 1)/(\theta+1)}\;t^{1/q^\prime}\;y(t)^{\frac52/(h+\frac52)}\,.
\ee
Consider next the norms $\{ F^L_{ab}(t^\prime)\}_{r, p}$ that appear in \eqref{ab-1}. Undoing the Riesz transforms, 
\[
\{ F^L_{ab}(t)\}_{r, p} \lesssim \{ f^L_{a}(t)\}_{r, p}\,.
\]
By \eqref{trans-3d<} followed by \eqref{z-6} we have
\be\label{Frp-1}
\{ f^L_{a}(t)\}_{r, p} \lesssim \langle u_a(t)\rangle_2^{r(\theta-1/2)/(\theta+1)}\;[\omega_a(t)]_\theta^{\frac32 r/(\theta+1)}\;\{f^E_a(t)\}_{r,p}\,.
\ee
Since each $f^E_a$ is a sum of the products $\partial V^E\;\partial V^E$, we look at the ${\dot B}^r_{p,p}$ norm of the product:
\[
\{\partial V^E\;\partial V^E\}_{r,p} \lesssim \langle\partial V^E\rangle_\infty\;\{\partial V^E\}_{r,p} 
\lesssim \langle\nabla V^E\rangle_\infty\,\langle \{\Omega^E\}_{r,p}\rangle \,.
\]
Use \eqref{z-1} to continue:
\[
\{f^E_a(t)\}_{r,p} \lesssim \langle \{\Omega^E(t)\}_{r,p} \rangle^{1 + \frac52/(h+\frac52)}\,.
\]
Thus,
\be\label{Frp-2}
\{ f^L_{a}(t)\}_{r, p} \lesssim \langle [\omega_a(t)]_\theta \rangle^{\frac32 r/(\theta+1)}\;\langle \{\Omega^E(t)\}_{r,p} \rangle^{1 + \frac52/(h+\frac52)}\,.
\ee
It turns out that 
\be
1 + \frac{\frac52}{\frac52 + h} < \frac{2p}{p-2} = \frac{\frac52 q}{\frac52 + h}\,.
\ee
Indeed, 
\[
\frac{\frac52 + h}{5+h} > \frac12 - \frac{1}{p}\;\Leftrightarrow \;
\frac52 + h > \frac52 + \frac{h}{2} - \frac{5+h}{p}\;\Leftrightarrow \;
\frac{h}{2} >  - \frac{5+h}{p}\,.
\]
With $\ell > 1$ defined by the formula 
\be\label{ell}
\ell = \frac{2p}{p-2}\,\left(1 + \frac{\frac52}{\frac52 + h}\right)^{-1}\,,
\ee
we obtain
\be\label{int-FLs-rp}
\int_0^t\{ F^L_{ab}(t^\prime)\}_{r, p}\,dt^\prime \lesssim z(t)^{\frac32 r/(\theta+1)}\;t^{1/\ell^\prime}\;y(t)^{1 + \frac52/(h+\frac52)}
\ee
Tying the ends:
\[
\begin{aligned}
& \left(\int_0^t  \langle \{\Omega^L(t^\prime)\}_{r,p} \rangle^{\frac{2p}{p-2}}\,dt^\prime\right)^{\frac{p-2}{2p}} \lesssim_{e_0} 
\langle [\omega_a^E(0)]_\theta \rangle^{\frac{\frac32 \theta}{\theta+1}}\;\langle[\Omega^E(0)]_\theta  \rangle + t^{\frac{p-2}{2p}}\,\langle \{\omega_a^E(0)\}_{r,p} \rangle^{1 + \frac32 r/(h+\frac52)}  \\ 
& \ \ \\ 
& + t^{\frac{p - 2}{2p}} + z(t)^{(\frac52 \theta + 1)/(\theta+1)}\;t^{1/q^\prime}\;y(t)^{\frac52/(h+\frac52)} 
 + t^{\frac{p-2}{2p}}\;z(t)^{\frac32 r/(\theta+1)}\;t^{1/\ell^\prime}\;y(t)^{1 + \frac52/(h+\frac52)}
\end{aligned}
\]
and 
\be\label{eq:y-3d}
\begin{aligned}
& y(t) \lesssim_{e_0} z(t)^{\frac{3r}{\theta+1}}\,\left[\langle [\omega_a^E(0)]_\theta \rangle^{\frac{\frac32 \theta}{\theta+1}}\;\langle[\Omega^E(0)]_\theta  \rangle + t^{\frac{p-2}{2p}}\,\langle \{\omega_a^E(0)\}_{r,p} \rangle^{1 + \frac32 r/(h+\frac52)}\right. \\ 
& \left. + t^{\frac{p - 2}{2p}} + t^{1/q^\prime}\;z(t)^{(\frac52 \theta + 1)/(\theta+1)}\;y(t)^{\frac52/(h+\frac52)}  + t^{\frac{p-2}{2p}+\frac{1}{\ell^\prime}}\;z(t)^{\frac32 r/(\theta+1)}\;y(t)^{1 + \frac52/(h+\frac52)}\right] 
\end{aligned}
\ee
After we substitute $z(t)$ with the right hand side of \eqref{Z-1}, we obtain from \eqref{eq:y-3d} an inequality of the form 
\[
y(t) \le {\mathfrak F}(t, y(t))\,,
\]
where the function ${\mathfrak F}(t, y)$ is non-negative, continuous,  and has the property that there is a constant $C > 0$
such that, for every fixed $y > 0$, ${\mathfrak F}(t, y)\to C$ as $t\searrow 0$. The continuity argument then works 
and yields $T_0 > 0$ such that $y(t)$ remains bounded on $[0, T_0]$. 
\end{proof}

\subsection{Low regularity well-posedness in the case $d=3$}\label{sec:low3}

\bigskip
\begin{theorem}\label{exist-3d} 
In $\R^3$ consider the system \eqref{u-11}, \eqref{u-21}. Let $s$ be any number greater than $2$. 
If the initial velocities $v(0)$ and $u_a(0)$ belong to $H^s_\sigma$, and if the voriticities 
$\omega_a(0)$ belong to the homogeneous Slobodetski space ${\dot B}^r_{p,p}$ with $r>\frac3p$ and $p\in[2, +\infty)$ such that $s \ge 2 + r - \frac2p$, 
then there is $T_0 > 0$ depending only on the 
$H^s_\sigma$ norms of the initial data and on $\|\omega_a(0)\,\big|\;{\dot B}^r_{p,p}\|$,   and a unique solution $(v, u_a)$ such that
\be
v, u_a\in C([0, T_0]\to H^s_\sigma)\,, 
\ee
and 
\be
\int_0^{T_0} \|\nabla V(t^\prime)\|_\infty + \|\Omega^E(t^\prime)\|_\infty\;dt^\prime < \infty\,.
\ee
The solution will satisfy 
\be
\int_0^{T_0} \|\nabla V(t^\prime)\|_\infty^q + \|\Omega^E(t^\prime)\|_\infty^q\;dt^\prime < \infty\,,
\ee
where 
\[
q = \frac{2p}{p-2}\cdot \frac{\frac52 + r - \frac3p}{\frac52}\;,
\]
and 
\be
\int_0^{T_0} \|\Omega^E(t)\,\big|\;{\dot B}^r_{p,p}(\R^3)\|^{\frac{2p}{p-2}}\,dt < \infty\,.
\ee
\end{theorem}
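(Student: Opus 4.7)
The plan is to run the same mollification-and-limit scheme used to prove Theorem \ref{exist-2d}, now feeding it the three-dimensional a priori bounds of Proposition \ref{apri-3d}. Set $\theta=1+r-\tfrac{2}{p}$, so the hypothesis $s\ge 2+r-\tfrac{2}{p}$ gives $\theta+1\le s$; in particular $v(0),u_a(0)\in H^{\theta+1}_\sigma$ and $\Omega(0)\in\dot H^\theta$. For $s>\tfrac{5}{2}$ Theorem \ref{Thm-1} already applies, so we may assume $2<s\le\tfrac{5}{2}$. Mollify the data, $V^\epsilon(0)=\rho_\epsilon[V(0)]$; by Theorem \ref{Thm-1} these produce smooth solutions $V^\epsilon\in C([-T_\epsilon,T_\epsilon]\to H^{s'}_\sigma)$ for any $s'>\tfrac{5}{2}$, with vorticities $\Omega^\epsilon$. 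Since $\rho_\epsilon[V(0)]\to V(0)$ in $H^s$, $\rho_\epsilon[\Omega(0)]\to \Omega(0)$ in $\dot H^\theta$, and $\rho_\epsilon[\omega_a(0)]\to\omega_a(0)$ in $\dot B^r_{p,p}$, all these norms are uniformly controlled at $t=0$.

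Apply Proposition \ref{apri-3d} to each $V^\epsilon$: there is a single $T_0>0$ (depending only on $e_0$, $[\Omega(0)]_\theta$, $\{\omega_a(0)\}_{r,p}$) and a constant $C$, independent of small $\epsilon$, such that
\begin{equation}\label{eq:planbds}
\int_0^{T_0}\{\Omega^{\epsilon,E}(t)\}_{r,p}^{\frac{2p}{p-2}}\,dt\le C,\qquad
\int_0^{T_0}\|\nabla V^\epsilon(t)\|_\infty^{q}+\|\Omega^\epsilon(t)\|_\infty^{q}\,dt\le C,
\end{equation}
with $q=\tfrac{2p}{p-2}\cdot\tfrac{5/2+r-3/p}{5/2}>1$. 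Because $q>1$, Hölder's inequality gives a uniform bound on $\int_0^{T_0}\|\nabla V^\epsilon(t)\|_\infty\,dt$, and the $H^s$ energy inequality \eqref{r-energy-est} (with $r=s$) then yields a uniform bound $\sup_{[0,T_0]}\|V^\epsilon(t)\|_{H^s}\le M$. This is the crucial output: $V^\epsilon$ is bounded in $L^\infty([0,T_0]\to H^s_\sigma)$ and $\partial_tV^\epsilon$ is bounded in $L^\infty([0,T_0]\to H^{s-1}_\sigma)$, exactly as in the subcritical passage-to-the-limit carried out in Section \ref{subsec:highlimit}.

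Extract a subsequence $\epsilon_n\searrow 0$ with $V^{\epsilon_n}\stackrel{*}{\rightharpoonup} V$ in $L^\infty([0,T_0]\to H^s_\sigma)$, $\partial_tV^{\epsilon_n}\stackrel{*}{\rightharpoonup}\partial_tV$ in $L^\infty([0,T_0]\to H^{s-1}_\sigma)$, and by Rellich strongly in $L^2_{\mathrm{loc}}$ for $V^{\epsilon_n}$ together with all its first-order spatial derivatives. These modes of convergence are enough, verbatim as in Section \ref{subsec:highlimit}, to pass to the limit in the integral identities \eqref{uno-5}, \eqref{dos-5}, so that $V=(v,u)$ satisfies \eqref{u-11}, \eqref{u-21} distributionally with the prescribed initial data. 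The bounds \eqref{eq:planbds} survive the limit by weak-$*$ lower semicontinuity: $L^q([0,T_0]\to L^\infty(\R^3))$ is the dual of $L^{q'}([0,T_0]\to L^1(\R^3))$, and $L^{2p/(p-2)}([0,T_0]\to \dot B^r_{p,p}(\R^3))$ is the dual of $L^{2p/(p+2)}([0,T_0]\to \dot B^{-r}_{p',p'}(\R^3))$ with $\tfrac{1}{p}+\tfrac{1}{p'}=1$, as in the two-dimensional argument.

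Uniqueness and strong temporal continuity go through exactly as in Section \ref{subsec:highlimit}: any two solutions $V,\underline V$ with the stated regularity give a difference whose $L^2$ norm obeys $\tfrac12\tfrac{d}{dt}\|\tilde V\|^2\lesssim\|\nabla\underline V\|_\infty\|\tilde V\|^2$, which forces $\tilde V\equiv 0$ because $\int_0^{T_0}\|\nabla\underline V(t)\|_\infty\,dt<\infty$ (which follows from the $L^q_t$ control just established and Hölder). Weak continuity plus the energy inequality, combined with the lower-semicontinuity argument at $t=0$ and the time-reversal symmetry of the system, promotes $V$ to $C([0,T_0]\to H^s_\sigma)$ exactly as in the high-regularity case. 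The anticipated main obstacle is not any single step but the bookkeeping in passing \eqref{eq:planbds} through the limit while simultaneously keeping the $H^s$ energy bound: the $H^s$ norm cannot be handled directly by semicontinuity in $\dot B^r_{p,p}$ estimates, so one must insist on $s\ge\theta+1$ to exploit \eqref{r-energy-est} independently, which is precisely the regularity threshold built into the hypothesis $s\ge 2+r-2/p$.
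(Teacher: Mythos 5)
Your proposal is correct and matches the paper's intended argument: the paper's proof of this theorem simply says it is analogous to the two-dimensional case (Theorem \ref{exist-2d}) with the $L^4_t$ bounds replaced by $L^q_t$ bounds, and you have spelled out exactly those modifications — mollification, Proposition \ref{apri-3d} for the uniform bounds, $q>1$ plus H\"older to control $\int_0^{T_0}\|\nabla V^\epsilon\|_\infty\,dt$ and hence the $H^s$ energy bound, weak-$*$ passage to the limit with the correct duality pairings for lower semicontinuity, and uniqueness and strong continuity as in Section \ref{subsec:highlimit}. No gaps beyond those already implicit in the paper's own treatment.
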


\begin{proof}The proof is analogous to the proof of Theorem~\ref{exist-2d}, with necessary modifications (e.g. $L^4_t$ norms of $\|\nabla V\|_\infty$ and $\|\Omega\|_\infty$ are replaced by their $L^q_t$ norms) .
\end{proof}
\bigskip

Solutions whose existence and uniqueness are proved in Theorem~\ref{exist-3d} 
will be called the $(s, r, p)$-solutions.

\begin{theorem}\label{wp-3d-1} 
Let $V$ be an $(s, r, p)$-solution, where $1 + \sqrt{\frac32} < s \le 5/2$, $h = r - \frac3p > 0$, and 
\be\label{cond5}
\begin{aligned}
&  \frac{\frac52 - s}{s - 1}\le  h \le s - 2 - \frac1p \,.
\end{aligned}
\ee
Then the $(s, r, p)$-solution $V$ depends continuously on the initial conditions: If $V_n(0)\to V(0)$ in $H^s_\sigma$ and 
$(\omega_n)_a(0)\to \omega_a(0)$ in ${\dot B}^r_{p,p}$, then $V_n$ converges to $V$ in 
$C([0, T_1]\to H^s_\sigma(\R^3))$, for some $T_1\in (0, T_0]$.
\end{theorem}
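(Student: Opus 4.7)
The plan is to follow the Bona--Smith/Kato--Lai scheme that was used in the proof of Theorem~\ref{contdep-2d}, replacing the two-dimensional Gagliardo--Nirenberg inequalities from Lemma~\ref{GN-E-2d} by their three-dimensional counterparts from Lemma~\ref{GN-E-3}. More precisely, I split
\[
\sup_{[0,T_1]}\|V_n(t)-V(t)\|_{H^s}\le \sup\|V_n-V_n^\epsilon\|_{H^s}+\sup\|V-V^\epsilon\|_{H^s}+\sup\|V_n^\epsilon-V^\epsilon\|_{H^s}
\]
as in \eqref{cd-scheme}, where $V^\epsilon$ denotes the solution corresponding to the mollified initial datum $\rho_\epsilon[V(0)]$. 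The first two suprema are handled by the three-dimensional analogue of Proposition~\ref{prop:A}: for $\tilde V=V^\delta-V^\epsilon$ with $0<\delta<\epsilon<1$, the difference satisfies the schematic equation \eqref{toy}, which leads to the same $H^s$-differential inequality \eqref{tilde-V-low}.

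I then estimate the right-hand side of \eqref{tilde-V-low} with the 3D bounds from Lemma~\ref{GN-E-3}, namely
\[
\|\nabla\tilde V(t)\|_\infty\lesssim\|\tilde V(t)\|^{h/(h+\frac52)}\,\{\tilde\Omega(t)\}_{r,p}^{\frac52/(h+\frac52)},\qquad
\|\tilde V(t)\|_\infty\lesssim\|\tilde V(t)\|^{(h+1)/(h+\frac52)}\,\{\tilde\Omega(t)\}_{r,p}^{\frac32/(h+\frac52)}.
\]
Combining these with the dimension-independent bound $\sup_t\|\tilde V(t)\|\lesssim\epsilon^s o(\epsilon)$ derived as in \eqref{est:L2-1} and with $\|V^\epsilon(t)\|_{H^{s+1}}\lesssim\epsilon^{-1}\|V(0)\|_{H^s}$ from \eqref{epsilon-} and \eqref{r-energy}, I obtain the crucial pointwise estimate
\[
\|\tilde V(t)\|_\infty\,\|V^\epsilon(t)\|_{H^{s+1}}\lesssim\epsilon^{s(h+1)/(h+\frac52)-1}\,o(\epsilon)\,\{\tilde\Omega(t)\}_{r,p}^{\frac32/(h+\frac52)}.
\]
Non-negativity of the $\epsilon$-exponent is equivalent to $h(s-1)\ge\frac52-s$, i.e.\ the first inequality in \eqref{cond5}; compatibility with the $(s,r,p)$-solution requirement $s\ge 2+h+\frac1p$ is the second inequality.

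Integrating the resulting differential inequality on $[0,T_1]$ requires the time-integrability of $\{\tilde\Omega(t)\}_{r,p}^{\alpha}$ for the exponents $\alpha=\tfrac32/(h+\tfrac52)$ and $\alpha=\tfrac52/(h+\tfrac52)$; both satisfy $\alpha<\tfrac{2p}{p-2}$ when $h>0$, so H\"older's inequality in time together with the uniform a priori bound \eqref{dot-Brp} from Proposition~\ref{apri-3d} (applied simultaneously to $V^\epsilon$ and $V^\delta$) gives uniform control of these integrals. The Gronwall factor generated by the first term of \eqref{tilde-V-low} is uniformly bounded thanks to \eqref{Lq-3d}. This yields $\sup_{[0,T_1]}\|\tilde V(t)\|_{H^s}\to 0$ as $\epsilon\searrow 0$, uniformly for $V(0)$ in a compact subset of $H^s_\sigma\cap\{\omega_a\in{\dot B}^r_{p,p}\}$. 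The third term in the splitting, with $\tilde V=V_n^\epsilon-V^\epsilon$ and $\epsilon$ fixed, is treated exactly as in \eqref{tilde-s3}: the $H^{s+1}$-norms are bounded for fixed $\epsilon$ and $\|\nabla\tilde V\|_\infty,\|\tilde V\|_\infty\lesssim\|\tilde V\|_{H^s}$, so Gronwall gives $\sup_t\|\tilde V(t)\|_{H^s}\lesssim\|\tilde V(0)\|_{H^s}\to 0$ as $n\to\infty$.

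The main technical obstacle is the simultaneous bookkeeping of H\"older exponents in time and of the powers of $\epsilon$ produced by mollification: one must check that, under \eqref{cond5}, every term on the right of \eqref{tilde-V-low} either carries a non-negative power of $\epsilon$ paired with a time-integrable factor, or is absorbed into the Gronwall factor controlled by the $L^q_tL^\infty_x$ bound of \eqref{Lq-3d}. A secondary point is to verify, using the continuous dependence of $T_0$ in Proposition~\ref{apri-3d} on the $H^s\cap{\dot B}^r_{p,p}$ data, that a common existence interval $[0,T_1]$ can be selected for $V$, for $V_n$ with $n$ large, and for $V^\epsilon$, $V_n^\epsilon$ with $\epsilon$ small.
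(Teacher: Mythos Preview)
Your proposal is correct and follows essentially the same route as the paper: you use the Bona--Smith/Kato--Lai splitting \eqref{cd-scheme}, derive the $H^s$ differential inequality \eqref{tilde-V-low} for $\tilde V = V^\delta - V^\epsilon$, and estimate the critical terms via the 3D Gagliardo--Nirenberg inequalities \eqref{z-1}, \eqref{z-2} in place of their 2D analogues. Your identification of the two inequalities in \eqref{cond5} --- the left one from $s\gamma_2-1\ge 0$ with $\gamma_2=(h+1)/(h+\tfrac52)$, the right one from $s\ge\theta+1=2+h+\tfrac1p$ --- matches the paper's conditions \eqref{cond2} and \eqref{cond3} exactly, and your time-integrability check via \eqref{dot-Brp} and \eqref{Lq-3d} is the paper's argument.

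One small slip: in your treatment of the third term $\|V_n^\epsilon - V^\epsilon\|_{H^s}$ you invoke $\|\nabla\tilde V\|_\infty\lesssim\|\tilde V\|_{H^s}$, but this Sobolev embedding fails in $\R^3$ when $s\le \tfrac52$. The fix is immediate: for \emph{fixed} $\epsilon$, the mollified data $\rho_\epsilon[V_n(0)]\to\rho_\epsilon[V(0)]$ in $H^{s_2}$ for any $s_2>\tfrac52$, and the solutions $V_n^\epsilon,V^\epsilon$ live in $C([0,T_1]\to H^{s_2})$ by the energy estimate \eqref{r-energy} together with the uniform bound on $\int_0^{T_1}\|\nabla V^\epsilon\|_\infty$ from \eqref{Lq-3d}; hence Theorem~\ref{Thm-1} applied at regularity $s_2$ gives $V_n^\epsilon\to V^\epsilon$ in $C([0,T_1]\to H^{s_2})\subset C([0,T_1]\to H^s)$. (The paper itself is terse on this point, simply referring back to Section~\ref{sec:contdep}.)
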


\begin{proof}
The restrictions $r > 0$, $2\le p < +\infty$, and  
\be\label{cond1}
h = r - \frac3p > 0
\ee
come from Lemma~\ref{GN-E-3}. That 
\[
\theta = 1 + h + \frac1p
\]
is the requirement of the Strichartz inequality for the vorticities. This shows that 
we ought to have
\be\label{cond2}
s \ge \theta + 1 = 2 + h + \frac1p
\ee
for $V(t)\in H^s_\sigma$. We start with $s$ satisfying \eqref{cond2}.

 The general scheme of the proof is the same as in the two dimensional case. 
The main step is the analysis of the inequality 
\be\label{tilde-V-low-3}
\begin{aligned}
 \frac{d\hfill}{dt}\,\|{\tilde V}(t)\|_{H^s} \lesssim & \left(\|\nabla V^\delta(t)\|_\infty 
+ \|\nabla V^\epsilon(t)\|_\infty \right)\,\|{\tilde V}(t)\|_{H^s}  \\ 
& + \left(\|V^\delta(t)\|_{H^s} + \|V^\epsilon(t)\|_{H^s}\right)\,\|\nabla {\tilde V}(t)\|_\infty \\ 
& + \|{\tilde V}(t)\|_\infty\,\|V^\epsilon(t)\|_{H^{s+1}}\,. 
\end{aligned} 
\ee
We have the bounds (with some constants $M$ and $C$)
\be\label{b-1}
\sup_{0\le t\le T_1} \|V^\epsilon(t)\|_{H^s}\le M\,,
\ee
and 
\be\label{b-2}
\sup_{0\le t\le T_1} \|{\tilde V}(t)\|\lesssim \epsilon^s\,o(\epsilon)\,,
\ee
and 
\be\label{b-3}
\sup_{0\le t\le T_1} \|V^\epsilon(t)\|_{H^{s+1}}\lesssim \frac{C}{\epsilon}\,,
\ee
and
\be\label{b-4}
\int_0^{T_1} \|\nabla V^\epsilon(t)\|_\infty + \|\nabla V^\delta(t)\|_\infty\;dt \lesssim M\,.
\ee
Also, in $\R^3$ we have, from \eqref{z-2}, 
\be\label{b-5}
\|{\tilde V}(t)\|_\infty \lesssim  \|{\tilde V}(t)\|^{\gamma_2}\;\|\Omega^\delta(t) - \Omega^\epsilon(t)\,\big|\; {\dot B}^r_{p,p}\|^{1-\gamma_2}
\ee
where
\[
\gamma_2 = \frac{1+h}{\frac52 + h}\,.
\]
Using these bounds we get
\[
\|{\tilde V}(t)\|_\infty\,\|V^\epsilon(t)\|_{H^{s+1}} \lesssim  \epsilon^{s\gamma_2 - 1}\,o(\epsilon)\,\|\Omega^\delta(t) - \Omega^\epsilon(t)\,\big|\; {\dot B}^r_{p,p}\|^{1-\gamma_2}
\]
We require that $s\gamma_2 - 1 \ge 0$. This is equivalent to the inequality
\be\label{cond3}
s \ge 2 + \frac{\frac12 - h}{1 + h}\,.
\ee
The second term on the right in inequality \eqref{tilde-V-low-3} is bounded as follows (using \eqref{b-1}, \eqref{z-1}, and \eqref{b-2}):
\begin{align*} 
\left(\|V^\delta(t)\|_{H^s} + \|V^\epsilon(t)\|_{H^s}\right)\,\|\nabla {\tilde V}(t)\|_\infty \lesssim{}& 
2 M\, \|{\tilde V}(t)\|^{h/(h+\frac52)}\;\|\Omega^\delta(t) - \Omega^\epsilon(t)\,\big|\; {\dot B}^r_{p,p}\|^{\frac52/(h+\frac52)} \\ 
\lesssim{}& 2 M\,\epsilon^{sh/(h+\frac52)}\;o(\epsilon)\;\|\Omega^\delta(t) - \Omega^\epsilon(t)\,\big|\; {\dot B}^r_{p,p}\|^{\frac52/(h+\frac52)}\,.
\end{align*} 
Thus, integration of \eqref{tilde-V-low-3} leads to
\[
\begin{aligned}
& \sup_{0\le t\le T_1} \|{\tilde V}(t)\|_{H^s} \lesssim \|{\tilde V}(0)\|_{H^s}\,e^M + 
2 M e^M\;\epsilon^{sh/(h+\frac52)}\;o(\epsilon)\;\int_0^{T_1}\|\Omega^\delta(t) - \Omega^\epsilon(t)\,\big|\; {\dot B}^r_{p,p}\|^{\frac52/(h+\frac52)}\,dt  \\ 
& +  o(\epsilon)\;\int_0^{T_1}\|\Omega^\delta(t) - \Omega^\epsilon(t)\,\big|\; {\dot B}^r_{p,p}\|^{\frac32/(h+\frac52)}\,dt\,.
\end{aligned}
\]
The integrals of the vorticity norms are uniformly in $\epsilon$ bounded thanks to Proposition~\ref{apri-3d}, so the right hand side goes to $0$ as $\epsilon \searrow 0$. 

It remains to observe that \eqref{cond5} is equivalent to \eqref{cond2} and \eqref{cond3} combined.
By the way,  condition  $s > 1 + \sqrt{\frac32}$ comes from the inequality
\be\label{cond6}
 \frac{\frac52 - s}{s - 1}  <  s - 2\,
\ee
that must be satisfied for \eqref{cond5} to be possible.

\end{proof}

\begin{corollary}
In dimension $d=3$, if $\frac52 \ge s > 1 + \sqrt{\frac32}$, then for every initial condition $V(0) = (v(0), u_a(0))\in H^s_\sigma$ such that $\curl u_a(0) \in  B^{\varkappa}_{\infty, \infty}(\R^3)$, where
\be\label{varkappa-3}
\varkappa = \sqrt{\frac32} - 1\,,
\ee
there exists a unique local in time solution $v\in C([0, T_0]\to H^s_\sigma)$ which depends continuously on $V(0)$.  
\end{corollary}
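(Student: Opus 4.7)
The plan is to reduce the statement to an application of Theorem~\ref{wp-3d-1} by exhibiting a pair $(r,p)$ satisfying condition \eqref{cond5} with $\omega_a(0) = \curl u_a(0) \in \dot B^r_{p,p}(\R^3)$, using only the information that $u_a(0) \in H^s_\sigma$ and $\omega_a(0) \in B^{\varkappa}_{\infty,\infty}(\R^3)$ with $\varkappa = \sqrt{3/2}-1$. First I would verify the key numeric fact that underlies the threshold $s > 1 + \sqrt{3/2} = 2 + \varkappa$: writing $\alpha = \varkappa$, a direct computation gives $\alpha^2 + 2\alpha - 1/2 = 0$, so for every $s > 1 + \sqrt{3/2}$ the strict inequality $(5/2 - s)/(s-1) < \varkappa$ holds.

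Given such an $s$, I would pick $p \in [2,\infty)$ sufficiently large and a number $h \in (0,\varkappa - 3/p)$ with $(5/2-s)/(s-1) \le h \le s - 2 - 1/p$; both intervals are nonempty and have a common point once $1/p$ is small enough, thanks to the inequality just mentioned and to $s - 2 > \varkappa$. Setting $r = h + 3/p$, we get $3/p < r < \varkappa$, $p \in [2,\infty)$, and $h$ satisfies \eqref{cond5}, which are the conditions of Theorem~\ref{wp-3d-1}. The remaining ingredient is $\omega_a(0) \in \dot B^r_{p,p}(\R^3)$.

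For that I would use Littlewood--Paley: since $u_a(0) \in H^s$ and $s \ge 1$, $\omega_a(0) \in L^2 \cap B^{\varkappa}_{\infty,\infty}(\R^3)$. For $j \ge 0$, interpolation $\|\dot\Delta_j \omega_a(0)\|_p \le \|\dot\Delta_j \omega_a(0)\|_2^{2/p}\,\|\dot\Delta_j \omega_a(0)\|_\infty^{1-2/p}$ combined with $\|\dot\Delta_j \omega_a(0)\|_\infty \lesssim 2^{-j\varkappa}\|\omega_a(0)\|_{B^\varkappa_{\infty,\infty}}$ gives
\[
\sum_{j\ge 0} 2^{jrp}\,\|\dot\Delta_j \omega_a(0)\|_p^p
\;\lesssim\; \|\omega_a(0)\|_{B^\varkappa_{\infty,\infty}}^{p-2}\;\sum_{j\ge 0} 2^{j(rp - \varkappa(p-2))}\,\|\dot\Delta_j \omega_a(0)\|_2^2,
\]
which is finite because $r < \varkappa$ forces $rp - \varkappa(p-2) < 0$ for $p$ large (one could also use $\omega_a(0)\in H^{s-1}$ directly, but the crude bound suffices). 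For $j < 0$, Bernstein yields $\|\dot\Delta_j \omega_a(0)\|_p \lesssim 2^{3j(1/2 - 1/p)}\|\dot\Delta_j \omega_a(0)\|_2$, and $\sum_{j<0} 2^{j(rp + 3p/2 - 3)}\|\omega_a(0)\|_2^p < \infty$ since $r > 0$ and $p \ge 2$. Therefore $\omega_a(0) \in \dot B^r_{p,p}(\R^3)$, so $V(0)$ is an admissible initial datum for an $(s,r,p)$-solution, and Theorem~\ref{wp-3d-1} produces the unique local-in-time solution $V \in C([0,T_0]\to H^s_\sigma)$.

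Continuous dependence is then obtained by running the same Littlewood--Paley estimate on the differences: if $V_n(0) \to V(0)$ in $H^s_\sigma$ and $\curl u_a^n(0) \to \curl u_a(0)$ in $B^{\varkappa}_{\infty,\infty}$, then replacing $\omega_a(0)$ by $\omega_a^n(0) - \omega_a(0)$ in the estimate above shows $\omega_a^n(0) \to \omega_a(0)$ in $\dot B^r_{p,p}$, and the continuous-dependence conclusion of Theorem~\ref{wp-3d-1} applies. The main obstacle is purely arithmetic: checking that the threshold $s > 1 + \sqrt{3/2}$ with $\varkappa = \sqrt{3/2}-1$ is exactly the condition that makes the window $[(5/2-s)/(s-1),\, s-2-1/p]$ intersect the interval $(0,\,\varkappa - 3/p)$ for $p$ large, which is precisely the algebraic identity $\varkappa^2 + 2\varkappa - 1/2 = 0$ noted at the outset.
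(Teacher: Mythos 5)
Your proposal is correct and follows essentially the same route as the paper: the corollary is reduced to Theorem~\ref{wp-3d-1} by choosing $(r,p)$ so that $h=r-3/p$ lands in the window prescribed by \eqref{cond5}, with the threshold $s>1+\sqrt{3/2}$ governed by exactly the identity $\varkappa^2+2\varkappa-\tfrac12=0$ you verify. The only cosmetic difference is that you re-derive the embedding $L^2\cap B^{\varkappa}_{\infty,\infty}\subset \dot B^r_{p,p}$ by a direct Littlewood--Paley/Bernstein computation, whereas the paper simply invokes Lemma~\ref{besov-nested}; your high-frequency exponent bookkeeping needs $p(\varkappa-h)>3+2\varkappa$ rather than just $\varkappa-h>3/p$, but as you note this is absorbed by taking $p$ large (or by using $\omega_a(0)\in H^{s-1}$), so it is not a gap.
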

\begin{proof} If $s = \frac52$, then one can find $r$ and $p$ so that $h = r - \frac3p $ satisfy (see \eqref{cond5})
\[
0 < h < \frac12 - \frac1p
\]
and, in addition, 
\be\label{less}
h < \varkappa\,.
\ee
Now assume 
$\frac52 > s > 1 + \sqrt{\frac32}$ .  On this interval,
\[
\frac{\frac52 - s}{s - 1} < \varkappa \,.
\]
Hence, one can find $r$ and $p$ so that 
\[
 \frac{\frac52 - s}{s - 1} \le h < \varkappa - \frac1p \le s - 2 - \frac1p \,.
\]
Again, $h < \varkappa$. Thus, in every case, there are $r$ and $p$ such that conditions \eqref{cond5} and \eqref{less} are satisfied. Since $L^2(\R^3)\cap {\dot B}^\varkappa_{\infty,\infty}(\R^3) \subset L^2(\R^3)\cap {\dot B}^r_{p,p}(\R^3)$ by Lemma~\ref{besov-nested}, we can apply Theorem~\ref{wp-3d-1} to 
conclude that the solutions depend continuously on the initial data.

\end{proof}

\subsection*{Acknowledgements}
L.K. would like to thank the Max-Planck Institute for Gravitational Physics (Albert Einstein Institute) for hospitality during his visits, where a substantial part of the work was done.

\appendix

\section{Function spaces} \label{sec:spaces} 

For the convenience of the reader, we include some background on the function spaces used in this paper. 
All spaces we use belong to the scales of Besov spaces $B^s_{p, q}(\R^d)$ and Lizorkin-Triebel spaces $F^s_{p, q}(\R^d)$.  Also, we use the homogenous versions of these spaces. For their definition and basic properties we rely on \cite{Triebel}.  

The $L^p(\R^d)$ norm of a function, $f$, is denoted $\|f\|_p$ or, if it is convenient, in one of the following forms: 
\[
 \|f\;\big|\;L^p\| = \|f(x)\;\big|\;L^p(dx)\|\,.
\]
Similar forms are used for the norms in other function spaces. The integral $\int$ is the Lebesgue integral over $\R^d$. 
${\cal S} = {\cal S}(\R^d)$ is the Schwartz space of rapidly decreasing test functions, and ${\cal S}^\prime = {\cal S}^\prime(\R^d)$ 
is its dual, the space of tempered distributions. The pairing between ${\cal S}^\prime$ and ${\cal S}$ is denoted $\langle f, g\rangle$ 
and, for a regular distribution $f\in L^1\subset {\cal S}^\prime$, $\langle f, g\rangle = \int f(x)\,g(x)\,dx$. 

\begin{itemize} 
\item The  Fourier transform ${\cal F}$ is defined as 
\[
{\cal F}_{x\to\kappa} f  = {\hat f}(\kappa) = \int e^{-i x \kappa} f(x)\,dx\,,
\]
with the inverse 
\[
f(x) = {\cal F}^{-1}_{\kappa\to x} {\hat f} = \int e^{i x \kappa} {\hat f}(\kappa)\,\dbar\kappa\,,
\]
where $\dbar\kappa = (2\pi)^{-d}\,d\kappa$. 
\item Notation for the Riesz and Bessel potentials: $D^s = \left(-\Delta\right)^{s/2} = {\cal F}^{-1}_{\kappa\to \cdot} |\kappa|^s {\cal F}_{x\to\kappa}$ and 
$J^s = \left(1 - \Delta\right)^{s/2} = {\cal F}^{-1}_{\kappa\to \cdot}\left(1 + |\kappa|^2\right)^{s/2} {\cal F}_{x\to\kappa}$, with $s\in\R$.
\item In this paper, all function spaces are subspaces of ${\cal S}^\prime$. If $A_1$ and $A_2$ are such Banach spaces, their intersection, $A_1 \cap A_2$, is viewed as a Banach space with the norm $\|f\,\big|\,A_1 \cap A_2\| = \max \left(\|f\,\big|\,A_1\|,\,\|f\,\big|\,A_2\|\right)$. 
\end{itemize} 

\subsection{Littlewood-Paley decomposition}

Pick a smooth function $\psi_0: [0, +\infty)\to [0, 1]$ such that  $\psi_0(s) = 1$ if $s \le 1$, $\psi_0(s) = 0$ if $s \ge 2$, and 
$0 < \psi_0(s) < 1$ if $1 < s < 2$. Set $\vp_0(s) = \psi_0(s) - \psi_0(2 s)$. Then $\hbox{supp}\,\vp_0 = [ 2^{-1}, 2]$. 
For $n\in\Z$, define $\psi_n(t) = \psi_0(2^{-n}t)$ and $\vp_n(t) = \vp_0(2^{-n}t)$. Then,
for all integer $n$,   
\[
\vp_{n-1}(t) + \vp_m(t) + \vp_{n+1}(t) = 1\quad \text{when}\quad 
t\in\hbox{supp}\; \vp_n = \{2^{n-1} \le s \le 2^{n+1}\}\,
\]
and
\[
\sum_{n = -\infty}^{+\infty} \vp_n(t) = 1\,,\quad \forall t > 0\,.
\]
Also, for all $N\in\Z$ and for all $t\ge 0$, we have 
\be\label{psiN}
\begin{aligned}
& \psi_N(t) + \sum_{n = N+1}^\infty \vp_n(t) = 1\,, \\ 
& \psi_0(t) + \sum_{n=1}^N \vp_n(t) = \psi_N(t)\,.
\end{aligned}
\ee
Abusing the notation we write $\vp_n(\kappa)$ instead of $\vp_n(|\kappa|)$, where $\kappa\in\R^d$, and similarly understood are $\psi_n(\kappa)$. 
Also, we  write $\vp_n(D)$, $\psi_n(D)$, etc. for the corresponding 
pseudodifferential operators, i.e., 
\[
\vp_n(D)\,f = \int e^{i x \kappa}\,\vp_n(\kappa)\, {\hat f}(\kappa)\,\dbar\kappa\, = {\cal F}^{-1}\vp_n {\cal F} f\,.
\]
Thanks to the first identity in \eqref{psiN}, any tempered distribution $f\in {\cal S}^\prime$ can be expanded as (the Littlewood-Paley decomposition)  
\be\label{LP}
 f = {\cal F}^{-1}\psi_N {\cal F} f + \sum_{n=N+1}^\infty {\cal F}^{-1}\vp_n {\cal F} f\;= \psi_N(D) f +\sum_{n=N+1}^\infty \vp_n(D) f\, , 
\ee
where the series converges in ${\cal S}^\prime$. 
We will abbreviate sometimes $f_n = \vp_n (D) f$.

\subsection{Homogeneous Besov and Lizorkin-Triebel spaces}

Following Triebel \cite{Triebel}, define ${\cal Z} = {\cal Z}(\R^d)$ as the subspace of $\cal S$ consisting of those test functions $\eta$ 
which satisfy the condition $\int x^\alpha\,\eta(x)\,dx = 0$ for all multiindices $\alpha = (\alpha_1,\dots, \alpha_d)\in\Z^d$ with all $\alpha_j\ge 0$. Equivalently, 
$\eta\in {\cal Z}$ iff ($\eta\in{\cal S}$ and) $\partial^\alpha {\hat \eta}(0) = 0$ for all nonnegative  multiindices $\alpha$. With the topology inherited from $\cal S$, $\cal Z$ is a complete locally convex space. 
Polynomials when viewed as elements of ${\cal S}^\prime$, annihilate  $\cal Z$: If $P$ is a polynomial, 
$P(x) = \sum c_\alpha x^\alpha$, and if $\eta\in{\cal Z}$, then
\[
\langle P, \eta\rangle = \langle {\hat P}, {\hat \eta}\rangle = \sum_\alpha c_\alpha\,\langle (i \partial)^\alpha\,\delta, {\hat \eta}\rangle = \sum_\alpha c_\alpha\,(-i)^\alpha\,\langle \delta, \partial^\alpha{\hat \eta}\rangle = 0\,.
\]
Conversely, any tempered distribution $f$ that annihilates ${\cal Z}$ is a polynomial. Indeed, if $\langle f, \eta\rangle = 0$ for all $\eta\in{\cal Z}$, then,  in particular, $\langle {\hat f}, {\hat\eta}\rangle = 0$ for every $\eta$ with $0\notin \hbox{supp}\,{\hat\eta}$. Hence, 
$\hbox{supp}\,{\hat f} = \{0\}$. Therefore, $f$ is a polynomial. 
Denote by ${\cal Z}^\prime$ the topological dual of $\cal Z$. If $\ell$ is a linear continuous functional on ${\cal Z}$, then there exist 
constants $C\ge 0$ and $K\in\Z$, $K\ge 0$, such that 
\[
|\ell(\eta)|\le\,C\,\sum_{|\alpha|\le K, |\beta|\le K}\sup_x |x^\alpha\,\partial^\beta \eta(x)|\,. 
\]
By the Hahn-Banach theorem, there exists a linear extension of $\ell$ from ${\cal Z}$ to $\cal S$ with the same inequality valid 
for all $\eta$ in $\cal S$. As elements of ${\cal S}^\prime$, any two such extensions must differ by a polynomial. 
This leads to identification of ${\cal Z}^\prime$ with the quotient space 
of ${\cal S}^\prime$ by the subspace $P\subset {\cal S}^\prime$ of all polynomials: 
${\cal Z}^\prime \simeq {\cal S}^\prime/P$. The following Littlewood-Paley decomposition applies to distributions in ${\cal Z}^\prime$: 
\be\label{homLP}
f = \sum_{n=-\infty}^\infty \vp_n(D)\,f\,,
\ee 
which really means that for every $f\in{\cal S}^\prime$ there exist an integer $K\ge 0$,  a sequence of polynomials $p_N(x)$ of degree 
not greater than $K$, and a polynomial $p_\infty(x)$ such that
\be
\sum_{n = - N}^\infty \vp_n(D)\,f + p_N \underset{N\to +\infty}{\longrightarrow} f + p_\infty\quad\text{in}\;\;{\cal S}^\prime\, 
\ee
(see \cite{MR0461123}).

\medskip

\begin{itemize} 
\item The homogeneous Besov space ${\dot B}^s_{p, q} = {\dot B}^s_{p, q}(\R^d)$ with the parameters $s\in\R$, $1\le p\le \infty$, and $1\le q < \infty$, is the subspace of ${\cal Z}^\prime$ composed of those $f\in {\cal S}^\prime$ for which the norm
\be
\|f\;\big|\; {\dot B}^s_{p, q}\| =  
\left(\sum_{n=-\infty}^\infty 2^{s n q} \|\vp_n (D) f\|_p^q\right)^{1/q}
\ee
is finite. If $q = \infty$, then
\be\label{Besov}
\|f\;\big|\; {\dot B}^s_{p, \infty}\| =  \sup_{n\in\Z} 2^{s n} \|\vp_n(D) f\|_p
\ee
\item For $s\in\R$, $1\le p < \infty$, and $1\le q < \infty$, the homogeneous Lizorkin-Triebel space ${\dot F}^s_{p, q} = {\dot F}^s_{p, q}(\R^d)$  is the subspace of ${\cal Z}^\prime$ composed of those $f\in {\cal S}^\prime$ for which the norm 
\[
\|f\;\big|\; {\dot F}^s_{p, q}\| =  \|\left(\sum_{n=0}^\infty 2^{s n q}\,|\vp_n(D) f|^q \right)^{1/q}\;\big|\;L^p\|
\]
is finite. A modification as above is needed in the case $q = \infty$. (The case $p=\infty$ requires a special treatment, see \cite{Triebel}.)
\item The homogeneous  Sobolev space ${\dot H}^s_p = {\dot H}^s_p(\R^d)$ is the space of all $f\in{\cal Z}^\prime$ such that 
the norm
\be
\|f\;\big|\; {\dot H}^s_{p}\| = \|\sum_{n=-\infty}^\infty D^s \vp_n(D)f\;\big|\;L^p\|
\ee
is finite (the range of parameters is $-\infty < s < +\infty$, $1\le p\le \infty$). 
When $p = 2$, we write ${\dot H}^s$ instead of ${\dot H}^s_2$. 
\item {\bf Basic embeddings.} 
\be
\begin{aligned}
& {\dot B}^s_{p, q_1}\subset {\dot B}^s_{p, q_2}\,,\quad {\dot F}^s_{p, q_1}\subset {\dot F}^s_{p, q_2}\,,\;\text{if}\; 1\le q_1\le q_2\le\infty\\ 
& {\dot B}^s_{p, \min(p,q)}\subset {\dot F}^s_{p, q}\subset {\dot B}^s_{p, \max(p,q)}\,,\\ 
& {\dot B}^{s_1}_{p_1, q_1} \subset {\dot B}^{s_2}_{p_2, q_2}\;\text{if}\;1\le p_1\le p_2\le\infty, \;  1\le q_1\le q_2\le\infty, \;s_2 - \frac{d}{p_2} = s_1 - \frac{d}{p_1} \\  
& {\dot F}^{s_1}_{p_1, q_1} \subset {\dot F}^{s_2}_{p_2, q_2}\;\text{if}\;1\le p_1 < p_2 < \infty, \;  1\le q_1,\, q_2\le\infty, \;s_2 - \frac{d}{p_2} = s_1 - \frac{d}{p_1}
\end{aligned}
\ee
\item For all $s, r\in\R$, the operator $D^r = (-\Delta)^{r/2} = {\cal F}^{-1}_{\kappa\to\cdot}|\kappa|^r{\cal F}_{x\to\kappa}$ is an isomorphism between 
${\dot B}^{s+r}_{p, q}$ and ${\dot B}^{s}_{p, q}$ if $p, q\in[1,\infty]$, and between ${\dot F}^{s+r}_{p, q}$ and ${\dot F}^{s}_{p, q}$, when 
$1\le p < \infty$, $1\le q\le\infty$, see Theorem 5.2.3.1 in \cite{Triebel}. 
\item The topological dual of ${\dot B}^s_{p,q}$ is ${\dot B}^{-s}_{p^\prime,q^\prime}$ and the topological dual of ${\dot F}^s_{p,q}$ is ${\dot F}^{-s}_{p^\prime,q^\prime}$, where $s\in\R$, $1\le q <\infty$, and $1\le p <\infty$. (As usual, $1/p+1/p^\prime = 1/q+1/q^\prime = 1$.)
\item {\bf Interpolation inequalities.} If $0 < \theta < 1$, $s = (1-\theta) s_0 + \theta s_1$, and if 
\[
\frac{1}{p} = \frac{1-\theta}{p_0} + \frac{\theta}{p_1}\,,\quad \frac{1}{q} = \frac{1-\theta}{q_0} + \frac{\theta}{q_1}\,,
\]
where $1\le p_0, p_1\le\infty$ and $1\le q_0, q_1\le\infty$, then 
\[
\|f\;\big|\; {\dot B}^s_{p, q}\|\lesssim \|f\;\big|\; {\dot B}^{s_0}_{p_0, q_0}\|^{1-\theta}\;\|f\;\big|\; {\dot B}^{s_1}_{p_1, q_1}\|^{\theta}\,.
\]
This is due to the fact that ${\dot B}^s_{p, q} = \left[{\dot B}^{s_0}_{p_0, q_0}, {\dot B}^{s_1}_{p_1, q_1}\right]_\theta$, the complex interpolation. The analogous result is true for the Lizorkin-Triebel spaces:
\[
\|f\;\big|\; {\dot F}^s_{p, q}\|\lesssim \|f\;\big|\; {\dot F}^{s_0}_{p_0, q_0}\|^{1-\theta}\;\|f\;\big|\; {\dot F}^{s_1}_{p_1, q_1}\|^{\theta}\,.
\]
\item Isomorphisms between spaces. 
\be
\begin{aligned}
& {\dot F}^0_{p,2} \simeq L^p\,,\quad {\dot F}^s_{p,2}\simeq {\dot H}^s_p \,,\; 1 < p < \infty, s\in\R \\ 
& {\dot F}^s_{p,p} \simeq {\dot B}^s_{p,p}  
\end{aligned}
\ee
It is known (\cite{Triebel}) that, for $r\in (0, 1)$, the ${\dot B}^r_{\infty, \infty}$-seminorm is equivalent to  the homogeneous H\"older 
${\dot C}^r$ seminorm:
\[
\sup_{n\in \Z} 2^{rn}\;\|{\cal F}^{-1}\vp_n {\cal F} f\|_\infty\quad \simeq\quad \{f\}_r = \sup_{x \neq y} \frac{|f(x) - f(y)|}{|x - y|^r}\,,
\]
and, for $1\le p < \infty$, the ${\dot B}^r_{p, p}$-seminorm is equivalent to the Gagliardo seminorm
\[
\left[f\;\big|\;{\dot B}^r_{p, p}\right]_* = \left(\int \int \frac{|f(x) - f(y)|^p}{|x - y|^{r p + d}} \right)^{1/p}
\]
An equivalent seminorm in  ${\dot H}^s$ is
\[
[f]_s = \left(\int |\kappa|^{2\,s} \,|{\hat f}(\kappa)|^2\,\dbar \kappa\right)^{1/2}
\]
\item The nonhomogeneous Besov and Lizorkin-Triebel spaces are made of tempered distributions with the norms
\[
\|f\;\big|\; B^s_{p, q}\| = \|\psi_0(D)\,f\|_p + \left(\sum_{n=0}^\infty 2^{s n q} \|\vp_n (D) f\|_p^q\right)^{1/q}
\]
and 
\[
\|f\;\big|\; F^s_{p, q}\| = \|\psi_0(D)\,f\|_p + \|\left(\sum_{n=0}^\infty 2^{s n q}\,|\vp_n(D) f|^q \right)^{1/q}\;\big|\;L^p\|\,,
\]
respectively. Equivalent norms are obtained when the part $\|\psi_0(D)\,f\|_p$ is replaced with $\|f\|_p$. For all $s, r\in\R$, the operator $J^r = (1-\Delta)^{r/2} = {\cal F}^{-1}_{\kappa\to\cdot}(1+|\kappa|^2)^{r/2}{\cal F}_{x\to\kappa}$ is an isomorphism between the nonhomogeneous spaces 
${ B}^{s+r}_{p, q}$ and ${ B}^{s}_{p, q}$ if $p, q\in[1,\infty]$, and between ${ F}^{s+r}_{p, q}$ and ${ F}^{s}_{p, q}$, when 
$1\le p < \infty$, $1\le q\le\infty$. The nonhomogeneous spaces are monotone with respect to the parameter $S$:
\[
B^{s_1}_{p, q} \subset B^{s_2}_{p, q}\quad\text{and}\quad F^{s_1}_{p, q} \subset F^{s_2}_{p, q}
\]
when $s_1\ge s_2$. The corresponding homogeneous spaces are not monotone with respect to $s$. However, the following result is easy to prove.
\end{itemize}

\begin{lemma}\label{besov-nested}
Let $s > 0$, $1\le m\le p\le\infty$, and $1\le q\le\infty$. Then $L^m\cap {\dot B}^s_{p, q} = L^m\cap { B}^s_{p, q}$. If $s_1 > s_2 > 0$, then 
$L^m\cap {\dot B}^{s_1}_{p, q}\subset L^m\cap {\dot B}^{s_2}_{p, q}$. Moreover, if $s_1 \ge s_2 > 0$, $1\le m\le p_2\le p_1\le\infty$, and 
\[
s_1 - \frac{d}{p_1} \ge s_2 - \frac{d}{p_2}\,,
\]
then  $L^m\cap {\dot B}^{s_1}_{p_1, q} \subset L^m\cap {\dot B}^{s_2}_{p_2, q}$ (for any $1\le q\le\infty$).
\end{lemma}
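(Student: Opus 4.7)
The plan is to establish each of the three claims through the Littlewood--Paley decomposition, splitting any $f \in L^m$ into its low-frequency ($n \le 0$) and high-frequency ($n \ge 0$) blocks $g_n = \varphi_n(D)f$. The unifying tool is a Young-type estimate: since $K_n = \mathcal{F}^{-1}\varphi_n$ rescales as $K_n(x) = 2^{nd} K_0(2^n x)$, one has $\|K_n\|_r = 2^{nd(1-1/r)}\|K_0\|_r$, so Young's convolution inequality with $1/r = 1 - 1/m + 1/p$ (legitimate whenever $m \le p$) gives
\[
\|g_n\|_p \;\lesssim\; 2^{nd(1/m - 1/p)}\,\|f\|_m,
\]
together with the analogous bound $\|\psi_0(D)f\|_p \lesssim \|f\|_m$.

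For the first claim, the discrepancy between $B^s_{p,q}$ and $\dot B^s_{p,q}$ is confined to the low-frequency regime (plus the $\psi_0(D)f$ piece). Weighting the displayed bound by $2^{snq}$ and summing over $n \le 0$ yields the geometric series $\sum_{n \le 0} 2^{nq(s + d(1/m - 1/p))}$, which converges because $s > 0$ and $1/m - 1/p \ge 0$ make the exponent strictly positive; hence on $L^m$ the two spaces coincide as sets with equivalent norms. For the second claim, split the $\dot B^{s_2}_{p,q}$-sum at $n = 0$: the high-frequency tail is dominated by the $\dot B^{s_1}_{p,q}$-norm via the trivial inequality $2^{s_2 n} \le 2^{s_1 n}$ for $n \ge 0$ and $s_1 \ge s_2$, while the low-frequency tail is controlled by $\|f\|_m$ via the same Young estimate.

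The third claim is the most substantial and is where I expect the principal obstacle to lie. The low-frequency tail is handled by the Young estimate applied with $p_2$ in place of $p$ (the condition $m \le p_2$ and $s_2 > 0$ again give a convergent geometric series). For the high-frequency tail I would interpolate the $L^{p_2}$-norm between $L^m$ and $L^{p_1}$ (legitimate since $m \le p_2 \le p_1$):
\[
\|g_n\|_{p_2} \;\le\; \|g_n\|_m^{1-\theta}\,\|g_n\|_{p_1}^{\theta}, \qquad \tfrac{1}{p_2} \;=\; \tfrac{1-\theta}{m} + \tfrac{\theta}{p_1}.
\]
Combining $\|g_n\|_m \lesssim \|f\|_m$ (uniform in $n$) with the assumed $\ell^q$-summability of $(2^{s_1 n}\|g_n\|_{p_1})_n$, the hypothesis $s_1 - d/p_1 \ge s_2 - d/p_2$ is precisely the scaling condition that matches the exponents so that a discrete H\"older inequality between the bounded sequence $\|g_n\|_m$ and the $\ell^q$-sequence from the Besov norm closes the summation over $n \ge 0$. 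The main difficulty is the exponent bookkeeping in this final step: verifying that the interpolation parameter $\theta$ combines with the Sobolev scaling assumption to yield exactly the power of $2^n$ needed to match a convergent weighted $\ell^q$ norm; once this is set up correctly, the remaining manipulations are routine.
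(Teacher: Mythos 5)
Your treatment of the first two claims is correct and is essentially the paper's own argument: the paper proves the single estimate $\|\psi_0(D)f\|_p\lesssim\|f\|_m$ by Young's inequality (applied via duality to the low‑frequency projection), and then invokes $B^s_{p,q}\subset\dot B^s_{p,q}$ for $s>0$ together with monotonicity of the nonhomogeneous spaces; your block‑by‑block version of the same Young estimate for $n\le 0$ is an equivalent route.

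The third claim is where your proposal has a genuine gap, and the exponent bookkeeping you defer does \emph{not} close. Carrying out your interpolation explicitly: with $\theta$ defined by $1/p_2=(1-\theta)/m+\theta/p_1$, i.e.\ $\theta=\frac{1/m-1/p_2}{1/m-1/p_1}$, and writing $\varepsilon_n=2^{s_1 n}\|g_n\|_{p_1}\in\ell^q$, one obtains $2^{s_2 n}\|g_n\|_{p_2}\lesssim\|f\|_m^{1-\theta}\,2^{(s_2-\theta s_1)n}\,\varepsilon_n^{\theta}$, and summability over $n\ge 0$ requires (essentially) $s_2\le\theta s_1$. This condition involves $m$ and is \emph{not} implied by $s_1-d/p_1\ge s_2-d/p_2$; in fact that hypothesis is automatically satisfied whenever $s_1\ge s_2$ and $p_2\le p_1$, so it carries no information. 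Worse, the inclusion itself fails under the stated hypotheses: in $d=2$ take $m=p_2=2$, $p_1=\infty$, $s_1=s_2=1$, $q$ arbitrary, and $f=\sum_{n\ge1}2^{-3n/2}\chi(2^{-n}x)\,e^{i2^n x_1}$ with $\hat\chi$ supported in a small ball, so that each summand is frequency‑localized near $|\kappa|\sim 2^n$. Then $\|f_n\|_\infty\sim 2^{-3n/2}$ and $\|f_n\|_2\sim 2^{-n/2}$, hence $f\in L^2\cap\dot B^1_{\infty,q}$, while $2^{n}\|f_n\|_2\sim 2^{n/2}\to\infty$, so $f\notin\dot B^1_{2,q}$ — yet all hypotheses of the third claim hold. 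So the step you describe as routine is exactly where the argument (and the statement as written) breaks; the correct Gagliardo–Nirenberg/Oru‑type substitute for decreasing the integrability index needs the extra condition $s_2\le\theta s_1$ with the $\theta$ above. For comparison, the paper's own proof dismisses this part in one sentence ("the remaining statements follow from the corresponding statements for non‑homogeneous Besov spaces"), which likewise does not address the passage from $p_1$ to a smaller $p_2$ on an unbounded domain.
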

\begin{proof}
Assume $f\in L^m\cap {\dot B}^s_{p, q}$. Then, for any $g\in L^{p^\prime}$, 
\[
\langle g, \psi_0(D) f\rangle = \int h(x - y)\,f(y)\,g(x)\,dx\,dy\,,
\]
where 
\[
h(x) = \int e^{i x \kappa}\,\psi_0(\kappa)\,\dbar\kappa\,.
\]
By Young's convolution inequality, 
\[
|\langle g, \psi_0(D) f\rangle | \le \|h\|_\ell\;\|f\|_m\,\|g\|_{p^\prime}\,,
\]
if
\[
\frac1\ell = 1 + \frac1p - \frac1m\,.
\]
Since $m \le p$, this equality defines $\ell$ so that $1\le \ell < \infty$. Clearly, $h\in L^\ell$, since $\psi_0$ is smooth and has compact support. Thus, 
\[
\|\psi_0(D) f\|_p \lesssim \|f\|_m\,.
\]
This proves that $f\in L^m\cap { B}^s_{p, q}$ and $L^m\cap {\dot B}^s_{p, q} \subset L^m\cap { B}^s_{p, q}$. 
If $s > 0$, then ${ B}^s_{p, q}\subset {\dot B}^s_{p, q}$. 
Together, these observations prove the isomorphism 
of spaces: $L^m\cap {\dot B}^s_{p, q} = L^m\cap { B}^s_{p, q}$. The remaining statements follow from the corresponding statements for non-homogeneous Besov spaces.

\end{proof}

\subsection{Gagliardo-Nirenberg inequality and Runst's lemma}

Recall the classical Gagliardo-Nirenberg inequality:
\be\label{classic-GN}
\| D^j g\|_p \lesssim \|D^m g\|_{p_1}^\alpha\,\|g\|_{p_2}^{1 - \alpha}\,,
\ee
where $j$ and $m$ are  integers such that $0 < j < m$, $1 \le p_1 < \infty$, $1 \le \,p_2 \le\infty$, and 
\[
\alpha = \frac{j}{m}\,,\quad \frac1p = \frac{\alpha}{p_1} + \frac{1 - \alpha}{p_2}\,.
\]
For more general forms/versions of this inequality see  \cite{MR3818110,MR3813967}.

Runst's inequality is a type of a Gagliardo-Nirenberg inequality stated in terms of the Lizorkin-Triebel spaces.   Its interesting feature is that there is no restriction on the parameters $q_1$ and $q$ within the range $(0, +\infty]$ (though the constants depend on their choice). 
\begin{lemma}\label{Runst} 
 Let $\alpha\in (0, 1)$ and $0 < p < \infty$, and  $0 < q_1, q \le \infty$, $r > 0$. Then, for any 
 $g\in  L^\infty\cap F^s_{p, q_1}$, 
\be\label{runst}
\|g\;\big|\; F^{\alpha r}_{p/\theta, q}\| \lesssim \|g\;\big|\; F^r_{p, q_1}\|^\alpha\,\|g\|_\infty^{1 - \alpha}\,.
\ee
Also, for any 
 $g\in  L^\infty\cap {\dot F}^r_{p, q_1}$, 
\be\label{hrunst}
\|g\;\big|\; {\dot F}^{\alpha r}_{p/\theta, q}\| \lesssim \|g\;\big|\; {\dot F}^r_{p, q_1}\|^\alpha\,\|g\|_\infty^{1 - \alpha}\,.
\ee

\end{lemma}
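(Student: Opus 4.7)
\textbf{Proof plan for Lemma~\ref{Runst}.}

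The plan is to give a direct proof using the Littlewood--Paley decomposition combined with a pointwise factorization. Write $g_n = \vp_n(D)g = K_n * g$ with $K_n = {\cal F}^{-1}\vp_n$. By the homogeneity $\vp_n(\kappa) = \vp_0(2^{-n}\kappa)$, we have $\|K_n\|_1 = \|K_0\|_1 =: c_0$ independent of $n$, so Young's inequality gives the uniform pointwise bound
\[
|g_n(x)| \le c_0\,\|g\|_\infty \qquad \text{for every } x\in\R^d,\ n\in\Z.
\]

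Step 1 (pointwise factorization). Factor $|g_n|^q = |g_n|^{\alpha q}\cdot |g_n|^{(1-\alpha)q}$, estimate the second factor by $(c_0\|g\|_\infty)^{(1-\alpha)q}$, and redistribute the weight $2^{\alpha r n q} = (2^{rn})^{\alpha q}$ onto the first:
\[
\sum_n 2^{\alpha r n q}|g_n(x)|^q \;\le\; (c_0\|g\|_\infty)^{(1-\alpha)q}\sum_n \bigl(2^{rn}|g_n(x)|\bigr)^{\alpha q}.
\]
Taking $q$-th roots and then the $L^{p/\alpha}$ norm in $x$, using that $\|M^\alpha\|_{p/\alpha}=\|M\|_{p}^{\alpha}$ with $M(x)=\bigl(\sum_n(2^{rn}|g_n(x)|)^{\alpha q}\bigr)^{1/(\alpha q)}$, we arrive at the intermediate bound
\[
\|g\,\big|\,\dot F^{\alpha r}_{p/\alpha,\,q}\| \;\lesssim\; \|g\|_\infty^{1-\alpha}\,\|g\,\big|\,\dot F^{r}_{p,\,\alpha q}\|^{\alpha}.
\]

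Step 2 (matching the microlocal index). It remains to replace the fine index $\alpha q$ on the right by the given $q_1$. In the regime $\alpha q \ge q_1$, this is immediate from the Littlewood--Paley embedding $\dot F^{r}_{p,q_1}\hookrightarrow \dot F^{r}_{p,\alpha q}$. In the opposite regime $\alpha q < q_1$, no such embedding is available, and one argues pointwise in $x$ by splitting $\sum_n(2^{rn}|g_n(x)|)^{\alpha q}$ at the scale $N_0(x)$ defined by the balance
\[
c_0\|g\|_\infty\,2^{rN_0(x)} \;\simeq\; \Bigl(\sum_m(2^{rm}|g_m(x)|)^{q_1}\Bigr)^{1/q_1} =: A(x).
\]
For $n\le N_0(x)$ one uses $2^{rn}|g_n(x)|\le c_0\|g\|_\infty\,2^{rn}$ and sums a convergent geometric series; for $n > N_0(x)$ one uses $2^{rn}|g_n(x)|\le A(x)$ together with a weighted H\"older inequality against the summable weight $2^{-\varepsilon|n-N_0(x)|}$ (chosen so the dyadic tail converges). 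Recombining the two halves yields the pointwise estimate
\[
\Bigl(\sum_n(2^{rn}|g_n(x)|)^{\alpha q}\Bigr)^{1/(\alpha q)} \;\le\; C\,A(x),
\]
with $C$ depending only on $\alpha,q,q_1,r$; substituting this into the output of Step 1 completes the homogeneous inequality \eqref{hrunst}.

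Step 3 (inhomogeneous version). The inhomogeneous inequality \eqref{runst} follows by restricting the Littlewood--Paley sum to $n\ge 1$, handling the low-frequency block $\psi_0(D)g$ separately, and using $\|\psi_0(D)g\|_{p/\alpha}\lesssim \|g\|_\infty^{1-\alpha}\|g\|_p^\alpha\lesssim \|g\|_\infty^{1-\alpha}\|g\,|\,F^r_{p,q_1}\|^\alpha$ via H\"older and the continuity of the smooth compactly supported Fourier multiplier $\psi_0$ on $L^m$ for $m\in[p,\infty]$.

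The main obstacle is Step 2 in the regime $\alpha q < q_1$: the straightforward dyadic embedding is unavailable and one must instead perform a cutoff argument pointwise in $x$ using both the $\ell^{\infty}$ bound $|g_n(x)|\lesssim\|g\|_\infty$ and the $\ell^{q_1}$ envelope $2^{rn}|g_n(x)|\le A(x)$. The success of this step rests on the fact that the first bound decays geometrically in $n$ below the balance scale $N_0(x)$, while above $N_0(x)$ the dyadic weight produces a summable tail that can be absorbed by a weighted H\"older inequality; the constant obtained is independent of $x$ and $N_0(x)$, which is essential for passing to the $L^{p/\alpha}$ norm.
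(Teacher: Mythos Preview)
There is a genuine gap in Step~2 in the regime $\alpha q < q_1$. The pointwise inequality you claim at the end of that step,
\[
\Bigl(\sum_n (2^{rn}|g_n(x)|)^{\alpha q}\Bigr)^{1/(\alpha q)} \le C\,A(x),
\]
is simply false in general. Take $c_0\|g\|_\infty = 1$ and set $b_n := 2^{rn}|g_n(x)| = 1$ for $1\le n\le M$ and $b_n = 0$ otherwise (this is consistent with $|g_n|\le 1$). Then $A(x) = M^{1/q_1}$, your balance scale satisfies $N_0 \simeq (\log_2 M)/(rq_1) \ll M$, and the high-frequency part of the sum is $\sum_{n>N_0} b_n^{\alpha q} \simeq M$, which is \emph{not} bounded by $A(x)^{\alpha q} = M^{\alpha q/q_1}$ when $\alpha q < q_1$. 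The proposed ``weighted H\"older against $2^{-\varepsilon|n-N_0|}$'' cannot rescue this: any such weight must be removed somewhere, and removing it forces you to control $\sum_n b_n^{q_1}\,2^{+\varepsilon' (n-N_0)}$, which you do not have. The underlying reason is that after Step~1 you have already spent the $L^\infty$ information to produce the factor $\|g\|_\infty^{1-\alpha}$, and the remaining sum carries the \emph{same} dyadic weight $2^{rn}$ as $A(x)$; there is no geometric slack left in the high-frequency tail.

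The fix is to skip Step~1 and perform the cutoff directly on $\sum_n 2^{\alpha r n q}|g_n|^q$, using \emph{different} bounds on the two sides: for $n\le N_0$ use $|g_n|\lesssim \|g\|_\infty$, and for $n>N_0$ use $|g_n|\le 2^{-rn}B(x)$ with $B(x):=\sup_m 2^{rm}|g_m(x)|$. The point is that the target weight $2^{\alpha r n}$ differs from $2^{rn}$ by the factor $2^{(\alpha-1)rn}$, which \emph{does} decay as $n\to+\infty$; balancing at $2^{rN_0}\simeq B(x)/\|g\|_\infty$ gives the pointwise bound
\[
\Bigl(\sum_n 2^{\alpha r n q}|g_n(x)|^q\Bigr)^{1/q}\lesssim B(x)^\alpha\,\|g\|_\infty^{1-\alpha}\le A(x)^\alpha\,\|g\|_\infty^{1-\alpha},
\]
from which the result follows by taking $L^{p/\alpha}$. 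This sequence inequality (interpolating an $\ell^q$ norm between two weighted $\ell^\infty$ norms) is precisely Oru's lemma, and it is how the paper proceeds.
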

\begin{proof}
The proof in the nonhomogeneous case, \eqref{runst},  is given by Runst, see 
Lemma 1, Section 5.2 of \cite{Runst}, and also Lemma 1, Section 5.3.7 \cite{Runst-Sickel-book}). It relies 
on Oru's lemma, \cite[Lemma 3.7]{MR3813967}. For the homogeneous spaces one needs a slight generalization of Oru's lemma, namely,
\begin{lemma}\label{oru} 
If $-\infty < s_1 < s_2 < +\infty$, $0 < q < \infty$, $0 < \alpha < 1$, and $s = \alpha s_1 + (1 - \alpha) s_2$, then
\be\label{foru}
\| 2^{s j} a_j\|_{\ell^q} \lesssim \|2^{s_1j} a_j\|_{\ell^\infty}^{\alpha}\;\|2^{s_2j} a_j\|_{\ell^\infty}^{1 - \alpha}
\ee
for any sequence $\{a_j\}_{j=-\infty}^\infty$. 
\end{lemma} 
We leave its proof to the reader and continue with the proof of \eqref{hrunst}. 

Denote $g_n = {\cal F}^{-1} \vp_n {\cal F}g$. With $s = \alpha r$, $s_1 = r$, and $s_2 = 0$, it follows from Lemma \ref{oru} that
\[
\left(\sum_n 2^{s n q} |g_n|^q\right)^{1/q} \lesssim \left(\sup_n 2^{s_1 n} |g_n|\right)^{\alpha}\;\left(\sup_n 2^{s_2 n} |g_n|\right)^{1 - \alpha} = \left(\sup_n 2^{r n} |g_n|\right)^{\alpha}\;\left(\sup_n  |g_n|\right)^{1 - \alpha}
\]
and, consequently,
\[
\left(\sum_n 2^{\alpha r n q} |g_n|^q\right)^{1/q} \lesssim \left(\sup_n 2^{r n} |g_n|\right)^{\alpha}\;\|\sup_n  |g_n|\|_\infty^{1 - \alpha}\,.
\]
Since $\sup_n 2^{r n} |g_n| \le \left(\sum_{n\in\Z} 2^{r n q_1} |g_n|^{q_1} \right)^{1/q_1}$ for any $q_1 > 0$, and since 
$\|\sup_n  |g_n|\|_\infty \lesssim \|g\|_\infty$, we have
\[
\left(\sum_n 2^{\alpha r n q} |g_n|^q\right)^{1/q} \lesssim \left(\sum_{n\in\Z} 2^{r n q_1} |g_n|^q \right)^{\alpha/q_1}\;\|g\|_\infty^{1 - \alpha}\,.
\]
Take the $L^{p/\alpha}$ norm of both sides to obtain \eqref{hrunst}.

\end{proof}

\section{Norms of products and compositions of functions}  

\subsection{Norms of products} \label{sec:products}

There are several results (sometimes called the fractional Leibniz rule) on the Sobolev/Lizorkin-Triebel norms of products of functions. 

\begin{lemma}\label{frac-leibniz}
If $s > 0$ and $1 < p < \infty$, then
\be \label{product-H}
\begin{aligned} 
\|D^s (f_1\cdot f_2)\|_p \lesssim 
 \|D^s f_1\|_{q_1}\,\|f_2\|_{q_2} + \|f_1\|_{q_3}\,\|D^s f_2\|_{q_4}
\end{aligned}
\ee
provided
\[
1 < q_1, q_2, q_3,  q_4\le \infty, \quad \frac{1}{q_1} + \frac{1}{q_2} = \frac{1}{q_3} + \frac{1}{q_4} = \frac{1}{p}\,.
\]
The same inequality is true with $D^s$ replaced by the operator $J^s = (1 - \Delta)^{s/2}$, with the same restrictions on the parameters:
\be\label{product-F}
\begin{aligned}
\|f_1\cdot f_2\;\big|\; F^s_{p, 2}\| \lesssim 
 \|f_1\;\big|\; F^s_{p\,q_1, 2}\|\,\|f_2\|_{p q_2} + \|f_1\|_{p q_3}\,\|f_2\;\big|\; F^s_{p\,q_4, 2}\|\,.
\end{aligned}
\ee
As a corollary, 
if $s > 0$ and $1 < p < \infty$,  then  
\be\label{product-many}
\|f_1\cdot \cdots \cdot f_N\;\big|\;F^s_{p, 2}\|\le C\,\sum_{j = 1}^N \|f_j\;\big|\; F^s_{p\,p_j, 2}\|\,\cdot\,\prod_{i\neq j} \|f_i\|_{p\,p_i}
\ee
provided
\[
\sum_1^N \frac{1}{p_j} = 1\,.
\] 

\end{lemma}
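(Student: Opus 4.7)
The plan for inequality \eqref{product-H} is the classical Kato--Ponce/Christ--Weinstein argument built on Bony's paraproduct decomposition. Using the Littlewood--Paley projectors $\vp_n(D)$ and low-frequency cutoffs $\psi_n(D)$ from Appendix~\ref{sec:spaces}, I split
\[
f_1 f_2 = T_{f_1} f_2 + T_{f_2} f_1 + R(f_1, f_2),
\]
where $T_g h = \sum_n \psi_{n-2}(D)\,g \cdot \vp_n(D)\,h$ is the Bony paraproduct and $R(f_1, f_2) = \sum_{|n-m| \le 1} \vp_n(D) f_1 \cdot \vp_m(D) f_2$ collects the resonant (high-high) contributions. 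Each dyadic summand in $T_{f_2} f_1$ has Fourier support in the annulus $\{|\kappa| \simeq 2^n\}$, so the square-function characterization of $L^p$ (valid precisely for $1<p<\infty$) applies directly after multiplication by $D^s$.

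For $T_{f_2} f_1$, using the pointwise bound $|\psi_{n-2}(D) f_2(x)| \lesssim (\mathcal M f_2)(x)$ for the Hardy--Littlewood maximal function $\mathcal M$, the Fefferman--Stein vector-valued maximal inequality, and H\"older's inequality,
\[
\|D^s T_{f_2} f_1\|_p \lesssim \Big\|\Big(\sum_n 2^{2sn} |\vp_n(D) f_1|^2\Big)^{1/2}\,\mathcal M f_2\Big\|_p \lesssim \|D^s f_1\|_{q_1}\,\|f_2\|_{q_2},
\]
and $T_{f_1} f_2$ is handled symmetrically. The main obstacle will be the resonant piece $R(f_1, f_2)$, whose dyadic blocks have Fourier support in balls $\{|\kappa| \lesssim 2^n\}$ rather than annuli, so high-high interactions can cascade to low output frequencies. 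Here I would reorganize $D^s R(f_1,f_2) = \sum_m 2^{sm} \vp_m(D) R(f_1,f_2)$, apply Bernstein's inequality to absorb the output projection while trading integrability, and sum the resulting geometric series $\sum_{n \ge m} 2^{s(m-n)}$; the hypothesis $s > 0$ is essential for convergence (cf.\ \cite{KP}).

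For the Bessel-potential version \eqref{product-F} I would use the isomorphism $\|f\,|\,F^s_{p,2}\| \simeq \|J^s f\|_p$ (valid for $1 < p < \infty$) and decompose $J^s = \psi_0(D)\,J^s + (1-\psi_0(D))\,J^s$. The high-frequency piece $(1-\psi_0(D))\,J^s$ differs from $D^s$ by a Mikhlin-type $L^p$-multiplier, so \eqref{product-H} applies; the low-frequency remainder has a Schwartz convolution kernel and is bounded on every $L^p$ by Young's inequality, producing only terms of the form $\|f_1\|_{pq_1}\|f_2\|_{pq_2}$ that are absorbed by H\"older into the stated bound. Finally, \eqref{product-many} follows by induction on $N$: the base case $N=2$ is \eqref{product-F}, and the inductive step writes $f_1 \cdots f_N = f_1 \cdot (f_2 \cdots f_N)$, applies \eqref{product-F} with paired exponents $(pp_1, pp_1')$ where $p_1' = p_1/(p_1-1)$, and invokes the inductive hypothesis to expand $\|f_2 \cdots f_N\,|\, F^s_{p p_1', 2}\|$, consistently with the H\"older relation $\sum_{i \ge 2} p_i^{-1} = 1 - p_1^{-1} = (p_1')^{-1}$.
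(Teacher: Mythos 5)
The paper does not actually prove this lemma: it is stated with a pointer to \cite[Theorem 1]{Grafakos}, \cite{BL-1}, \cite{CW}, and \cite{Taylor}. Your sketch reconstructs the standard paraproduct argument that underlies those references, and it is sound in the range where it is actually invoked in this paper: the decomposition $f_1f_2=T_{f_1}f_2+T_{f_2}f_1+R(f_1,f_2)$, the maximal-function/square-function treatment of the paraproducts, the $\sum_{n\ge m}2^{s(m-n)}$ summation for the resonant piece (where $s>0$ is indeed the crucial hypothesis), the Mikhlin reduction of $J^s$ to $D^s$ plus a Schwartz-kernel low-frequency remainder, and the induction for \eqref{product-many} with the exponent bookkeeping $\sum_{i\ge2}(p p_i)^{-1}=(p p_1')^{-1}$ all check out.

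Two caveats. First, the lemma as stated permits $q_1=\infty$ (respectively $q_4=\infty$) on the factor that carries $D^s$, and there your argument has a genuine hole: after H\"older you need
$\bigl\|\bigl(\sum_n 2^{2sn}|\vp_n(D)f_1|^2\bigr)^{1/2}\bigr\|_{q_1}\lesssim\|D^sf_1\|_{q_1}$, i.e.\ the Littlewood--Paley characterization of $L^{q_1}$, which fails at $q_1=\infty$ (the left side is the ${\dot F}^s_{\infty,2}$ norm, which is not controlled by $\|D^sf_1\|_\infty$). This is precisely the endpoint that required the additional work of \cite{Grafakos} and \cite{BL-1}; your proof covers $1<q_i<\infty$ together with the ``easy'' endpoint where $L^\infty$ sits on the underived factor (since $\mathcal M:L^\infty\to L^\infty$), which is the only configuration the paper uses (e.g.\ in \eqref{F-TH} and \eqref{temp-12}). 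Either restrict the claim accordingly or import the endpoint from the cited references. Second, a notational point: in \eqref{product-F} and \eqref{product-many} the H\"older relation is the conjugate-exponent one, $\sum_j p_j^{-1}=1$ (so that $\sum_j (p p_j)^{-1}=p^{-1}$), not $\sum q_i^{-1}=p^{-1}$ as in \eqref{product-H}; your induction already uses the correct convention, but state it explicitly so the two displays are not read with the same normalization.
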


For the proofs of \eqref{product-H} and \eqref{product-F} see \cite[Theorem 1]{Grafakos} and \cite[Theorem1.1]{BL-1}. Some more general inequalities  are established in \cite{Runst-Sickel-book}. For earlier results see the Christ and Weinstein paper \cite[Prop. 3.3]{CW} and \cite[Prop. 2.1.1]{Taylor}.

\subsection{Norms of compositions} 

Lemma~\ref{frac-leibniz} is used, in particular, to 
 estimate the $H^s$ norms of compositions $f(u)$, where $f$ is a sufficiently smooth functions and $u\in H^s\cap L^\infty$. The simplest result deals with the case $0 < s \le 1$. 
\begin{lemma}\label{chain-1}
Suppose $f: \R\to \R$ is a locally Lipschitz  function such that $f(0) = 0$. 
Then, if $0 < s \le 1$ and $1 < p < \infty$, 
\[
\|f(u(\cdot))\,\big|\;F^s_{p, 2}(\R^d)\| \le C_f(\|u\|_\infty)\;\|u\,\big|\;F^s_{p, 2}(\R^d)\|
\]
for any function $u\in F^s_{p, 2}(\R^d)$ (recall that $F^s_{p, 2}(\R^d) = H^{s, p}(\R^d)$). Here 
\[
C_f(\|u\|_\infty) = \inf \{C\,:\;|f(z_1) - f(z_2)|\le C \,|z_1 - z_2|\,,\;\forall z_1, z_2 :\;|z_{1,2}|\le \|u\|_\infty\}
\]
\end{lemma}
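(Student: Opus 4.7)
The plan is to handle the two ranges $s=1$ and $0<s<1$ separately, exploiting classical concrete realizations of $F^s_{p,2}$.

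For $s=1$, recall that $F^1_{p,2}(\R^d)=W^{1,p}(\R^d)$ for $1<p<\infty$. Since $u\in L^\infty$, $f$ is Lipschitz on $[-\|u\|_\infty,\|u\|_\infty]$ with constant $L:=C_f(\|u\|_\infty)$. By Stampacchia's theorem (composition of a Lipschitz function with a Sobolev function), $f\circ u\in W^{1,p}$ and
\[
|\nabla(f(u))(x)| \le L\,|\nabla u(x)| \quad\text{a.e.}\,,
\]
while $f(0)=0$ gives $|f(u(x))|\le L|u(x)|$. Summing, $\|f(u)\|_{W^{1,p}}\le L\,\|u\|_{W^{1,p}}$.

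For $0<s<1$, the key tool is Strichartz's (Stein's) characterization of the Bessel potential space $H^{s,p}=F^s_{p,2}$ in terms of a first-difference square function: for $1<p<\infty$ and $0<s<1$,
\[
\|u\,\big|\,F^s_{p,2}\| \simeq \|u\|_p + \left\|\left(\int_{\R^d}\frac{|u(\cdot+h)-u(\cdot)|^2}{|h|^{d+2s}}\,dh\right)^{1/2}\right\|_p.
\]
The Lipschitz bound gives the pointwise (in $x$ and $h$) inequality
\[
|f(u)(x+h)-f(u)(x)| \le L\,|u(x+h)-u(x)|,
\]
so the inner $L^2_h$ square function for $f(u)$ is pointwise in $x$ dominated by $L$ times that for $u$. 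Taking the $L^p$ norm in $x$ preserves this, and together with $\|f(u)\|_p\le L\|u\|_p$ (from $f(0)=0$) we get $\|f(u)\,|\,F^s_{p,2}\|\lesssim L\,\|u\,|\,F^s_{p,2}\|$.

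The main obstacle is invoking the right concrete norm. The Bessel potential norm $F^s_{p,2}$ is not a Gagliardo seminorm for $p\neq 2$ (that would be $B^s_{p,p}$), so one must use Strichartz's square-function formula rather than the more naive double-integral Gagliardo seminorm; apart from citing this characterization, the argument reduces to the pointwise Lipschitz comparison and a routine $L^p$ monotonicity step. An alternative route, should one wish to avoid the difference characterization, is to observe that composition with a Lipschitz $f$ vanishing at zero is a bounded map on $L^p$ and on $W^{1,p}$ uniformly in the (nonlinear) datum $u$ confined to $\{\|u\|_\infty\le R\}$, and then obtain the intermediate case by real interpolation on the sublevel set, using that nonlinear Lipschitz maps interpolate on balls; but the Strichartz characterization is shorter and avoids the interpolation subtleties.
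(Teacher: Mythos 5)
Your argument is essentially the paper's: the paper disposes of this lemma with a one-line citation to Taylor's Prop.~2.4.1, and the proof behind that citation is exactly your combination of the pointwise Lipschitz bound $|f(u(x+h))-f(u(x))|\le L|u(x+h)-u(x)|$ with a first-difference square-function characterization of $H^{s,p}=F^s_{p,2}$ (plus the trivial $W^{1,p}$ case at $s=1$). One caveat: the characterization you invoke, with the pointwise difference and the integral over all of $\R^d$,
\[
\mathcal{D}_s u(x)=\Bigl(\int_{\R^d}\frac{|u(x+h)-u(x)|^2}{|h|^{d+2s}}\,dh\Bigr)^{1/2},
\]
is Stein's, and it is equivalent to the $H^{s,p}$ norm only for $p>\tfrac{2d}{d+2s}$; below that threshold (which is a nonempty subinterval of $(1,2)$ whenever $s<d/2$) the forward implication $u\in H^{s,p}\Rightarrow \mathcal{D}_s u\in L^p$ fails, and your chain of inequalities breaks at its first step. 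So as written your proof covers $s=1$ and $0<s<1$ with $p>2d/(d+2s)$ — in particular all $p\ge 2$, which is the only range in which the paper actually applies the lemma (in the proof of the composition estimate for $H^s$, where the exponent is $2q'\ge 2$) — but not the full stated range $1<p<\infty$. To close that residual range you would replace $\mathcal{D}_s$ by Strichartz's averaged-difference square function, or by the local first-difference characterization of $F^s_{p,2}$, for which the same pointwise Lipschitz domination goes through verbatim.
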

The proof of Lemma \ref{chain-1} relies on the fact that $|f(u(x)) - f(u(y))|\le C_f(\|u\|_\infty)\,|u(x) - u(y)|$, see \cite[Prop. 2.4.1]{Taylor}. Next, consider larger $s$. In the main body of the paper we need only the case of the  Sobolev scale $H^s$.

\begin{lemma}\label{chain-2}
Let $s > 0$.  Let $f: \R\to \R$ be $r =\lfloor s \rfloor$ times  continuously differentiable function such that  its $r$\,th derivative is locally Lipschitz, and $f(0) = 0$. 
Then there is a continuous, nondecreasing function 
$C_{f, s, d}: \R_+\to \R_+$ such that
\be\label{fuHs}
\|f(u(\cdot))\,\big|\;{H^s(\R^d)} \| \le C_{f, s, d}(\|u\|_\infty)\;\|u\|_{H^s(\R^d)}
\ee
for any function $u\in H^s(\R^d)\cap L^\infty(\R^d)$.
\end{lemma}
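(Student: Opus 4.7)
Write $s = r + \sigma$ with $r=\lfloor s\rfloor$ and $\sigma \in [0,1)$, and proceed by induction on $r$, the base case $r=0$ (i.e.\ $0<s<1$) being exactly Lemma~\ref{chain-1} with $p=2$. For the inductive step, it suffices to estimate $\|\partial_k f(u)\|_{H^{s-1}} = \|f'(u)\,\partial_k u\|_{H^{s-1}}$ for each $k=1,\dots,d$, since $\|f(u)\|_{H^s} \simeq \|f(u)\|_{L^2}+\sum_k \|\partial_k f(u)\|_{H^{s-1}}$ and the $L^2$ term is controlled by $\|f(u)\|_\infty \cdot \|u\|_{L^2}\cdot(1/\|u\|_\infty)$ times $\|u\|_{H^s}$ (using $f(0)=0$ and the local Lipschitz bound).

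The main work is the product estimate. Apply the fractional Leibniz rule \eqref{product-H} (with the Bessel-potential version \eqref{product-F}, which extends to $J^{s-1}$ with the same exponents since $J$ and $D$ differ by a lower-order multiplier) to split
\[
\|f'(u)\,\partial_k u\|_{H^{s-1}} \lesssim \|f'(u)\|_{H^{s-1,q_1}}\,\|\partial_k u\|_{q_2} + \|f'(u)\|_{q_3}\,\|\partial_k u\|_{H^{s-1,q_4}},
\]
where the indices are chosen to sum to $1/2$ and to be interpolable between $L^\infty$ and $H^s$ via Gagliardo-Nirenberg. The second term is easy: $\|f'(u)\|_\infty \le C_f(\|u\|_\infty)$ (taking $q_3=\infty$) and $\|\partial_k u\|_{H^{s-1}}\le \|u\|_{H^s}$. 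For the first term we use the inductive hypothesis applied to the composition $f'\circ u$ (noting that $f'$ is $r-1$ times continuously differentiable with locally Lipschitz $(r-1)$st derivative, and we reduce to $f'(0)$ by writing $f'(u)=f'(0)+(f'(u)-f'(0))$ where the constant piece disappears after differentiation or is bounded trivially). Interpolating via Gagliardo-Nirenberg between $L^\infty$ and $H^s$ then yields matching powers so that the product of the two factors is bounded by $C_{f,s,d}(\|u\|_\infty)\,\|u\|_{H^s}$.

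For non-integer $s$, the final step after $r$ applications of this reduction is to estimate $\|f^{(r)}(u)\,(\partial^\alpha u)\|_{H^\sigma}$ with $|\alpha|=r$ and $0<\sigma<1$. Here Lemma~\ref{chain-1} applies directly to $f^{(r)}\circ u$ (which has $f^{(r)}$ locally Lipschitz, as required), giving $\|f^{(r)}(u)\|_{H^{\sigma,p}} \le C_{f}(\|u\|_\infty)\,\|u\|_{H^{\sigma,p}}$ for any $1<p<\infty$, which combined with the Leibniz rule and Gagliardo-Nirenberg interpolation closes the estimate.

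\textbf{Main obstacle.} The bookkeeping of interpolation exponents is the only delicate point: one must verify that for every Faà di Bruno-type product appearing after expansion, the Gagliardo-Nirenberg powers $|\beta_i|/s$ sum to $1$ so that the resulting bound is homogeneous of degree one in $\|u\|_{H^s}$ and the remaining weight is a continuous, monotone function of $\|u\|_\infty$ alone. This is automatic for integer $s$ via the classical Moser estimate, and for non-integer $s$ it follows by one additional application of \eqref{product-H} together with Lemma~\ref{chain-1}; the constant $C_{f,s,d}$ is then built from the $C^r$-norm of $f$ on the interval $[-\|u\|_\infty,\|u\|_\infty]$ and the Lipschitz constant of $f^{(r)}$ there.
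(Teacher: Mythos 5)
Your route (peel off one derivative, apply the fractional Leibniz rule once, recurse on $f'\circ u$) is genuinely different from the paper's, which never recurses: it expands $\partial^r f(u)$ in one shot via the Fa\`a di Bruno formula and estimates each term $f^{(k)}(u)\,\prod_m(\partial^m u)^{\alpha_m}$ directly with H\"older, Runst's lemma and Lemma~\ref{chain-1}. Your scheme can be made to work, but as written the inductive step has a concrete gap. In the first term of your splitting you must take some $q_1>2$ (taking $q_2=\infty$ is not an option, since $\|\partial_k u\|_\infty$ is not controlled), and the Gagliardo--Nirenberg balancing forces $q_2=2s$, hence $q_1=2s/(s-1)$. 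The inductive hypothesis, taken literally as the statement of the lemma, only controls $\|f'(u)-f'(0)\|_{H^{s-1}}=\|f'(u)-f'(0)\|_{H^{s-1,2}}$, and you cannot upgrade this to $H^{s-1,q_1}$ by interpolating against $\|f'(u)\|_\infty$: Runst's inequality \eqref{hrunst} with $\alpha\rho=s-1$ and $2/\alpha=q_1$ would require control of $f'(u)$ in $H^{\rho}$ with $\rho=s>s-1$, which is unavailable because $f'$ has one derivative less than $f$. The fix is to run the induction on the stronger statement $\|f(u)\|_{F^{s}_{p,2}}\le C(\|u\|_\infty)\,\|u\|_{F^{s}_{p,2}}$ for \emph{all} $1<p<\infty$ simultaneously; the base case (Lemma~\ref{chain-1}) and the tools \eqref{product-F}, \eqref{hrunst} are already stated at general $p$, and the exponents then close:
\[
\|f'(u)-f'(0)\|_{F^{s-1}_{q_1,2}}\lesssim C(\|u\|_\infty)\,\|u\|_{F^{s-1}_{q_1,2}}\lesssim C(\|u\|_\infty)\,\|u\|_{H^s}^{(s-1)/s}\,\|u\|_\infty^{1/s},
\]
which, multiplied by $\|\partial_k u\|_{2s}\lesssim\|u\|_{H^s}^{1/s}\,\|u\|_\infty^{1-1/s}$, is of degree one in $\|u\|_{H^s}$, as required. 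Until you state and prove this $L^p$ version, the step ``use the inductive hypothesis on $f'\circ u$'' is not justified.

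It is worth noting why the paper's organization sidesteps this. By expanding all $r$ derivatives at once, the only composition factors that ever appear are $f^{(k)}(u)$ measured in spaces of regularity $\gamma=s-r<1$, where Lemma~\ref{chain-1} (already valid for all $1<p<\infty$) suffices; all the higher regularity is carried by the explicit factors $\partial^m u$, which are handled by Runst's lemma alone. Your recursion instead requires composition bounds at regularity $s-1$, which can exceed $1$, and this is exactly what forces the strengthened, all-$p$ induction hypothesis. Both arguments are Moser-type and use the same ingredients; the paper's is safer bookkeeping, yours is shorter per step but needs the more general statement to be the object of the induction. Your remaining reductions (splitting off $f'(0)$, the chain rule $\partial_kf(u)=f'(u)\partial_ku$ for $u\in H^s\cap L^\infty$ with $s\ge1$, and the $L^2$ term via $|f(u)|\le C_f(\|u\|_\infty)|u|$) are fine.
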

\begin{proof} For infinitely differential functions $f$, there is an elegant and short proof of \eqref{fuHs} in H\"ormander's book 
\cite[Theorem 8.5.1]{MR1466700}.
However, having in mind finitely differentiable $f$, we present a different argument.  It can be used for other purposes as well (we use it to prove 
regularity of the inverse map in Lemma~\ref{difff}).
\medskip

For $s$ an integer, to prove \eqref{fuHs} we follow Moser's argument  on p. 273 of  paper \cite{Moser}. Let $s = r$ be an integer greater than $1$. 
Observe that $|f(u)|\le C_f(|u|)\,|u|$, and hence $\|f(u)\| \le C_f(\|u\|_\infty)\,\|u\|$. Now it suffices to consider the derivatives of the order $r$ of $f(u(x))$. 
Schematically, 
\be\label{partialf}
\partial^r f(u) = \sum_{k \le r} f^{(k)}(u) \sum_{|\alpha| = k} C_{k \alpha} (\partial u)^{\alpha_1}(\partial^2 u)^{\alpha_2}\dots (\partial^r u)^{\alpha_r} 
\ee
where $\alpha_1 + 2\alpha_2 + \dots + r\alpha_r = r$ and $C_{k \alpha}$ are non-negative constants. We have 
\[
\|\partial^r f(u)\| \le \sum_{k \le r} \|f^{(k)}(u)\|_\infty \sum_{|\alpha| = k} C_{k \alpha}\,
\prod_{m=1}^r\|(\partial^m u)^{\alpha_m}\|_{2 p_m}\,,
\]
where $1\le p_m\le\infty$ and $\sum_m 1/p_m = 1$. The right choice of $p_m$ is   
\[
 p_m = \frac{r}{m\,\alpha_m}\,,
\]
because then
\[
\|(\partial^m u)^{\alpha_m}\|_{2 p_m} =  \|\partial^m u\|_{2 \alpha_m p_m}^{\alpha_m} 
\]
and
\[
\|\partial^m u\|_{2 \alpha_m p_m} = \|\partial^m u\|_{2 r/m} \lesssim \|u\|_{H^r}^{m/r}\,\|u\|_\infty^{1 - m/r}\,, 
\]
by the Gagliardo-Nirenberg inequality. Collecting all the terms we obtain
\[
\|\partial^r f(u)\|_2 \lesssim \|u\|_{H^r}\;\sum_{k \le r} C_{f^{(k)}}(\|u\|_\infty)\,\|u\|_\infty^{k-1} \sum_{|\alpha| = k}C_{k\alpha}\,.
\]
Now consider the case of fractional $s$. Assume $s = r + \gamma$, $r \ge 1 $ is an integer and $0 < \gamma < 1$. 
Since we have the $H^r$ norm of $f(u)$ already bounded, it remains to show that the $H^\gamma$ norm of each term of the form
\be\label{partialg}
g(u)\,(\partial u)^{\alpha_1}(\partial^2 u)^{\alpha_2}\dots (\partial^r u)^{\alpha_r}
\ee
is bounded (see \eqref{partialf}). 
First, consider the case $g(u)$ is not a constant. Use the product estimate \eqref{product-F} 
to obtain
\be\label{prod-theta-1}
\begin{aligned}
\|g(u)\,\prod_{k = 1}^r (\partial^k u)^{\alpha_k}\|_{H^\gamma} \lesssim 
\|g(u)\|_\infty\,\|\prod_{k = 1}^r (\partial^k u)^{\alpha_k}\|_{H^\gamma} + 
      \|\prod_{k = 1}^r (\partial^k u)^{\alpha_k}\|_{2 q}\,\|g(u)\,\big|\; F^\gamma_{2q^\prime, 2}\|
\end{aligned}
\ee
where 
\be
q = \frac{s}{r}\,,\quad q^\prime = \frac{s}{s - r}\;.
\ee
Since $\|g(u)\|_\infty \lesssim C_g(\|u\|_\infty)\,\|u\|_\infty$, the first term on the right will be treated later (with the case $g(\cdot) = const$). Thus, look at the second term.
The choice of $q$ is dictated by the following computation. First, use H\"older's inequality,
\[
\|\prod_{m = 1}^r (\partial^m u)^{\alpha_m}\|_{2 q} \le \prod_{m = 1}^r\,\|\partial^m u\|_{2 q \alpha_m p_m}^{\alpha_m}\,. 
\]
Then, 
\[
\|\partial^m u\|_{2 q \alpha_m p_m} \le \|u\;\big|\; F^m_{2 q \alpha_m p_m, 2}\|
\]
The norm $\|u\;\big|\; F^m_{2 q \alpha_m p_m, 2}\|$ will be bounded using \eqref{runst} as follows:
\[
\|u\;\big|\; F^m_{2 q \alpha_m p_m, 2}\| \le \|u\;\big|\;F^s_{2, 2}\|^{\theta_m}\,\|u\|_\infty^{1 - \theta_m}\;.
\]
This means the indices should satisfy
\[
m = s\,\theta_m,\quad 2 q\,\alpha_m p_m = \frac{2}{\theta_m}
\]
and $1\le p_m \le\infty$, 
\[
\sum_m \frac{1}{p_m} = 1\,.
\]
Consequently, we must have 
\[
\theta_m = \frac{m}{s}
\]
Then 
\[
2 q \alpha_m p_m = \frac{2s}{m} \quad\Leftrightarrow\quad m\,\alpha_m = \frac{2s}{2 q}\,\frac{1}{p_m} 
\]
Since $\sum m \alpha_m = r$ and we want $\sum 1/p_m = 1$, we must have $q = s/r$.

\medskip

To estimate the norm $\|g(u)\,\big|\; F^\gamma_{2q^\prime, 2}\|$ apply Lemma \ref{chain-1}: 
\[
\|g(u)\,\big|\; F^\gamma_{2q^\prime, 2}\|\lesssim C(\|u\|_\infty)\,\|u\,\big|\; F^\gamma_{2q^\prime, 2}\|
\]
By Runst's Lemma~\ref{Runst}, 
\[
\|u\,\big|\; F^\gamma_{2q^\prime, 2}\| \lesssim 
\|u\;\big|\; F^s_{2, 2}\|^{1 - \frac{r}{s}}\,\|u\|_\infty^{\frac{r}{s}}\,.
\]
Collecting the estimates, we get
\[
\begin{aligned}
& \|\prod_{m = 1}^r (\partial^m u)^{\alpha_m}\|_{2 q}\,\|g(u)\,\big|\; F^\gamma_{2q^\prime, 2}\|\lesssim \\ 
& C(\|u\|_\infty)\,\|u\,\big|\;H^s\|^{1 - \frac{r}{s}}\,\|u\|_\infty^{\frac{r}{s}}\;\prod_{m=1}^r \|u\,\big|\;H^s\|^{\alpha_m \theta_m}\,\|u\|_\infty^{\alpha_m (1 - \theta_m)} = \\ 
& C(\|u\|_\infty)\,\|u\|_\infty^{\sum \alpha_m }\;\|u\|_{H^s}\,,
\end{aligned}
\]
where we have used that, by construction, 
\[
\sum_{m=1}^r \alpha_m\,\theta_m = \frac{r}{s}\,.
\]
Finally, consider the case $g(u) \equiv const$. To estimate  $\|\prod_m (\partial^m u)^{\alpha_m}\;\big|\;{H^\gamma}\| $
use \eqref{product-many}:
\[
\|\prod_m (\partial^m u)^{\alpha_m}\|_{H^\gamma} \lesssim \sum_m \|(\partial^m u)^{\alpha_m}\;\big|\; F^\gamma_{2 q_m, 2}\|\,\prod_{j\neq m} 
\|(\partial^j u)^{\alpha_j}\;\big|\; L^{2q_j}\|\,,
\]
where $q_1, \dots, q_r$ will be chosen later (and will satisfy $\sum_j 1/q_j = 1$). 
The factors in the product  are bounded first as follows:
\[
\|(\partial^j u)^{\alpha_j}\;\big|\; L^{2q_j}\| \lesssim \|u\;\big|\;F^j_{2\alpha_j q_j, 2}\|^{\alpha_j}\,,
\]
and then Runst's lemma is applied:
\[
\|u\;\big|\;F^j_{2\alpha_j q_j, 2}\| \lesssim \|u\;\big|\;F^s_{2,2}\|^{\beta_j}\,\|u\|_\infty^{1 - \beta_j}\,.
\]
Lemma \ref{Runst} imposes the following restrictions
\[
j = \beta_j\,s\,,\quad 2 \beta_j\,\alpha_j\,q_j = 2 \,,
\]
i.e.,
\[
\beta_j = \frac{j}{s}\,,\quad \frac{1}{q_j} = \alpha_j\,\frac{j}{s}\,.
\]
Thus,
\[
\|(\partial^j u)^{\alpha_j}\;\big|\; L^{2q_j}\| \lesssim \|u\;\big|\;H^s\|^{j\,\alpha_j/s}\,\|u\|_\infty^{\alpha_j(1 - j/s)}\,,
\]
for $j\neq m$. 

Next, consider $ \|(\partial^m u)^{\alpha_m}\;\big|\; F^\gamma_{2 q_m, 2}\|$.  
Assuming $\alpha_m > 0$, first use \eqref{product-many} with equal exponents:
\begin{align*} 
\|(\partial^m u)^{\alpha_m}\;\big|\; F^\gamma_{2 q_m, 2}\| \lesssim{}& 
\sum_{k = 1}^{\alpha_m} \|(\partial^m u)\;\big|\; F^\gamma_{2\alpha_m q_m, 2}\|\,
\prod_{j = 1}^{\alpha_m - 1}\|(\partial^m u)\;\big|\;L^{2\alpha_m q_m}\| \\ 
={}& \alpha_m \,\|(\partial^m u)\;\big|\; F^\gamma_{2\alpha_m q_m, 2}\|\,\|(\partial^m u)\;\big|\;L^{2\alpha_m q_m}\|^{\alpha_m -1} 
\end{align*} 
and continue as follows:
\[
\lesssim \|u\;\big|\; F^{m +\gamma}_{2\alpha_m q_m, 2}\|\,\|u \;\big|\;F^m_{2\alpha_m q_m}\|^{\alpha_m -1} 
\]
Now each norm is bounded using \eqref{runst}. We have
\[
\|u\;\big|\; F^{m +\gamma}_{2\alpha_m q_m, 2}\| \lesssim \|u\,\big|\; H^s\|^\lambda\,\|u\|_\infty^{1 - \lambda}\,,
\]
where
\[
\lambda = \frac{m + \gamma}{s}
\]
and $q_m$ must be chosen so that
\[
\frac{1}{q_m} = \alpha_m\,\frac{m + \gamma}{s}\,.
\]
Similarly, 
\[
\|u \;\big|\;F^m_{2\alpha_m q_m}\| \lesssim \|u\,\big|\; H^s\|^{\lambda_m}\,\|u\|_\infty^{1 - \lambda_m}\,,
\]
where $q_m$ is as above and 
\[
\lambda_m = \frac{m}{s}\,.
\] 
Bringing the estimates together, we obtain
\[
\|\prod_m (\partial^m u)^{\alpha_m}\;\big|\;{H^\gamma}\| \lesssim {\tilde C}(\|u\|_\infty)\,\|u\|_{H^s}\,,
\]
as claimed.

\end{proof}

\section{Estimates on Riesz transforms} \label{sec:riesz-est} 
The Riesz transform operators ${\cal R}_j$, $j = 1, \dots, d$, will be defined by the formula
\[
{\cal R}_j f = {\cal F}^{-1} \frac{\kappa^j}{|\kappa|} {\cal F} f\,.
\]
It is well known (see \cite[Theorem 10.2.1]{Meyer-Coifman}) that ${\cal R}_j$ is bounded in ${\dot B}^s_{\infty, \infty}$, when $s > 0$, but not 
in  ${ B}^s_{\infty, \infty}$. The Riesz transform is a special case of a more general class of pseudo-differential operators which we shall now discuss. 

Denote by $S^0_{ph}$ the space of all smooth functions $a: \R^d\setminus\{0\}\to \R$ which are positively homogeneous of degree $0$ and satisfy 
\[
|\partial^\alpha_\kappa \,a(\kappa)|\le C_\alpha \,|\kappa|^{-|\alpha|}
\]
for any multiindex $\alpha$.  Each symbol $a\in S^0_{ph}$ gives rise to a pseudodifferential operator ${\mathfrak a}:\,f\mapsto {\mathfrak a}[f]$, where 
\[
{\mathfrak a}[f](x) = {\cal F}^{-1}_{\kappa\to x} a {\cal F} f = \int e^{i x \kappa} a(\kappa)\,{\hat f}(\kappa)\,\dbar\kappa\,.
\]
Any finite composition of the Riesz transforms has its symbol in $S^0_{ph}$. 
An observation: if, as usual,  $q$ and $q^\prime$ are conjugate exponents, and 
$ 1\le q^\prime \le 2\le  q \le\infty$, then (change of variables)
\be\label{unif-Lp}
 \|\int e^{i y \kappa} \vp(2^{-n} \kappa)\,a(\kappa)\,\dbar\kappa\;\big|\;L^{q^\prime}(dy) \| =  
2^{n \frac{d}{q}}\,\|{\cal F}^{-1} \left(\vp\cdot a\right)\;\big|\; L^{q^\prime}\|\,.
\ee
In particular, the integrals 
$
 \int e^{i k y} \vp_n(k)\,a(k)\,\dbar k 
$
represent smooth functions with uniformly in $n$ bounded $L^1$ norms.

\begin{lemma}\label{HA} 
Let the symbol $a\in S^0_{ph}$ be given. 
\begin{enumerate}
\item For any $r \in \R$ and any $p\in [2, +\infty]$,  
\be\label{rho}
\|{\mathfrak a}[f]\;\big|\; {\dot B}^r_{p, p}\|\lesssim \|f\;\big|\; {\dot B}^r_{p, p}\|\,.
\ee
\item \label{point:D1:2} Assume the parameters $r, p$, and $q$ satisfy the conditions $1\le p, q \le \infty$ and $r > \frac{d}{p}$. 
Then there exists a constant 
$C_1$ such that  
\be \label{eq:D.3}
\|{\mathfrak a}[f]\|_\infty \le C_1\; [f]_{-1}^{\gamma_1}\;\|f\;\big|\;{\dot B}^r_{p, q}\|^{1 - \gamma_1} 
\ee
for every $f\in {\dot H}^{-1}\cap {\dot B}^r_{p, q}$, 
where 
\be\label{gamma1}
\gamma_1 = \gamma_1(d, r, p) = \frac{r - \frac{d}{p}}{r + 1 + d\,\frac{p-2}{2p}}\,.
\ee
\item Assume the parameters $r, p$, and $q$ satisfy the conditions $1\le p, q \le \infty$ and $r > \frac{d}{p} - 1$.
Then  there exists a constant $C_2 > 0$ such that  
\be
\|D^{-1}\,{\mathfrak a}[f]\|_\infty \le C_2\; [f]_{-1}^{\gamma_2}\;\|f\;\big|\;{\dot B}^r_{p, q}\|^{1 - \gamma_2} 
\ee
for every $f\in {\dot H}^{-1}\cap {\dot B}^r_{p, q}$, where 
\be\label{gamma2}
\gamma_2 = \gamma_2(d, r, p) = \frac{r + 1 - \frac{d}{p}}{r + 1 + d\,\frac{p-2}{2p}}
\ee
\end{enumerate}
\end{lemma}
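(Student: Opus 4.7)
All three assertions will be handled by a single Littlewood--Paley strategy that exploits the homogeneity of $a$ to produce dyadically uniform convolution kernels.

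For the first claim \eqref{rho}, the plan is to fix a slightly enlarged annular cutoff $\widetilde\varphi_n$ satisfying $\widetilde\varphi_n\varphi_n=\varphi_n$ and write
\[
\varphi_n(D)\mathfrak{a}[f]=K_n*\varphi_n(D)f,\qquad K_n={\cal F}^{-1}(\widetilde\varphi_n\,a).
\]
Because $a$ is homogeneous of degree $0$, one has $\widetilde\varphi_n(\kappa)a(\kappa)=(\widetilde\varphi_0\,a)(2^{-n}\kappa)$, so Fourier scaling forces $\|K_n\|_1$ to be independent of $n$. Young's convolution inequality then yields $\|\varphi_n(D)\mathfrak{a}[f]\|_p\lesssim\|\varphi_n(D)f\|_p$ for every $n\in\Z$ and every $p\in[1,\infty]$, and summing in the weighted $\ell^p$-norm finishes the argument.

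For the second claim \eqref{eq:D.3}, the approach is to split the Littlewood--Paley expansion of $\mathfrak{a}[f]$ at a level $N$ to be chosen. On the high-frequency side, Bernstein's inequality gives $\|\varphi_n(D)f\|_\infty\lesssim 2^{nd/p}\|\varphi_n(D)f\|_p$; summing
\[
\sum_{n>N}2^{n(d/p-r)}\cdot 2^{nr}\|\varphi_n(D)f\|_p
\]
is legitimate because the hypothesis $r>d/p$ makes the geometric factor summable, and (uniformly in $q$, via H\"older or the $\ell^\infty$ characterization) this produces a bound by a constant times $2^{N(d/p-r)}\|f\,|\,\dot B^r_{p,q}\|$. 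For the low-frequency part the plan is to route $L^\infty$ through $L^2$:
\[
\|\varphi_n(D)f\|_\infty\lesssim 2^{nd/2}\|\varphi_n(D)f\|_2\lesssim 2^{n(d/2+1)}\|\varphi_n(D)D^{-1}f\|_2\le 2^{n(d/2+1)}[f]_{-1},
\]
so that summation over $n\le N$ produces $2^{N(d/2+1)}[f]_{-1}$. Setting $\alpha=d/2+1$ and $\beta=r-d/p$, and balancing the two contributions by choosing $2^{(\alpha+\beta)N}=\|f\,|\,\dot B^r_{p,q}\|\,[f]_{-1}^{-1}$, reproduces the interpolation exponent $\gamma_1=\beta/(\alpha+\beta)$; the identity $\alpha+\beta=r+1+d(p-2)/(2p)$ then recovers \eqref{gamma1} exactly.

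The third claim follows from the same splitting applied to $D^{-1}\mathfrak{a}$, whose symbol $a(\kappa)/|\kappa|$ is homogeneous of degree $-1$. The only change from the previous paragraph is that the dyadic kernel bound sharpens by a factor $2^{-n}$, namely $\|D^{-1}\mathfrak{a}[\varphi_n(D)f]\|_\infty\lesssim 2^{-n}\|\varphi_n(D)f\|_\infty$; this relaxes the high-frequency convergence requirement to $r>d/p-1$ and shrinks the low-frequency exponent from $d/2+1$ to $d/2$. Rebalancing $N$ and using the algebraic identity $d/2=(1-\gamma_2)(\alpha+\beta)$ then yields \eqref{gamma2}. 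The main obstacle is purely bookkeeping of the dyadic exponents; the substantive input is the uniform kernel estimate, which is exactly what the homogeneity of $a$ provides, and which allows the same splitting to be optimized in all three settings.
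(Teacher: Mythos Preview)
Your proof is correct and follows essentially the same Littlewood--Paley strategy as the paper: uniform $L^1$ kernel bounds from homogeneity for part~1, and a low/high frequency split at level $N$ with subsequent optimization for parts~2 and~3. The only cosmetic differences are that the paper treats the low-frequency block in one shot via $\psi_N(D)f$ and a Cauchy--Schwarz estimate on $\widehat f$ (rather than summing dyadic Bernstein bounds as you do), and it phrases the high-frequency $L^\infty$ bound directly via Young's inequality with the $L^{p'}$ norm of the scaled kernel rather than invoking Bernstein; both routes yield the identical exponents $2^{N(1+d/2)}$ and $2^{N(d/p-r)}$ and the same balancing.
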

\begin{proof} To prove the first claim we need to show that 
\[
\sum_{n\in \Z}\,2^{r n p}\,\|\vp_n(D) {\mathfrak a}[f]\|_p^p \lesssim \sum_{m\in \Z}\,2^{r m p}\,\| f_m\|_p^p\,,
\]
where $f_m = \vp_m(D) f$. 
This follows from the estimate
\be\label{base-Bes}
\|\vp_n(D) {\mathfrak a}[f]\|_p \lesssim \|f_n\|_p\,,
\ee
which is easy to prove. Indeed,
\[
\left(\vp_n(D) {\mathfrak a}[f]\right)(x) = \sum_{\ell = - 1}^{1} \left(\vp_{n+\ell}(D) {\mathfrak a}[\vp_{n}(D)f]\right)(x)
\]
and
\[
\left(\vp_{n+\ell}(D) {\mathfrak a}[\vp_{n}(D)f]\right)(x) = \int\int e^{i k (x - y)} \vp_{n+\ell}(k)\,a(k)\,\dbar k\;f_{n}(y)\,dy\,.
\]
Then (see \eqref{unif-Lp})
\[
\|\vp_{n+\ell}(D) {\mathfrak a}[\vp_{n}(D)f]\,\|_p \le \|\int e^{i k y} \vp_{n+\ell}(k)\,a(k)\,\dbar k\;\big|\; L^1(dy)\|\;\|f_{n}\|_p \lesssim \|f_{n}\|_p\,.
\]
Thus we have \eqref{base-Bes}, and this implies \eqref{rho}.

To prove the remaining two claims, 
consider the expansion \eqref{LP} for $f$, $f = g_N + h_N$, where
\[
g_N = {\cal F}^{-1}\psi_N(\kappa)\,{\hat f}(\kappa)\quad \text{and}\quad h_N = {\cal F}^{-1}\sum_{n = N+1}^\infty \vp_n(\kappa) \,{\hat f}(\kappa)\,.
\]
the integer $N$ will be chosen later. We have
\[
\|{\mathfrak a}[g_N]\|_\infty \lesssim \|\psi(2^{-N}\kappa)\,a(\kappa)\,{\hat f}(\kappa)\|_{L^{1}(d\kappa)}\le [f]_{-1}\;\|\psi(2^{-N}\kappa)\,a(\kappa)\,|\kappa|\|_{L^{2}(d\kappa)} 
\]
and
\[
\|\psi(2^{-N}\kappa)\,a(\kappa)\,|\kappa|\|_{L^{2}(d\kappa)} = 2^{N(1 + \frac{d}{2})}\,\|\psi(\kappa)\,a(\kappa)\,|\kappa|\|_{L^{2}(d\kappa)}\,.
\] 
Therefore,
\be\label{g-11}
\|{\mathfrak a}[g_N]\|_\infty \lesssim 2^{N(1 + \frac{d}{2})}\,[f]_{-1}\;.
\ee
Next,
\[
\begin{aligned}
& {\mathfrak a}[h_N] = {\cal F}^{-1}\sum_{n = N+1}^\infty \vp_n(\kappa)\,a(\kappa) \,{\hat f}(\kappa) = {\cal F}^{-1}\sum_{n = N+1}^\infty \sum_{\ell = -1}^1 \vp_{n+\ell}(\kappa)\,a(\kappa)  \, \vp_{n}(\kappa)\,{\hat f}(\kappa) \\ 
& = \sum_{\ell = -1}^1 \sum_{n = N+1}^\infty {\cal F}^{-1}\left[\vp_{n+\ell}(\kappa)\,a(\kappa) \, {\hat f}_{n}(\kappa)\right] 
=  \sum_{\ell = -1}^1 \sum_{n = N+1}^\infty {\cal F}^{-1} \vp_{n+\ell} a\,{\cal F}  f_{n}\,,
\end{aligned}
\]
and hence, 
\[
\|{\mathfrak a}[h_N]\|_\infty \le \sum_{\ell = -1}^1 \sum_{n = N+1}^\infty \|{\cal F}^{-1} \vp_{n+\ell}\, a\,{\cal F}  f_{n}\|_\infty\,.
\]
Pick a $p\ge 1$ and observe that
\[
\begin{aligned}
& |\;[{\cal F}^{-1} \vp_j \,a\,{\cal F}  f_{n}](x)\;| = |\int \left(\int e^{i (x - y) \kappa} \vp(2^{-j} \kappa)\,a(\kappa)\,\dbar\kappa \right)\,f_{n}(y)\,dy| \le \\ 
& \|\int e^{i (x - y) \kappa} \vp(2^{-j} \kappa)\,a(\kappa)\,\dbar\kappa\|_{L^{p^\prime}(dy)}\,\|f_{n}\|_p\,.
\end{aligned}
\]
Applying \eqref{unif-Lp}, we obtain  
\be\label{temp-11-D}
\|{\mathfrak a}[h_N]\|_\infty \lesssim_p \sum_{n = N}^\infty 2^{n \frac{d}{p}} \;\|f_n\|_p\;.
\ee
If 
$
r > \frac{d}{p}\,,
$
then, for any $q\ge 1$,  
\begin{align*}
\sum_{n = N}^\infty 2^{n \frac{d}{p}} \;\|f_n\|_p ={}& \sum_{n = N}^\infty 2^{n (\frac{d}{p} - r)} \;2^{n r}\|f_n\|_p \le  
 \left(\sum_{n = N}^\infty 2^{n (\frac{d}{p} - r) q^\prime}\right)^{1/q^\prime}\;\left(\sum_{m = N}^\infty 2^{n r q}\|f_n\|^q_p\right)^{1/q} \\
={}& 
2^{N (\frac{d}{p} - r)}\,\left(1 - 2^{(\frac{d}{p} - r) q^\prime}\right)^{1/q^\prime} \;\|f\;\big|\; {\dot B}^r_{p, q}\|
\end{align*}
Thus,
\be\label{temp-12-D}
\|{\mathfrak a}[h_N]\|_\infty \lesssim_{p, q, d, r}\,2^{N (\frac{d}{p} - r)}\;\|f\;\big|\; {\dot B}^r_{p, q}\|
\ee
Combine this with \eqref{g-11} to obtain
\be\label{temp-13}
\|{\mathfrak a}[f]\|_\infty \lesssim  2^{N(1 + \frac{d}{2})}\,[f]_{-1} + 2^{N (\frac{d}{p} - r)}\;\|f\;\big|\; {\dot B}^r_{p, q}\|\,.
\ee
Now choose $N$ so that the two terms have (almost) the same magnitudes:
\[
N = \lfloor\; \log_2 \left( \frac{\|f\;\big|\; {\dot B}^r_{p, q}\|}{[f]_{-1}} \right)^{1/(1 + r + d \frac{p-2}{2p})}\;\rfloor 
\]
( $\lfloor\cdot\rfloor$ is the floor function).
The resulting inequality is
\be\label{refin-riesz}
\|\rho[f]\|_\infty \lesssim [f]_{-1}^{\gamma_1(d, p, r)}\;\cdot\;\|f\;\big|\; {\dot B}^r_{p, q}\|^{1 - \gamma_1(d, p, r)}  
\ee
with 
\[
\gamma_1(d, p, r) = \frac{r - \frac{d}{p}}{r + 1 + d\,\frac{p-2}{2p}}\,.
\]
The second part is proved similarly. First we have
\[
\|D^{-1}{\mathfrak a}[g_N]\|_\infty \lesssim \|\psi(2^{-N}\kappa)\,a(\kappa)\,\frac{1}{|\kappa|}\,{\hat f}(\kappa)\|_{L^{1}(d\kappa)}\le [f]_{-1}\;\|\psi(2^{-N}\kappa)\,a(\kappa)\|_{L^{2}(d\kappa)}\lesssim 2^{N\,\frac{d}{2}}\,[f]_{-1}\,. 
\]
After that, consider $\|D^{-1}{\mathfrak a}[h_N]\|_\infty$. We have
\[
\|D^{-1}{\mathfrak a}[h_N]\|_\infty \le \sum_{\ell = -1}^1 \sum_{n = N+1}^\infty \|D^{-1}{\cal F}^{-1} \vp_{n+\ell}\, a\,{\cal F}  f_{n}\|_\infty\,.
\]
Now,
\[
\begin{aligned}
& |\;[D^{-1}{\cal F}^{-1} \vp_j \,a\,{\cal F}  f_{n}](x)\;| = |\int \left(\int e^{i (x - y) \kappa} \vp(2^{-j} \kappa)\,a(\kappa)\,\frac{1}{|\kappa|}\,\dbar\kappa \right)\,f_{n}(y)\,dy| \le \\ 
& \|\int e^{i (x - y) \kappa} \vp(2^{-j} \kappa)\,a(\kappa)\,\frac{1}{|\kappa|}\,\dbar\kappa\|_{L^{p^\prime}(dy)}\,\|f_{n}\|_p\,,
\end{aligned}
\]
and 
\[
\begin{aligned}
& \|\int e^{i (x - y) \kappa} \vp(2^{-j} \kappa)\,a(\kappa)\,\frac{1}{|\kappa|}\,\dbar\kappa\|_{L^{p^\prime}(dy)} = 
2^{j (\frac{d}{p} - 1)}\,\|D^{-1}{\cal F}^{-1} \left(\vp\cdot a\right)\|_{p^\prime}\,= C\,2^{j (\frac{d}{p} - 1)}.
\end{aligned}
\]
This leads to
\[
\begin{aligned}
\|D^{-1}\rho[h_N]\|_\infty \lesssim \sum_{n = N}^\infty 2^{n (\frac{d}{p} - 1)}\;\|f_{n}\|_p
\end{aligned}
\]
and we proceed as in the first part. Now the restriction on $r$ will be 
$
r > \frac{d}{p} - 1
$
and then
\begin{align*}
\sum_{n = N}^\infty 2^{n (\frac{d}{p} - 1)}\;\|f_{n}\|_p \le{}& \left(\sum_{n = N}^\infty 2^{n (\frac{d}{p} - 1 - r) q^\prime}\right)^{1/q^\prime}\,\left(\sum_{m = N}^\infty 2^{n r q}\,\|f_{n}\|_p^q\right)^{1/q}\\
 ={}& 
2^{N (\frac{d}{p} - 1 - r)}\,\left(1 - 2^{(\frac{d}{p} - 1 - r) q^\prime}\right)^{1/q^\prime}\,\|f\;\big|\;{\dot B}^r_{p, q}\|
\end{align*}
This time we obtain
\[
\|D^{-1} {\mathfrak a}[f]\|_\infty \lesssim  2^{N\,\frac{d}{2}}\,[f]_{-1} + 2^{N (\frac{d}{p} - 1 - r)}\,\|f\;\big|\;{\dot B}^r_{p, q}\|\,.
\]
Again, choose $N$ so that
\[
2^{N(1 + r + d\,\frac{p-2}{2p})} \approx \frac{\|f\;\big|\;{\dot B}^r_{p, q}\|}{[f]_{-1}}\,.
\]
i.e.,
\[
N = \lfloor\; \log_2\left(\|f\;\big|\;{\dot B}^r_{p, q}\|/[f]_{-1}\right)^{1/(1 + r + d\, \frac{p-2}{2p})}\;\rfloor\,.
\]
Plugging this value into the inequality above we obtain
\[
\|D^{-1} \rho[f]\,\|_\infty \lesssim [f]_{-1}^{\gamma_2}\;\|f\;\big|\;{\dot B}^r_{p, q}\|^{1 - \gamma_2}
\]
with
\[
\gamma_2 = \gamma_2(d, p, r)= \frac{1 + r - \frac{d}{p}}{1 + r + d \,\frac{p-2}{2p}}\;.
\]
\end{proof}

\section{
Strichartz estimates} \label{sec:stricharts-app}

In this section we derive the homogeneous $\R^d$ version of the Strichartz estimates in Theorem 2 \cite{Kap-1}. We work with functions of $x\in\R^d$ and the Fourier 
variables are $\kappa$. Note, that the estimates are applied in Section \ref{sec:below}  to functions of $\xi$ with the dual Fourier variables $k$. 

Consider the Cauchy problem
\be\label{wave-eq}
\partial_t w = i D w + f\,,\quad w(0) = w_0\,,
\ee
where  $D = \sqrt{- \Delta}$ in $\R^d$. Given $w_0\in {\dot H}^\theta$ and $f\in L^1_{loc}(\R\to {\dot H}^\theta)$,  there exists 
a unique solution $w$ of equation \eqref{wave-eq} such that $w(t)$ is a continuous functions of $t$ with values in ${\dot H}^\theta$, and 
\be
\sup_{[0, T]} \|w(t)\;\big|\;{\dot H}^\theta\| \le C_\theta \,\left(\|w_0\;\big|\; {\dot H}^\theta\| + \int_0^T \|f(t)\;\big|\; {\dot H}^\theta\|\,dt\right)
\ee 
with the positive constant $C_\theta$ independent of the particular choice of $w_0$ and $f$. 

\begin{theorem}\label{Strichartz} 
Let $2\le q \le \infty$ and let $p$ be such that if $d = 2$, then 
$2 \le p \le +\infty$, and if $d \ge 3$, then $2 \le p < \frac{2(d-1)}{d - 3}$.  
Assume the parameters $r$ and $\theta$ satisfy the conditions
\be
r\in\R,\quad \theta = r + \frac{d+1}{4}\,\frac{p-2}{p}\,.
\ee
There exists a constant $C > 0$ (dependent on $d$, $r$, and $p$) such that, for any $T > 0$, the following estimate is true 
for the solutions of \eqref{wave-eq}:
\be\label{Str}
\left(\int_0^T \|w(t)\;\big|\;{\dot B}^r_{p, q}\|^{\frac{4p}{(d-1)(p-2)}}\,dt\right)^{\frac{(d-1)(p-2)}{4p}} \le 
C\,\left(\|w_0\;\big|\; {\dot H}^\theta\| + \int_0^T \|f(t)\;\big|\; {\dot H}^\theta\|\,dt\right)
\ee
\end{theorem}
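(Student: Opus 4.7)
The plan follows the standard blueprint for proving wave-type Strichartz estimates, adapted to the homogeneous Besov setting (cf.\ \cite{Kap-1} for the inhomogeneous analogue), in three stages: a Duhamel-plus-Minkowski reduction, a Littlewood--Paley decoupling, and a frequency-localized estimate obtained from the dispersive bound through the $TT^*$ machinery.

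First I would reduce to the homogeneous equation. By Duhamel's formula, $w(t)=e^{itD}w_0+\int_0^t e^{i(t-s)D}f(s)\,ds$, and Minkowski's integral inequality applied in the Strichartz norm (together with the group property of $e^{itD}$) shows that it suffices to prove the homogeneous estimate $\|e^{itD}w_0\|_{L^{q_1}_t \dot B^r_{p,q}} \lesssim \|w_0\|_{\dot H^\theta}$ with constant independent of $T$, where $q_1:=\frac{4p}{(d-1)(p-2)}$. Since $D^r$ commutes with $e^{itD}$ and is an isomorphism $\dot H^\theta\to\dot H^{\theta-r}$ and $\dot B^r_{p,q}\to \dot B^0_{p,q}$, I may take $r=0$ and $\theta=\tau:=\frac{(d+1)(p-2)}{4p}$. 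The continuous embedding $\dot B^0_{p,2}\hookrightarrow \dot B^0_{p,q}$ for $q\ge 2$ further reduces matters to the case $q=2$.

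Next I would decouple frequencies. Writing $w_0=\sum_n \varphi_n(D)w_0$ and using $\|h\|_{\dot B^0_{p,2}}^2=\sum_n\|\varphi_n(D)h\|_p^2$, Minkowski's inequality for the mixed norm $L^{q_1}_t\ell^2_n$ (available because $q_1\ge 2$, as one checks from the formula for $q_1$ together with the admissibility range $p<\frac{2(d-1)}{d-3}$ in $d\ge 3$, and automatic in $d=2$) yields
\[
\|e^{itD}w_0\|_{L^{q_1}_t\dot B^0_{p,2}}\ \le\ \Bigl(\sum_n \|\varphi_n(D)\,e^{itD}w_0\|_{L^{q_1}_tL^p_x}^2\Bigr)^{1/2}.
\]
The whole argument thus reduces to the frequency-localized Strichartz inequality
\[
\|\varphi_n(D)\,e^{itD}g\|_{L^{q_1}(\R\to L^p(\R^d))}\ \lesssim\ 2^{n\tau}\,\|\varphi_n(D)g\|_{L^2},
\]
with constant uniform in $n$ and $T$; squaring and summing over $n$ then produces $\|w_0\|_{\dot B^\tau_{2,2}}\simeq \|w_0\|_{\dot H^\tau}$ on the right.

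The frequency-localized estimate itself is classical. The convolution kernel of $\varphi_n(D)\,e^{itD}$, whose symbol is supported on the annulus $|\kappa|\sim 2^n$, admits the dispersive bound
\[
\|\varphi_n(D)\,e^{itD}g\|_\infty\ \lesssim\ 2^{nd}\bigl(1+2^n|t|\bigr)^{-(d-1)/2}\,\|g\|_1,
\]
by stationary phase on the sphere; combined with the $L^2$-conservation $\|e^{itD}g\|_2=\|g\|_2$ and the Keel--Tao $TT^*$ framework on the sharp wave-admissible line $\frac{2}{q_1}+\frac{d-1}{p}=\frac{d-1}{2}$ (precisely the identity used to define $q_1$), one gets the required estimate, with the exponent $2^{n\tau}$ extracted by the rescaling $t\mapsto 2^{-n}t$, $x\mapsto 2^{-n}x$ that reduces the frequency-$n$ inequality to a fixed frequency-$1$ one. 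The hypothesis $p<\frac{2(d-1)}{d-3}$ in dimension $d\ge 3$ ensures $q_1>2$, keeping us strictly off the delicate Keel--Tao endpoint and allowing the elementary $TT^*$ argument to suffice. The main obstacle will be verifying uniformity in $n$ (and hence in $T$) throughout the dispersive/$TT^*$ chain and correctly bookkeeping the scaling exponent $\tau$; once these are in place, the remaining assembly is routine.
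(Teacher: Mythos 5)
Your proposal is correct, and it proves the theorem, but it is organized differently from the paper's argument even though both rest on the same two pillars: the frequency-localized dispersive bound for $\varphi_n(D)e^{itD}$ and a $TT^*$ duality step. The paper first upgrades the dyadic dispersive estimate \eqref{est:LpLpprime} to a Besov-to-Besov dispersive estimate \eqref{besov-1}, then runs the $TT^*$ argument once, at the level of the full $\dot B^r_{p,q}$ norm, against test functions $h(t,x)$, and closes with the Hardy--Littlewood--Sobolev inequality in the time variable; the condition $p<\tfrac{2(d-1)}{d-3}$ appears there exactly as the HLS admissibility constraint \eqref{restrict-p}. You instead decouple the dyadic frequencies first (lifting to $r=0$, embedding $\dot B^0_{p,2}\hookrightarrow\dot B^0_{p,q}$, and using Minkowski in $L^{q_1}_t\ell^2_n$, which indeed requires $q_1\ge 2$ and hence the same restriction on $p$), rescale each block to unit frequency, and invoke the non-endpoint Keel--Tao theorem on the sharp wave-admissible line; the derivative loss $2^{n\tau}$ with $\tau=\tfrac{(d+1)(p-2)}{4p}$ then comes out of the scaling bookkeeping, and squaring and summing recovers $\|w_0\|_{\dot H^\theta}$. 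Your route buys modularity (the analytic core is outsourced to a standard black box, and the reduction to $r=0$, $q=2$ is clean), at the cost of importing Keel--Tao; the paper's route is self-contained modulo HLS and avoids any interchange of the $t$-integration with the dyadic sum, which is why it can state the result directly for general $q\ge 2$ without passing through $\dot B^0_{p,2}$. Both treat the inhomogeneous term identically, by Minkowski against the $L^1_t\dot H^\theta$ norm of $f$, which is legitimate without a Christ--Kiselev argument precisely because the right-hand side is an $L^1$ norm in time. One small point to make explicit if you write this up: at $p=\infty$ in $d=2$ the $TT^*$ duality must be run against $L^{4/3}_tL^1_x$ test functions rather than by naive $L^\infty$--$L^1$ duality, but this is standard and does not affect the conclusion.
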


\begin{corollary}\label{cor:strichartz}
Two special cases: If $d = 2$, then take $p = q = \infty$, $0 < r < 1$, and $\theta = r + \frac34$, and obtain
\be\label{str-d2}
\left(\int_0^T \|w(t)\;\big|\;{\dot B}^r_{\infty, \infty}\|^4\,dt\right)^{1/4} \le 
C\,\left(\|w_0\;\big|\; {\dot H}^\theta\| + \int_0^T \|f(t)\;\big|\; {\dot H}^\theta\|\,dt\right)\,.
\ee
If $d = 3$, take $2\le p = q < \infty$, and  $\theta = r + \frac{p-2}{p}$, and obtain
\be\label{str-d3}
\left(\int_0^T \|w(t)\;\big|\;{\dot B}^r_{p, p}\|^{\frac{2p}{p-2}}\,dt\right)^{\frac{p-2}{2p}} \le 
C\,\left(\|w_0\;\big|\; {\dot H}^\theta\| + \int_0^T \|f(t)\;\big|\; {\dot H}^\theta\|\,dt\right)
\ee
\end{corollary}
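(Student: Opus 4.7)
The plan is to deduce both statements directly from Theorem~\ref{Strichartz} by substituting the stated parameter values and checking that the hypotheses are satisfied; no further analytic work should be required since the corollary only records two convenient specializations.

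For the two-dimensional case, I would first verify admissibility: with $d=2$, Theorem~\ref{Strichartz} allows the full range $2 \le p \le +\infty$ and $2 \le q \le \infty$, so the choice $p = q = \infty$ is permitted. Substituting $d=2$ and $p=\infty$ into $\theta = r + \tfrac{d+1}{4}\cdot\tfrac{p-2}{p}$ gives $\theta = r + \tfrac{3}{4}$, which is exactly the relation in \eqref{str-d2}, and the time exponent $\tfrac{4p}{(d-1)(p-2)}$ reduces to $\tfrac{4p}{p-2}\to 4$ (which is the value in \eqref{str-d2} since at the endpoint the exponent equals $4$ directly from $\tfrac{4\cdot\infty}{1\cdot\infty}$ interpreted by the formula). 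The restriction $0 < r < 1$ is inherited from the use of $\dot B^r_{\infty,\infty}$ in its H\"older-space guise (cf.\ Appendix~\ref{sec:spaces}), and the constant $C$ depends only on $d$, $r$, $p$ as stated in the parent theorem.

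For the three-dimensional case, with $d=3$ the upper bound on $p$ in Theorem~\ref{Strichartz} reads $p < \tfrac{2(d-1)}{d-3} = +\infty$, so any $p \in [2, +\infty)$ is admissible, and the condition $2\le q\le\infty$ allows the choice $q = p$. The formula for $\theta$ becomes
\[
\theta = r + \frac{d+1}{4}\cdot \frac{p-2}{p} = r + \frac{4}{4}\cdot \frac{p-2}{p} = r + \frac{p-2}{p},
\]
matching \eqref{str-d3}, and the time exponent simplifies as
\[
\frac{4p}{(d-1)(p-2)} = \frac{4p}{2(p-2)} = \frac{2p}{p-2},
\]
also matching \eqref{str-d3}. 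Applying Theorem~\ref{Strichartz} with these values yields \eqref{str-d3} verbatim.

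The only subtle point — and hence the nominal main obstacle — is the endpoint $p = \infty$ in the two-dimensional case: one should confirm that Theorem~\ref{Strichartz} genuinely covers $p=\infty$ (which it does, since the statement allows $p\le +\infty$ when $d=2$), or else obtain \eqref{str-d2} by taking $p\to\infty$ along admissible values and invoking a standard monotonicity/limiting argument for the $L^{4p/(p-2)}_t$-norms together with the fact that $\dot B^r_{p,p}\hookrightarrow \dot B^r_{\infty,\infty}$ (up to embedding adjustments in the regularity index). Once this endpoint issue is dispatched, nothing else needs to be shown.
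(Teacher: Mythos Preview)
Your approach is correct and matches the paper's: the corollary is presented as an immediate specialization of Theorem~\ref{Strichartz} with no separate proof, and your parameter verifications for both $d=2$ and $d=3$ are accurate.

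Your ``subtle point'' about the endpoint $p=\infty$ is a non-issue that you should simply drop. Theorem~\ref{Strichartz} explicitly allows $p=\infty$ when $d=2$ (the statement reads ``if $d=2$, then $2\le p\le +\infty$''), and in the proof the Hardy--Littlewood--Sobolev constraint \eqref{restrict-p} becomes $\tfrac{1}{2}\cdot\tfrac{p-2}{p}<1$, which holds with strict inequality at $p=\infty$ (giving $\tfrac12<1$). So the endpoint is genuinely covered, and the alternative limiting argument you sketch is unnecessary --- and would in fact be awkward, since the embedding $\dot B^r_{p,p}\hookrightarrow \dot B^r_{\infty,\infty}$ does not hold without a loss in the regularity index.
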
 

\begin{proof}[Proof of Theorem \ref{Strichartz}]
For fixed $w_0$ and $f$, the solution of the problem \eqref{wave-eq} is given by the formula
\[
w(t) = e^{i t\,D} w_0 + \int_0^t e^{i (t - \tau) D} f(\tau)\,d\tau\,.
\]
Each term on the right can be analyzed separately. Take a $g\in {\cal S}$ and consider $e^{i t\,D}g$. 
In fact, we need to look at the dyadic pieces $\vp_n(D)e^{i t\,D}g = e^{i t\,D}g_n$, where $g_n = \vp_n(D) g$. Since 
\[
e^{i t\,D}g_n = \sum_{j = n-1}^{n+1} \vp_{j}(D)\,e^{i t\,D}g_n\,,
\]
we examine the terms $\vp_j(D)\,e^{i t\,D}g_n$. The first important observation in the Strichartz analysis is 
the following bound on the operator $\vp_j(D)\,e^{i t\,D}$ as an operator from $L^{p^\prime}$ to  $L^p$, $2\le p\le \infty$:
\be\label{est:LpLpprime}
\| \vp_j(D)\,e^{i t\,D}\|_{L^{p^\prime}(\R^d) \to L^p(\R^d)} \lesssim \frac{1}{|t|^{(d-1)\,\frac{p-2}{2p}}}\,2^{j (d+1)\,\frac{p-2}{2p}}\,.
\ee
As a corollary, 
\be\label{n-LpLpprime}
\|e^{i t\,D}g_n \|_p \lesssim \frac{1}{|t|^{(d-1)\,\frac{p-2}{2p}}}\,2^{n (d+1)\,\frac{p-2}{2p}}\,\|g_n\|_{p^\prime}\,,
\ee
for any $p\in [2, +\infty]$. 
This estimate implies the following estimates in the homogeneous Besov spaces:
\be\label{besov-1}
\|e^{i t\,D}g\;\big|\; {\dot B}^r_{p, q}\| \lesssim \frac{1}{|t|^{(d-1)\,\frac{p-2}{2p}}}\;\|g\;\big|\;
{\dot B}^{r + (d+1)\,\frac{p-2}{2p}}_{p^\prime, q}\|\,,
\ee
$r\in \R$ and $1\le q \le \infty$.
The second important observation is the space-time estimate for the homogeneous term $e^{i t\,D} g$. The argument uses duality. Let $h(t, x)$ be a sufficiently smooth function. Then
\[
\begin{aligned}
& \int_0^T\langle e^{i t\,D} g\,,\;h(t)\rangle \,dt = \int_0^T\langle e^{i t\,D} g\,,\;h(t)\rangle \,dt = 
\int_0^T\langle g\,,\;e^{- i t\,D} h(t)\rangle \,dt = \langle g\,,\;\int_0^T e^{- i t\,D} h(t)\,dt\rangle \\ 
& \le \|g\;\big|\;{\dot H}^\theta\|\,\|D^{-\theta}\int_0^T e^{- i t\,D} h(t)\,dt\|_2
\end{aligned}
\]
Now, 
\[
\begin{aligned}
& \|D^{-\theta}\int_0^T e^{- i t\,D} h(t)\,dt\|_2^2 = \int_0^T \int_0^T \langle e^{- i t\,D}D^{-\theta}  h(t), e^{- i t^\prime\,D} D^{-\theta}h(t^\prime)\rangle \,dt^\prime\,dt = \\ 
&\int_0^T \int_0^T \langle e^{- i (t^\prime - t)\,D} \,D^{-2\theta}  h(t), h(t^\prime)\rangle\,dt^\prime\,dt \\ 
& \le \int_0^T \int_0^T \| e^{ i (t - t^\prime)\,D} \,D^{-2\theta}  h(t)\;\big|\; {\dot B}^r_{p, q}\|\cdot\|h(t^\prime)\;\big|\;{\dot B}^{-r}_{p^\prime, q^\prime}\|\;dt^\prime\,dt  \quad\text{(use \eqref{besov-1})} \\ 
& \lesssim 
\int_0^T \int_0^T \frac{1}{|t - t^\prime|^{(d-1)\,\frac{p-2}{2p}}}\;\|h(t)\;\big|\; {\dot B}^{r+(d+1)\,\frac{p-2}{2p}  - 2\theta}_{p^\prime, q}\|
\cdot\|h(t^\prime)\;\big|\;{\dot B}^{-r}_{p^\prime, q^\prime}\|\;dt^\prime\,dt
\end{aligned}
\]
Choose $\theta$ so that 
\[
r + (d+1)\,\frac{p-2}{2p} - 2\theta = - r
\]
i.e.,
\be\label{thetar}
\theta = r + \frac{d+1}{4}\,\frac{p-2}{p} 
\ee
and assume that 
\[
q \ge 2
\]
(then $q^\prime \le 2 \le q$ and ${\dot B}^{-r}_{p^\prime, q^\prime} \subset {\dot B}^{-r}_{p^\prime, q}$). Then 
\begin{align*} 
\|D^{-\theta}\int_0^T e^{- i t\,D} h(t)\,dt\|_2^2 \lesssim{}& \int_0^T \int_0^T \frac{\|h(t)\;\big|\; {\dot B}^{- r}_{p^\prime, q^\prime}\|
\cdot\|h(t^\prime)\;\big|\;{\dot B}^{-r}_{p^\prime, q^\prime}\|}{|t - t^\prime|^{(d - 1)\,\frac{p-2}{2p}}}\;dt^\prime\,dt \\
\lesssim{}& \left(\int_0^T\|h(t)\;\big|\;  {\dot B}^{-r}_{p^\prime, q^\prime}\|^m\right)^{1/m}
\end{align*} 
with 
\[
m = \frac{1}{1 - \frac{d-1}{4}\,\frac{p-2}{p}}\,.
\]
The last inequality is a consequence of
the Hardy-Littlewood-Sobolev inequality (see \cite[Theorem 4.5.3]{H}). It requires the following restrictions on $p$: 
\be\label{restrict-p}
0 \le \frac{d-1}{2}\,\frac{p-2}{p} < 1\,,
\ee 
i.e., if $d = 2$, then 
$2 \le p \le +\infty$, and if $d \ge 3$, then $2 \le p < \frac{2(d-1)}{d - 3}$. 
In any case, 
\be\label{test-1}
|\int_0^T\langle e^{i t\,D} g\,,\;h(t)\rangle \,dt|\lesssim \|g\;\big|\;{\dot H}^\theta\|\cdot \left(\int_0^T\|h(t)\;\big|\;  {\dot B}^{-r}_{p^\prime, q^\prime}\|^m\right)^{1/m}
\ee
Hence, by duality, 
\be
\left(\int_0^T \|e^{i t D} g\;\big|\; {\dot B}^r_{p, q}\|^{\frac{4 p}{(d-1)(p-2)}} \right)^{\frac{(d-1)(p-2)}{4 p}} \lesssim 
\|g\;\big|\; {\dot H}^\theta\|\,,
\ee
where the exponent $\frac{4 p}{(d-1)(p-2)}$ is the conjugate of $m$. This proves estimate \eqref{Str} for the homogeneous equation \eqref{wave-eq}. The estimate on $\int_0^t e^{i (t - \tau) D} f(\tau)\,d\tau$ is obtained as in the second part of the proof of Theorem 2 in \cite{Kap-1} 
(with necessary slight modifications). 

\end{proof}

\section{
Norms in Euler and Lagrange coordinates} \label{sec:Eul-Lag-App}

In this section $\xi \to x(\xi)$ is a $C^1$ volume preserving diffeomorphism of the form $x(\xi) = A \xi + \vp(\xi)$ satisfying the assumptions of Lemma~\ref{difff}. In particular, $A$ is a constant $SL(d, \R)$ matrix and $\vp\in H^{s+1}(\R^d_\xi)$ with $s > d/2$. 
 This transformation from the Lagrangian coordinates, $\xi$, to the Eulerian coordinates, $x$, pushes back the functions $f( x)$ to the functions ${\tilde f}( \xi) = f(x(\xi))$.  This is a linear isometry  from $L^p(\R^d, dx)$ to $L^p(\R^d, d\xi)$, $1\le p \le \infty$. To analyze other norms, we use the superscripts $L$ and $E$ on functions to indicate the coordinate system used. Denote by $v^i_a(x)$ the entries of the Jacobian matrix, $\partial x/\partial \xi$, expressed in Eulerian coordinates, i.e., as functions of $x$. We have $v^i_a(x) = A^i_a + u^i_a(x)$, where $u^i_a \in H^s(\R^d_x)$. 
Notation $v_a$ or $u_a$ is used to represent a generic $v_a$ or $u_a$, or when in a norm, 
the maximal over $a$ norm, e.g., $\|v_a\|_\infty = \max_a \|v_a\|_\infty$. For the norms in $L^p$ we skip the superscripts $E$ and $L$. 
  
\subsection{Homogenous Besov and Sobolev spaces}  
The following proposition contains inequalities between the homogeneous Besov norms $\{g\}_{r,p}$ and between the homogeneous $L^2$ Sobolev  norms $[g]_\theta$ in the Euler and Lagrange coordinates. The range of $r, p$, and $\theta$ is restricted to the demands of the main body of the paper. All the spaces are over $\R^d$, $d \ge 2$.

\begin{lemma}\label{E-L-E}
\ 
\begin{enumerate}
\item[a)] Assume $0 < r < 1$, $0 \le \theta\le 1$, and $2\le p \le \infty$. Then, for all $g\in {\cal S}(\R^d)$,
\begin{align} 
& \{g^L\}_{r,p} \lesssim \| v_a\|_\infty^r\;\{g^E\}_{r,p}\,,\quad & \{g^E\}_{r,p} \lesssim \| v_a\|_\infty^{(d-1)r}\;\{g^L\}_{r,p}\,,\label{ineq1} \\  
& [g^L]_\theta \lesssim \| v_a\|_\infty^\theta\;[g^E]_\theta\,,\quad 
& [g^E]_\theta \lesssim \| v_a\|_\infty^{(d - 1)\theta}\;[g^L]_\theta \label{ineq2}
\end{align}
\item[b)] Assume $1<\theta < 2$, then 
\be\label{ineq3}
[g^L]_\theta \lesssim  \| v_a\|_\infty^{\theta-1}\;\left(\| v_a\|_\infty + \|u_a\|_\infty^{2 - \theta}\;\|u_a\,\big|\;{\dot F}^1_{d, 2}(\R^d)\|^{\theta - 1}\right)\;[g^E]_\theta\,.
\ee
\end{enumerate}
\end{lemma}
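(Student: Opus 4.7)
\emph{Proof plan.}
I would base the proof on two elementary ingredients. Since $x(\cdot)$ is volume preserving, the pullback $g\mapsto g^L$ with $g^L(\xi)=g(x(\xi))$ is an isometry of every $L^p(\R^d)$. Moreover, the chain rule $\nabla_\xi g^L=(\partial x/\partial\xi)^{T}(\nabla_x g)\circ x$ combined with volume preservation yields $\|\nabla_\xi g^L\|_p\le\|v_a\|_\infty\,\|\nabla_x g\|_p$ for every $1\le p\le\infty$. For the reverse direction one needs control of the inverse Jacobian: since $\det(\partial x/\partial\xi)=1$, Cramer's rule expresses its entries as polynomials of degree $d-1$ in the $v_a^i$, giving $\|\partial\xi/\partial x\|_\infty\lesssim\|v_a\|_\infty^{d-1}$ and hence $\|\nabla_x g\|_p\le\|v_a\|_\infty^{d-1}\|\nabla_\xi g^L\|_p$.

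Part (a) would then follow from these endpoint estimates by real interpolation. Using the identifications $\dot B^r_{p,p}\simeq (L^p,\dot W^{1,p})_{r,p}$ for $0<r<1$ and $\dot H^\theta\simeq (L^2,\dot H^1)_{\theta,2}$ for $0\le\theta\le 1$, the operator norm of pullback interpolates as $1^{1-r}\|v_a\|_\infty^r=\|v_a\|_\infty^r$ and that of the inverse pullback as $\|v_a\|_\infty^{(d-1)r}$, yielding \eqref{ineq1} and \eqref{ineq2}. The case $p=\infty$ of \eqref{ineq1} can alternatively be read off directly from the Lipschitz bound $|x(\xi)-x(\eta)|\le\|v_a\|_\infty |\xi-\eta|$ via the H\"older-seminorm characterization of $\dot B^r_{\infty,\infty}$.

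For part (b), where $1<\theta<2$, the plan is to interpolate between $\dot H^1$ and $\dot H^2$ via $\dot H^\theta\simeq(\dot H^1,\dot H^2)_{\theta-1,2}$. Two differentiations give, schematically,
\[
\partial^2_\xi g^L=(\partial_\xi u_a)\,\nabla_x g^E+v_a\,v_a\,\partial^2_x g^E;
\]
the second summand contributes $\|v_a\|_\infty^2\,[g^E]_2$, while for the first I would apply H\"older with exponents $(d,2d/(d-2))$, the Sobolev embedding $\dot H^2\hookrightarrow\dot W^{1,2d/(d-2)}$, and the identification $\dot F^1_{d,2}\simeq\dot W^{1,d}$, giving $\|(\partial u_a)\nabla g^E\|_2\lesssim\|u_a\,|\,\dot F^1_{d,2}\|\,[g^E]_2$. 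A straightforward two-endpoint interpolation then produces the bound $\|v_a\|_\infty^{2-\theta}(\|v_a\|_\infty^2+\|u_a\,|\,\dot F^1_{d,2}\|)^{\theta-1}[g^E]_\theta$, which after subadditivity splits into $\|v_a\|_\infty^\theta [g^E]_\theta+\|v_a\|_\infty^{2-\theta}\|u_a\,|\,\dot F^1_{d,2}\|^{\theta-1}[g^E]_\theta$. The main obstacle will be to bridge the discrepancy between this and the sharper form stated in \eqref{ineq3}, which carries an extra factor $\|u_a\|_\infty^{2-\theta}$ in place of a compensating power of $\|v_a\|_\infty$. To close this gap I would refine the $\dot H^2$ step by using H\"older with the variable exponent $d/(\theta-1)$ in place of $d$, combined with a fractional Sobolev embedding for $\nabla g^E\in\dot H^{\theta-1}$ and a Runst-type Gagliardo--Nirenberg interpolation for $\|\partial u_a\|_{d/(\theta-1)}$ involving both $\|u_a\|_\infty$ and $\|u_a\,|\,\dot F^1_{d,2}\|$, implemented as a direct bound on $[g^L]_\theta$ rather than as a two-endpoint interpolation between $\dot H^1$ and $\dot H^2$.
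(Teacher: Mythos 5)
Part a) of your plan is correct and is essentially the paper's argument: the $L^p$-isometry of the volume-preserving pullback, the chain-rule endpoint bound with $\|v_a\|_\infty$, Cramer's rule giving $\|\partial\xi/\partial x\|_\infty\lesssim\|v_a\|_\infty^{d-1}$ for the reverse direction, and interpolation. (The paper gets the Besov case by interpolating between the $p=2$ Sobolev result and the $p=\infty$ H\"older result rather than between $L^p$ and $\dot W^{1,p}$ at fixed $p$, but both routes are standard and the difference is immaterial.)

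The genuine gap is in part b). You correctly diagnose that two-endpoint interpolation between $\dot H^1$ and $\dot H^2$ produces $\|v_a\|_\infty^{2-\theta}\,\|u_a\,|\,\dot F^1_{d,2}\|^{\theta-1}$ where \eqref{ineq3} requires $\|v_a\|_\infty^{\theta-1}\,\|u_a\|_\infty^{2-\theta}\,\|u_a\,|\,\dot F^1_{d,2}\|^{\theta-1}$, but the repair you propose cannot work: there is no Runst-type bound for $\|\partial u_a\|_{L^{d/(\theta-1)}}$ in terms of $\|u_a\|_\infty$ and $\|u_a\,|\,\dot F^1_{d,2}\|$, since under the rescaling $u_a\mapsto u_a(\lambda\cdot)$ the left-hand side scales like $\lambda^{2-\theta}$ while both candidate factors on the right are scale invariant; Runst's lemma (Lemma \ref{Runst}) only lowers smoothness while raising integrability at fixed scaling, and a full first-order norm at integrability $d/(\theta-1)>d$ lies outside its reach. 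The scale-invariant object you actually need is the fractional norm $\|u_a\,|\,\dot F^{\theta-1}_{d/(\theta-1),2}\|$, for which Runst legitimately gives $\lesssim\|u_a\|_\infty^{2-\theta}\|u_a\,|\,\dot F^1_{d,2}\|^{\theta-1}$. The way this norm arises is not through the second derivative of $g^L$ at all: write $[g^L]_\theta\simeq[\nabla_\xi g^L]_{\theta-1}$, apply your part a) estimate with exponent $\theta-1\in(0,1)$ to the function $\nabla_\xi g^L=(v_a\cdot\nabla_x g)^L$ — this costs exactly $\|v_a\|_\infty^{\theta-1}$ and leaves $[v_a\cdot\nabla_x g^E]_{\theta-1}$ in Euler coordinates — and then apply the fractional Leibniz rule (Lemma \ref{frac-leibniz}) at order $\theta-1$ with $1/q_1=1/2-(\theta-1)/d$ and $q_2=d/(\theta-1)$, using $\dot H^{\theta-1}\hookrightarrow L^{q_1}$ for the $\nabla_x g^E$ factor. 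This yields precisely the two terms of \eqref{ineq3}. Your instinct to replace the two-endpoint interpolation by a direct bound is correct, but as stated the plan does not close.
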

\begin{proof} We first prove the inequalities for Sobolev norms in part a). Compare  the norms  $\|g^L\;\big|\;{\dot H}^1(\R^d_\xi)\|$ and $\|g^E\;\big|\;{\dot H}^1(\R^d_x)\|$. We have
\begin{align*} 
\|g^L\;\big|\;{\dot H}^1(\R^d_\xi)\|^2 ={}& \int |\nabla_\xi {\tilde g}(\xi)|^2\,d\xi \\
\le{}& \int |\frac{\partial f(t, x(\xi))}{\partial x^j}\;\frac{\partial x^j(\xi)}{\partial \xi}|^2\,d\xi \\\le{}& \|v_a(t)\|_\infty^2\;\int |\frac{\partial g(x(\xi))}{\partial x^j}|^2\,d\xi \\
={}& 
\|v_a(t)\|_\infty^2\;\int |\frac{\partial g(x)}{\partial x^j}|^2\,dx \\
={}& \|v_a(t)\|_\infty^2\;\|g^E\;\big|\;{\dot H}^1(\R^d_x)\|^2\,.
\end{align*} 
If $0 < \theta < 1$, then ${\dot H}^\theta$ is an interpolation space between $L^2$ and ${\dot H}^1$, 
${\dot H}^\theta = (L^2, {\dot H}^1)_{\theta, 2}$ (see \cite[Theorem 6.3.1]{MR0482275}). This explains why $[g^L]_\theta \lesssim \|v_a\|_\infty^\theta \,[g^E]_\theta$. Similarly, 
$[g^E]_\theta \lesssim \|v_a\|_\infty^{(d - 1)\theta} \,[g^L]_\theta$, where $(d - 1)$ appears because 
of the $L^\infty$ bound on $\partial \xi^a/\partial x^i$ in terms of $\|v_a\|_\infty$. 

The Besov norm inequalities in part a) can be obtained by interpolation between the inequalities for ${\dot B}^r_{2, 2} \simeq {\dot H}^r$, which we already have, and the inequalities for ${\dot B}^r_{\infty, \infty} \simeq {\dot C}^r$, the homogeneous H\"older spaces. For the H\"older seminorms we have
\begin{align*} 
\{g^L\}_r ={}& \sup_{\xi, \xi^\prime} \frac{|g^L(\xi) - g^L(\xi^\prime)|}{|\xi - \xi^\prime|^r} \\
={}& \sup_{\xi, \xi^\prime} \frac{|g^E( x(\xi)) - g^E( x(\xi^\prime))|}{|\xi - \xi^\prime|^r} \\ 
={}& \sup_{\xi, \xi^\prime} \frac{|g^E(x(\xi)) - g^E(x(\xi^\prime))|}{|x(\xi) - x(\xi^\prime)|^r}\;
 \frac{|x(\xi) - x(\xi^\prime)|^r}{|\xi - \xi^\prime|^r} \\
\lesssim{}& \sup_{x, x^\prime} \frac{|g^E(x) - g^E(x^\prime)|}{|x - x^\prime|^r}\;
\|v_a\|_\infty^r\,.
\end{align*} 
Thus, $\{g^L\}_r \le \|v_a\|_\infty^r\,\{g^E\}_r$. 
The norm $\{g^E\}_r$ is bounded similarly.

Now turn to part b). Assume $1 < \theta < 2$ and proceed with
\begin{align}\label{g-1}  
[g^L]_\theta \simeq{}& [\frac{\partial g^L}{\partial \xi}]_{\theta-1} \nonumber \\ \underset{\eqref{ineq1}}{\lesssim} {}& 
\| v_a\|_\infty^{\theta-1}\,[\left(\frac{\partial g^L}{\partial \xi}\right)^E]_{\theta-1} \nonumber \\  
\simeq{}& \| v_a\|_\infty^{\theta-1}\,
[\frac{\partial g^E}{\partial x} \cdot\frac{\partial x}{\partial \xi}]_{\theta-1} \nonumber \\ \lesssim{}&  
\| v_a\|_\infty^{\theta-1}\;  [\frac{\partial g^E}{\partial x} \cdot v_a]_{\theta-1}\,.
\end{align}
Apply the fractional product rule \eqref{product-H}: 
\[
[\frac{\partial g^E}{\partial x} \cdot v_a^E]_{\theta-1} \lesssim 
[\frac{\partial g^E}{\partial x}]_{\theta-1}\,\| v_a\|_\infty +  \|\frac{\partial g^E}{\partial x}\|_{q_1}\; \|v_a^E\,\big|\; {\dot F}^{\theta-1}_{q_2, 2}\|\,.
\]
Note that  the homogeneous Lizorkin-Triebel norms of $v_a$ and $u_a$ are the same. So, we have 
\be\label{g-2}
[\frac{\partial g^E}{\partial x} \cdot v_a^E]_{\theta-1} \lesssim 
[\frac{\partial g^E}{\partial x}]_{\theta-1}\,\| v_a\|_\infty + \| \frac{\partial g^E}{\partial x}\|_{q_1}\;\|u_a^E\,\big|\; {\dot F}^{\theta-1}_{q_2, 2}\|\,.
\ee
The parameters $q_1$ and $q_2$ 
must satisfy $2\le q_1, q_2 \le \infty$ and 
\[
\frac{1}{q_1} + \frac{1}{q_2} = \frac12\,.
\]
We choose $q_1$ and $q_2$ as follows
\be\label{q1q2}
\frac{1}{q_1} = \frac12 - \frac{\theta - 1}{d}\,,\quad \frac{1}{q_2} = \frac{\theta-1}{d} \,.
\ee
Then ${\dot H}^{\theta-1}(\R^d)\subset L^{q_1}(\R^d)$, and so
\[
\|\frac{\partial g^E}{\partial x}\|_{q_1} \lesssim [\frac{\partial g^E}{\partial x}]_{\theta-1} \lesssim [g^E]_\theta\,.
\]
As for the other factor, use \eqref{hrunst}:
\[
\|u_a^E\,\big|\; {\dot F}^{\theta-1}_{q_2, 2}(\R^d)\| \lesssim \|u_a\|_\infty^{2 - \theta}\;\|u_a^E\,\big|\;{\dot F}^1_{d, 2}(\R^d)\|^{\theta - 1}\,.
\]
Collecting the pieces we arrive at \eqref{ineq3}.

\end{proof}

\subsection{Vorticities}

Let $v$ be a vectorfield on $\R^d$ such that $\hbox{div}\,v = 0$. Denote $\omega^{mn} = \partial_m v^n - \partial_n v^m$. 
In terms of Fourier transform,
\[
{\hat \omega}^{mn} = i\,\left(\kappa^m {\hat v}^n - \kappa^n {\hat v}^m\right)\quad \text{and}\quad \kappa^n {\hat v}^n = 0\,.
\]
From $\omega^{mn}$ one recovers $v$ as follows:
\[
{\hat v}^n = - i \;\frac{\kappa^m}{|\kappa|^2}\,{\hat \omega}^{mn}\,.
\]
In our notation,
\[
v^n = - i D^{-1}{\cal R}_m \,\omega^{mn}\,.
\]
In dimension $d=2$, $\omega = \partial_1 v^2 - \partial_2 v^1$ and 
\[
{\hat v}^1 = i\,\frac{\kappa^2}{|\kappa|^2}\,{\hat \omega}\,,\quad {\hat v}^2 = - i\,\frac{\kappa^1}{|\kappa|^2}\,{\hat \omega}\,.
\]
We also have pseudovelocities $v_a$ with the components $v^i_a = u^i_a + A^i_a$, and the corresponding pseudovorticities 
$\omega_a = \partial_1 v^2_a - \partial_2 v^1_a = \partial_1 u^2_a - \partial_2 u^1_a$. 

\begin{lemma}\label{v-r}
\ 
\begin{enumerate}
\item In the case $d = 2$, 
assume that $u\in L^2$ and $\omega = \curl u\in {\dot C}^r$ for some $r\in (0, 1)$. Then $u\in L^\infty$ and $\nabla u\in L^\infty$, and the following inequalities are true:
\be\label{v-infty-r}
\|u\|_\infty  \lesssim \|u\|^{(r + 1)/(r + 2)}\;\{\omega\}_r^{1/(r + 2)}
\ee 
and
\be\label{nabla-v-infty-r}
\|\nabla u\|_\infty \lesssim \|u\|^{r/(r + 2)}\;\{\omega\}_r^{2/(r + 2)}\,.
\ee
\item In the case $d = 2$, 
assume that $u\in L^2$ and $\omega = \curl u\in {\dot H}^\theta$ for some $\theta\in(0, 1)$. Then $u\in L^\infty$  and the following inequality is true:
\be\label{v-infty-theta}
\|u\|_\infty  \lesssim \|u\|^{\theta/(\theta+1))}\;[\omega]_\theta^{1/(\theta+1)}\;.
\ee 
If $\omega\in {\dot H}^\theta$ with $\theta > 1$, then $\nabla u\in L^\infty$ and 
\be\label{nabla-v-infty-theta}
\|\nabla u\|_\infty \lesssim \|u\|^{(\theta - 1)/(\theta + 1)}\;[\omega]_\theta^{2/(\theta+1)}\,.
\ee

\item In the case $d \ge 3$, assume that $u\in L^2$ and $\omega\in {\dot B}^r_{p, p}$, where 
\[
r\in (0, 1),\;r > \frac{d}{p},\;\;1\le p \le \infty .
\]
Then $u\in L^\infty$, $\nabla u\in L^\infty$, and 
\be\label{d-v-infty-r}
\|u\|_\infty  \lesssim \|u\|^{(r + 1 - d/p)/(r + 1 - d/p + d/2)}\;\|\omega\;\big|\;{\dot B}^r_{p, p}\|^{(d/2)/(r + 1 - d/p + d/2)}
\ee
and
\be\label{d-nabla-v-infty-r}
\|\nabla u\|_\infty  \lesssim \|u\|^{(r - d/p)/(r + 1 - d/p + d/2)}\;\|\omega\;\big|\;{\dot B}^r_{p, p}\|^{(1 + d/2)/(r + 1 - d/p + d/2)}\,.
\ee
\item  In the case $d \ge 3$ assume that $u\in L^2$ and $\omega = \curl\, u \in {\dot H}^\theta$.  If $\theta > \frac{d}{2} - 1$, then $u\in L^\infty$ and 
\be
\|u\|_\infty \lesssim \|u\|^{(\theta + 1 - d/2)/(\theta + 1)}\;[\omega]_\theta^{d/2/(\theta + 1)}\,.
\ee
If $\theta > \frac{d}{2}$, then $\nabla u\in L^\infty$ and 
\be
\|\nabla u\|_\infty \lesssim \|u\|^{(\theta - d/2)/(\theta + 1)}\,[\omega]_\theta^{(d/2 + 1)/(\theta + 1)}\,.
\ee
\end{enumerate} 
\end{lemma}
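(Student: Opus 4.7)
The plan is to reduce every bound in the statement to a direct application of Lemma~\ref{HA}, using the recovery formulas
\begin{equation*}
v^n = -iD^{-1}{\cal R}_m\,\omega^{mn}, \qquad \partial_k v^n = {\cal R}_k{\cal R}_m\,\omega^{mn},
\end{equation*}
which are already recorded at \eqref{v-om-Riesz}. Since a finite composition of Riesz transforms is a pseudo-differential operator whose symbol lies in $S^0_{ph}$, this is precisely the setting covered by Lemma~\ref{HA}. One further ingredient is Plancherel's identity, which gives $\|u\|_2 \simeq [\omega]_{-1}$ (the divergence-free condition ensures that $\hat u^n$ is determined by $\hat\omega^{mn}$ via multiplication by $\kappa^m/|\kappa|^2$, whose modulus is $1/|\kappa|$).

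For $\|u\|_\infty$ the plan is to apply the third part of Lemma~\ref{HA} with ${\mathfrak a}={\cal R}_m$, which yields
\begin{equation*}
\|u\|_\infty \;\lesssim\; [\omega]_{-1}^{\gamma_2}\,\|\omega\,\big|\,{\dot B}^r_{p,q}\|^{1-\gamma_2}
\;\simeq\; \|u\|_2^{\gamma_2}\,\|\omega\,\big|\,{\dot B}^r_{p,q}\|^{1-\gamma_2},
\end{equation*}
where $\gamma_2=(r+1-d/p)/(r+1+d(p-2)/(2p))$. For $\|\nabla u\|_\infty$ the plan is to apply the second part of Lemma~\ref{HA} with ${\mathfrak a}={\cal R}_k{\cal R}_m$, yielding the same type of inequality with the exponent $\gamma_1=(r-d/p)/(r+1+d(p-2)/(2p))$.

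It then remains to specialize the parameters in each of the four cases. In case (1) take $d=2$, $p=q=\infty$, identify ${\dot B}^r_{\infty,\infty}$ with ${\dot C}^r$ and simplify: the denominator collapses to $r+2$ while the numerators become $r+1$ and $r$ respectively. In case (2) take $d=2$, $p=q=2$, so that ${\dot B}^\theta_{2,2}\simeq{\dot H}^\theta$; the denominator becomes $\theta+1$ and the numerators become $\theta$ and $\theta-1$. In case (3) simply read off $\gamma_1,\gamma_2$ with the general $d\ge 3$, $p\in[1,\infty]$. Case (4) is the $p=2$ specialization of case (3), giving denominator $\theta+1$ and numerators $\theta+1-d/2$ and $\theta-d/2$. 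In each case I would also verify that the hypotheses of Lemma~\ref{HA} (namely $r>d/p$ for $\|\nabla u\|_\infty$ and $r>d/p-1$ for $\|u\|_\infty$) are exactly the regularity assumptions imposed in the lemma statement.

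There is no substantive obstacle beyond careful bookkeeping of exponents and checking that the parameter ranges specified in each part of the lemma coincide with those required by Lemma~\ref{HA}; the work has been done in the appendix and this lemma is really a packaging of its corollaries in the specific form needed in the main text.
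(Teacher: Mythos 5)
Your proposal is correct and is essentially the paper's own proof: the paper disposes of this lemma in one sentence by citing Lemma~\ref{HA} (parts 2 and 3) together with the observations $[\omega]_{-1}\simeq\|u\|$ and $\nabla u\simeq{\cal R}{\cal R}\,\omega$, and your parameter specializations in the four cases reproduce the stated exponents exactly.
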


The inequalities follow immediately from Lemma \ref{HA} part 2 and the observation that $[\omega]_{-1} \simeq \|u\|$ and 
$\nabla u \simeq {\cal R} \omega$.

\bibliography{CH}
\newcommand{\arxivref}[1]{\href{http://www.arxiv.org/abs/#1}{{arXiv.org:#1}}}
\newcommand{\prd}{Phys. Rev. D} 
\bibliographystyle{abbrv}

\end{document}